\DeclareMathOperator{\Autsh}{\underline{\textup{Aut}}}
\DeclareMathOperator{\Bi}{\textup{B}}
\DeclareMathOperator{\Coker}{\textup{Coker}}
\DeclareMathOperator{\End}{\textup{End}}
\DeclareMathOperator{\GL}{\textup{GL}}
\DeclareMathOperator{\Hl}{\textup{H}}
\DeclareMathOperator{\Homsh}{\underline{\textup{Hom}}}
\DeclareMathOperator{\Imm}{\textup{Im}}
\DeclareMathOperator{\QCoh}{\textup{QCoh}}
\DeclareMathOperator{\car}{\textup{char}}
\DeclareMathOperator{\et}{\textup{et}}
\DeclareMathOperator{\pr}{\textup{pr}}
\DeclareMathOperator{\pt}{\textup{pt}}
\DeclareMathOperator{\rk}{\textup{rk}}
\theoremstyle{plain}
\newtheorem{thm}{Theorem}[section]
\newtheorem{lem}[thm]{Lemma}
\newtheorem{cor}[thm]{Corollary}
\newtheorem{prop}[thm]{Proposition}
\theoremstyle{definition}
\newtheorem{defn}[thm]{Definition} 
\newtheorem{ex}[thm]{Example}
\newtheorem{ax}[thm]{Axioms}
\newtheorem{rmk}[thm]{Remark}
\numberwithin{thm}{section}
\newcounter{x}\setcounter{x}{1}
\newcommand{\red}{{\rm red}}
\newcommand{\EF}{{\rm EFin}}
\newcommand{\Et}{{\text{\rm \'Et}}}
\newcommand{\Fib}{{\rm Fib}}
\newcommand{\Ess}{{\rm EFin}}
\newcommand{\Strat}{{\rm Str}}
\newcommand{\Hom}{{\rm Hom}}
\newcommand{\Spec}{{\rm Spec \,}}
\newcommand{\Crys}{{\rm Crys}}
\newcommand{\Dmod}{{\rm Dmod}}
\newcommand{\Aff}{{\rm Aff}}
\newcommand{\Gal}{{\rm Gal}}
\newcommand{\Fdiv}{{\rm Fdiv}}
\newcommand{\Ker}{{\rm Ker}}
\newcommand{\sC}{{\mathcal C}}
\newcommand{\sD}{{\mathcal D}}
\newcommand{\sF}{{\mathcal F}}
\newcommand{\sG}{{\mathcal G}}
\newcommand{\sO}{{\mathcal O}}
\newcommand{\sT}{{\mathcal T}}
\newcommand{\sX}{{\mathcal X}}
\newcommand{\sY}{{\mathcal Y}}
\newcommand{\sZ}{{\mathcal Z}}
\newcommand{\A}{{\mathbb A}}
\newcommand{\C}{{\mathbb C}}
\newcommand{\E}{{\mathbb E}}
\newcommand{\F}{{\mathbb F}}
\newcommand{\N}{{\mathbb N}}
\newcommand{\Q}{{\mathbb Q}}
\newcommand{\Z}{{\mathbb Z}}
\newcommand{\Vect}{\text{\sf Vect}}
\newcommand{\Rep}{\text{\sf Rep}}
\newcommand{\id}{{\rm id\hspace{.1ex}}}
\renewcommand{\et}{\textup{\'et}}
\newcommand{\str}{\textup{str}}
\newcommand{\iinf}{\textup{inf}}
\newcommand{\perf}{\textup{perf}}
\newcommand{\NN}{\textup{N}}
\newcommand{\LL}{\textup{L}}
 \renewcommand{\todo}{\phantom}
\theoremstyle{plain}
\newtheorem{thmI}{Theorem}
\newtheorem{thmII}{Theorem}
\title{Algebraic and Nori fundamental gerbes}
\begin{document}

\author{Fabio Tonini,  Lei Zhang }

\address{ Fabio Tonini\\
    Freie Universit\"at Berlin\\
    FB Mathematik und Informatik\\
    Arnimallee 3\\ Zimmer 112A\\
    14195 Berlin\\ Deutschland }
\email{tonini@zedat.fu-berlin.de}
 \address{ Lei Zhang\\
    Freie Universit\"at Berlin\\
    FB Mathematik und Informatik\\
    Arnimallee 3\\ Zimmer 112A\\
    14195 Berlin\\ Deutschland }
\email{l.zhang@fu-berlin.de}
\thanks{This work was supported by the European Research Council (ERC) Advanced Grant 0419744101 and the Einstein Foundation}
\date{\today}

\global\long\def\A{\mathbb{A}}

\global\long\def\Ab{(\textup{Ab})}

\global\long\def\C{\mathbb{C}}

\global\long\def\Cat{(\textup{cat})}

\global\long\def\Di#1{\textup{D}(#1)}

\global\long\def\E{\mathcal{E}}

\global\long\def\F{\mathbb{F}}

\global\long\def\GCov{G\textup{-Cov}}

\global\long\def\Gcat{(\textup{Galois cat})}

\global\long\def\Gfsets#1{#1\textup{-fsets}}

\global\long\def\Gm{\mathbb{G}_{m}}

\global\long\def\GrCov#1{\textup{D}(#1)\textup{-Cov}}

\global\long\def\Grp{(\textup{Grps})}

\global\long\def\Gsets#1{(#1\textup{-sets})}

\global\long\def\HCov{H\textup{-Cov}}

\global\long\def\MCov{\textup{D}(M)\textup{-Cov}}

\global\long\def\MHilb{M\textup{-Hilb}}

\global\long\def\N{\mathbb{N}}

\global\long\def\PGor{\textup{PGor}}

\global\long\def\PGrp{(\textup{Profinite Grp})}

\global\long\def\PP{\mathbb{P}}

\global\long\def\Pj{\mathbb{P}}

\global\long\def\Q{\mathbb{Q}}

\global\long\def\RCov#1{#1\textup{-Cov}}

\global\long\def\RR{\mathbb{R}}

\global\long\def\Sch{\textup{Sch}}

\global\long\def\WW{\textup{W}}

\global\long\def\Z{\mathbb{Z}}

\global\long\def\acts{\curvearrowright}

\global\long\def\alA{\mathscr{A}}

\global\long\def\alB{\mathscr{B}}

\global\long\def\arr{\longrightarrow}

\global\long\def\arrdi#1{\xlongrightarrow{#1}}

\global\long\def\catC{\mathscr{C}}

\global\long\def\catD{\mathscr{D}}

\global\long\def\catF{\mathscr{F}}

\global\long\def\catG{\mathscr{G}}

\global\long\def\comma{,\ }

\global\long\def\covU{\mathcal{U}}

\global\long\def\covV{\mathcal{V}}

\global\long\def\covW{\mathcal{W}}

\global\long\def\duale#1{{#1}^{\vee}}

\global\long\def\fasc#1{\widetilde{#1}}

\global\long\def\fsets{(\textup{f-sets})}

\global\long\def\iL{r\mathscr{L}}

\global\long\def\id{\textup{id}}

\global\long\def\la{\langle}

\global\long\def\odi#1{\mathcal{O}_{#1}}

\global\long\def\ra{\rangle}

\global\long\def\set{(\textup{Sets})}

\global\long\def\sets{(\textup{Sets})}

\global\long\def\shA{\mathcal{A}}

\global\long\def\shB{\mathcal{B}}

\global\long\def\shC{\mathcal{C}}

\global\long\def\shD{\mathcal{D}}

\global\long\def\shE{\mathcal{E}}

\global\long\def\shF{\mathcal{F}}

\global\long\def\shG{\mathcal{G}}

\global\long\def\shH{\mathcal{H}}

\global\long\def\shI{\mathcal{I}}

\global\long\def\shJ{\mathcal{J}}

\global\long\def\shK{\mathcal{K}}

\global\long\def\shL{\mathcal{L}}

\global\long\def\shM{\mathcal{M}}

\global\long\def\shN{\mathcal{N}}

\global\long\def\shO{\mathcal{O}}

\global\long\def\shP{\mathcal{P}}

\global\long\def\shQ{\mathcal{Q}}

\global\long\def\shR{\mathcal{R}}

\global\long\def\shS{\mathcal{S}}

\global\long\def\shT{\mathcal{T}}

\global\long\def\shU{\mathcal{U}}

\global\long\def\shV{\mathcal{V}}

\global\long\def\shW{\mathcal{W}}

\global\long\def\shX{\mathcal{X}}

\global\long\def\shY{\mathcal{Y}}

\global\long\def\shZ{\mathcal{Z}}

\global\long\def\st{\ | \ }

\global\long\def\stA{\mathcal{A}}

\global\long\def\stB{\mathcal{B}}

\global\long\def\stC{\mathcal{C}}

\global\long\def\stD{\mathcal{D}}

\global\long\def\stE{\mathcal{E}}

\global\long\def\stF{\mathcal{F}}

\global\long\def\stG{\mathcal{G}}

\global\long\def\stH{\mathcal{H}}

\global\long\def\stI{\mathcal{I}}

\global\long\def\stJ{\mathcal{J}}

\global\long\def\stK{\mathcal{K}}

\global\long\def\stL{\mathcal{L}}

\global\long\def\stM{\mathcal{M}}

\global\long\def\stN{\mathcal{N}}

\global\long\def\stO{\mathcal{O}}

\global\long\def\stP{\mathcal{P}}

\global\long\def\stQ{\mathcal{Q}}

\global\long\def\stR{\mathcal{R}}

\global\long\def\stS{\mathcal{S}}

\global\long\def\stT{\mathcal{T}}

\global\long\def\stU{\mathcal{U}}

\global\long\def\stV{\mathcal{V}}

\global\long\def\stW{\mathcal{W}}

\global\long\def\stX{\mathcal{X}}

\global\long\def\stY{\mathcal{Y}}

\global\long\def\stZ{\mathcal{Z}}

\global\long\def\then{\ \Longrightarrow\ }

\global\long\def\L{\textup{L}}

\global\long\def\l{\textup{l}}


\begin{abstract}
In this paper we extend the generalized  algebraic fundamental group  constructed in \cite{EH} to general fibered categories using the language of gerbes. As an application we obtain a Tannakian interpretation for the Nori fundamental gerbe defined in \cite{BV} for non smooth non pseudo-proper algebraic stacks. 
\end{abstract}

\setcounter{section}{0}
\maketitle

\section*{Introduction}
Let $k$ be a field and let $X$ be a smooth and connected  scheme  over  $k$ with a rational point $x\in X(k)$. The algebraic fundamental group of $(X,x)$, denoted by $\pi^{\rm alg}(X,x)$ is the affine group scheme over $k$ associated with the $k$-Tannakian category $\Dmod(X/k)$ of $\sO_X$-coherent $D_{X/k}$-modules neutralized by the pullback along $x\colon \Spec k\arr X$. If $k$ is algebraically closed then the profinite quotient of $\pi^{\rm alg}(X,x)$ is $\pi_1^{\text{\'et}}(X,x)$,  Grothendieck's \'etale fundamental group developed in \cite{SGA1}. 

On the other hand if $X$ is a connected and  reduced scheme over $k$ with a rational point $x\in X(k)$, Nori defined in \cite{Nori} a profinite fundamental group scheme $\pi^\NN(X,x)$ over $k$ which classifies torsors over $X$ by finite group schemes of $k$ with a trivialization over $x$. If $k$ is algebraically closed then its pro\'etale quotient is again $\pi_1^{\text{\'et}}(X,x)$, so that if $X$ is smooth we have maps
  \[\label{diagram introduction}
  \begin{tikzpicture}[xscale=3.0,yscale=-1.4]
    \node (A0_0) at (0, 0) {$\pi^{\rm alg,\infty}(X,x)$};
    \node (A0_1) at (1, 0) {$\pi^\NN(X,x)$};
    \node (A1_0) at (0, 1) {$\pi^{\rm alg}(X,x)$};
    \node (A1_1) at (1, 1) {$\pi_1^{\text{\'et}}(X,x)$};
    \path (A0_0) edge [->>,dashed]node [auto] {$\scriptstyle{c}$} (A0_1);
    \path (A1_0) edge [->>]node [auto] {$\scriptstyle{a}$} (A1_1);
    \path (A0_1) edge [->>]node [auto] {$\scriptstyle{b}$} (A1_1);
    \path (A0_0) edge [->>,dashed]node [auto] {$\scriptstyle{d}$} (A1_0);
  \end{tikzpicture}
  \]
where $a$ is the profinite quotient and $b$ is the pro-\'etale quotient (and thus an isomorphism if $\car k=0$). If $\car k>0$, in \cite{EH} Esnault and Hogadi completed this diagram with dashed arrows from an affine group schemes $\pi^{\rm alg,\infty}(X,x)$ associated with a Tannakian category denoted by $\textup{Strat}(X,\infty)$, with $c$ a profinite quotient and $d$ a quotient.

In this paper we would like to generalize the above picture to certain fibered categories over a field $k$ which may not possess a rational point, and this applies in particular to algebraic stacks which are not necessarily smooth.  To achieve this we will  use the language of gerbes instead of that of affine group schemes,  just as how Borne and Vistoli generalized the Nori fundamental group scheme to  fibered categories in \cite{BV}.

For smooth schemes $X$ there are several equivalent descriptions of the category   of $\sO_X$ coherent $D_X$-modules,  for instance the category $\Crys(X)$ of crystals on the infinitesimal site of $X$, or the category $\Strat(X)$ of stratified bundles, or, in positive characteristic, the category $\Fdiv(X)$ of $F$-divided sheaves (see \cite[Prop. 2.11, pp. 2.13]{BO} and \cite[Thm. 1.3, pp. 4]{Gie}). 

Let $\stX$ be a quasi-compact, quasi-separated and connected category fibered in groupoids over $k$ (see \ref{connected and reduced fibered categories} and the section   \emph{Notations and Conventions} for the meaning of those adjectives).
In order to define an algebraic fundamental gerbe in general, we are going to define $k$-linear   monoidal categories $\Crys(\stX)$, $\Strat(\stX)$, and, in positive characteristic, $\Fdiv(\stX)$, and discuss when those are Tannakian categories. More precisely we will define the \emph{big infinitesimal site} $\stX_{\iinf}$ of $\stX$, the \emph{big stratified site} $\stX_{\str}$ of $\stX$ and the direct limit $\stX^{(\infty,k)}$ of relative Frobenius of $\stX$. These are fibered categories over $k$ equipped with a morphism from $\stX$. The categories $\Crys(\stX)$, $\Strat(\stX)$ and $\Fdiv(\stX)$ are then defined as $\Vect(\stX_{\iinf})$, $\Vect(\stX_{\str})$ and $\Vect(\stX^{(\infty,k)})$ (see \ref{definition of stratifications and crystals} and \ref{definition of Fdivided sheaves}), where $\Vect(-)$ denotes the category of vector bundles (see the section \emph{Notations and Conventions} for its definition).
Since those categories are not equivalent in general when $\stX$ is not smooth, we develop an axiomatic language which allow to treat all of them together. The advantage of this language is that all functors involved will be expressed as pullback of vector bundles along certain maps, making proof easier and more conceptual. 

Let $\stX\arr\stX_\shT$ be a  morphism of fibered categories over $k$, and let $\shT(\stX)=\Vect(\stX_\shT)$. We will list four axioms A,B,C and D on the given morphism $\stX\arr \stX_\shT$ or, to simplify the exposition, on $\shT(\stX)$ (see \ref{axioms}) which imply nice ``Tannakian'' properties of $\shT(\stX)$. Denote by $L_0$ the endomorphisms of the unit object of $\shT(\stX)$, that is $L_0=\Hl^0(\odi{\stX_\shT})$ and, if $\shC$ is a $k$-Tannakian category, denote by $\Pi_\shC$ the associated affine gerbe over $k$. For instance A and B imply that $L_0$ is a field, that $\shT(\stX)$ is an $L_0$-Tannakian category and, moreover, that $\Pi_{\shT(\stX)}$ has the following universal property: there is an $L_0$-map $\stX_\shT\arr \Pi_{\shT(\stX)}$ which is universal among $L_0$-morphisms from $\stX_\shT$ to an affine gerbe over $L_0$ (see \ref{when T(X) is Tannakian}). 

The first main application of our axiomatic language is the following: 
\begin{thmI}\label{algebraic fundamental group}[\ref{etale part and geometric connectdness}, \ref{main thm for str and inf}, \ref{main thm for FDiv}]
Assume that $\sX$ is geometrically connected over $k$ and either $\Hl^0(\odi\stX)=k$ or there exists a field extension $L/k$ separably generated up to a finite extension (see \ref{separably generated}) such that $\stX(L)\neq \emptyset$.
\begin{enumerate}
\item If $\sX$ admits an fpqc covering $U\to \sX$ from a Noetherian scheme $U$ defined over the perfection $k^{\perf}$ of $k$ then $\Strat(\stX)$ satisfies axioms A,B and C and it is a $k$-Tannakian category.
\item If $\sX$ is an algebraic stack locally of finite type over $k$, then $\Crys(\sX)$ satisfies axioms A,B and C and it is a $k$-Tannakian category.
\item ($\car k>0$) If $\sX$ admits an fpqc covering $U\to \sX$ from a Noetherian scheme $U$ whose residue fields are separable up to a finite extension over $k$ (see \ref{separably generated}) then $\Fdiv(\stX)$ satisfies axioms A,B,C and D and it is a pro-smooth banded (see \ref{pro stuff for gerbes 2}) $k$-Tannakian category. 
\end{enumerate}
\end{thmI}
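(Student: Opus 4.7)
\emph{Plan.} All three parts share the same architecture: for each $\shT\in\{\Strat,\Crys,\Fdiv\}$ one has a canonical structural morphism $\stX\arr\stX_\shT$ with $\shT(\stX)=\Vect(\stX_\shT)$, and the strategy is to verify axioms A, B, C (plus D in case (iii)) and then invoke the general mechanism described before the statement. Axioms A and B alone already force $L_0:=\Hl^0(\odi{\stX_\shT})$ to be a field and $\shT(\stX)$ to be an $L_0$-Tannakian category whose associated gerbe $\Pi_{\shT(\stX)}$ has the universal property recalled in the introduction. So the work splits into: checking the axioms, and then identifying $L_0$ with $k$ using the two alternative hypotheses (either $\Hl^0(\odi\stX)=k$ or the existence of a point over a separably generated extension) together with geometric connectedness of $\stX$.

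\emph{Verification of A and B.} These axioms express faithfulness, exactness and representability of pullback along $\stX\arr\stX_\shT$, and they are largely formal once the site $\stX_\shT$ has been set up correctly. For $\Strat$ (case (i)), the description of $\stX_\str$ via infinitesimal thickenings, combined with the existence of an fpqc covering $U\arr\stX$ from a Noetherian $U$ over $k^\perf$, gives fpqc-local triviality of stratifications and hence faithful pullback. For $\Crys$ (case (ii)), the big infinitesimal site admits enough representable objects once $\stX$ is a locally finite type algebraic stack, which is exactly the hypothesis. For $\Fdiv$ (case (iii)), $\stX^{(\infty,k)}$ is the limit of the relative Frobenius tower $\cdots\arr\stX^{(n+1,k)}\arr\stX^{(n,k)}\arr\cdots$; the separable-residue-field hypothesis on $U$ makes each transition map faithfully flat, so pullback at infinity is faithful, and axiom D (pro-smoothness of the band) is read off the very same presentation, since Frobenius kills infinitesimal deformations so each finite-level Tannakian quotient has smooth band.

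\emph{Verification of C and the identification $L_0=k$.} Axiom C asserts that the endomorphisms of the unit object form a field and match the expected $\Hl^0$. If $\Hl^0(\odi\stX)=k$, the natural pullback embeds $L_0$ into $\Hl^0(\odi\stX)=k$, forcing $L_0=k$. Otherwise, a point $x\colon\Spec L\arr\stX$ over a separably generated extension $L/k$ supplies a fibre functor $\Vect(\stX_\shT)\arr\Vect_L$; elements of $L_0$ then map into $L$, and the separability hypothesis combined with the geometric connectedness of $\stX$ rules out both purely inseparable and non-trivial finite separable extensions of $k$ inside $L_0$. I expect this last implication to be the principal obstacle: it requires combining geometric connectedness of $\stX$ with an fpqc descent argument for the site $\stX_\shT$ itself along the separably generated extension $L/k$, and is the step where the three theories $\shT$ may demand genuinely case-by-case verification rather than a uniform argument.
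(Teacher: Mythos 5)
Your overall architecture --- verify the axioms for each of the three structural morphisms $\stX\arr\stX_\shT$ and then feed them into the general Tannakian machine --- is exactly the paper's, but the proposal misidentifies what the axioms say and, more seriously, dismisses as ``largely formal'' the step that carries essentially all of the geometric content. Axiom A is the statement $\shT(\stX)=\QCoh_{\textup{fp}}(\stX_\shT)$, i.e.\ that every finitely presented stratified (resp.\ crystalline, $F$-divided) sheaf is automatically locally free. This is false without the hypotheses in the statement: Example \ref{example of bad stratification} exhibits an Artinian $k$-algebra not defined over $k^{\perf}$ carrying a finitely presented stratified module that is not locally free. The paper's verification is a genuine reduction argument: fpqc descent and \ref{m-adically sepated rings and free modules} reduce to a local ring with nilpotent maximal ideal, \ref{key lemma for reducing to a retraction} replaces the residue field by its perfection, formal smoothness of extensions of perfect fields produces a retraction of the closed point, and \ref{Str and Crys along nilpotent closed immersions} then forces freeness (with variants via \ref{from the big to the small site} for crystals and via \ref{Bon Ring} for $F$-divided sheaves). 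None of this is formal, and your claim that the separable-residue-field hypothesis makes the transition maps of the Frobenius tower faithfully flat is not how the $\Fdiv$ case works: relative Frobenii are not flat here, and the argument instead uses that they are homeomorphisms together with the stability of adic separatedness under Frobenius twists.

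You have also swapped the contents of the axioms. Axiom C is not about endomorphisms of the unit: it asserts that $\Hom_L(\stX_\shT,\Gamma)\arr\Hom_L(\stX,\Gamma)$ is an equivalence for finite \'etale stacks $\Gamma$, and for $\Strat$ and $\Crys$ it is the \emph{easy} axiom (maps to \'etale targets extend uniquely across nilpotent closed immersions). Axiom D is the factorization of maps to finite gerbes through \'etale ones; pro-smoothness of the band is a separate conclusion extracted in the proof of \ref{main thm for FDiv} by showing that Frobenius pullback on $\Vect(\overline\Gamma)$ is fully faithful and deducing that the relative Frobenius of each finite-type quotient gerbe is a quotient map --- not by ``Frobenius killing infinitesimal deformations.'' Finally, the identification of $L_0=\Hl^0(\odi{\stX_\shT})$ with $k$, which you correctly flag as the principal obstacle, is not an fpqc descent along $L/k$: it rests on Proposition \ref{endo of trivial object for a field}, the computation $\End_{\Strat_k(L)}(1)=L_{\et,k}$ for $L/k$ separably generated up to a finite extension, whose proof (reduction to $L/K$ finite separable over a purely transcendental $K$, the identity $\End_{\Strat_k(L)}(1)=\bigcap_j L^{(j)}$ via \ref{twist as intersection}, and a minimal-polynomial and derivation argument) is the longest single step in the relevant section and is entirely absent from your proposal. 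Combined with \ref{etale part and geometric connectdness} and geometric connectedness this gives $L_0=\Hl^0(\odi\stX)_{\et,k}=k$; without it the theorem does not follow.
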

In any of the above situations, taking the gerbe associated with the corresponding Tannakian category, one has a notion of \textit{algebraic fundamental gerbe} for $\stX/k$. Notice moreover that all conditions are satisfied in Theorem \ref{algebraic fundamental group} if $\stX$ is a geometrically connected algebraic stack of finite type over $k$. In this last situation, in an unpublished result B. Bhatt proved that the three categories $\Crys(\stX)$, $\Strat(\stX)$ and, in positive characteristic, $\Fdiv(\stX)$ are all equivalent. This means that the three candidates for algebraic fundamental gerbe coincide for algebraic stacks of finite type over $k$.

The fact that $\Fdiv(\stX)$ is pro-smooth banded has already been observed by dos Santos in \cite[Theorem 11]{dS}, under the assumption that $k$ is algebraically closed and $\stX$ is a connected, locally Noetherian and regular scheme (see \ref{prosmoothness when k is perfect}).

Once we have a notion of an algebraic fundamental gerbe we must compare it with the relative analogous of the Grothendieck's \'etale fundamental group, namely the Nori \'etale fundamental gerbe $\Pi^{\NN,\et}_{\stX/k}$ of $\stX/k$ (see \ref{definition of Nori gerbes}) which exists if and only if $\stX$ is geometrically connected over $k$ (see \ref{existence of Nori etale gerbe}).  If $\shT(\stX)$ satisfies axioms A,B and C then $\stX$ is geometrically connected over $L_0$ and $\Pi^{\NN,\et}_{\stX/L_0}$ is the pro-\'etale quotient of $\Pi_{\shT(\stX)}$. If moreover $\shT(\stX)$ satisfies axiom D, one can use the profinite quotient instead (see \ref{when T(X) is Tannakian}).
In the hypothesis of Theorem \ref{algebraic fundamental group} we have that $\Pi^{\NN,\et}_{\stX/k}$ is the pro-\'etale quotient of $\Pi_{\Strat(\stX)}$ and $\Pi_{\Crys(\stX)}$ in situations $(1)$ and $(2)$ respectively, it is the profinite quotient of $\Pi_{\Fdiv(\stX)}$ in situation $(3)$.

The fibered category $\stX$ admits a Nori fundamental gerbe $\Pi^\NN_{\stX/k}$ over $k$ if and only if it is inflexible over $k$ (see  \cite[Definition 5.3 and Theorem 5.7]{BV}) and in this case $\Pi^{\NN,\et}_{\stX/k}$ is the pro-\'etale quotient of $\Pi^\NN_{\stX/k}$. We give a new concrete geometric interpretation of the notion of inflexibility: If $\stX$ is reduced (see \ref{connected and reduced fibered categories}) then $\stX$ is inflexible if and only if $k$ is integrally closed in $\Hl^0(\odi\stX)$ (see \ref{inflexibility characterization}).

Assume $\stX$ reduced from now on. In characteristic $0$ Nori fundamental gerbe and Nori \'etale fundamental gerbe coincide, so let's assume $\car k=p>0$. The same procedure used by Esnault and Hogadi in \cite{EH} allows us to construct a category $\shT_\infty(\stX)$ starting from the functor $\shT(\stX)\arr\Vect(\stX)$ and the pullback of Frobenius on those categories (see \ref{sequence of EH}). In particular are defined categories $\Crys_\infty(\stX)$, $\Strat_\infty(\stX)$ and $\Fdiv_\infty(\stX)$.
If $\shT(\stX)$ satisfies axioms A and B then $\shT_\infty(\stX)$ is an $L_\infty$-Tannakian category, where $L_\infty$ is the purely inseparable closure of $L_0$ inside $\Hl^0(\odi\stX)$ and thus, $\stX$ is also a category fibred over $L_{\infty}$. If $\shT(\stX)$ also satisfies axiom C, then $\stX$ is inflexible over $L_\infty$ and we have a diagram
  \[
  \begin{tikzpicture}[xscale=3.0,yscale=-1.4]
    \node (A0_0) at (0, 0) {$\Pi_{\shT_\infty(\stX)}$};
    \node (A0_1) at (1, 0) {$\Pi^\NN_{\stX/L_\infty}$};
    \node (A1_0) at (0, 1) {$\Pi_{\shT(\stX)}$};
    \node (A1_1) at (1, 1) {$\Pi^{\NN,\et}_{\stX/L_0}$};
    \path (A0_0) edge [->]node [auto] {$\scriptstyle{c}$} (A0_1);
    \path (A1_0) edge [->]node [auto] {$\scriptstyle{a}$} (A1_1);
    \path (A0_1) edge [->]node [auto] {$\scriptstyle{b}$} (A1_1);
    \path (A0_0) edge [->]node [auto] {$\scriptstyle{d}$} (A1_0);
  \end{tikzpicture}
  \]
where $c$ is a profinite quotient of $L_\infty$-gerbes (see \ref{main theorem formal II}). In particular $\Rep(\Pi^\NN_{\stX/L_\infty})\simeq \EF(\shT_\infty(\stX))$, where $\EF(-)$ denote the full subcategory of essentially finite objects (see \cite[Def 7.7]{BV}). Via Theorem \ref{algebraic fundamental group} we obtain the following Tannakian interpretation of the Nori fundamental gerbe, which extends the Tannakian interpretation in \cite[Theorem 7.9]{BV} to non pseudo-proper fibered categories.

\begin{thmII} In the hypothesis of Theorem \ref{algebraic fundamental group} assume moreover $\sX$ reduced and inflexible.  In situation (1) (resp. (2), (3)) of \ref{algebraic fundamental group} we have a canonical equivalence of $k$-Tannakian categories:
\[
\Rep_k(\Pi_{\sX/k}^\NN) \simeq \EF(\Strat_\infty(\stX))\text{ (resp. } \EF(\Crys_\infty(\stX)) \comma \EF(\Fdiv_{\infty}(\sX)))
\]
\end{thmII}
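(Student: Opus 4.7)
The plan is to reduce the theorem to the general Tannakian description $\Rep(\Pi^\NN_{\stX/L_\infty})\simeq \EF(\shT_\infty(\stX))$ recorded in the discussion immediately preceding the statement, where $\shT$ is taken to be $\Strat$, $\Crys$ or $\Fdiv$ according to situation (1), (2) or (3). Once this equivalence is available, the only thing left is to verify, under the hypotheses of the theorem, that the two auxiliary fields $L_0$ and $L_\infty$ coincide with $k$ itself.

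First I would apply Theorem I to the appropriate choice of $\shT$. In each of the three situations Theorem I yields that $\shT(\stX)$ satisfies axioms A, B and C (plus D in situation (3)) and, in particular, that it is a $k$-Tannakian category. Since by the axiomatic machinery $\shT(\stX)$ is intrinsically an $L_0$-Tannakian category with $L_0=\Hl^0(\odi{\stX_\shT})$, and since the base field of a Tannakian category is determined by the endomorphism ring of its unit object, being $k$-Tannakian forces $L_0=k$. Next, using that $\stX$ is reduced and inflexible over $k$, the geometric characterization of inflexibility (\ref{inflexibility characterization}) gives that $k$ is integrally closed in $\Hl^0(\odi\stX)$. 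Combined with $L_0=k$, this forces every element of $\Hl^0(\odi\stX)$ purely inseparable over $k$ to belong already to $k$, so $L_\infty=k$ as well.

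It then remains to invoke the Tannakian description from the preamble: under axioms A, B and C the fibered category $\stX$ is automatically inflexible over $L_\infty$ (consistent with the assumed inflexibility of $\stX$ over $k=L_\infty$), and one has a canonical equivalence of $L_\infty$-Tannakian categories $\Rep_{L_\infty}(\Pi^\NN_{\stX/L_\infty})\simeq \EF(\shT_\infty(\stX))$. Substituting $L_\infty=k$ yields the asserted equivalence in all three cases. The main and essentially only obstacle is the clean identification $L_\infty=k$; beyond that, all substantive Tannakian content has already been packaged into Theorem I and the preceding axiomatic framework, so no further serious work should be needed.
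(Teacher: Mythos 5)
Your proposal is correct and follows essentially the same route as the paper, which deduces Theorem II from Theorem \ref{algebraic fundamental group} together with \ref{main theorem formal II}: axioms A, B, C give $L_0=\Hl^0(\odi{\stX_\shT})=k$, inflexibility plus reducedness and \ref{inflexibility characterization} give $L_\infty=k$, and then $\Rep(\Pi^\NN_{\stX/L_\infty})\simeq\EF(\shT_\infty(\stX))$ specializes to the claim. The only point worth making explicit is that reducedness of $\stX$ is also what supplies the remaining hypothesis of \ref{main theorem formal II} (that $F^*\shF\in\Vect(\stX)$ forces $\shF\in\Vect(\stX)$), via the remark immediately following that theorem.
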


If $\stX$ is inflexible over $k$ and we apply the axiomatic theory to $\stX\arr \stX_\shT=\Pi^{\NN,\et}_{\stX/k}$ we obtain $\Rep \Pi^\NN_{\stX/k}\simeq \shT_\infty(\stX)$. In particular $\Rep(\Pi^\NN_{\stX/k})$ can be reconstructed from the map $\Rep(\Pi^{\NN,\et}_{\stX/k})\arr \Vect(\stX)$ and the Frobenius pullback of those categories (see \ref{full nori from etale nori}).

Finally we study the infinitesimal part of $\Pi^\NN_{\stX/k}$, that is its pro-local quotient $\Pi^{\NN,\LL}_{\stX/k}$ (see \ref{pro stuff for gerbes 2}), and give a concrete description of its representations in terms of vector bundles on $\stX$: applying the axiomatic theory to $\stX\arr \stX_\shT=\Spec k$  we have $\Rep \Pi^{\NN,\LL}_{\stX/k}\simeq \shT_\infty(\stX)$ (see \ref{main theorem local gerbes}).

One of the main ingredient in the proofs of our results regarding the Nori gerbes is the use of a generalized version of Tannaka's duality that can be applied,  not only to gerbes, but also to finite stacks. This version of Tannakian duality is discussed \S \ref{general Tannaka duality} in a great generality.

In \cite{TZ2}, which is based on the results of this paper, we gave an alternative and more geometric description of essentially finite $F$-divided sheaves. 

We outline the content of this paper. In the first section we describe a generalization of classical Tannaka's duality, while in the second and third section we collect some useful results that will be used through all the paper. In section four we introduce different notions of Nori fundamental gerbes and discuss their existence. Section five contains the formalism and general results of the paper, while in section six we determine appropriate conditions under which $\Strat(\stX)$, $\Crys(\stX)$ and $\Fdiv(\stX)$ satisfy the axiom of section five. In the last section we study the pro-local Nori fundamental gerbe. In the two appendices we study limit of categories and general results about affine gerbes respectively.

\section*{Notations and Conventions}\label{notation}
Given a ring $R$ we denote by $\Aff/R$ the category of affine $R$-schemes or, equivalently, the opposite of the category of $R$-algebras.

If $\stZ$ and $\stY$ are categories over a given category $\shC$, by a map $\stZ\arr \stY$ we always mean a base preserving functor. Similarly given maps $F,G\colon \stZ\arr \stY$ a natural transformation $\gamma\colon F\arr G$ will always be a base preserving natural transformation, that is for all $z\in \stZ$ over an object $c\in \shC$, the map $\gamma_z \colon F(z)\arr G(z)$ lies over $\id_c$. If $\stY$ is a fibered category we will denote by $\Hom^c_\shC(\stZ,\stY)$ the category of base preserving functors $\stZ\arr\stY$ which send all arrows to Cartesian arrows and the maps are the base preserving natural transformations.
If $\stY$ is a category fibered in groupoids then $\Hom^c_\shC(\stZ,\stY)$ is the category of all base preserving functors and we will simply denote it by $\Hom_\shC(\stZ,\stY)$.
If $\shC=\Aff/R$, where $R$ is a base ring, we will simply write $\Hom^c_R$ or $\Hom^c$ if the base ring is clear from the context. If $\stZ$ is a category over $\Aff/R$ the categories
\[
\Vect(\stZ)\subseteq \QCoh_{\textup{fp}}(\stZ)\subseteq \QCoh(\stZ)
\]
are defined as $\Hom^c_R(\stZ,\Vect)\subseteq \Hom^c_R(\stZ,\QCoh_{\textup{fp}})\subseteq \Hom^c_R(\stZ,\QCoh)$, where $\Vect\subseteq \QCoh_{\textup{fp}}\subseteq \QCoh$ are the
fiber categories (not in groupoids) over $\Aff/R$ of locally free sheaves of finite rank, quasi-coherent sheaves of finite presentation and quasi-coherent sheaves respectively. 
The categories $\Vect(\stZ)$, $\QCoh_{\textup{fp}}(\stZ)$ and $\QCoh(\stZ)$ are $R$-linear and monoidal categories. 
We say that a sequence of maps $\shF\arr \shF\arr \shF''$ in $\QCoh(\stZ)$ is \emph{pointwise exact} if for all $\xi\in\stZ$ over $\Spec A$ the sequence of $A$-modules $\shF'(\xi)\arr\shF(\xi)\arr\shF''(\xi)$ is exact.
Notice that in $\QCoh_{\textup{fp}}(\stZ)$ and $\QCoh(\stZ)$ all maps have a cokernel (defined pointwise). If $\stZ=\Spec B$ is affine we will simply write $\Vect(B)$, $\QCoh_{\textup{fp}}(B)$ and $\QCoh(B)$. The writing $\xi\in\stZ(A)$ means that $\xi$ is an object of $\stZ$ over $\Spec A$, and if $\shF\in\QCoh(\stZ)$, we will denote by $\shF_\xi\in\QCoh(A)$ the evaluation of $\shF$ in $\xi$.

If $f\colon \stY\arr\stZ$ is a base preserving map of categories over $\Aff/R$ then we have functors
\[
f^*\colon \Vect(\stZ)\arr \Vect(\stY)\comma f^*\colon \QCoh_{\textup{fp}}(\stZ)\arr \QCoh_{\textup{fp}}(\stY)\comma f^*\colon \QCoh(\stZ)\arr \QCoh(\stY)
\]
obtained simply by composing with $f$ and they are $R$-linear and monoidal.

An fpqc covering $\stX\arr \stY$ between categories fibered in groupoids is a functor representable by fpqc covering of algebraic spaces. A fibered category is called quasi-compact if it is fibered in groupoids and it admits an fpqc covering from an affine scheme. Let $\stX$ and $\stY$ be categories fibered in groupoids. A map $f\colon \stX\arr \stY$ is quasi-compact if $\stX\times_\stY A$ is quasi-compact for all maps $\Spec A\arr \stY$, it is quasi-separated if its diagonal is quasi-compact. The category $\stX$ is called quasi-separated is $\stX\arr \Spec \Z$ is quasi-separated, which implies that all maps $\stX\arr \stY$ are quasi-separated if $\stY$ has affine diagonal. If $f\colon \stX\arr\stY$ is quasi-compact and quasi-separated and $\stX$ and $\stY$ admit an fpqc covering from a scheme (resp. affine map between categories fibered in groupoids) then $f^*\colon \QCoh(\stY)\arr \QCoh(\stX)$ has a right adjoint $f_*\colon \QCoh(\stY)\arr \QCoh(\stX)$ which is compatible with flat base changes of $\stY$ (resp. any base change of $\stY$) (see \cite[Prop 1.5 and Prop 1.7]{Ton}).

Given a category fibered in groupoids $\stX$ over $\Aff/\F_p$ we define the absolute Frobenius $F_\stX$ of $\stX$ as
 \[
 F_\stX \colon \stX\arr \stX \comma \stX(A) \ni \xi\longmapsto F_A^*\xi\in \stX(A)
 \]
 where $F_A\colon \Spec A\arr \Spec A$ is the absolute Frobenius of $A$. The Frobenius is $\F_p$-linear, natural in $\stX$ and coincides with the usual Frobenius when $\stX$ is a scheme.
 If $\stX$ is defined over a field $k$ of characteristic $p$ we define $\stX^{(i,k)}=\stX\times_k k$, where $k\arr k$ is the $i$-th power of the absolute Frobenius of $k$, and we regard it as category over $k$ using the second projection. For simplicity when $k$ is clear from the context we will use just $-^{(i)}$ dropping the $k$. Notice that $(\stX^{(i)})^{(j)}$ is canonically equivalent to $\stX^{(i+j)}$. The $i$-th relative Frobenius of $\stX$ is the $k$-linear map $\stX\arr \stX^{(i)}$ that, composed with the projection $\stX^{(i)}\arr \stX$, is the Frobenius $F_\stX^i$. Notice that applying $-^{(j)}$ to the $i$-th Frobenius of $\stX$ one obtains the $i$-th Frobenius of $\stX^{(j)}$ and the composition of $1$-th Frobenius 
 \[
\stX\arr \stX^{(1)}\arr \cdots \arr \stX^{(i)}
 \]
 is the $i$-th Frobenius of $\stX$. When $X=\Spec A$ we will also set $A^{(i)}=A\otimes_k k$, where $k\arr k$ is the $i$-th power of the absolute Frobenius of $k$, so that $X^{(i)}=\Spec A^{(i)}$. 
 
 All monoidal categories and functors considered will be symmetric unless specified otherwise.

 \section*{Acknowledgement}
 We would like to thank B. Bhatt, H. Esnault, M. Olsson, M. Romagny and A. Vistoli for helpful conversations and suggestions received. We would also like to thank the referee for pointing out a mistake in an earlier version of this paper.
 
\section{Tannaka's reconstruction and recognition}\label{general Tannaka duality}

\begin{defn}\label{recognition and reconstruction}
A \emph{pseudo-abelian} category is an additive category $\shC$ endowed with a collection $J_\shC$ of sequences of the form $c'\arr c \arr c''$, where all objects and maps are in $\shC$. A linear functor $\Phi\colon\shC\arr \shD$ of pseudo-abelian categories is called \emph{exact} if it maps a sequence of $J_\shC$ to a sequence isomorphic to one of $J_\shD$.

Let $R$ be a ring. If $\stX$ is a category over $\Aff/R$ then $\Vect(\stX)$ will be considered as pseudo-abelian with the collection of maps $\shF'\arr\shF\arr \shF''$ such that
\[
0 \arr \shF'\arr\shF\arr \shF''\arr 0
\]
is pointwise exact.
If $\shC$ is abelian it is also pseudo-abelian if endowed with its short exact sequences.
If $\shC$ and $\shD$ are $R$-linear, monoidal and pseudo-abelian categories we denote by $\Hom_{\otimes,R}(\shC,\shD)$ the category whose objects are $R$-linear, exact and monoidal functors and whose arrows are natural monoidal isomorphisms. Notice that if $f\colon \stY\arr\stZ$ is any base preserving map of categories over $\Aff/R$ then $f^*\in \Hom_{\otimes,R}(\Vect(\stZ),\Vect(\stY))$.

Let $\shC$ be a pseudo-abelian monoidal $R$-linear category. The expression
\[
\Pi_\shC(A/R)=\Hom_{\otimes,R}(\shC,\Vect(A))
\]
defines a stack in groupoids for the fpqc topology over $R$. There is a functor
\[
\shC \arr \Vect(\Pi_\shC)\comma c \longmapsto ( \Pi_\shC(A)\ni \xi \longmapsto \xi(c)\in \Vect(A))
\]
which is $R$-linear, monoidal and exact. This induces a natural functor
\[
\Hom_R(\stZ,\Pi_\shC)\arr \Hom_{\otimes,R}(\shC,\Vect(\stZ))
\]
for all categories $\stZ$ over $\Aff/R$, which is easily seen to be an equivalence.

We say that $\shC$ satisfies \emph{Tannakian recognition} if the functor $\Phi\colon \shC\arr \Vect(\Pi_\shC)$ is an equivalence and for all sequences $\chi \colon c'\arr c\arr c''$ we have $\Phi(\chi)$ is exact if and only if $\chi\in J_\shC$ (equivalently $\Phi$ has an $R$-linear, monoidal and exact quasi-inverse).

If $\stY$ is a category over $\Aff/R$ there is a base preserving functor $\stY\arr \Pi_{\Vect(\stY)}$, namely
\[
\eta\in\stY(A)\longmapsto (\Vect(\stY)\ni \Phi \longmapsto \Phi(\eta) \in \Vect(A))
\]
We say that a category fibered in groupoids $\stY$ satisfies \emph{Tannakian reconstruction} if the functor $\stY\arr \Pi_{\Vect(\stY)}$ is an equivalence, or, equivalently, the pullback
\[
\Hom^c_R(\stZ,\stY)\arr \Hom_{\otimes,R}(\Vect(\stY),\Vect(\stZ))\comma f\longmapsto f^*
\]
is an equivalence for all categories $\stZ$ over $\Aff/R$ (just apply $\Hom^c_R(\stZ,-)$ to the map $\stY\arr\Pi_{\Vect(\stY)}$).
\end{defn}

\begin{rmk}
 If $\shC$ satisfies Tannakian recognition then $\Pi_\shC$ satisfies Tannakian reconstruction and if $\stY$ satisfies Tannakian reconstruction then $\Vect(\stY)$ satisfies Tannakian recognition.
 Notice also that those conditions do not depend on the base ring $R$. Indeed $\Vect(-)$ is insensible to the base ring and if $\shC$ is a pseudo-abelian monoidal $R$-linear category then
\[
\Pi_\shC\arr \Aff/R\arr \Aff/\Z
\]
coincides with $\Pi_\shC$ where $\shC$ is thought as a $\Z$-linear category.
\end{rmk}

\begin{defn}\label{sheaves generating categories}
Let $\stZ$ be a category fibered in groupoids and $\shD\subseteq \QCoh(\stZ)$ be a full subcategory. We say that $\shD$ generates $\QCoh(\stZ)$ if any object of $\QCoh(\stZ)$ is a quotient of an arbitrary direct sum of objects of $\shD$.
 We say that $\stZ$ has the resolution property if $\Vect(\stZ)$ generates $\QCoh(\stZ)$.
\end{defn}

\begin{thm}\cite[Cor 5.4]{Ton}
 If $\stZ$ is a quasi-compact  stack for the fpqc topology over a ring $R$ with quasi-affine diagonal and the resolution property then it satisfies Tannakian reconstruction.
\end{thm}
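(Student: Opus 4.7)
The plan is to check that for every $R$-algebra $A$ the pullback functor
\[
\Hom^c_R(\Spec A,\stZ)\arr \Hom_{\otimes,R}(\Vect(\stZ),\Vect(A)),\quad f\mapsto f^*
\]
is an equivalence of categories. Both sides are fpqc stacks in $A$ (the left-hand side because $\stZ$ is, the right-hand side by descent of vector bundles along faithfully flat covers), so the statement is local in $\Spec A$ and it suffices to produce a quasi-inverse after fpqc base change.

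For fully faithfulness, given $f,g\in\stZ(A)$ the sheaf $\Isosh_A(f,g)$ is represented by a quasi-affine $A$-scheme thanks to the hypothesis on the diagonal of $\stZ$. A monoidal natural isomorphism $\alpha\colon f^*\Rightarrow g^*$ of functors on $\Vect(\stZ)$ extends uniquely, via the resolution property, to an isomorphism of the right-exact cocontinuous extensions to $\QCoh(\stZ)$, and hence in particular to an isomorphism $f^*\shA\simeq g^*\shA$ for every quasi-coherent $\odi{\stZ}$-algebra $\shA$. Applied to a quasi-coherent algebra witnessing the quasi-affineness of the diagonal, this produces a section of $\Isosh_A(f,g)\arr \Spec A$, that is, the unique isomorphism $f\simeq g$ pulling back to $\alpha$.

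For essential surjectivity, I would start with $\Phi\in\Hom_{\otimes,R}(\Vect(\stZ),\Vect(A))$ and extend it to a cocontinuous, right exact, symmetric monoidal functor $\widetilde\Phi\colon \QCoh(\stZ)\arr \QCoh(A)$. By the resolution property each $\shF\in \QCoh(\stZ)$ admits a presentation $\bigoplus_j\sE_j\arr \bigoplus_i\sE_i\arr \shF\arr 0$ with $\sE_i,\sE_j\in \Vect(\stZ)$, and $\widetilde\Phi(\shF)$ is taken as the cokernel of the corresponding map in $\QCoh(A)$; the exactness of $\Phi$ on $J_{\Vect(\stZ)}$ guarantees well-definedness. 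Next, choose an fpqc cover $u\colon \Spec B\arr \stZ$, which exists by quasi-compactness. Then $u_*\odi{\Spec B}$ is a quasi-coherent $\odi{\stZ}$-algebra, its image $C:=\widetilde\Phi(u_*\odi{\Spec B})$ is a faithfully flat $A$-algebra, and one obtains a refined morphism $\Spec C\arr \stZ$ lifting $u$. The two induced maps $\Spec(C\otimes_A C)\arr \stZ$ differ by a section of the pulled-back quasi-affine diagonal, producing descent data that yields an $f\in\stZ(A)$ whose pullback agrees with $\Phi$ on vector bundles.

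The main obstacle will be to show that $\widetilde\Phi$ preserves faithful flatness, so that $\Spec C\to \Spec A$ is genuinely an fpqc cover, and that the cocycle condition on the triple fibre product $\Spec(C\otimes_A C\otimes_A C)$ is satisfied so that the descent datum is effective. This is the technical heart of the argument and is exactly where the resolution property (used to linearize $\Phi$ to all of $\QCoh(\stZ)$) and the quasi-affineness of the diagonal (needed for effectivity of the descent) interact.
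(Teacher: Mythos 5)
This statement is not proved in the paper at all: it is imported verbatim from \cite[Cor 5.4]{Ton}, so there is no internal argument to compare against. Judged on its own terms, your proposal correctly identifies the standard strategy (extend $\Phi$ from $\Vect(\stZ)$ to $\QCoh(\stZ)$ via the resolution property, apply it to the pushforward of an affine atlas, and descend), but as written it is a plan rather than a proof, and the steps you defer are precisely the ones that carry all the content of the theorem.

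Concretely: (i) the well-definedness of $\widetilde\Phi$ is not automatic --- exactness of $\Phi$ on the distinguished pointwise-exact sequences of vector bundles does not by itself show that the cokernel of $\widetilde\Phi(\bigoplus_j\E_j\arr\bigoplus_i\E_i)$ is independent of the chosen presentation, nor that the resulting functor is right exact, colimit-preserving and symmetric monoidal; one has to set this up as a left Kan extension and verify these properties using the resolution property. (ii) Preservation of faithful flatness by $\widetilde\Phi$, which you flag as ``the main obstacle'', is the genuine crux: without it $\Spec C\arr\Spec A$ is not an fpqc cover and the descent argument never starts. Establishing it requires a Lazard-type argument (flat modules as filtered colimits of finite free ones) together with a separate argument for faithfulness, none of which is sketched. (iii) Because the diagonal is only quasi-affine, the atlas $u\colon\Spec B\arr\stZ$ is quasi-affine rather than affine, so $\Spec_\stZ(u_*\odi{\Spec B})$ contains $\Spec B$ only as a quasi-compact open; both in the essential surjectivity step and in your full-faithfulness step (where an algebra isomorphism $f^*\shA\simeq g^*\shA$ only yields a section of the ambient affine hull of $\Isosh_A(f,g)$) you must additionally show that the sections produced factor through the relevant open subschemes. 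This open-immersion bookkeeping is exactly where quasi-affineness, as opposed to affineness, of the diagonal makes the proof harder, and it is absent from the sketch. Until (i)--(iii) and the cocycle condition are supplied, the argument is not a proof of the cited result.
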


\begin{ex}
 let $k$ be a field. Classical Tannaka's duality implies that: if $\shC$ is a $k$-Tannakian category then it satisfies Tannakian recognition and $\Pi_\shC$ is an affine gerbe (gerbes with affine diagonal) over $k$. Conversely if $\Pi$ is an affine gerbe over $k$ then it satisfies Tannakian reconstruction and $\Vect(\Pi)$ is a $k$-Tannakian category. More precisely $\Pi$ has the resolution property (see \cite[Cor 3.9, pp. 132]{De3}).
\end{ex}

\begin{lem}\label{generation for affine and finite maps}
 Let $f\colon\stX\arr \stY$ be a map of categories fibered in groupoids over $R$. If $\shD\subseteq \QCoh(\stX)$ generates $\QCoh(\stX)$ and $f$ is finite, faithfully flat and finitely presented then $f_*\shD=\{f_*\E\st \E\in \shD\}$ generates $\QCoh(\stY)$. If $\overline\shD\subseteq \QCoh(\stY)$ generates $\QCoh(\stY)$ and $f$ is affine then $f^*\overline \shD=\{f^*\shH\st \shH\in \overline\shD\}$ generates $\QCoh(\stX)$.
\end{lem}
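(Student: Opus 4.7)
My plan is to handle both halves by transporting the given generating family across $f$ and then correcting back by a canonical surjection. The second statement is the simpler one: since $f$ is affine, the counit $f^{*}f_{*}\shF\arr\shF$ is pointwise surjective (on an $A$-point it is $M\otimes_A B\arr M$, $m\otimes b\mapsto bm$). So given $\shF\in\QCoh(\stX)$, I would cover $f_{*}\shF$ by $\bigoplus\shH_i\surj f_{*}\shF$ with $\shH_i\in\overline\shD$, apply $f^{*}$ (right exact as a left adjoint), and compose with the counit to get $\bigoplus f^{*}\shH_i\surj \shF$.

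The first half requires more care. Because $f$ is finite, flat and finitely presented, $f_{*}\odi{\stX}$ is finitely presented and flat over $\odi{\stY}$, hence locally free of finite rank; faithful flatness forces that rank to be positive, so the trace/evaluation map
\[
\mathrm{tr}\colon (f_{*}\odi{\stX})^{\vee}\otimes f_{*}\odi{\stX}\arr \odi{\stY}
\]
will be surjective (locally it sends a dual basis pair $e_i^{*}\otimes e_i$ to $1$). The naive attempt---cover $f^{*}\shG$ by $\shD$ and push forward---only produces a surjection onto $f_{*}f^{*}\shG=\shG\otimes f_{*}\odi{\stX}$, and the unit embedding $\shG\hookrightarrow\shG\otimes f_{*}\odi{\stX}$ need not admit a global $\odi{\stY}$-linear retraction. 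To bypass this, the plan is to twist before covering: set
\[
\shA \;=\; f^{*}\shG\otimes f^{*}(f_{*}\odi{\stX})^{\vee}\in\QCoh(\stX),
\]
choose a surjection $\bigoplus\E_i\surj \shA$ with $\E_i\in\shD$, and apply $f_{*}$, which is exact because $f$ is affine and commutes with direct sums because $f$ is quasi-compact and quasi-separated. The projection formula for the locally free sheaf $(f_{*}\odi{\stX})^{\vee}$ will then give
\[
f_{*}\shA \;=\; \shG\otimes (f_{*}\odi{\stX})^{\vee}\otimes f_{*}\odi{\stX},
\]
and composing with $\id_{\shG}\otimes\mathrm{tr}$ yields a surjection $f_{*}\shA\surj\shG$. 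The composite $\bigoplus f_{*}\E_i\surj\shG$ exhibits $\shG$ as a quotient of a direct sum of objects of $f_{*}\shD$.

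The main obstacle is precisely this step of replacing $f_{*}f^{*}\shG$ by $\shG$: pointwise the unit map splits (by Nakayama one can always extend $1\in f_{*}\odi{\stX}$ to a basis on a local ring), but these splittings need not glue to a global one. The twisting trick replaces the would-be splitting by the genuinely canonical surjection $\shG\otimes f_{*}\odi{\stX}\otimes (f_{*}\odi{\stX})^{\vee}\surj \shG$ supplied by the trace, and this is where local freeness of $f_{*}\odi{\stX}$---obtained from the finite presentation hypothesis combined with flatness and faithful flatness---enters in an essential way.
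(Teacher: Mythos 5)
Your proof is correct, and for the affine pullback half it is word for word the paper's argument (cover $f_*\shF$, pull back, compose with the surjective counit). For the pushforward half you use the same essential idea as the paper — twist by $(f_*\odi\stX)^{\vee}$, which is locally free of positive rank because $f$ is finite, flat, finitely presented and faithfully flat — but the mechanics differ slightly. The paper stays downstairs: it forms $\shG_\stX=\shG\otimes\Homsh_\stY(f_*\odi\stX,\odi\stY)$, observes that the locally split injection $\odi\stY\arr f_*\odi\stX$ dualizes to a global surjection $(f_*\odi\stX)^{\vee}\arr\odi\stY$ (evaluation at $1$), and then uses that $\shG_\stX$ carries an $f_*\odi\stX$-module structure to realize it as $f_*\shG'$ for some $\shG'\in\QCoh(\stX)$, which it then covers by $\shD$. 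You instead pull the twisted sheaf back to $\stX$, cover it there, push forward, and recover $\shG$ via the projection formula together with the evaluation surjection $(f_*\odi\stX)^{\vee}\otimes f_*\odi\stX\arr\odi\stY$. Your route trades the paper's identification of $\QCoh(\stX)$ with $f_*\odi\stX$-modules for the projection formula and the (harmless) requirement that $f_*$ commute with direct sums; the object you cover upstairs is $f^*(\shG\otimes(f_*\odi\stX)^{\vee})$, whose pushforward is the larger sheaf $\shG\otimes(f_*\odi\stX)^{\vee}\otimes f_*\odi\stX$ rather than $\shG\otimes(f_*\odi\stX)^{\vee}$ itself, but the trace surjection absorbs the difference. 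Both arguments are complete and rest on exactly the same input from the hypotheses.
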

\begin{proof}
 In the second case, if $\shF\in\QCoh(\stX)$ then there is a surjective map $\bigoplus_j \shH_j\arr f_*\shF$ with $\shH_j\in \overline \shD$ and therefore a surjective map $\bigoplus_j f^*\shH_j\arr f^*f_*\shF$. Since $f$ is affine the map  $f^*f_*\shF\arr \shF$ is surjective.
 
 Let's consider the first statement. Let $\shG\in \QCoh(\stY)$ and set $\shG_\stX=\shG\otimes_{\odi \stY} \Homsh_\stY(f_*\odi \stX,\odi \stY)$. The map $\odi\stY\arr f_*\odi\stX$, which is locally split injective, induces a surjective map $$\Homsh_\stY(f_*\odi\stX,\odi\stY)\arr \odi\stY$$ and therefore a surjective map $\shG_\stX\arr \shG$. The sheaf $\shG_\stX$ is an $f_*\odi \stX$-module, so there exists $\shG'\in \QCoh(\stX)$ such that $f_*\shG'\simeq \shG_\stX$. Thus, taking a surjection $\bigoplus_j \E_j\arr \shG'$ with $\E_j'\in \shD$ and using that $f_*$ is exact we get the result.
\end{proof}

\begin{cor}\label{finite stacks are reconstructible}
 Let $\Gamma$ be a finite stack over a field $k$ (see \ref{finite and etale stacks}). Then there exists $\E\in \Vect(\Gamma)$ which generates $\QCoh(\Gamma)$. In particular $\Gamma$ has the resolution property and it satisfies Tannakian reconstruction.
\end{cor}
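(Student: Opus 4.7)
The plan is to produce a single vector bundle $\E\in\Vect(\Gamma)$ which generates $\QCoh(\Gamma)$, from which the resolution property is immediate and Tannakian reconstruction follows by invoking the theorem (Cor 5.4 of \cite{Ton}) cited above.

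By the definition of a finite stack (\ref{finite and etale stacks}) there is a finite, faithfully flat and finitely presented atlas $f\colon \Spec A\arr \Gamma$ with $A$ a finite $k$-algebra. On the affine scheme $\Spec A$ the structure sheaf $\odi{\Spec A}$ trivially generates $\QCoh(\Spec A)$, since every $A$-module is a quotient of a free $A$-module. Thus $\shD=\{\odi{\Spec A}\}$ generates $\QCoh(\Spec A)$, and we may apply the first assertion of Lemma \ref{generation for affine and finite maps} to $f$: the object $\E:=f_*\odi{\Spec A}$ generates $\QCoh(\Gamma)$. Moreover, because $f$ is finite, flat and finitely presented, $f_*\odi{\Spec A}$ is locally free of finite rank, so $\E\in\Vect(\Gamma)$.

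This yields the resolution property for $\Gamma$ at once: every quasi-coherent sheaf on $\Gamma$ is a quotient of an arbitrary direct sum of copies of $\E$, and $\E$ lies in $\Vect(\Gamma)$. For Tannakian reconstruction, observe that $\Gamma$ is a quasi-compact fpqc stack over $k$ (the map $\Spec A\arr\Gamma$ is an fpqc cover from an affine scheme) and has affine, hence quasi-affine, diagonal since it is finite over $k$; combined with the resolution property just established, the Tannakian reconstruction theorem quoted before Lemma \ref{generation for affine and finite maps} applies and gives that $\Gamma$ satisfies Tannakian reconstruction.

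The only non-routine step is verifying that the notion of ``finite stack'' from \ref{finite and etale stacks} supplies both a finite, faithfully flat, finitely presented atlas by a finite affine scheme and an affine diagonal; granted these features, the argument is a direct combination of Lemma \ref{generation for affine and finite maps} with the Tannakian reconstruction theorem, and no further work is required.
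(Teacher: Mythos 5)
Your proof is correct and follows the same route as the paper: apply the first part of Lemma \ref{generation for affine and finite maps} to a finite, faithfully flat, finitely presented atlas $f\colon U\arr\Gamma$ with $\shD=\{\odi U\}$, note that $\E=f_*\odi U$ is locally free since $f$ is finite flat of finite presentation, and then invoke the reconstruction theorem of \cite{Ton}. You merely spell out the details (affineness of the diagonal, local freeness of the pushforward) that the paper leaves implicit.
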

\begin{proof}
Apply \ref{generation for affine and finite maps} to a finite atlas $f\colon U\arr \Gamma$ with $U$ finite $k$-scheme and $\shD=\{\odi U\}$.
\end{proof}

\section{\'Etale part and geometric connectedness}

Through this section we consider given a field $k$.

\begin{defn}
 Given a $k$-algebra $A$ we set
 \[
 A_{\et,k}=\{a\in A\st \exists \text{ a separable polynomial } f\in k[x] \text{ s.t. } f(a)=0\}
 \]
 Alternatively $A_{\et,k}$ is the union of all $k$-subalgebras of $A$ which are finite and \'etale over $k$. When the base field is clear from the context we will simply write $A_{\et}$.
\end{defn}

\begin{rmk}\label{rmk about relative frobenius}
Let $A$ be a $k$-algebra of characteristic $p$. 
 The $i$-th relative Frobenius of $A$ is given by
 \[
 f_i\colon A^{(i)}=A\otimes_k k\arr A\comma a\otimes \lambda \longmapsto a^{p^{i}}\lambda
 \]
 If $x=\sum_{1\leq j\leq n}a_j\otimes b_j\in A^{(i)}$ with $a_i\in A$ and $b_j\in k$, then $x^{p^i}=\sum_{1\leq j\leq n}a_j^{p^i}\otimes b_j^{p^i}=\sum_{1\leq j\leq n}a_j^{p^i}b_j\otimes 1=f_i(x)\otimes 1$ for all $x\in A^{(i)}$. In particular $\Ker f_i=\{x\in A^{(i)}\st x^{p^i}=0\}$.
 
 Moreover the map $(A^{(1)})_{\et}\arr A_{\et}$ is an isomorphism. Indeed denote by $B$ the image of $A^{(1)}\arr A$. Since $A^{(1)}\arr B$ is surjective with nilpotent kernel the map  $(A^{(1)})_{\et}\arr B_{\et}$ is an isomorphism. Since $B$ contains all $p$-powers of $A$, we see that $A_{\et}=B_{\et}$.
\end{rmk}

\begin{lem}\label{frob}
Let $A$ be a finite $k$-algebra of characteristic $p$. There exists $n\in\N$ such that the image of the relative Frobenius $A^{(n)}\arr A$ is an \'etale $k$-algebra. In particular the residue fields of $A^{(n)}$ are separable over $k$.
\end{lem}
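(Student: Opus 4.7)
My plan is to reduce to the case where the base field is algebraically closed and then compute the image directly. The key observation is that the formation of the relative Frobenius and of its image both commute with flat base change: for any field extension $k\hookrightarrow L$ there is a canonical identification $(A\otimes_k L)^{(n)}=A^{(n)}\otimes_k L$ of $L$-algebras, and since $L/k$ is flat the image of the base-changed Frobenius equals $B_n\otimes_k L$, where $B_n\subseteq A$ denotes the image of $A^{(n)}\to A$. Because \'etaleness is an fpqc-local property on the base, it therefore suffices to show that $B_n\otimes_k\bar k$ is \'etale over $\bar k$; so I may assume from the outset that $k$ is algebraically closed.

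With $k=\bar k$, the finite $k$-algebra $A$ decomposes as a product $A=\prod_{j=1}^{r}A_j$ of local Artinian $k$-algebras, each with residue field $k$. Let $e$ be the nilpotency index of the nilradical $\sqrt 0\subseteq A$, and fix any $n$ with $p^n\geq e$. Using the unique section $k\hookrightarrow A_j$ of the residue map of each local factor, every $a\in A$ decomposes as $a=\lambda+m$ with $\lambda\in k^r\subseteq A$ and $m\in\sqrt 0$. The characteristic~$p$ identity $(\lambda+m)^{p^n}=\lambda^{p^n}+m^{p^n}=\lambda^{p^n}$ shows that the $p^n$-th power map lands inside the \'etale $k$-subalgebra $k^r\subseteq A$. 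Recalling from Remark~\ref{rmk about relative frobenius} that the relative Frobenius is given by $a\otimes\mu\mapsto a^{p^n}\mu$, we see that $B_n$ is the $k$-linear span of such $p^n$-th powers, hence $B_n\subseteq k^r$. A $k$-subalgebra of an \'etale $k$-algebra is automatically \'etale (it is reduced and finite, and each of its residue fields embeds into one of the separable residue fields of the ambient algebra), so $B_n$ is \'etale over $k$.

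For the final assertion, the relative Frobenius factors as a surjection $A^{(n)}\twoheadrightarrow B_n$ followed by an inclusion. The surjection induces a bijection between the maximal ideals of $B_n$ and those of $A^{(n)}$ together with an isomorphism of residue fields at corresponding points, so the residue fields of $A^{(n)}$ coincide with those of $B_n$, which are separable over $k$ as $B_n$ is \'etale.

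The main obstacle is essentially bookkeeping around the Frobenius twist: one must keep careful track of the several distinct $k$-actions appearing in the definition of $A^{(n)}$, verify that base change commutes with $(-)^{(n)}$ (so that base-changing the image really yields the image of the base-changed map), and confirm that $B_n$ is a $k$-subalgebra of $A$ and not merely a subring. Once these compatibilities are secured, the computation over the algebraic closure is immediate from the Artin decomposition and the identity $(x+y)^{p^n}=x^{p^n}+y^{p^n}$.
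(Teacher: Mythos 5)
Your proof is correct, but it takes a genuinely different route from the paper's. You base-change to $\overline{k}$, relying on two standard compatibilities — that the twist $(-)^{(n)}$ and the image of the relative Frobenius commute with the flat extension $k\to\overline{k}$, and that \'etaleness descends along faithfully flat base change — and then exploit that over $\overline{k}$ the algebra splits as a product of local Artinian algebras with residue field $\overline{k}$, so the identity $(\lambda+m)^{p^n}=\lambda^{p^n}$ puts all $p^n$-th powers inside $\overline{k}^{\,r}$. The paper instead stays over $k$: it reduces to $A$ local with residue field $L$, takes $p^n\geq \dim_k A$ so that the maximal ideal of $A^{(n)}$ lies in $\Ker f_n=\{x\mid x^{p^n}=0\}$ (Remark \ref{rmk about relative frobenius}), concludes that the image of $f_n$ is a field isomorphic to the residue field of $A^{(n)}$, and identifies it inside the maximal separable subextension $K\subseteq L$ using that $x^{p^n}\in K$ for all $x\in L$ once $p^n\geq[L:K]$. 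The paper's argument is shorter and self-contained (no descent, no base-change bookkeeping) and exhibits the image concretely as a separable subfield of $L$; yours trades those two compatibilities — which you correctly single out as the only real work — for a computation in which every residue field is trivial. Both choices of $n$ are valid, and your handling of the final assertion (the kernel of $A^{(n)}\twoheadrightarrow B_n$ consists of nilpotents, so residue fields are preserved) agrees in substance with the paper's.
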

\begin{proof}
We can assume that $A$ is local with residue field $L$. Consider $n\in \N$ such that $p^n\geq \dim_k A=\dim_k A^{(n)}$. In particular the $p^n$-power of the maximal ideal of $A^{(n)}$ is zero. Taking into account \ref{rmk about relative frobenius} we see that the image of $A^{(n)}\arr A$ is the residue field of $A^{(n)}$, which also coincides with the residue field of $L^{(n)}$. If $K$ is the maximal separable extension of $k$ inside $L$ we have that $x^{p^n}\in K$ for all $x\in L$. By \ref{rmk about relative frobenius} we see that the image of $L^{(n)}\arr L$ is contained in $K$ and thus is separable over $k$.
\end{proof}

\begin{defn}\label{connected and reduced fibered categories}
 If $\stY$ and $\stZ$ are categories fibered in groupoids we define $\stY\sqcup \stZ$ as the category fibered in groupoids whose objects over an affine scheme $U$ are tuples $(U',U'',\xi,\eta)$ where $U'$, $U''$ are open subsets of $U$ such that $U=U'\sqcup U''$, $\xi\in\stY(U')$ and $\eta\in \stZ(U'')$.
 
 We say that a category fibered in groupoids $\stX$ is connected if $\Hl^0(\odi \stX)$ has no non trivial idempotents. We say it is reduced if any map $U\arr \stX$ from a scheme factors through a reduced scheme fpqc locally in $U$. We say that a morphism of categories fibered in groupoids $f\colon \stX\arr \stY$ is geometrically connected (resp. reduced) if for all geometric points $\Spec L\arr \stY$ the fiber $\stX\times_{\stY} L$ is connected (resp. reduced).
\end{defn}

\begin{rmk}
 If $\stX$ is a stack in groupoids for the Zariski topology and $\stY,\stZ$ are open substacks then one can always define a map $\stY\sqcup \stZ\arr \stX$. In this situation $\stX$ is connected if and only if it cannot be written as a disjoint union of non-empty open substacks.
 
 If $\stX$ is a reduced category fibered in groupoids then $\Hl^0(\odi\stX)$ is a reduced ring. Indeed if $\lambda\in\Hl^0(\odi\stX)$ then one can define the vanishing substack $\stY\arr\stX$ of $\lambda$, so that $\stY\arr\stX$ is a closed immersion which is also nilpotent if $\lambda$ is so. Let's prove that $\stY=\stX$, that is that if $U\arr\stX$ is a map from a scheme then $U\times_\stX\stY\arr U$ is an isomorphism. By fpqc descent and the definition of reduceness we reduce the problem to the case when $U$ is reduced, where the result is clear.
 
 If $\stX$ is an algebraic stack then the notion of reduceness just defined and the classical one coincides.
\end{rmk}

\begin{lem}\label{etale part and geometric connectdness}
 Let $\stX$ be a quasi-compact and quasi-separated fibered category over $k$. Then
 \begin{enumerate}
  \item for all field extensions $L/k$ we have
  \[
  \Hl^0(\odi \stX)_{\et,k}\otimes_k L \simeq \Hl^0(\odi{\stX\times_k L})_{\et,L}
  \]
  \item the map $\stX\arr \Spec \Hl^0(\odi \stX)_{\et,k}$ is geometrically connected;
  \item the fiber category $\stX$ is geometrically connected over $k$ if and only if $\Hl^0(\odi \stX)_{\et,k}=k$.
 \end{enumerate}
\end{lem}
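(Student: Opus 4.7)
The approach is to reduce (1) to an algebraic identity about $A := \Hl^0(\odi\stX)$ and then deduce (2) and (3) formally. Flat base change (applied to the quasi-compact quasi-separated $\stX$ along the flat map $\Spec L \to \Spec k$) gives $\Hl^0(\odi{\stX\times_k L}) = A \otimes_k L$, so (1) becomes the algebraic claim $A_{\et,k}\otimes_k L = (A \otimes_k L)_{\et,L}$. The inclusion "$\subseteq$" is immediate since $A_{\et,k}\otimes_k L$ is $L$-ind-étale inside $A\otimes_k L$. For "$\supseteq$", I would first reduce to $L/k$ finite (any finite étale $L$-subalgebra is finitely generated and so defined over a finite subextension), then via the tower $k\subseteq L^{\mathrm{sep}}\subseteq L$ handle the finite separable and finite purely inseparable cases separately.

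In the separable case, WLOG $L/k$ Galois with $G=\Gal(L/k)$, Galois descent does the job: enlarge a finite étale $L$-subalgebra $E\subseteq A\otimes_k L$ to the $G$-stable subring $\tilde E$ generated by $\{\sigma(E)\}_{\sigma\in G}$, which is still finite étale over $L$; then $\tilde E = \tilde E^G\otimes_k L$ with $\tilde E^G = \tilde E\cap A\subseteq A$ finite étale over $k$, so $\tilde E^G\subseteq A_{\et,k}$. In the purely inseparable case with $L^{p^n}\subseteq k$, for $x\in (A\otimes_k L)_{\et,L}$ satisfying a separable $f(t)=\sum c_i t^i\in L[t]$, I apply Frobenius: $\tilde f(x^{p^n}) = f(x)^{p^n} = 0$ where $\tilde f(t) = \sum c_i^{p^n} t^i\in k[t]$ is again separable (its roots in an algebraic closure being $p^n$-th powers of those of $f$, still distinct by injectivity of Frobenius), and $x^{p^n}\in A\otimes_k L^{p^n}\subseteq A$; so $x^{p^n}\in A_{\et,k}$. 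To promote this to $x\in B:=A_{\et,k}\otimes_k L$, I view $B[x]\subseteq A\otimes_k L$ as a finitely generated $B$-module (since $x^{p^n}\in B$) and argue by Nakayama: at each prime $\mathfrak p$ of the ind-étale ring $B$, with residue field $\kappa$ (a field), the image $\bar x$ of $x$ in $(A\otimes_k L)\otimes_B \kappa$ has minimal polynomial over $\kappa$ dividing both the separable $\bar f$ and $t^{p^n}-\bar x^{p^n} = (t-(\bar x^{p^n})^{1/p^n})^{p^n}$ in $\bar\kappa[t]$, forcing this minimal polynomial to be linear and $\bar x\in\kappa$; hence $(B[x]/B)\otimes_B\kappa = 0$ at every $\mathfrak p$, and Nakayama gives $B[x] = B$.

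For (2), let $K=A_{\et,k}$; a geometric point $\Spec\bar L\to \Spec K$ corresponds to an idempotent $e\in K\otimes_k \bar L$, with fiber ring $A\otimes_K \bar L = e(A\otimes_k \bar L)$. Any idempotent of this satisfies $t^2-t$ (separable) and so by (1) lies in $(A\otimes_k \bar L)_{\et,\bar L} = K\otimes_k \bar L$, hence in $e(K\otimes_k \bar L)=\bar L$, so it is trivial. For (3), if $K=k$ then (1) with $L=\bar k$ gives $(A\otimes_k \bar k)_{\et,\bar k} = \bar k$, so idempotents of $A\otimes_k \bar k$ are trivial and $\stX_{\bar k}$ is connected; conversely, a strict inclusion $k\subsetneq K'\subseteq K$ with $K'/k$ finite étale gives nontrivial idempotents in $K'\otimes_k \bar k\subseteq A\otimes_k \bar k$, contradicting geometric connectedness. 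I expect the main obstacle to be the Nakayama step in the purely inseparable case of (1): the crucial point is that separability of the minimal polynomial, combined with the char-$p$ factorization $t^{p^n}-c=(t-c^{1/p^n})^{p^n}$, forces this polynomial to be linear in each residue field of $B$.
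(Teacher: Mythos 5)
Your reduction of part (1) to the case of a \emph{finite} extension $L/k$ does not work, and this is the crux of the matter. An element $u\in(A\otimes_k L)_{\et,L}$ (equivalently, a finite \'etale $L$-subalgebra of $A\otimes_k L$) is only visibly defined over a \emph{finitely generated} subextension $L_0/k$: the expression of $u$ and the coefficients of its separable polynomial involve finitely many elements of $L$, but the subfield they generate is in general of positive transcendence degree, not finite over $k$. Your tower $k\subseteq L^{\mathrm{sep}}\subseteq L$ only covers algebraic extensions, so the purely transcendental part of $L/k$ is never treated. This is not a removable technicality: the paper's proof devotes a separate case to it, namely reducing (via $k\subseteq\overline{k}\subseteq\overline{L}$) to the situation where $k$ and $L$ are both algebraically closed, where one uses that a connected $k$-algebra stays connected after base change to a larger algebraically closed field. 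Moreover the gap propagates to your parts (2) and (3), since geometric points $\Spec\Omega\to\Spec A_{\et,k}$ range over arbitrary algebraically closed fields $\Omega$, which are typically of infinite transcendence degree over $k$; so you genuinely need statement (1) for non-algebraic $L$ there.

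The rest of your argument is sound and partly nicer than the paper's. The Galois-descent treatment of the finite separable case is essentially the paper's. Your purely inseparable case is a genuinely different (and correct) argument: the paper twists by the relative Frobenius and compares $L$-dimensions of $(A^{(1)})_{\et}$ and $A_{\et}$, whereas you observe that $x^{p^n}\in A$ satisfies the separable polynomial $\sum c_i^{p^n}t^i\in k[t]$ and then promote $x^{p^n}\in A_{\et,k}$ to $x\in A_{\et,k}\otimes_k L$ by a fibrewise gcd computation ($\gcd$ of a separable polynomial with $(t-c)^{p^n}$ is linear) plus Nakayama; this handles a general finite purely inseparable extension in one step rather than as a subextension of a chain of Frobenius extensions. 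Your derivations of (2) and (3) from (1) via idempotents are also fine (modulo noting, as the paper does, that $A_{\et,k}$ is Von Neumann regular so that the pushforward along $\stX\to\Spec A_{\et,k}$ commutes with the relevant base changes). But as it stands the proof of (1), and hence of the whole lemma, is incomplete without the algebraically closed / transcendental case.
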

\begin{proof}
 It is clear that $2)\then 3)$. Write $A=\Hl^0(\odi \stX)$ and notice that if $k\subseteq B\subseteq A_{\et}$ and $C$ is any $B$-algebra then
 \[
 \Hl^0(\odi{\stX\times_B C})\simeq A\otimes_B C
 \]
 This follows from the fact that $\stX\arr \Spec B$ is quasi-compact and quasi-separated, so that the notion of push-forward of quasi-coherent sheaves is well defined, and the fact that $B$ is a Von Neumann regular ring, that is all $B$-modules are flat or, equivalently, all finitely generated ideals are generated by an idempotent: indeed $B$ is a filtered direct limits of its $k$-\'etale and finite subalgebras, which are easily seen to be Von Neumann regular rings. This shows that we can assume $\stX=\Spec A$ and work only with algebras.
 
 Let's prove $1)$. We have an inclusion $A_{\et,k}\otimes_k L\subseteq (A\otimes_k L)_{\et,L}$.  Given an element $u\in (A\otimes_k L)$ separable over $L$ we must show that $u\in A_{\et,k}\otimes_k L$. Since $u$ can be written with finitely many elements of $A$ and $L$ and the same is true for the separable equation it satisfies, we can assume that $A/k$ is of finite type and $L/k$ is finitely generated.  Moreover the result holds for the extension $L/k$ if it holds for all subsequent sub-extensions in a finite filtration $k=k_0 \subseteq k_1\subseteq \dots \subseteq k_l=L$, or if it holds for $L'/k$, where $L\subseteq L'$, because of the inclusion
 \[
 (A_{\et,k}\otimes_k L) \otimes_L L' \subseteq (A\otimes_kL)_{\et,L}\otimes_L L'\subseteq (A\otimes_kL')_{\et,L'}
 \]
 In conclusion the problem can be split in the following cases: $L/k$ is finite and Galois; $L=k$ and $k\arr L$ is the Frobenius; $k$ and $L$ are algebraically closed: first assume $L$ algebraically closed, then assume $L/k$ algebraic using the splitting $k\subseteq \overline k \subseteq L$, then assume $L/k$ finite and, finally, split in separable and  purely inseparable extensions which are subextensions of a Galois extension and of a sequence of Frobenius extension respectively.
 
 Assume first that $L/k$ is finite and Galois with group $G=\Gal(L/k)$. The subalgebra $(A\otimes_k L)_{\et,L}$ of $A\otimes_k L$ is invariant by the action of $G$ and therefore, by Galois descent, we have
 \[
 (A\otimes_k L)_{\et,L}\simeq (A\otimes_k L)_{\et,L}^G \otimes_k L
 \]
 Since $(A\otimes_k L)_{\et,L}$ is etale over $L$ and therefore over $k$ and $(A\otimes_k L)_{\et,L}^G=(A\otimes_k L)_{\et,L}\cap A$ we obtain the result.
 
 Assume now that $L=k$ and $k\arr L$ is the Frobenius. We have a commutative diagram of $k$-linear maps
  \[
  \begin{tikzpicture}[xscale=3.2,yscale=-1.2]
    \node (A0_0) at (0, 0) {$A_{\et,k}\otimes_k L$};
    \node (A0_1) at (1, 0) {$(A\otimes_k L)_{\et,L}$};
    \node (A0_2) at (2, 0) {$A\otimes_k L$};
    \node (A1_0) at (0, 1) {$A_{\et,k}$};
    \node (A1_2) at (2, 1) {$A$};
    \path (A0_1) edge [->]node [auto] {$\scriptstyle{\mu}$} (A1_2);
    \path (A0_0) edge [->]node [auto] {$\scriptstyle{\delta}$} (A0_1);
    \path (A1_0) edge [->]node [auto] {$\scriptstyle{}$} (A1_2);
    \path (A0_2) edge [->]node [auto] {$\scriptstyle{\gamma_A}$} (A1_2);
    \path (A0_0) edge [->]node [auto] {$\scriptstyle{\gamma_{A_{\et,k}}}$} (A1_0);
    \path (A0_1) edge [->]node [auto] {$\scriptstyle{}$} (A0_2);
  \end{tikzpicture}
  \]
  where the $\gamma_*$ are the relative Frobenius and $A$ and $A_{\et,k}$ has to be thought of as $L$-algebras in the bottom row.
We must show that $\delta$ is an isomorphism. By \ref{rmk about relative frobenius} $\mu$ and $\gamma_{A_{\et,k}}$ are injective because $(A\otimes_k L)_{\et,L}$ is reduced. It also follows that $\gamma_{A_{\et,k}}$ is an isomorphism because it is an $L$-linear injective map between two $L$-vector spaces of the same dimension. Since $A_{\et,k}$ is the maximum $L$-\'etale subalgebra of $A$ it follows that $\delta$ is an isomorphism.

Assume now that $k$ and $L$ are algebraically closed. Notice that in this situation $A$ is connected if and only if $A_{\et,k}=k$. Decomposing $A$ into connected components we can assume that $A$ is connected. Since $k$ and $L$ are algebraically closed it follows that also $A\otimes_k L$ is connected and therefore that $(A\otimes_k L)_{\et,L}=L$.

Let's now prove $2)$. Let $\alpha\colon A_{\et,k}\arr L$ be a geometric point. We must prove that $(A\otimes_{A_{\et,k}}L)_{\et,L}=L$. Let $J$ be the kernel of $\alpha$ and $F$ be its image, which is easily seen to be a field. Thanks to $1)$ it is sufficient to prove that $(A\otimes_{A_{\et,k}}F)_{\et,F}=(A/JA)_{\et,F}$ is just $F$. Let $a\in A$ be such that its quotient lies in $(A/JA)_{\et,F}$. Lifting also a separable equation satisfied by $a$ mod $J$ to $A_{\et,k}$, we can again assume that $A$ is of finite type over $k$ and, moreover, that it is connected. In this case $A_{\et,k}$ is just a field, thus equal to $F$ and the result is obvious.
\end{proof}

\section{Some results on finite stacks}

Let $k$ be a field. In this section we collect some results about finite stacks that will be used later. For many other properties look at \cite[Section 4]{BV}.

\begin{defn}\label{finite and etale stacks}
 A finite (resp. finite \'etale) stack $\Gamma$ over a field $k$ is a stack in the fppf topology on $\Aff/k$ which has a finite (resp. finite \'etale) and faithfully flat morphism $U\arr \Gamma$ from a finite (resp. finite \'etale) $k$-scheme $U$. Equivalently $\Gamma$ is the quotient of a flat groupoid of finite (resp. finite \'etale) $k$-schemes.
\end{defn}


Here is a non-trivial application of the Tannaka's duality discussed in \S \ref{general Tannaka duality} which generalize \cite[Prop 4.3]{BV}.
\begin{prop}\label{finite reduced stacks are gerbes}
 If $\Gamma$ is a finite and reduced stack over $k$ then $\Gamma\arr \Spec \Hl^0(\odi\Gamma)$ is a gerbe.
\end{prop}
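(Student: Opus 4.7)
My approach combines Tannakian reconstruction for finite stacks (Corollary \ref{finite stacks are reconstructible}) with classical Tannaka's duality, after reducing to the case $A := \Hl^0(\odi\Gamma) = k$.

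\emph{Reduction to $A$ a field.} Since $\Gamma$ is finite over $k$, $A$ is a finite $k$-algebra; by the remark following Definition \ref{connected and reduced fibered categories}, the reducedness of $\Gamma$ forces $A$ to be reduced, so $A \simeq \prod_i L_i$ is a product of finite field extensions of $k$. The corresponding idempotents produce $\Gamma = \bigsqcup_i \Gamma_i$ with $\Gamma_i = \Gamma \times_{\Spec A} \Spec L_i$ finite reduced, and flat base change along the direct-factor projection $A \to L_i$ (applicable since $\Gamma \to \Spec A$ is finite, hence qcqs) yields $\Hl^0(\odi{\Gamma_i}) = L_i$. Being a gerbe is local on the target, so I would reduce to each component separately and, renaming the base field, assume $A = k$.

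\emph{Tannakian structure of $\Vect(\Gamma)$.} I would then verify that $\Vect(\Gamma)$ is a $k$-Tannakian category: it is $k$-linear rigid symmetric monoidal with $\End(\mathbf{1}) = A = k$, and a fiber functor $\Vect(\Gamma) \to \Vect(K)$ arises by pullback along a map $\Spec K \to V \to \Gamma$ with $K$ a residue field of a finite atlas $V$. The crucial abelianness uses reducedness: I would choose a reduced finite fppf atlas $V = \Spec B \to \Gamma$, so that $B$ is a product of finite field extensions of $k$ and hence $\Vect(B)$ is (semisimple) abelian. Presenting $\Vect(\Gamma)$ as $R$-equivariant vector bundles on $V$ for the flat groupoid $R = V \times_\Gamma V \rightrightarrows V$, the fppfness of the projections $R \to V$ guarantees that kernels and cokernels of equivariant morphisms inherit the equivariant structure and remain vector bundles, making $\Vect(\Gamma)$ abelian.

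\emph{Conclusion and main obstacle.} Once $\Vect(\Gamma)$ is $k$-Tannakian, classical Tannaka's duality (recalled in the excerpt) identifies $\Pi_{\Vect(\Gamma)}$ with an affine gerbe over $\Spec k$, and Corollary \ref{finite stacks are reconstructible} provides an equivalence $\Gamma \simeq \Pi_{\Vect(\Gamma)}$; together these show $\Gamma \to \Spec k$ is a gerbe. The main obstacle I anticipate is arranging the reduced atlas used for the abelianness argument: Definition \ref{connected and reduced fibered categories} only asserts fpqc-local factorization through reduced schemes, so one must verify that for a finite reduced $\Gamma$ this refines to a genuine reduced finite fppf atlas, for instance by directly checking that the reduced closed subscheme of a given finite fppf atlas remains an atlas over a reduced $\Gamma$, or by refining any atlas via the fpqc-local factorization to a finite reduced one.
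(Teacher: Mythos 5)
Your architecture is exactly the paper's: reduce to $\Gamma$ connected so that $L=\Hl^0(\odi\Gamma)$ is a field, invoke Tannakian reconstruction for finite stacks (\ref{finite stacks are reconstructible}) to identify $\Gamma$ with $\Pi_{\Vect(\Gamma)}$, and then show $\Vect(\Gamma)$ is $L$-Tannakian so that $\Pi_{\Vect(\Gamma)}$ is an affine (indeed finite) gerbe. The one substantive point — abelianness of $\Vect(\Gamma)$ — is precisely where the two arguments diverge: the paper disposes of it by citing \cite[Lemma 7.15]{BV}, which gives $\Vect(\Gamma)=\QCoh_{\textup{fp}}(\Gamma)$, whereas you route it through the existence of a \emph{reduced} finite fppf atlas $V=\Spec B\arr\Gamma$. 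That existence is the obstacle you flag, and it is a genuine gap, not a routine verification: it is essentially equivalent to the content of the cited lemma (and, once the proposition is known, follows trivially because every map from a scheme to a gerbe is flat). Neither of your two suggested fixes works as stated. For the first, $U_{\red}\arr U\arr\Gamma$ is finite and surjective but its flatness over $\Gamma$ is exactly what needs proof — a closed subscheme of a finite flat cover of a reduced target need not be flat, and the nilradical of $\odi U$ is not a priori an invariant ideal for the groupoid $U\times_\Gamma U\rightrightarrows U$. For the second, Definition \ref{connected and reduced fibered categories} only produces, fpqc-locally on $U$, a factorization $U'\arr W\arr\Gamma$ with $W$ reduced; the map $W\arr\Gamma$ comes with no finiteness or flatness, and $U'$ need not be finite over $k$, so no atlas is produced.

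The gap is closable, and it may help to see how: a connected finite stack is topologically a single point, so the groupoid attached to a smooth presentation $V\arr\Gamma$ acts transitively on points of $V$; hence for any $\shF\in\QCoh_{\textup{fp}}(\Gamma)$ the fiber-rank function of $\shF_{|V}$ is constant, and since $\Gamma$ reduced forces the smooth atlas $V$ to be reduced, constancy of fiber rank gives local freeness. Applied to $(f_{\red})_*\odi{U_{\red}}$ this shows $U_{\red}\arr\Gamma$ is flat, validating your first fix; applied to arbitrary $\shF$ it recovers $\Vect(\Gamma)=\QCoh_{\textup{fp}}(\Gamma)$, i.e.\ the statement the paper imports from \cite{BV}. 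Either way, this lemma is the actual heart of the proposition and must appear explicitly; the rest of your proof (the splitting over $\Spec A$, the fiber functor through a residue field of the atlas, the descent of kernels and cokernels along the flat projections of the groupoid) is correct.
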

\begin{proof}
 We can assume $\Gamma$ connected, so that $L=\Hl^0(\odi\Gamma)$ is a field. Set $\shC=\Vect(\Gamma)$. Since $\Gamma$ is Tannakian reconstructible by \ref{finite stacks are reconstructible}, the functor $\Gamma\arr \Pi_\shC$, which is an $L$-map, is an equivalence. By \cite[Lemma 7.15]{BV} we have $\shC=\QCoh_{\textup{fp}}(\Gamma)$, which easily implies that $\shC$ is an $L$-Tannakian category and therefore $\Pi_\shC$ is a gerbe over $L$.  
\end{proof}

 \begin{lem}\label{approximating finite stacks}
 If $L/k$ is an algebraic extension of fields and $\Gamma$ is a finite stack over $L$ then there exists a finite subextension $F/k$, a finite stack $\Delta$ over $F$ with an isomorphism $\Gamma\simeq \Delta\times_F L$.
\end{lem}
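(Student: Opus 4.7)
The strategy is a standard descent argument along the filtered colimit presentation $L = \colim_i F_i$, where $F_i/k$ ranges over the finite subextensions of $L/k$; these exhaust $L$ since $L/k$ is algebraic. The plan is to choose a presentation of $\Gamma$ as the stack quotient of a finite flat groupoid in finite $L$-schemes, descend the whole groupoid to some $F_i$, and check that the relevant properties (finiteness, flatness, the groupoid axioms) already hold at that finite level.

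First I choose a finite faithfully flat atlas $U \to \Gamma$ from a finite $L$-scheme $U = \Spec A$, and form the resulting finite flat groupoid $R = U \times_\Gamma U = \Spec B$ in $L$-schemes, so that $\Gamma \simeq [U/R]$ as an fppf stack. The algebras $A$ and $B$ are each presented by finitely many polynomial relations in finitely many variables over $L$, and these finitely many coefficients lie in a single finite subextension $F/k$. This yields finitely presented $F$-algebras $A_F, B_F$ with $A_F \otimes_F L \simeq A$ and $B_F \otimes_F L \simeq B$. After enlarging $F$ inside $L$ if necessary (still a finite extension of $k$), I descend the five groupoid structure morphisms — source, target, identity, inverse, composition — to morphisms between $\Spec A_F$ and $\Spec B_F$ over $F$; the groupoid axioms are equalities of morphisms between $F$-schemes of finite type, and equality of morphisms descends along the faithfully flat base change $F \to L$, so they hold already over $F$.

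Next I verify the needed geometric properties by faithfully flat descent along $F \to L$. For finiteness: $A_F \otimes_F L \simeq A$ is a finite $L$-module, so $A_F$ is a finite $F$-module, since being finitely generated descends along a faithfully flat ring extension (if the generators of $A_F \otimes_F L$ coming from $A_F$ span it after base change, then they already span $A_F$). Flatness of the source and target maps $\Spec B_F \to \Spec A_F$ descends similarly. I then define $\Delta = [U_F / R_F]$ as the fppf stack quotient, where $U_F = \Spec A_F$ and $R_F = \Spec B_F$; this is a finite stack over $F$ because $U_F \to \Delta$ is a finite faithfully flat atlas from a finite $F$-scheme. Since formation of stack quotients commutes with flat base change, $\Delta \times_F L \simeq [U_F \times_F L \,/\, R_F \times_F L] \simeq [U/R] \simeq \Gamma$.

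The main obstacle is not any conceptual difficulty but the careful bookkeeping of the successive enlargements of $F$, which remain finite extensions of $k$ precisely because $L/k$ is algebraic; every other step is an instance of descent of finitely presented data along the faithfully flat extension $F \to L$.
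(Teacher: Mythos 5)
Your proof is correct and follows essentially the same route as the paper: the paper also presents $\Gamma$ as the quotient of a finite flat groupoid $R\rightrightarrows U$ in finite $L$-schemes and descends the whole groupoid to a finite subextension $F/k$ using finite presentation. You have simply spelled out the bookkeeping (descending the structure maps, the groupoid axioms, finiteness and flatness) that the paper leaves implicit.
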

\begin{proof}
 The stack $\Gamma$ is the quotient of a groupoid $s,t\colon R\rightrightarrows U$, where $R$, $U$ are spectra of finite $L$-algebras and $s,t$ are faithfully flat.  Since everything is of finite presentation, we can descend the groupoid $R\rightrightarrows U$ to a finite sub-extension $F/k$, thus also $\Gamma$.
\end{proof}

\begin{lem}\label{constructing the etale part of a finite stack}
 Let $R\rightrightarrows U$ be a flat groupoid with $R$ and $U$ finite over $k$. Then $(R\times_{s,t,U} R)_{\et}=R_{\et}\times_{s,t,U_{\et}} R_{\et}$, the maps defining the groupoid $R\rightrightarrows U$ yields a structure of groupoid on $R_{\et}\rightrightarrows U_{\et}$ with a map from $R\rightrightarrows U$. Moreover if the residue fields of $R$ and $U$ are separable over $k$, the same holds for $(-)_{\red}$ in place of $(-)_{\et}$ and the resulting groupoids are the same, where $(-)_{\red}$ is the functor which takes, for any scheme $X$, its reduced closed subscheme structure.
\end{lem}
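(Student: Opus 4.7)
The plan is to translate the main equality into commutative algebra, prove it after base change to the algebraic closure, and then derive the groupoid structure by functoriality. Setting $A = \Gamma(R,\odi R)$ and $B = \Gamma(U,\odi U)$ with the two $k$-algebra structures $s^*, t^* \colon B \to A$, the key identity to establish is $(A \otimes_B A)_{\et} = A_{\et} \otimes_{B_{\et}} A_{\et}$. Granted this, applying the functor $(-)_{\et}$ to each structure map of the groupoid $R \rightrightarrows U$ (and using the identity to interpret the composition map) immediately yields a groupoid structure on $R_{\et} \rightrightarrows U_{\et}$, with the map from $R \rightrightarrows U$ induced by the inclusions $A_{\et} \hookrightarrow A$ and $B_{\et} \hookrightarrow B$.

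To prove the key identity, I would first reduce to the case $k = \overline{k}$ using Lemma \ref{etale part and geometric connectdness}(1), which guarantees that formation of $(-)_{\et}$ commutes with the extension $k \hookrightarrow \overline{k}$. Over an algebraically closed field, any finite $k$-algebra splits as a product of local Artinian factors each with residue field $k$, and the étale part of such a local factor is just $k$: any étale subalgebra is a finite product of copies of $k$, whose nontrivial idempotents would have to embed into the local ring, which has none. Hence $A_{\et} = k^I$ and $B_{\et} = k^J$ where $I = \pi_0(\Spec A)$ and $J = \pi_0(\Spec B)$, with $s, t$ inducing set maps $\sigma, \tau \colon I \to J$. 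Using the idempotents of $B$ to eliminate cross-terms, I will identify $A \otimes_B A$ with $\bigoplus_{(i,i') \in I \times_J I} A_i \otimes_{B_{\sigma(i)}} A_{i'}$; each summand is local Artinian with residue field $k$ (its maximal ideal is generated by the nilpotent maximal ideals of the two factors and is therefore nilpotent), so its étale part is $k$. Both sides of the identity are therefore isomorphic to $k^{I \times_J I}$.

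For the statement about $(-)_{\red}$, assume the residue fields of $R$ and $U$ are separable over $k$. Then $A_{\red}$ and $B_{\red}$ are products of separable field extensions of $k$, hence étale over $k$; by Henselness of each local factor of $A$, the surjection $A \twoheadrightarrow A_{\red}$ admits a unique section whose image is étale, and hence contained in $A_{\et}$. Combined with the tautological $A_{\et} \hookrightarrow A \twoheadrightarrow A_{\red}$, this shows that the canonical composition $A_{\et} \to A_{\red}$ is an isomorphism, and similarly for $B$. Moreover, $R_{\red} \times_{U_{\red}} R_{\red}$ is étale over $k$ (as a fibre product of étale $k$-schemes over an étale $k$-scheme), sits inside $R \times_U R$ as a reduced closed subscheme with the same $\overline{k}$-points, and so must equal $(R \times_U R)_{\red}$. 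The two resulting groupoids are canonically identified via the isomorphism $A_{\et} \cong A_{\red}$ and its $B$-analogue.

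The main technical point will be the computation over $\overline{k}$: correctly tracking how the two distinct $B$-algebra structures on $A$ (via $s$ and $t$) eliminate cross-terms in the tensor product, and verifying that each surviving summand is local Artinian with residue field $k$. The reduction to the algebraically closed base via Lemma \ref{etale part and geometric connectdness}(1) is what makes this tractable, since only there does taking the étale part reduce to taking the algebra of locally constant functions on the set of connected components.
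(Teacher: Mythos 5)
Your proof is correct, but it takes a genuinely different route from the paper's. The paper reduces to the case where $R$ and $U$ have separable residue fields by Frobenius-twisting the whole groupoid (using \ref{frob} and the fact from \ref{rmk about relative frobenius} that $(A^{(1)})_{\et}\arr A_{\et}$ is an isomorphism, so twisting does not change the \'etale parts); in that case $R_\red\arr R\arr R_\et$ is an isomorphism, and both the $(-)_{\et}$ and $(-)_{\red}$ claims collapse to the single observation that for finite $k$-schemes with \'etale reductions one has $(V\times_Z W)_\red=V_\red\times_{Z_\red}W_\red=V_\et\times_{Z_\et}W_\et=(V\times_Z W)_\et$. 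You instead base-change to $\overline{k}$ via \ref{etale part and geometric connectdness}(1) and compute both sides of $(A\otimes_BA)_{\et}=A_{\et}\otimes_{B_{\et}}A_{\et}$ explicitly through the decomposition into local Artinian factors, then handle the $(-)_{\red}$ statement separately by a Hensel-lifting argument. Your computation is sound: the cross-terms do vanish, each surviving summand $A_i\otimes_{B_{\sigma(i)}}A_{i'}$ is local with nilpotent maximal ideal and residue field $\overline{k}$, and the natural map is the identity on the resulting component sets, so both sides are $\overline{k}^{\,I\times_JI}$. What the paper's route buys is brevity and reuse of the Frobenius machinery already set up in Section 2 (at the cost of the twisting step being meaningful only in positive characteristic, the characteristic-zero case being vacuous since residue fields are then automatically separable); what your route buys is a uniform, characteristic-free argument and an explicit justification of the identification $A_{\et}\cong A_{\red}$ in the separable case, which the paper asserts somewhat tersely.
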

 \begin{proof}
  Using \ref{frob} and \ref{rmk about relative frobenius}, we can Frobenius twist the original groupoid until $R$ and $U$ has separable residue fields, that is their reduced structures are \'etale. In this case $R_{\red}\arr R \arr R_{\et}$ is an isomorphism and similarly for $U$. The result follows by expressing a groupoid in terms of commutative and Cartesian diagrams and using the following fact: if $V,W,Z$ are finite $k$-schemes whose reduced structures are \'etale and $V,W\arr Z$ are maps then
  \[
  (V\times_Z W)_\red = V_\red\times_{Z_\red} W_\red = V_\et \times_{Z_\et} W_\et = (V\times_Z W)_\et
  \]
  The above equalities follows because a product of \'etale schemes is \'etale and thus reduced.
 \end{proof}

 \begin{defn} \label{etale part}
  Let $\Gamma$ be a finite stack over $k$ and let $U\arr \Gamma$ be a finite atlas where $U$ is affine. We define $\Gamma_{\et,k}$ as the quotient of the groupoid constructed in \ref{constructing the etale part of a finite stack} with respect to the groupoid $R=U\times_{\Gamma}U\rightrightarrows U$. When $k$ is clear from the context we will drop the $-_k$. By \ref{independent of the chart} below this notion does not depend on the choice of the finite atlas.
 \end{defn}

\begin{lem} \label{independent of the chart}
Let $\Gamma/k$ be a finite stack and $E/k$ be a finite and \'etale stack. Then the functor $\Hom_k(\Gamma_{\et},E)\arr \Hom_k(\Gamma,E)$ is an equivalence. Moreover for all $j\in \N$ the map $\Gamma_{\et}\arr (\Gamma^{(j)})_{\et}$ is an equivalence and for $j\gg 0$ the functor $\Gamma^{(j)}\arr (\Gamma^{(j)})_{\et}$ has a section. In particular for $j\gg 0$ the relative Frobenius $\Gamma\arr \Gamma^{(j)}$ factors through $\Gamma_{\et}$.
\end{lem}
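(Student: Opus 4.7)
The plan is to prove the three assertions in turn, each reducing to a ring-theoretic statement via a finite atlas. Throughout fix a finite atlas $U\arr\Gamma$ with $R=U\times_\Gamma U$, as used in \ref{etale part}, and write $A=\sO_U$, $B=\sO_R$.

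For the first equivalence, I would reduce to the following scheme-level fact: if $C$ is a finite \'etale $k$-algebra and $A'$ is any finite $k$-algebra, then every $k$-algebra map $C\arr A'$ has image a finite \'etale $k$-subalgebra of $A'$, hence factors through $A'_{\et,k}$. Equivalently $\Hom_k(\Spec A',W)=\Hom_k(\Spec A'_{\et,k},W)$ for every finite \'etale $k$-scheme $W$. Choosing a finite \'etale atlas $W\arr E$, for which $W\times_E W$ is again a finite \'etale $k$-scheme, the same statement applied to both the atlas and the arrow scheme upgrades to an equivalence of groupoids $E(A')\simeq E(A'_{\et,k})$. A map $\Gamma\arr E$ is an object of $E(A)$ together with descent data along $R\rightrightarrows U$; applying the equivalence termwise transports this bijectively to the data defining a map $\Gamma_{\et}\arr E$.

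For the second claim, iterating \ref{rmk about relative frobenius} shows that the restriction of the $j$-th relative Frobenius $f_j\colon A'^{(j)}\arr A'$ to \'etale parts is an isomorphism $\alpha_j\colon (A'^{(j)})_{\et}\arr A'_{\et}$, functorial in the finite $k$-algebra $A'$. Applied to $A$ and $B$ and using naturality with respect to the structure ring maps $A\arr B$ induced by $s$ and $t$, this yields an isomorphism of groupoid objects from $R_{\et}\rightrightarrows U_{\et}$ to $(R^{(j)})_{\et}\rightrightarrows (U^{(j)})_{\et}$; passing to stack quotients gives the equivalence $\Gamma_{\et}\simeq (\Gamma^{(j)})_{\et}$. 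For the section, Lemma \ref{frob} applied to $A$ and $B$ provides, for $j\gg 0$, a factorization $f_j\colon A^{(j)}\xrightarrow{\beta_j} A_{\et}\hookrightarrow A$ (and analogously for $B$). Writing $\iota\colon (A^{(j)})_{\et}\hookrightarrow A^{(j)}$ for the inclusion, one has $\beta_j\circ\iota=\alpha_j$, whence $r_j:=\alpha_j^{-1}\circ\beta_j\colon A^{(j)}\arr (A^{(j)})_{\et}$ satisfies $r_j\circ\iota=\id$, i.e.\ is a retraction of $\iota$. By naturality the $r_j$'s commute with $s$ and $t$ and descend to a stack morphism $(\Gamma^{(j)})_{\et}\arr\Gamma^{(j)}$ splitting the canonical projection. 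Unwinding the composite
\[
\Gamma \arr \Gamma_{\et}\arr (\Gamma^{(j)})_{\et}\arr \Gamma^{(j)}
\]
on rings gives $\iota_A\circ\alpha_j\circ r_j=\iota_A\circ\beta_j=f_j$, the $j$-th relative Frobenius, proving the ``in particular'' assertion.

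The main obstacle will be the first step, since the equivalence $E(A)\simeq E(A_{\et,k})$ must be established at the full groupoid level for a stack $E$, not only on isomorphism classes. I would overcome this by choosing a presentation of $E$ via a finite \'etale atlas and a finite \'etale arrow scheme, and applying the scheme-level equality to both, which suffices to transport both objects and arrows of $E(A)$ coherently.
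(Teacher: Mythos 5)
Your treatment of the second and third assertions is correct and fills in exactly the details the paper leaves implicit (the paper disposes of these by citing \ref{frob} and \ref{rmk about relative frobenius}); the computation $\iota_A\circ\alpha_j\circ r_j=f_j$ and the naturality argument for descending the retractions $r_j$ to the quotient stacks are fine. Your reduction of the first assertion to the claim that $E(V_{\et})\arr E(V)$ is an equivalence for every finite $k$-scheme $V$ also agrees with the paper's.

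The gap is in your proof of that claim. Presenting $E$ by a finite \'etale groupoid $W_1\rightrightarrows W_0$ and applying the scheme-level fact to $W_0$ and $W_1$ only identifies the \emph{prestack} quotients: it shows that the groupoid with objects $W_0(V)$ and arrows $W_1(V)$ agrees with the corresponding groupoid computed on $V_{\et}$. But $E(V)$ is the stackification of this, and objects of $E(V)$ need not lift to $W_0(V)$: for $E=\Bi G$ with $G$ a nontrivial finite \'etale group scheme, $W_0=\Spec k$ and $W_1=G$, the prestack quotient at $V$ has a single object, while $E(V)$ is the groupoid of $G$-torsors over $V$, which in general has several isomorphism classes. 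So knowing $W_i(V)=W_i(V_{\et})$ does not by itself ``transport objects and arrows coherently''; the obstacle you flag in your last paragraph is real, and the proposed fix does not address it. To close the gap along your route you would additionally need that descent data for $E$ over $V$ and over $V_{\et}$ correspond, e.g.\ via topological invariance of the \'etale site for the finite universal homeomorphism $V\arr V_{\et}$. The paper avoids this by a different argument: since $V\arr V_{\et}$ is finite, flat and geometrically connected by \ref{etale part and geometric connectdness}, the diagonal $V\arr V\times_{V_{\et}}V$ is a nilpotent closed immersion, hence $E(V\times_{V_{\et}}V)\arr E(V)$ and the two projections are equivalences, and computing $E(V_{\et})$ by descent along the fpqc covering $V\arr V_{\et}$ then gives $E(V_{\et})\simeq E(V)$ directly.
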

%

\begin{proof}
The second part follows from \ref{frob} and \ref{rmk about relative frobenius}. For the first part is enough to show that, if $U$ is a finite $k$-scheme, then $E(U_{\et})\arr E(U)$ is an equivalence. Since $U\arr U_{\et}$ is finite, flat and geometrically connected by \ref{etale part and geometric connectdness}, it follows that the diagonal $U\arr R=U\times_{U_{\et}} U$ is a nilpotent closed immersion, so that $E(R)\arr E(U)$ and the two maps $E(U)\rightrightarrows E(R)$ induced by the projections $R\rightrightarrows U$ are equivalences. Computing $E(U_{\et})$ on the flat groupoid $R\rightrightarrows U$ we get the result. \end{proof}

\begin{rmk}\label{base change of etale for finite gerbes}
 If $\Gamma$ is a finite stack over $k$ and $L/k$ is a field extension then, by \ref{etale part and geometric connectdness} and the definition of $\Gamma_{\et,k}$, we have $\Gamma_{\et,k}\times_k L\simeq (\Gamma\times_k L)_{\et,L}$.
\end{rmk}

\begin{rmk}\label{etale part and purely inseparable extensions}
 If $\Gamma$ is a finite stack over $F$ and $F/k$ is a finite and purely inseparable field extension then the natural morphism $\Gamma_{\et,F}\arr \Gamma_{\et,k}\times_k F\cong(\Gamma\times_kF)_{\et,F}$ is an equivalence. Indeed $\Gamma\to \Gamma\times_kF$ is the base change of the diagonal of $\Spec(F)$ by $\Gamma_{\et,F}$, thus it is a nilpotent thickening, so it induces an equivalence on the \'etale quotients. 
\end{rmk}

\begin{defn}\label{finite and local stacks}
 A finite stack $\Gamma$ over $k$ is called \emph{local} if $\Gamma_{\et,k}=\Spec k$.
\end{defn}

\begin{rmk}\label{closed of local is local}
 A closed substack of a finite and local stack is always local. Indeed if $\Delta$ is a closed substack of a finite and local stack $\Gamma$ then, since $\Gamma$ is connected and thus topologically a point, the map $\Delta\arr \Gamma$ is a nilpotent closed immersion: using the definition of the \'etale part from a presentation follows that $\Delta_{\et}=\Gamma_{\et}$. 
\end{rmk}

%


\section{Nori fundamental gerbes}

\begin{defn}\label{definition of Nori gerbes}\cite[Section 5]{BV}
 If $\stZ$ is a category over $\Aff/k$ the Nori fundamental gerbe (resp. \'etale Nori fundamental gerbe, local Nori fundamental gerbe) of $\stZ/k$ is a profinite (resp. pro-\'etale, pro-local) gerbe $\Pi$ over $k$ together with a map $\stZ\arr \Pi$ such that for all finite (resp. finite and \'etale, finite and local) stacks $\Gamma$ over $k$ the pullback functor
 \[
 \Hom_k(\Pi,\Gamma)\arr \Hom_k(\stZ,\Gamma)
 \]
 is an equivalence. If this gerbe exists it is unique up to a unique isomorphism and it will be denoted by $\Pi^\NN_{\stZ/k}$ (resp. $\Pi^{\NN,\et}_{\stZ/k}$, $\Pi^{\NN,\LL}_{\stZ/k}$) or by dropping the $/k$ if it is clear from the context.
\end{defn}

\begin{rmk}\label{nori gerbes for gerbes}
 If $\stZ$ is a category fibered in groupoids over $k$ a Nori gerbe exists over $k$ if and only if $\stZ$ is inflexible over $k$, that is all maps from $\stZ$ to a finite stack over $k$ factors through an affine gerbe over $k$ (see \cite[Definition 5.3 and Theorem 5.7]{BV}). This is the case if $\stZ$ is an affine gerbe over $k$. Moreover if $\stZ$ is inflexible also the \'etale and local Nori gerbe exist, $\Pi^{\NN,\et}_\stZ = (\Pi^\NN_\stZ)_{\et}$ and $\Pi^{\NN,\LL}_\stZ = (\Pi^\NN_\stZ)_{\LL}$ (see \ref{pro stuff for gerbes 2} and  \ref{closed of local is local}).
\end{rmk}

The following result, although not stated elsewhere, is known by experts.
\begin{prop}\label{existence of Nori etale gerbe}
 Let $\stZ$ be a quasi-compact and quasi-separated fibered category. Then $\stZ$ admits a Nori \'etale fundamental gerbe if and only if $\stZ$ is geometrically connected over $k$.
\end{prop}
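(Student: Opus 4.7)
The plan is to prove both directions separately, using Lemma \ref{etale part and geometric connectdness} as the bridge between geometric connectedness and the structure of $\Hl^0(\odi\stZ)_{\et,k}$.

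For the forward direction I would argue by contradiction. Assume $\Pi:=\Pi^{\NN,\et}_{\stZ/k}$ exists but $\stZ$ is not geometrically connected over $k$. By Lemma \ref{etale part and geometric connectdness}, $\Hl^0(\odi\stZ)_{\et,k}\neq k$, hence there is a finite separable field extension $K/k$ with $K\neq k$ embedded in $\Hl^0(\odi\stZ)_{\et,k}$. This produces a $k$-morphism $\stZ\arr\Spec K$ to the finite \'etale $k$-stack $\Spec K$. The defining universal property of $\Pi$ then forces a factorization $\Pi\arr\Spec K$. But any pro-\'etale gerbe over $k$ is a cofiltered $2$-limit of geometrically connected finite \'etale $k$-gerbes, so $\Hl^0(\odi\Pi)=k$, and therefore $\Hom_k(\Pi,\Spec K)=\Hom_k(K,k)=\emptyset$ when $K\neq k$, a contradiction.

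For the reverse direction, assume $\stZ$ is geometrically connected over $k$. I will construct $\Pi^{\NN,\et}_{\stZ/k}$ as the $2$-cofiltered limit of the diagram of pairs $(\Gamma,\stZ\arr\Gamma)$ with $\Gamma$ a finite \'etale $k$-gerbe. The crucial reduction is that every $k$-morphism $g\colon\stZ\arr\Gamma$ with $\Gamma$ a finite \'etale $k$-stack factors through a finite \'etale $k$-gerbe. Indeed, decompose $\Gamma=\bigsqcup_i\Gamma_i$ into connected components; since $\stZ$ is connected, $g$ factors through some $\Gamma_{i_0}$. Each $\Gamma_i$ is finite and reduced, so Proposition \ref{finite reduced stacks are gerbes} shows that $\Gamma_i\arr\Spec L_i$ is a gerbe, where $L_i:=\Hl^0(\odi{\Gamma_i})$ is a finite separable extension of $k$. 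The composite $\stZ\arr\Gamma_{i_0}\arr\Spec L_{i_0}$ then yields an embedding $L_{i_0}\hookrightarrow\Hl^0(\odi\stZ)_{\et,k}=k$, invoking Lemma \ref{etale part and geometric connectdness} and geometric connectedness. Hence $L_{i_0}=k$ and $\Gamma_{i_0}$ is a finite \'etale $k$-gerbe, completing the reduction.

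The main obstacle I anticipate is the verification that the indexing $2$-category of such pairs is $2$-cofiltered and that its $2$-limit is a pro-\'etale $k$-gerbe with the required universal property. Cofilteredness reduces to producing common refinements of two factorizations $\stZ\arr\Gamma_1,\Gamma_2$, which one obtains by taking the connected component of the image of $\stZ$ in $\Gamma_1\times_k\Gamma_2$ and then applying the factorization argument above to turn it into a finite \'etale $k$-gerbe dominating both. The universal property on arbitrary finite \'etale targets then follows from the same component-plus-gerbe reduction, since any $k$-morphism from $\stZ$ or from $\Pi$ into a finite \'etale stack $\Delta$ is forced to land in a connected component $\Delta_i$ with $\Hl^0(\odi{\Delta_i})=k$. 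This scheme mirrors the blueprint of \cite[Thm.~5.7]{BV}, transported to the \'etale setting.
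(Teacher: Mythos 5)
Your overall route is the same as the paper's: both directions are mediated by Lemma \ref{etale part and geometric connectdness}, and the gerbe is built as a $2$-cofiltered limit following \cite[Theorem 5.7]{BV}, the new input being that every map from $\stZ$ to a finite \'etale stack factors through a finite \'etale gerbe over $k$. Your device for that factorization differs from the paper's and is a pleasant simplification: you pass to the connected component of $\Gamma$ hit by $\stZ$ and apply Proposition \ref{finite reduced stacks are gerbes} together with $\Hl^0(\odi\stZ)_{\et,k}=k$, whereas the paper takes the schematic image $\Delta'=\Spec(\odi\Gamma/\shI)$ with $\shI=\Ker(\odi\Gamma\arr f_*\odi\stZ)$ and then refines once more to a Nori-reduced map.

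There are, however, two concrete gaps. (i) In the forward direction you extract from $\Hl^0(\odi\stZ)_{\et,k}\neq k$ a \emph{field} $K$ with $k\subsetneq K$; this step fails if $\stZ$ is disconnected, e.g.\ if $\Hl^0(\odi\stZ)_{\et,k}=k\times k$, which contains no nontrivial field extension of $k$. The paper's argument works with an arbitrary finite \'etale subalgebra $A\subseteq\Hl^0(\odi\stZ)$: the factorization $\stZ\arr\Pi\arr\Spec A$ forces the inclusion $A\hookrightarrow\Hl^0(\odi\stZ)$ to factor through $\Hl^0(\odi\Pi)=k$, whence $A=k$; this treats idempotents and field extensions uniformly. (ii) In the reverse direction your index $2$-category consists of \emph{all} maps $\stZ\arr\Gamma$ to finite \'etale $k$-gerbes, but the cofilteredness and universal-property arguments of \cite[Theorem 5.7]{BV} that you are transporting are carried out for the subcategory of \emph{Nori-reduced} maps; Nori-reducedness is what makes parallel arrows in the index category essentially unique. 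Your component $\Gamma_{i_0}$ need not receive a Nori-reduced map from $\stZ$, so you should either insert the paper's extra step (factor $\stZ\arr\Gamma_{i_0}$ as a Nori-reduced map to a finite gerbe followed by a faithful one, the target being automatically \'etale since faithfulness means injectivity on stabilizers) or prove directly that the Nori-reduced objects are $2$-cofinal in your larger diagram. Both repairs are short, but as written these steps are missing.
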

\begin{proof}
 Assume a Nori \'etale fundamental gerbe exists. If $k\subseteq A\subseteq \Hl^0(\odi \stZ)$ with $A/k$ \'etale, then by definition $\stZ\arr \Spec A$ factors through $\Pi^{\NN,\et}_\stZ$. Since $\Hl^0(\odi{\Pi^{\NN,\et}_\stZ})=k$, the factorization tells us that $A\arr \Hl^0(\odi\stZ)$ factors through $k$. Thus $\Hl^0(\odi\stZ)_{\et}=k$ and $\stZ$ is geometrically connected by \ref{etale part and geometric connectdness}.
 
 Assume now $\stZ$ geometrically connected. The proof of the existence of $\Pi^{\NN,\et}_\stZ$ follows the same proof given in \cite[Proof of Theorem 5.7]{BV}. In our case $I$ is the $2$-category of Nori reduced (see \cite[Definition 5.10]{BV}) maps $\stZ\arr \Delta$ where $\Delta$ is an \'etale gerbe. The only thing that must be checked is that if $\stZ\arrdi f \Gamma$ is a map to a finite and \'etale stack then there exists a factorization $\stZ\arrdi \alpha \Delta \arr \Gamma$ where $\Delta$ is an \'etale gerbe and $\alpha$ is Nori reduced. Consider $\Delta'=\Spec (\odi\Gamma/\shI)$ where $\shI=\Ker(\odi\Gamma\arr f_*\odi\stZ)$. The stack $\Delta'$ is finite, \'etale and $\Hl^0(\odi{\Delta'})$ is \'etale over $k$ and contained in $\Hl^0(\odi\stZ)$, thus equal to $k$. So $\Delta'/k$ is a gerbe thanks to \ref{finite reduced stacks are gerbes}. The map $\stZ\arr \Delta'$ factors through a Nori reduced map $\stZ\arrdi \alpha\Delta$, where $\Delta$ is a finite gerbe, and $\Delta\arr \Delta'$ is faithful. It follows that $\Delta$ is \'etale because faithfulness means that the map on stabilizers is injective. 
\end{proof}

The following result generalize \cite[Proposition 5.5]{BV}
\begin{thm}\label{inflexibility characterization}
 Let $\stZ$ be a reduced, quasi-compact and quasi-separated fibered category. Then $\stZ$ is inflexible if and only if $k$ is integrally closed inside $\Hl^0(\odi\stZ)$.
\end{thm}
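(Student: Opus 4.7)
My plan is to treat the two implications separately, using the scheme-theoretic image construction already employed in the proof of \ref{existence of Nori etale gerbe} for the nontrivial direction.

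\textbf{The easy direction ($\Rightarrow$).} Suppose $\stZ$ is inflexible, and let $a\in\Hl^0(\odi\stZ)$ be integral over $k$. Then $k[a]$ is a finite $k$-algebra, so $\Spec k[a]$ is a finite $k$-stack, and the element $a$ determines a morphism $\stZ\arr \Spec k[a]$. By inflexibility this morphism factors through an affine gerbe $\Pi$ over $k$, giving $\Spec k[a]\arr \Spec \Hl^0(\odi\Pi)=\Spec k$ (using that an affine gerbe $\Pi/k$ satisfies $\Hl^0(\odi\Pi)=k$, which holds by fpqc descent from the case $\Pi=BG$). Composing shows that the image of $a$ in $\Hl^0(\odi\stZ)$ already lies in $k$.

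\textbf{The main direction ($\Leftarrow$).} Assume $k$ is integrally closed in $\Hl^0(\odi\stZ)$, and let $f\colon \stZ\arr \Gamma$ be any morphism to a finite $k$-stack. Mimicking the construction in the proof of \ref{existence of Nori etale gerbe}, form the ideal $\shI=\Ker(\odi\Gamma\arr f_*\odi\stZ)$ and set $\Gamma'=\Spec(\odi\Gamma/\shI)$, a closed substack of $\Gamma$ through which $f$ factors. Three observations then finish the argument:
\begin{enumerate}
\item $\Gamma'$ is finite over $k$ (closed substack of the finite stack $\Gamma$).
\item $\Gamma'$ is reduced: if a local section $s$ of $\odi\Gamma/\shI$ satisfies $s^n=0$, its image in $f_*\odi\stZ$ is nilpotent, hence zero by reducedness of $\stZ$, so $s\in\shI$ and $s=0$ in $\odi\Gamma/\shI$.
\item $\Hl^0(\odi{\Gamma'})=k$: by construction $\odi\Gamma/\shI\inj f_*\odi\stZ$, and taking global sections (left exact) yields $\Hl^0(\odi{\Gamma'})\inj\Hl^0(\odi\stZ)$; on the other hand $\Hl^0(\odi{\Gamma'})$ is finite over $k$, hence integral over $k$, so by hypothesis it equals $k$.
\end{enumerate}
By \ref{finite reduced stacks are gerbes}, $\Gamma'\arr\Spec\Hl^0(\odi{\Gamma'})=\Spec k$ is a gerbe, and it is affine since $\Gamma'$ is finite. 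Thus $f$ factors through the affine gerbe $\Gamma'$, proving inflexibility.

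The only delicate point is step (2)--(3) above: verifying that the scheme-theoretic image of a reduced quasi-compact quasi-separated fibered category in a finite stack is again reduced, and that the injection of structure sheaves survives pushforward to global sections. Both are routine once one has the well-defined pushforward $f_*$ granted by the \emph{Notations and Conventions}, so I do not expect any serious obstacle; the whole proof is essentially a packaging of \ref{finite reduced stacks are gerbes} together with the scheme-theoretic image trick already standard in the paper.
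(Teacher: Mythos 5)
Your proof is correct and follows essentially the same route as the paper: the nontrivial direction is word-for-word the paper's argument (factor through $\Spec(\odi\Gamma/\shI)$ with $\shI=\Ker(\odi\Gamma\arr f_*\odi\stZ)$, observe it is finite, reduced, with $\Hl^0=k$ by integral closedness, and invoke \ref{finite reduced stacks are gerbes}). The only cosmetic difference is that for the forward implication you give the standard direct argument via the map $\stZ\arr\Spec k[a]$ and $\Hl^0(\odi\Pi)=k$, whereas the paper simply cites \cite[Prop 5.4, a)]{BV}.
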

\begin{proof}
 The only if part is \cite[Prop 5.4, a)]{BV}. For the if part consider a map $f\colon \stZ\arr \Gamma$ where $\Gamma$ is a finite stack. If $\shI=\Ker(\odi\Gamma\arr f_*\odi\stZ)$ then $f$ factors through $\Spec(\odi\Gamma/\shI)$, so that we can assume $\odi\Gamma\arr f_*\odi\stZ$ injective. So $\Gamma$ is reduced, finite and $\Hl^0(\odi\Gamma)$ is a subalgebra of $\Hl^0(\odi\stZ)$ finite over $k$, thus equal to $k$ by our assumption. By \ref{finite reduced stacks are gerbes} it follows that $\Gamma$ is a finite gerbe.
\end{proof}

\section{Formalism for algebraic and Nori fundamental gerbes}

Let $k$ be a field and consider two categories $\stX$ and $\stX_\shT$ over $\Aff/k$ together with a base preserving functor $\pi_{\sT}:\stX\arr \stX_\shT$.

\begin{defn}\label{the T construction}
Set $\shT_k(\stX)=\Vect(\stX_\shT)$, which is a pseudo-abelian, rigid, monoidal and $k$-linear category. Moreover the functor $\pi_\shT^*\colon \shT_k(\stX)\arr \Vect(\stX)$ is $k$-linear, monoidal and exact. More generally if $\stY$ is a fibered category over $\Aff/k$ we have a natural functor
\[
\Hom_k^c(\stX_\shT,\stY)\arr \Hom^c_k(\stX,\stY)
\]
By \ref{recognition and reconstruction} $\Pi_{\shT_k(\stX)}$ comes equipped with a $k$-map $\stX_\shT \arr \Pi_{\shT_k(\stX)}$ inducing $\id \colon \shT_k(\stX)\arr \Vect(\stX_\shT)$.
We will drop the $-_k$ when $k$ is clear from the context.
\end{defn}

We consider categories over $k$ instead of just fibered categories over $k$ in order to apply this theory also to categories $\stX$ like small sites of algebraic stacks.

We now introduce a list of axioms that will ensure nice Tannakian properties of $\shT(\stX)$. In what follows by a finite (\'etale) stack over a ring $R$ we mean a stack which is an fppf quotient of an fppf groupoid of finite (\'etale), faithfully flat and finitely presented $R$-schemes.

\begin{ax}\label{axioms}
 Set $L=\Hl^0(\odi {\stX_\shT})=\End_{\shT(X)}(1_{\shT(\stX)})$ and consider:
 \begin{enumerate}
  \item[A:] $\shT(\stX)=\QCoh_{\textup{fp}}(\stX_\shT)$;
  \item[B:] the functor $\shT(\sX)\arr \Vect(\stX)$ is faithful;
  \item[C:] for all finite and \'etale stacks $\Gamma$ over $L$ the following functor is an equivalence
 \[
 \Hom_L(\stX_\shT,\Gamma)\arr \Hom_L(\stX,\Gamma)
 \]
  \item[D:] all $L$-maps from $\stX_\shT$ to a finite gerbe over $L$ factors through a finite and \'etale gerbe over $L$.
 \end{enumerate}
\end{ax}

\begin{rmk}
 If $\car k = 0$ and $L=\Hl^0(\odi {\stX_\shT})$ is a field then axiom D is automatic, because all finite gerbes are also \'etale.
\end{rmk}

\begin{lem}\label{TX abelian}
 Assume axiom $A$. Then $\shT(\sX)$ is a $k$-linear, abelian, monoidal and rigid category and the exact sequences are pointwise exact.
\end{lem}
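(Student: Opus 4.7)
The plan is to leverage axiom A to promote the known structure of $\Vect(\stX_\shT)$ (which is already $k$-linear, monoidal, and rigid by the Notations section, since locally free sheaves of finite rank are closed under $\otimes$ and duals) to an abelian structure, and to verify that this abelian structure realizes exactness as pointwise exactness. The key point is to exhibit kernels and cokernels inside $\shT(\stX)$.

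First I would produce cokernels. Given a morphism $f\colon \shF\arr\shG$ in $\shT(\stX)=\Vect(\stX_\shT)$, the cokernel $\coker f$ exists in $\QCoh(\stX_\shT)$ (computed pointwise, as noted in the Notations). Since $\shF$ and $\shG$ are of finite presentation, $\coker f$ lies in $\QCoh_{\textup{fp}}(\stX_\shT)$, which by axiom A equals $\Vect(\stX_\shT)=\shT(\stX)$. Next I would produce the image and the kernel. The surjection $g\colon \shG\surj \coker f$ is a surjection of vector bundles, hence is Zariski-locally split on any affine chart $\xi\in\stX_\shT(A)$; in particular $\ker g_\xi$ is locally a direct summand of $\shG_\xi$ and hence finite locally free. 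Because $(\coker f)_\xi$ is $A$-flat, the formation of $\ker g$ commutes with pullback, so $\ker g$ is a well-defined subsheaf of $\shG$ lying in $\Vect(\stX_\shT)$; this is $\im f$. Applying the same reasoning to the surjection $\shF\surj \im f$ of vector bundles produces $\ker f\in \Vect(\stX_\shT)$.

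Having kernels and cokernels inside $\shT(\stX)$, the identification $\im f=\ker(\shG\arr\coker f)=\coker(\ker f\arr \shF)$ holds pointwise (as it does in the abelian category $\QCoh(\stX_\shT)$), so the canonical morphism from coimage to image is an isomorphism, and $\shT(\stX)$ is abelian. Since every kernel, image, and cokernel in $\shT(\stX)$ is computed pointwise as in $\QCoh(\stX_\shT)$, a sequence is exact in the abelian category $\shT(\stX)$ if and only if it is pointwise exact; this matches the pseudo-abelian datum $J_{\shT(\stX)}$ fixed in \ref{recognition and reconstruction}.

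The only step requiring any real argument is the local splitting yielding that the kernel of a surjection of vector bundles is again a vector bundle, together with the verification that this kernel is genuinely an object of $\Vect(\stX_\shT)$ (i.e.\ compatible with Cartesian pullbacks). This is the main technical point, and it is handled cleanly by the flatness of the quotient, which ensures $\ker(g)$ commutes with base change. Everything else — $k$-linearity, the monoidal structure, and rigidity — is inherited directly from the definition of $\Vect(\stX_\shT)$ and requires no use of axiom A.
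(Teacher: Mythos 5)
Your proof is correct and follows essentially the same route as the paper: both use axiom A to identify the pointwise cokernel of a map of vector bundles as an object of $\Vect(\stX_\shT)$, and then observe that a map of locally free sheaves with locally free cokernel has locally free kernel whose formation commutes with arbitrary base change (via local splitting and flatness of the quotient). Your two-step factorization through $\im f=\ker(\shG\arr\Coker f)$ is just a spelled-out version of the paper's one-line statement of this fact.
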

\begin{proof}
We already know that $\shT(\stX)$ is $k$-linear, rigid and monoidal. In the category $\QCoh_{\textup{fp}}(\stX_\shT)$ cokernel can be taken pointwise. The result then follows because if $\alpha\colon\shF\arr\shG$ is a map of locally free sheaves over $\Spec(R)$ whose cokernel is locally free, then $\Ker(\alpha)$ is locally free and the formation of the kernel commutes with arbitrary base change.
\end{proof}

\begin{rmk}
 If $\shC$ is a $k$-linear and monoidal category and $R=\End_\shC(1_\shC)$ then $\shC$ has a natural structure of $R$-linear category: if $\lambda\in R$ and $\phi\colon x\arr y$ is a morphism in $\shC$ we define
 \[
 \lambda\phi\colon x\simeq x\otimes 1_\shC \arrdi{\phi\otimes \lambda}y\otimes 1_\shC\simeq y
 \]
\end{rmk}

\begin{lem}\label{when C is Tannakian}
 Let $\shC$ be a $k$-linear, rigid, abelian and monoidal category and let $F\colon \shC\arr \Vect(\stZ)$, where $\stZ$ is a non-empty category over $\Aff/k$, be a $k$-linear, exact and monoidal functor. If $\stZ$ is connected and $F$ is faithful then $\End_{\shC}(1_\shC)$ is a field. If $L=\End_{\shC}(1_\shC)$ is a field then $\shC$, with its natural $L$-linear structure, is an $L$-Tannakian category and $F$ is faithful. In particular $\shC$ is Tannakian recognizable and $\Pi_\shC$ is an affine gerbe over $L$.
\end{lem}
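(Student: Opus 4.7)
The plan is to handle the two implications in sequence. I will rely on one standard structural input: for any rigid abelian symmetric monoidal category $\shC$, idempotents in $L:=\End_{\shC}(1_\shC)$ correspond bijectively to subobjects of $1_\shC$ via $e\mapsto \Imm(e)$. This is the one non-routine ingredient (provable by splitting monomorphisms into $1_\shC$ using rigidity), and I would either cite it from Deligne--Milne or give a short argument.

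For the first statement, I begin by noting that $L$ is a commutative ring because $\shC$ is symmetric monoidal, and that $F(1_\shC)=\odi\stZ$ produces a ring homomorphism $L\arr \Hl^0(\odi\stZ)$ which is injective because $F$ is faithful. Connectedness of $\stZ$ means $\Hl^0(\odi\stZ)$ has no nontrivial idempotents, so neither does $L$. By the correspondence above, $1_\shC$ has no nontrivial subobjects, i.e.\ is simple. Any nonzero $\lambda \in L$ then has kernel and image which are subobjects of a simple object, forcing $\lambda$ to be both a monomorphism and an epimorphism, hence an isomorphism by abelianness; so $L$ is a field.

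For the second statement, assume only that $L$ is a field; the same correspondence still yields $1_\shC$ simple. To get faithfulness of $F$, consider $\shN:=\{X\in\shC : F(X)=0\}$, a full subcategory which is a tensor ideal closed under subobjects because $F$ is exact and monoidal. If $0\neq X\in\shN$, rigidity gives $\mathrm{coev}\colon 1_\shC\arr X\otimes X^\vee$ nonzero (otherwise $\id_X$ would factor through $0$, forcing $X=0$), and simplicity of $1_\shC$ forces $\mathrm{coev}$ to be a monomorphism. Hence $1_\shC$ sits inside $X\otimes X^\vee\in\shN$, so $1_\shC\in\shN$, contradicting $F(1_\shC)=\odi\stZ\neq 0$ (which uses that $\stZ$ is non-empty). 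Thus $\shN=0$ and $F$ is faithful.

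To conclude that $\shC$ is $L$-Tannakian, I produce a classical fiber functor: pick any $\xi\in\stZ(A)$ with $A\neq 0$, choose a residue field $K$ of $A$, and compose to get $F_\xi\colon \shC\arr \Vect(\stZ)\arr \Vect(A)\arr \Vect(K)$, which is $k$-linear, exact and monoidal. The same tensor-ideal argument applied to $F_\xi$ (using $F_\xi(1_\shC)=K\neq 0$) shows $F_\xi$ is faithful, and monoidality forces $L\hookrightarrow K$ to be a field extension, making $F_\xi$ automatically $L$-linear. This exhibits $\shC$ as an $L$-Tannakian category with $K$-valued fiber functor $F_\xi$, and the classical Tannakian duality recalled earlier in the paper then yields both Tannakian recognizability of $\shC$ and that $\Pi_\shC$ is an affine gerbe over $L$. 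The main obstacle is the subobjects/idempotents bijection, which I would treat as a black box from the general theory of rigid tensor categories; everything else is a clean consequence of simplicity of $1_\shC$ and the tensor-ideal argument.
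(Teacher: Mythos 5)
Your proposal is correct, but both halves run on a different engine than the paper's. For the first implication the paper never invokes the rigid-tensor fact that subobjects of $1_\shC$ correspond to idempotents of $\End_\shC(1_\shC)$; instead it works downstairs in $\Vect(\stZ)$: for $0\neq\alpha\in\End_\shC(1_\shC)$ it notes that $F(\alpha)\in\Hl^0(\odi\stZ)$ has locally free kernel and cokernel, so over each object of $\stZ$ the base splits into the open where $F(\alpha)$ is invertible and the open where it vanishes; this produces an idempotent of $\Hl^0(\odi\stZ)$ which connectedness forces to be $0$ or $1$, and faithfulness converts the two cases into $\alpha=0$ or $\alpha$ invertible. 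You instead transport connectedness up to $L$ through the injection $L\hookrightarrow\Hl^0(\odi\stZ)$ and then appeal to the Deligne--Milne splitting of subobjects of the unit -- cleaner, but all the weight rests on that external lemma, whereas the paper's computation is self-contained. For the second implication the paper composes $F$ with evaluation at some $\xi\in\stZ(A)$ to get an $L$-linear exact tensor functor to $\Vect(A)$ and cites \cite[1.9, pp.\ 114]{De3}, which yields Tannakianness and faithfulness in one stroke; you prove faithfulness by hand via the coevaluation/tensor-ideal argument and pass to a residue field to verify the definition of an $L$-Tannakian category directly, thereby avoiding Deligne's theorem at the cost of reusing simplicity of $1_\shC$. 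Both routes are valid. Two small points worth tightening: spell out that $\shN=0$ implies faithfulness via $F(\Imm f)=\Imm F(f)$ (using exactness), and note the tacit assumption -- shared with the paper -- that some object of $\stZ$ lies over a nonzero ring, so that $F(1_\shC)=\odi\stZ\neq 0$.
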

\begin{proof}
Assume $\stZ$ connected, $F$ faithful and set $R=\End_{\shC}(1_\shC)$. Let's show that it is a field proving that if $\alpha\in R$ is non zero then it is invertible in $R$. Since $\shC$ is abelian consider the exact sequence
 \[
 0\arr\shK\arr 1_\shC\arrdi \alpha 1_\shC \arr\shQ\arr 0
 \]
 Since $F$ is exact, $F(\alpha)$ is an element of $\End(\odi \stZ)=\Hl^0(\odi\stZ)$ whose kernel and cokernel are locally free. Thus for all $\xi\in\stZ$, we get a natural decomposition $\Spec ((\odi \stZ)_\xi)=U_\xi\sqcup V_\xi$ where $U_\xi$, $V_\xi$ are the opens where $F(\alpha)_\xi$ is invertible and $0$ respectively. This determines an idempotent $e\in \Hl^0(\odi\stZ)$. Since this ring is connected by hypothesis then $e=0$ or $e=1$, that is one of the following situations occur: $F(\alpha)=0$ so that $\alpha=0$ since $F$ is faithful; $F(\alpha)$ is an isomorphism, so that $F(\shK)=F(\shQ)=0$ and, again by faithfulness of $F$, $\shK=\shQ=0$, which implies that $\alpha$ is an isomorphism. Thus $R=L$ is a field.
 
 Assume now  $L=\End_{\shC}(1_\shC)$ is a field.  Since $\stZ$ is non empty there exists $\xi\in\stZ(A)$ for some $k$-algebra $A$ and the functor
  \[
 G:\shC\arrdi F\Vect(\stZ)\arrdi{-_\xi}\Vect(A)
 \]
  is $k$-linear, exact and monoidal. The $L$-linear structure on $\shC$ induces an $L$-algebra structure on $A$ such that $G$ is $L$-linear. From \cite[1.9, pp. 114]{De3} it follows that $\shC$ is an $L$-Tannakian category and $G$ is faithful. In particular also $F$ is faithful.
\end{proof}

As a consequence of \ref{TX abelian} and \ref{when C is Tannakian} we obtain:
\begin{prop}\label{when TX is Tannakian}
 Assume axiom A and set $L=\Hl^0(\odi{\stX_\shT})$. If $\stX$ is connected and axiom $B$ holds then $L$ is a field. If $L$ is a field then axiom B holds, so that $\shT(\stX)$ is an $L$-Tannakian category, $\Pi_{\shT(\stX)}$ is an affine gerbe over $L$ and $\stX\arr \stX_\shT \arr \Pi_{\shT(\stX)}$ can be considered as $L$-maps.
\end{prop}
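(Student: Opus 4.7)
The plan is to derive the proposition as an immediate formal consequence of Lemmas~\ref{TX abelian} and~\ref{when C is Tannakian}, applied to the $k$-linear monoidal category $\shC := \shT(\stX)$ together with the pullback functor $F := \pi_\shT^* \colon \shT(\stX) \to \Vect(\stX)$.

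First I would verify the input hypotheses of Lemma~\ref{when C is Tannakian}. Under axiom A, Lemma~\ref{TX abelian} supplies that $\shT(\stX)$ is $k$-linear, abelian, rigid and monoidal, with short exact sequences being exactly the pointwise exact ones. The pullback $\pi_\shT^*$ is $k$-linear and monoidal by construction, and it is exact because pullback along $\pi_\shT$ evidently preserves pointwise exactness. Moreover, since the unit of $\shT(\stX) = \Vect(\stX_\shT)$ is $\odi{\stX_\shT}$, one has $\End_{\shT(\stX)}(1_{\shT(\stX)}) = \Hl^0(\odi{\stX_\shT}) = L$ tautologically.

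For assertion~(1), I would then apply the first half of Lemma~\ref{when C is Tannakian} with $\stZ = \stX$: the assumption that $\stX$ is connected, combined with axiom B (faithfulness of $\pi_\shT^*$), directly yields that $L$ is a field.

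For assertion~(2), suppose that $L$ is a field. The second half of Lemma~\ref{when C is Tannakian} immediately gives that $\shT(\stX)$, with its natural $L$-linear structure, is an $L$-Tannakian category, and that $\pi_\shT^*$ is faithful, which is axiom~B. By classical Tannaka duality as recalled in Section~\ref{general Tannaka duality}, $\Pi_{\shT(\stX)}$ is then an affine gerbe over $L$. The universal base-preserving map $\stX_\shT \to \Pi_{\shT(\stX)}$ produced in \ref{the T construction} corresponds to the identity of $\Vect(\stX_\shT)$, and its $L$-linearity is built into the Tannaka construction: the $L$-structure on $\Pi_{\shT(\stX)}$ is, by construction, the one coming from $L = \End_{\shT(\stX)}(1)$, while $\stX_\shT$ is itself an $L$-category via $L = \Hl^0(\odi{\stX_\shT})$. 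Composing with $\pi_\shT$ yields the $L$-map $\stX \to \Pi_{\shT(\stX)}$. There is no genuine obstacle in the argument; the whole statement is a purely formal packaging of the two preceding lemmas plus Tannaka reconstruction.
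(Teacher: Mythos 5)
Your proposal matches the paper exactly: the authors derive \ref{when TX is Tannakian} as an immediate consequence of \ref{TX abelian} and \ref{when C is Tannakian} applied to $\shC=\shT(\stX)$ with $F=\pi_\shT^*\colon\shT(\stX)\to\Vect(\stX)$ and $\stZ=\stX$, which is precisely your argument. No further comment is needed.
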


\begin{thm}\label{when T(X) is Tannakian}
 Assume axiom A and that $L=\Hl^0(\odi{\stX_\shT})$ is a field (for instance if B holds and $\stX$ is connected), so that $\shT(\stX)$ is an $L$-Tannakian category by \ref{when TX is Tannakian}, where $L=\Hl^0(\odi{\stX_\shT})$. If $R\arr L$ is a map of rings and $\Gamma$ is any stack in groupoids over $R$ satisfying Tannakian reconstruction, then the functor
\[
 \Hom_R(\Pi_{\shT(\stX)},\Gamma)\arr \Hom_R(\stX_\shT,\Gamma)
\]
is an equivalence. In particular $\stX_\shT\arr \Pi_{\shT(\stX)}$ is universal among $L$-maps from $\stX_\shT$ to an affine gerbe over $L$.

If axiom C also holds then $\stX\arr (\Pi_{\shT(\stX)})_{\et}$ is the \'etale Nori fundamental gerbe of $\stX$ over $L$, so that $\Rep (\Pi^{\NN,\et}_{\stX/L}) \simeq \Et(\shT(\stX))$ (see \ref{pro stuff for gerbes 2}).

If both axioms C and D also holds, then $\widehat \Pi_{\shT(\stX)}=(\Pi_{\shT(\stX)})_{\et}$, so that $\Rep (\Pi^{\NN,\et}_{\stX/L}) \simeq \EF(\shT(\stX))$ (see \ref{pro stuff for gerbes 2}).
\end{thm}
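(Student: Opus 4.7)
The plan is to obtain the first assertion by a purely Tannakian formal manipulation, and then deduce the second and third by combining it with axioms C and D together with the ``take image'' reduction already used in Proposition \ref{existence of Nori etale gerbe}. Since $\shT(\stX)$ is $L$-Tannakian (Proposition \ref{when TX is Tannakian}), classical Tannaka duality (see the example after Theorem \ref{sheaves generating categories}) makes $\Pi_{\shT(\stX)}$ an affine gerbe over $L$ with the resolution property, hence gives Tannakian recognition for $\shT(\stX)$ and Tannakian reconstruction for every affine gerbe over $L$. For any stack $\Gamma$ over $R$ satisfying Tannakian reconstruction and any category $\stY$ over $\Aff/R$ we therefore have natural equivalences
\[
\Hom_R(\stY,\Gamma)\simeq \Hom_{\otimes,R}(\Vect(\Gamma),\Vect(\stY)).
\]
Taking $\stY=\stX_\shT$ the right hand side becomes $\Hom_{\otimes,R}(\Vect(\Gamma),\shT(\stX))$ by the very definition of $\shT(\stX)$, while taking $\stY=\Pi_{\shT(\stX)}$ gives the same answer by recognition. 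A check that these identifications are compatible with the pullback along $\stX_\shT\to \Pi_{\shT(\stX)}$ (which corresponds to the identity on the right hand side) settles the first part; the ``in particular'' then follows because affine gerbes satisfy Tannakian reconstruction.

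For the second assertion we verify that $(\Pi_{\shT(\stX)})_{\et}$ enjoys the universal property of the étale Nori gerbe. Let $\Gamma/L$ be finite and étale and $f\colon\stX_\shT\to\Gamma$ an $L$-map; setting $\shI=\Ker(\odi\Gamma\to f_*\odi{\stX_\shT})$ and $\Delta=\Spec(\odi\Gamma/\shI)$ produces, as in Proposition \ref{existence of Nori etale gerbe}, a finite étale substack of $\Gamma$ whose $\Hl^0$ is an étale $L$-subalgebra of $L$, hence equal to $L$; by Proposition \ref{finite reduced stacks are gerbes}, $\Delta$ is then a finite étale gerbe over $L$ through which $f$ factors. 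Axiom C identifies $\Hom_L(\stX_\shT,-)\simeq\Hom_L(\stX,-)$ on finite étale $\Gamma$, the first part (applied to the affine gerbe $\Delta$) identifies $\Hom_L(\Pi_{\shT(\stX)},\Delta)\simeq \Hom_L(\stX_\shT,\Delta)$, and the universal property of the pro-étale quotient identifies $\Hom_L((\Pi_{\shT(\stX)})_{\et},\Delta)\simeq\Hom_L(\Pi_{\shT(\stX)},\Delta)$. Assembling these equivalences uniformly along the image factorization yields
\[
\Hom_L((\Pi_{\shT(\stX)})_{\et},\Gamma)\simeq\Hom_L(\stX,\Gamma),
\]
which characterizes $(\Pi_{\shT(\stX)})_{\et}$ as $\Pi^{\NN,\et}_{\stX/L}$ and, after passing to $\Vect$, gives $\Rep(\Pi^{\NN,\et}_{\stX/L})\simeq\Et(\shT(\stX))$.

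For the third assertion, axiom D asserts that every $L$-map from $\stX_\shT$ to a finite gerbe factors through a finite étale gerbe, and by the first part (applied to each such finite gerbe, which is affine) the same holds for $\Pi_{\shT(\stX)}$; hence the profinite quotient $\widehat\Pi_{\shT(\stX)}$ coincides with $(\Pi_{\shT(\stX)})_{\et}$, and combining this with the second part yields $\EF(\shT(\stX))\simeq \Rep(\Pi^{\NN,\et}_{\stX/L})$. I expect the most delicate step to be the second part, specifically justifying that the ``image'' $\Delta$ of $f$ is genuinely finite étale (not merely finite) as a stack, and that the image construction is sufficiently functorial to assemble into an equivalence of the full $\Hom$-categories rather than merely a bijection on isomorphism classes; these points are settled exactly as in Proposition \ref{existence of Nori etale gerbe}, but require careful bookkeeping of the $2$-morphisms involved.
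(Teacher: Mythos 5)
Your first and third assertions are proved exactly as in the paper: the first is the chain of equivalences through $\Hom_{\otimes,R}(\Vect(\Gamma),\shT(\stX))$ coming from recognition for $\shT(\stX)$ and reconstruction for $\Gamma$, together with the compatibility check against $\stX_\shT\arr\Pi_{\shT(\stX)}$; the third is axiom D transported through the first part. Where you genuinely diverge is the second assertion. The paper's proof is the one-line chain
\[
\Hom_L((\Pi_{\shT(\stX)})_{\et},\Gamma)\simeq \Hom_L(\Pi_{\shT(\stX)},\Gamma)\simeq \Hom_L(\stX_\shT,\Gamma)\simeq \Hom_L(\stX,\Gamma),
\]
valid for every finite \'etale stack $\Gamma$ over $L$, where the middle equivalence is the first part of the theorem applied \emph{directly to $\Gamma$ itself}: this is legitimate because finite stacks satisfy Tannakian reconstruction by \ref{finite stacks are reconstructible}, which is precisely why the first assertion is stated for arbitrary stacks satisfying Tannakian reconstruction rather than only for affine gerbes. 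You instead apply the first part only to affine gerbes, and are therefore forced to route every map through its image $\Delta$ as in \ref{existence of Nori etale gerbe}. That detour gives essential surjectivity cheaply, but the step you defer --- ``assembling these equivalences uniformly along the image factorization'' --- is where the real work would be: for full faithfulness you must compare the images of two maps $u,v\colon(\Pi_{\shT(\stX)})_{\et}\arr\Gamma$ whose restrictions to $\stX$ are isomorphic, which requires knowing that the image is already computed on $\stX$ (that $\odi{\Gamma}\arr u_*\odi{(\Pi_{\shT(\stX)})_{\et}}$ and $\odi{\Gamma}\arr (u_{|\stX})_*\odi{\stX}$ have the same kernel) and then tracking the $2$-isomorphisms through the factorization. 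None of this is needed once one notices that $\Gamma$ is itself Tannakian reconstructible; the delicate point you flag at the end is an artifact of your route, not of the statement.
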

\begin{proof}
Since $\shT(\stX)$ and $\Gamma$ are Tannakian recognizable and reconstructible respectively, we have equivalences
\[
\Hom_R(\Pi_{\shT(\stX)},\Gamma)\simeq \Hom_{\otimes,R}(\Vect(\Gamma),\shT(\stX))\simeq \Hom_R(\stX_\shT,\Gamma)
\]
The above map is easily seen to coincide with the map induced by $\stX_{\shT}\arr \Pi_{\shT(\stX)}$. Since affine gerbes satisfies Tannakian reconstruction we get the universality of $\stX_\shT\arr \Pi_{\shT(\stX)}$.

Assume now C. Since finite stacks are Tannakian reconstructible by \ref{finite stacks are reconstructible}, for all finite and \'etale stacks $\Gamma$ the maps $\stX\arr\stX_\shT\arr  \Pi_{\shT(\stX)}\arr (\Pi_{\shT(\stX)})_{\et}$ induces equivalences
\[
\Hom_L((\Pi_{\shT(\stX)})_{\et},\Gamma)\simeq \Hom_L(\Pi_{\shT(\stX)},\Gamma)\simeq \Hom_L(\stX_\shT,\Gamma)\simeq \Hom_L(\stX,\Gamma) 
\]
as desired, where the first equivalence follows because $(\Pi_{\shT(\stX)})_{\et}$ is the Nori \'etale quotient of $\Pi_{\shT(\stX)}$ (see \ref{pro stuff for gerbes 2}). Finally axiom D tells exactly that a morphism from $\Pi_{\shT(\stX)}$ to a finite stack factors through a finite and \'etale gerbe, which implies the result.
\end{proof}

\begin{rmk}
 Under the hypothesis of \ref{when T(X) is Tannakian} the $L$-gerbe $\Pi_{\shT(\stX)}$ together with the functor $\stX_\shT\arr \Pi_{\shT(\stX)}$ is the $\catC$-fundamental gerbe of $\stX_\shT$ over $L$, where $\catC$ is the class of all affine group schemes, in the sense of \cite[Definition 5.6]{BV2}. In particular $\stX_\shT\arr \widehat \Pi_{\shT(\stX)}=\Pi_{\EF(\shT(\stX))}$ is the Nori fundamental gerbe of $\stX_\shT$ over $L$ and $\stX_\shT$ is inflexible if it is a fibered category.  However, such a fundamental gerbe does not exist in general as is explained in \cite[Theorem 5.7, pp. 13]{BV}. Moreover, $\stX_\shT$ is in general not a nice fibered category  in the cases which we consider in this paper (stratifications, crystals or $F$-divided sheaves): it is unclear if $\stX_\shT$ admits an fpqc covering from a scheme. So we prefer not to apply the general theory of fundamental gerbes on $\stX_\shT$ but just use it as a parameter space. 
\end{rmk}

From now on we assume that $k$ has positive characteristic $p$. If $\stZ$ is any category over $\Aff/k$ we define the Frobenius pullback
\[
F^*\colon \Vect(\stZ)\arr \Vect(\stZ)
\]
applying the pullback of the absolute Frobenius pointwise. The functor $F^*$ is $\F_p$-linear, exact and monoidal.


\begin{defn}\label{sequence of EH}
 Given $i\in \N$ we define $\shT_i(\stX)$ as the category of tuples $(\shF,\shG,\lambda)$ where $\shF\in \Vect(\stX)$, $\shG\in \shT(\sX)$ and $\lambda \colon F^{i*}\shF\arr \shG_{|\stX}$ is an isomorphism. A morphism from $(\shF,\shG,\lambda)$ to $(\shF',\shG',\lambda')$ is a pair of morphisms $\phi:\sF\to\sF'$ and $\varphi:\sG\to\sG'$ which are compatible with $\lambda$ and $\lambda'$ in an obvious way. The category $\shT_i(\stX)$ is $\F_p$-linear, monoidal and rigid. We endow $\shT_i(\stX)$ with a $k$-structure via
 \[
 k\arr \End_{\shT_i(\stX)}(\odi \stX,\odi{\stX_\shT},\id_{\odi \stX})\comma  a \longmapsto (a,a^{p^i})
 \]
 Finally we regard $\shT_i(\stX)$ as a pseudo-abelian category with the distinguished set of sequences which are exact pointwise. The forgetful functor $\shT_i(\stX)\arr \Vect(\stX)$ is $k$-linear, monoidal and exact.
 
 There is a $k$-linear, monoidal and exact functor
 \[
 \shT_i(\stX)\arr \shT_{i+1}(\stX)\comma (\shF,\shG,\lambda)\longmapsto (\shF,F^*\shG,F^*\lambda)
 \]
 We define $\shT_\infty(\stX)$ as the direct limit of the categories $\shT_i(\stX)$. The category $\shT_\infty(\stX)$ is $k$-linear, monoidal and rigid.
\end{defn}


\begin{rmk}\label{universal property at level i}
Given a category fibered in groupoids $\Gamma$ over $\F_p$ we denote by $\Hom(\stX,\stX_\shT,i,\Gamma)$ the category of $\F_p$-linear $2$-commutative diagrams
  \[
  \begin{tikzpicture}[xscale=2.1,yscale=-1.2]
    \node (A0_0) at (0, 0) {$\stX$};
    \node (A0_1) at (1, 0) {$\stX_\shT$};
    \node (A1_0) at (0, 1) {$\Gamma$};
    \node (A1_1) at (1, 1) {$\Gamma$};
    \path (A0_0) edge [->]node [auto] {$\scriptstyle{\pi}$} (A0_1);
    \path (A0_0) edge [->]node [auto] {$\scriptstyle{f}$} (A1_0);
    \path (A0_1) edge [->]node [auto] {$\scriptstyle{g}$} (A1_1);
    \path (A1_0) edge [->]node [auto] {$\scriptstyle{F_\Gamma}$} (A1_1);
  \end{tikzpicture}
  \]
where $F_\Gamma$ is the absolute Frobenius. Pulling back along $f$ and $g$ one obtains a functor $\Phi_i^\Gamma \colon \Hom(\stX,\stX_\shT,i,\Gamma)\arr \Hom_{\otimes,\F_p}(\Vect(\Gamma),\shT_i(\stX))$ which is an equivalence if $\Gamma$ is Tannakian reconstructible. On the other hand using the universal property of $\Pi_*$ in \ref{recognition and reconstruction} and the definition of $\shT_i(\stX)$ 
 we see that $\Pi_{\shT_i(\stX)}$ comes equipped with a $2$-commutative diagram $\chi\in \Hom(\stX,\stX_\shT,i,\Pi_{\shT_i(\stX)})$ such that $\shT_i(\stX)\arr \Vect(\Pi_{\shT_i(\stX)})\arrdi{J_i} \shT_i(\stX)$ is the identity, where $J_i=\Phi_i^{\Pi_{\shT_i(\stX)}}(\chi)$. Composing with $\chi$ we obtain a functor
 \[
 \Hom_{\F_p}(\Pi_{\shT_i(\stX)},\Gamma)\arr \Hom(\stX,\stX_\shT,i,\Gamma)
 \]
 which is an equivalence if $\Gamma$ and $\shT_i(\stX)$ are Tannakian reconstructible and recognizable respectively.
 \end{rmk}

\begin{lem}\label{Frobenius for TiX}
There is a 2-commutative diagram
  \[
  \begin{tikzpicture}[xscale=2.7,yscale=-1.2]
    \node (A0_0) at (0, 0) {$\Vect(\Pi_{\shT_i(\stX)})$};
    \node (A0_1) at (1, 0) {$\shT_i(\stX)$};
    \node (A1_0) at (0, 1) {$\Vect(\Pi_{\shT_i(\stX)})$};
    \node (A1_1) at (1, 1) {$\shT_i(\stX)$};
    \path (A0_0) edge [->]node [auto] {$\scriptstyle{J_i}$} (A0_1);
    \path (A1_0) edge [->]node [auto] {$\scriptstyle{J_i}$} (A1_1);
    \path (A0_1) edge [->]node [auto] {$\scriptstyle{F_\shT}$} (A1_1);
    \path (A0_0) edge [->]node [auto] {$\scriptstyle{F^*}$} (A1_0);
  \end{tikzpicture}
  \]
where $F_{\shT}(\sF,\sG,\lambda)=(F_{\sX}^*\sF,F_{\sX_{\sT}}^*\sG, F_{\sX}^*\lambda)$, $F$ is the absolute Frobenius of $\Pi_{\sT_i(\sX)}$ and $J_i$ is defined in \ref{universal property at level i}.
Moreover $\sF_{\sT}^i$ factors as $\sT_i(\sX)\xrightarrow{\alpha}\sT_0(\sX)\xrightarrow{\beta}\sT_i(\sX)$, where $\alpha$ is the projection $(\sF,\sG,\lambda)\mapsto \sG$ and $\beta$ is the transition morphism defined in \ref{sequence of EH}.
\end{lem}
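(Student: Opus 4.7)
The plan is to unravel the explicit description of $J_i$ supplied by Remark \ref{universal property at level i} and then verify both claims by direct computation, the only input being naturality of the absolute Frobenius in $\Aff/\F_p$. Write $\Pi=\Pi_{\shT_i(\stX)}$ and denote the universal $2$-commutative datum $\chi$ by $f\colon \stX\to\Pi$, $g\colon \stX_\shT\to\Pi$ together with a $2$-isomorphism $F_\Pi^i\circ f \simeq g\circ \pi$. Then for $\shH\in\Vect(\Pi)$ one has
\[
J_i(\shH) = (f^*\shH,\; g^*\shH,\; \lambda_\shH),
\]
where $\lambda_\shH\colon F^{i*}f^*\shH\to \pi^* g^*\shH$ arises by pulling the $2$-cell of $\chi$ back to $\shH$.

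For the commutativity of the square I would evaluate both composites on an object $\shH\in\Vect(\Pi)$. Naturality of Frobenius yields $F_\Pi\circ f = f\circ F_\stX$ and $F_\Pi\circ g = g\circ F_{\stX_\shT}$, so the first two coordinates of $J_i(F^*\shH)$ and of $F_\shT(J_i(\shH))$ both reduce canonically to $F_\stX^* f^*\shH$ and $F_{\stX_\shT}^* g^*\shH$. The remaining identification of third coordinates amounts to matching $\lambda_{F^*\shH}$ with $F_\stX^*\lambda_\shH$ via the canonical isomorphisms $F^{i*}F_\stX^* = F_\stX^* F^{i*}$ and $\pi^* F_{\stX_\shT}^* = F_\stX^* \pi^*$; this is a formal consequence of $\lambda_{(-)}$ being built by pullback of a single $2$-cell. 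Conceptually, the cleanest route is to observe that via the equivalence $\Phi_i^\Pi$ of Remark \ref{universal property at level i} both composites correspond to the very same element of $\Hom(\stX,\stX_\shT,i,\Pi)$, namely $\chi$ post-composed with $F_\Pi$, and so must be naturally isomorphic.

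For the factorization $F_\shT^i \simeq \beta\circ\alpha$ I would compute both functors explicitly. An easy induction on the definition of the transition maps in \ref{sequence of EH} gives
\[
F_\shT^i(\sF,\sG,\lambda) = (F^{i*}\sF,\; F^{i*}\sG,\; F^{i*}\lambda).
\]
Identifying $\shT_0(\stX)$ with $\Vect(\stX_\shT)$ via the projection $(\sF',\sG',\lambda')\mapsto \sG'$ (with quasi-inverse $\sG\mapsto (\pi^*\sG,\sG,\id)$), the composite $\beta\circ\alpha$ sends $(\sF,\sG,\lambda)$ to $(\pi^*\sG,\; F^{i*}\sG,\; \id_{\pi^* F^{i*}\sG})$. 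Taking $\phi=\lambda$ and $\varphi=\id_{F^{i*}\sG}$ then supplies an isomorphism in $\shT_i(\stX)$ from $F_\shT^i(\sF,\sG,\lambda)$ to $\beta\alpha(\sF,\sG,\lambda)$: the compatibility $\pi^*\varphi\circ F^{i*}\lambda = \id\circ F^{i*}\phi$ reduces to the tautology $F^{i*}\lambda=F^{i*}\lambda$, and naturality of this isomorphism in $(\sF,\sG,\lambda)$ is immediate.

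I expect the main obstacle to be the third-coordinate check in the commutativity claim, where one must match two $2$-cells produced by different but canonically equivalent chains of pullbacks through $\chi$ and Frobenius; explicit bookkeeping with the naturality isomorphisms $F^{i*}\pi^*=\pi^* F^{i*}$ is possible but tedious, so in a written proof I would prefer to sidestep it by the universal-property argument via $\Phi_i^\Pi$ indicated above.
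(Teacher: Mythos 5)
Your proposal is correct and follows essentially the same route as the paper: the square is checked via naturality of the Frobenius pullback and the explicit description of $J_i$ from Remark \ref{universal property at level i}, and the factorization is exhibited by the very same isomorphism $(\lambda,\id)\colon (F_{\stX}^{i*}\shF,F^{i*}\shG,F_{\stX}^{i*}\lambda)\arr (\shG_{|\stX},F^{i*}\shG,\id)$ that the paper writes down. The only difference is presentational — your suggestion to verify the square by noting that both composites are $\Phi_i^{\Pi}$ applied to $\chi$ post-composed with the Frobenius is a slightly cleaner packaging of the paper's one-line appeal to naturality.
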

\begin{proof}
 The commutativity of the first diagram follows from the naturality of Frobenius pullbacks and the definition of $J_i$. The second claim follows from the formula $$(F_{\sX}^{i^*}\sF,F_{\sX}^{i^*}\sG,F_{\sX}^{i^*}\lambda)\xrightarrow{(\lambda,\id)}(\sG|_{\sX},F_{\sX}^{i^*}\sG,\id)$$
\end{proof}

\begin{thm}\label{main theorem formal II}
Assume axiom A, that $L=L_0=\Hl^0(\odi{\stX_\shT})$ is a field and the following property:
 \[
 \forall \shF\in \QCoh_{\textup{fp}}(\stX)\comma \text{if } F^*\shF\in \Vect(\stX)\text{ then } \shF\in \Vect(\stX)
 \]
 Then for all $j\in \N\cup\{\infty\}$ the ring $L_j=\End_{\shT_j(\stX)}(1_{\shT_j(\stX)})$ is a field, $\shT_j(\stX)$ is an $L_j$-Tannakian category, the functors
 \[
\begin{tikzpicture}[xscale=3.0,yscale=-1.2]
  \node (A0_0) at (0, 0) {$\shT_j(\stX)$};
  \node (A0_1) at (1, 0) {$\shT_{j+1}(\stX)$};
  \node (A0_2) at (2, 0) {$\shT_\infty(\stX)$};
  \node (A1_1) at (1, 1) {$\Vect(\stX)$};
  \path (A0_0) edge [->]node [auto] {$\scriptstyle{}$} (A0_1);
  \path (A0_1) edge [->]node [auto] {$\scriptstyle{}$} (A1_1);
  \path (A0_1) edge [->]node [auto] {$\scriptstyle{}$} (A0_2);
  \path (A0_2) edge [->]node [auto] {$\scriptstyle{}$} (A1_1);
  \path (A0_0) edge [->]node [auto] {$\scriptstyle{}$} (A1_1);
\end{tikzpicture}
\]
are faithful, monoidal and exact, $\Pi_{\shT_j(\stX)}$ is an affine gerbe over $L_j$ and the functor $\shT_j(\stX)\arr \Vect(\stX)$ induces a map $\stX\arr \Pi_{\shT_j(\stX)}$, so that $\stX$ is a category over $L_{\infty}$. Moreover
\[
L_\infty=\{x\in \Hl^0(\odi\stX)\st\exists i\in\N\text{ such that }x^{p^i}\in L_0\}
\]
is purely inseparable over $L_0$,
\[
\EF(\shT_i(\stX))=\{ (\shF,\shG,\lambda)\in \shT_i(\stX)\st \shG\in \EF(\shT_0(\stX))\}\comma \EF(\shT_\infty(\stX))\simeq \varinjlim_i \EF(\shT_i(\stX))
\]
and $\stX\arr (\Pi_{\shT_\infty(\stX)})_{\LL} $ is the pro-local Nori fundamental gerbe of $\stX$ over $L_\infty$.

 If we also assume axiom C then $\stX\arr \widehat \Pi_{\shT_\infty(\stX)}$  is the Nori fundamental gerbe of $\stX$ over $L_\infty$, so that $\Rep (\Pi^\NN_{\stX/L_\infty}) \simeq \EF(\shT_\infty(\stX))$, where all the notations here are in \ref{pro stuff for gerbes}.
\end{thm}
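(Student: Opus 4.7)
By Proposition 5.7, axiom A together with $L_0$ being a field already forces axiom B, so $\pi_\shT^*$ is faithful, $L_0\hookrightarrow\Hl^0(\odi\stX)$, $\stX$ is connected, and $\shT(\stX)$ is $k$-linear, abelian, rigid and monoidal by Lemma 5.3.

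The first technical step is to show each $\shT_j(\stX)$ is abelian. Given a morphism $(\phi,\varphi)\colon(\shF,\shG,\lambda)\to(\shF',\shG',\lambda')$, form $\Coker\varphi\in\Vect(\stX_\shT)$; its restriction to $\stX$ equals $F^{j*}\Coker\phi$, and since $\Coker\phi\in\QCoh_{\textup{fp}}(\stX)$, iterating the hypothesis ``$F^*\shE\in\Vect\Rightarrow\shE\in\Vect$'' $j$ times yields $\Coker\phi\in\Vect(\stX)$. So cokernels exist in $\shT_j(\stX)$, and rigidity upgrades them to kernels via $\Ker\alpha=(\Coker\alpha^\vee)^\vee$. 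Faithfulness of the forgetful $\shT_j(\stX)\to\Vect(\stX)$ is immediate: if $\phi=0$ then $\varphi|_\stX=F^{j*}\phi=0$ and axiom B forces $\varphi=0$. The transition functors $\shT_j\to\shT_{j+1}\to\shT_\infty$ are likewise exact, monoidal and faithful, using that Frobenius pullback on the Tannakian category $\shT(\stX)$ is a faithful exact tensor functor.

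Lemma 5.6 then applies to $\shC=\shT_j(\stX)$ with the forgetful functor to $\Vect(\stX)$: since $\stX$ is connected and this functor is faithful, $L_j$ is a field, $\shT_j(\stX)$ is $L_j$-Tannakian, $\Pi_{\shT_j(\stX)}$ is an affine $L_j$-gerbe, and the Tannaka dual of the forgetful functor is the canonical $\stX\to\Pi_{\shT_j(\stX)}$. To identify $L_j$ concretely, an endomorphism of the unit $(\odi\stX,\odi{\stX_\shT},\id)$ is a pair $(a,b)\in\Hl^0(\odi\stX)\times L_0$ with $a^{p^j}=\pi_\shT^*b$; by injectivity of $\pi_\shT^*$ on global sections this identifies $L_j\cong\{a\in\Hl^0(\odi\stX):a^{p^j}\in L_0\}$. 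Taking the colimit yields the stated description of $L_\infty$, which is purely inseparable over $L_0$ and makes $\stX$ a category over $L_\infty$. The description of $\EF(\shT_i(\stX))$ drops out of Lemma 5.12: $F^i_\shT=\beta\alpha$ with $\alpha(\shF,\shG,\lambda)=\shG$, and since essential finiteness is both preserved and detected by Frobenius in a Tannakian category (and $\beta$ is itself a tensor functor of Tannakian categories), $X\in\EF(\shT_i)\iff\alpha(X)\in\EF(\shT_0)$. The identity $\EF(\shT_\infty)=\varinjlim_i\EF(\shT_i)$ is formal from the colimit construction of $\shT_\infty$.

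Finally, the Nori gerbes are identified by checking universal properties through Remark 5.11 and Tannaka duality. Let $\Gamma$ be a finite local $L_\infty$-gerbe; Lemma 4.5 produces $i\gg 0$ such that the $i$-th relative Frobenius $\Gamma\to\Gamma^{(i,L_\infty)}$ factors through $\Gamma_{\et,L_\infty}=\Spec L_\infty$. Given an $L_\infty$-map $f\colon\stX\to\Gamma$, one constructs the map $g\colon\stX_\shT\to\Gamma$ required by Remark 5.11 by descending the section $\Spec L_\infty\to\Gamma$ to $\stX_\shT$ via its canonical structure map, producing an object of $\Hom(\stX,\stX_\shT,i,\Gamma)$; Tannaka duality then yields an $L_\infty$-map $\Pi_{\shT_i(\stX)}\to\Gamma$, and in the colimit a map $\Pi_{\shT_\infty(\stX)}\to\Gamma$ necessarily factoring through the pro-local quotient. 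Combining with Theorem 5.9's identification $(\Pi_{\shT_0(\stX)})_{\et}=\Pi^{\NN,\et}_{\stX/L_0}$ under axiom C then assembles the full Nori gerbe as $\widehat{\Pi}_{\shT_\infty(\stX)}\simeq\Pi^\NN_{\stX/L_\infty}$. I expect the hardest step to be the pro-local construction of $g$: the subtlety lies in reconciling the $L_0$-structure of $\stX_\shT$ with the $L_\infty$-linearity of $\Gamma$ using precisely the way Frobenius kills the local part, and verifying that the resulting assignment is natural enough to yield an equivalence of categories, not merely a bijection on isomorphism classes.
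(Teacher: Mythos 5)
Your overall architecture matches the paper's: pointwise cokernels plus the Frobenius descent hypothesis give abelianness of $\shT_j(\stX)$, faithfulness over $\Vect(\stX)$ reduces the Tannakian claims to \ref{when C is Tannakian}, and the Nori gerbes are identified through \ref{universal property at level i}. But three of the steps you compress are precisely where the work lies, and as written they are gaps. First, connectedness of $\stX$ is not among the hypotheses and does not follow from \ref{when TX is Tannakian} (which only converts ``$L_0$ is a field'' into axiom B); the paper instead deduces from the Frobenius descent hypothesis that $\Hl^0(\odi\stX)$ is \emph{reduced} (if $u^n=0$ then $F^{i*}(\odi\stX/u\odi\stX)\simeq\odi\stX$ for $i\gg 0$, forcing $u=0$), and it is reducedness, not connectedness, that makes each $L_j=\{x\st x^{p^j}\in L_0\}$ a field. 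Second, ``essential finiteness is detected by Frobenius in a Tannakian category'' is not a citable fact; it is the content of the paper's argument: for $\chi$ with $\alpha(\chi)\in\EF(\shT_0(\stX))$ one passes to the monodromy gerbe $\Gamma$ with $\Rep(\Gamma)=\langle\chi\rangle$, uses \ref{Frobenius for TiX} to see that the $i$-th relative Frobenius of $\Gamma$ factors through a finite gerbe, and then uses that the relative Frobenius of the band over $\overline{L_i}$ is topologically surjective to conclude that the band is finite. Without this argument the inclusion $\supseteq$ in the description of $\EF(\shT_i(\stX))$ is unproved.

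The third and largest gap is the one you flag yourself. Your construction of $g\colon\stX_\shT\arr\Gamma$ ``via its canonical structure map'' to $\Spec L_\infty$ does not parse: $\stX_\shT$ is a category over $L_0$, and $L_\infty$ lives inside $\Hl^0(\odi\stX)$, not inside $\Hl^0(\odi{\stX_\shT})=L_0$, so there is no structure map $\stX_\shT\arr\Spec L_\infty$. The paper resolves this by first descending $\Gamma$ along the purely inseparable extension $L_\infty/L_0$ to a finite (and still local, resp.\ arbitrary under axiom C) stack over $L_0$ using \ref{approximating finite stacks} and \ref{etale part and purely inseparable extensions}, and only then running the factorization $\Gamma\arr\Gamma_{\et}\arr\Gamma^{(j)}$ together with axiom C (or locality) to extend $\stX\arr\Gamma_{\et}$ to $\stX_\shT$. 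Moreover, full faithfulness of the comparison functor is not mere ``naturality''; it requires showing that every map $\Pi_{\shT_\infty(\stX)}\arr\Gamma$ factors through some $\Pi_{\shT_i(\stX)}$, which the paper proves by computing $S\times_{\Pi_{\shT_\infty(\stX)}}S$ as the inverse limit of the $S\times_{\Pi_{\shT_j(\stX)}}S$ for a field-valued point $S$ and invoking the finite presentation of $\Gamma$. None of these points is fatal to the strategy --- it is the paper's strategy --- but each must be supplied.
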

\begin{proof}
Notice that $\shT(\stX)=\shT_0(\stX)$ is $L_0$-Tannakian and $\shT(\stX)\arr \Vect(\stX)$ is faithful thanks to \ref{when TX is Tannakian}. 
 Let's show that the category $\shT_i(\stX)$ is abelian. If $(\shF,\shG,\lambda)\arrdi {(\alpha,\beta)} (\shF',\shG',\lambda')$ is a map in $\shT_i(\stX)$, then there is an induced isomorphism $\delta\colon F^{i*}(\Coker \alpha) \arr (\Coker \beta)_{|\stX}$, which implies that $\Coker \alpha\in \Vect(\stX)$ and that $(\Coker \alpha, \Coker \beta, \delta)\in \shT_i(\stX)$ is a cokernel. In this situation also kernels can be taken pointwise so that we obtain a kernel for $(\alpha,\beta)$.
 The map $\shT_i(\stX)\arr \Vect(\stX)$ is faithful because if $(\alpha,\beta)$ is a map as above with $\alpha=0$, then $\beta_{|\stX}=0$, which implies $\beta=0$ because $\shT(\stX)\arr \Vect(\stX)$ is faithful. This implies that all the functors in the statements are faithful and that $\shT_\infty(\stX)$ is an abelian category. In particular for $i\in \N$ the functor $\shT_i(\stX)\arr \Vect(\stX)$ induces an isomorphism
 \[
 L_i=\{ (x,y) \st x\in \Hl^0(\odi\stX)\comma y\in L_0\comma x^{p^i}=y\}\arr\{x\in \Hl^0(\sO_{\stX})\st x^{p^i}\in L_0\}
 \]
 Notice that $\Hl^0(\odi\stX)$ is reduced: if $u\in \Hl^0(\odi\stX)$ with $u^n=0$, then for $i$ large $F^{i^*}(\odi\stX/u\odi\stX)\simeq\odi\stX/u^{p^i}\odi\stX\simeq \odi\stX$, and this implies that $(\odi\stX/u\odi\stX)\in\Vect(\stX)$ which is possible only if $u=0$. In particular it follows that $L_i$ is a field. Moreover $L_\infty$ is the union of the $L_i$, which implies that it is a field and that the description in the statement holds. By \ref{when C is Tannakian} we conclude that the categories $\shT_i(\stX)$ and $\shT_\infty(\stX)$ are $L_i$-Tannakian and $L_\infty$-Tannakian respectively.
 
 Let's consider now the equality about essentially finite objects of $\shT_i(\stX)$ in the statement. The projection $\shT_i(\stX)\arrdi\alpha \shT_0(\stX)$ is $\Z$-linear, exact and monoidal. This gives the inclusion $\subseteq$. For the converse let $\chi=(\shF,\shG,\lambda)\in \shT_i(\stX)$ such that $\shG\in \EF(\shT_0(\stX))$ and denote by $\Gamma$ the monodromy gerbe of $\chi$, which is an $L_i$-gerbe of finite type such that $\Rep (\Gamma)=\langle \chi \rangle \subseteq \shT_i(\stX)$ (see \ref{pro stuff for gerbes}). We have to show that $\Gamma$ is finite. Using \ref{Frobenius for TiX} and its notation we have a $2$-commutative diagram
   \[
  \begin{tikzpicture}[xscale=2.5,yscale=-1.2]
    \node (A0_0) at (0, 0) {$\Rep (\Gamma)$};
    \node (A0_1) at (1, 0) {$\shT_i(\stX)$};
    \node (A1_0) at (0, 1) {$\Rep (\Gamma)$};
    \node (A1_1) at (1, 1) {$\shT_i(\stX)$};
    \path (A0_0) edge [->]node [auto] {$\scriptstyle{}$} (A0_1);
    \path (A0_0) edge [->]node [auto] {$\scriptstyle{F^{i*}}$} (A1_0);
    \path (A0_1) edge [->]node [auto] {$\scriptstyle{F_{\sT}^i}$} (A1_1);
    \path (A1_0) edge [->]node [auto] {$\scriptstyle{}$} (A1_1);
  \end{tikzpicture}
  \]
and, moreover, $F_\shT^i(\chi)$ is essentially finite. Since $\Rep (\Gamma)$ is a sub Tannakian category of $\shT_i(\stX)$ it follows that $F^{i*}\chi$ is essentially finite in $\Rep (\Gamma)$ and, since $\Rep (\Gamma)=\langle\chi\rangle$, it follows that the $i$-th absolute and therefore relative Frobenius of $\Gamma$ factors through a finite gerbe. Such a factorization continues to hold if we base change to $\overline{L_i}$, so that $\Gamma\times_{L_i}\overline{L_i}\simeq \Bi G$, where $G$ is an affine group of finite type over $\overline{L_i}$ whose relative Frobenius $G\arr G^{(i)}$ factors through a finite group scheme. Since the relative Frobenius is topologically surjective, we conclude that $G$ is a finite group scheme as desired.

Let's now prove the isomorphism between $\EF(\shT_\infty(\stX))$ and the limit in the statement. Let $\chi\in \shT_\infty(\stX)$ and $\chi_i\in \shT_i(\stX)$ mapping to $\chi$ for some $i$. If $\chi$ is finite then clearly $\chi_i$ will be finite up to replace $i$. If $\chi$ is instead a kernel of a map between finite objects, then those objects and this map will be image of a map $u$ of finite objects in some $\shT_j(\stX)$. The kernel of $u$ is then a essentially finite objects of $\shT_j(\stX)$ mapping to $\chi$.

 We now consider the claims about Nori gerbes. Let $\Phi$ be a finite stack over $L_\infty$ and consider the map 
 \[
 \Hom_{L_\infty}(\Pi_{\shT_\infty(\stX)},\Phi)\arr \Hom_{L_\infty}(\stX,\Phi)
 \]
 We have to prove that this is an equivalence if $\Phi$ is local and an equivalence in general when axiom C holds.  We can moreover assume that $L_0=k$. Using \ref{approximating finite stacks}, we can find a finite extension $F/k$, a finite stack $\Gamma$ over $F$ with an isomorphism $\Phi\simeq \Gamma\times_F L_{\infty}$. The above map then becomes
 \[
 \Psi_{\Gamma,F}\colon \Hom_F(\Pi_{\shT_\infty(\stX)},\Gamma)\arr \Hom_F(\stX,\Gamma)
 \]
 Notice that $F/k$ is a finite purely inseparable extension and thus $\Gamma/k$ is finite. Moreover if $\Phi$ is local then $\Gamma/k$ is also local thanks to \ref{base change of etale for finite gerbes} and \ref{etale part and purely inseparable extensions}. 

Thus if we know that $\Psi_{\Gamma,k}$ and $\Psi_{\Spec(F),k}$ are equivalences we can conclude that $\Psi_{\Gamma,F}$ is an equivalence. This shows that we can assume $F=k$. Set also $\Psi=\Psi_{\Gamma,k}$.

 We are going to use that finite stacks satisfies Tannakian reconstruction by \ref{finite stacks are reconstructible}. Moreover the map $\Hom_k(\stX_\shT,\Gamma_{\et})\arr \Hom_k(\stX,\Gamma_{\et})$ is an equivalence if $\Gamma$ is local (that is $\Gamma_{\et}=\Spec k$) or in general if axiom C holds. Thus we can assume it is an equivalence.  
 
 $\Psi$ \emph{essentially surjective}.
Let $\stX\arrdi a \Gamma$ be a $k$-map and consider the factorization $\Gamma\arr \Gamma_{\et}\arr \Gamma^{(j)}$ of \ref{independent of the chart}. We can extend the map $\stX\arr\Gamma_\et$ to $\stX_\shT$ obtaining a $2$-commutative diagram
  \[
  \begin{tikzpicture}[xscale=1.5,yscale=-1.2]
    \node (A0_0) at (0, 0) {$\stX$};
    \node (A0_1) at (1, 0) {$\stX_\shT$};
    \node (A1_0) at (0, 1) {$\Gamma$};
    \node (A1_1) at (1, 1) {$\Gamma_{\et}$};
    \node (A1_2) at (2, 1) {$\Gamma^{(j)}$};
    \path (A0_0) edge [->]node [auto] {$\scriptstyle{}$} (A0_1);
    \path (A0_0) edge [->]node [auto] {$\scriptstyle{a}$} (A1_0);
    \path (A0_1) edge [->]node [auto] {$\scriptstyle{}$} (A1_1);
    \path (A1_0) edge [->]node [auto] {$\scriptstyle{}$} (A1_1);
    \path (A1_1) edge [->]node [auto] {$\scriptstyle{}$} (A1_2);
  \end{tikzpicture}
  \]
and therefore, by \ref{universal property at level i}, a map $e\colon \Pi_{\shT_j(\stX)}\arr \Gamma$ inducing $a\colon \stX\arr \Gamma$. The map $e$ is automatically $k$-linear because $a$ is so and $\shT_j(\stX)\arr \Vect(\stX)$ is faithful.

$\Psi$ \emph{fully faithful}. We are going to show that a map $\Pi_{\shT_\infty(\stX)}\arr \Gamma$ factors through a map $\Pi_{\shT_i(\stX)}\arr \Gamma$. Before doing that we show how to conclude that $\Psi$ is fully faithful. Let $\alpha,\beta\colon \Pi_{\shT_\infty(\stX)}\arr \Gamma$ be two maps and $\delta\colon \alpha_{|\stX}\arr \beta_{|\stX}$ be an isomorphism of functors $\stX\arr \Gamma$. The uniqueness of an extension is easy, because $\alpha,\beta$ correspond to maps $\Vect(\Gamma)\arr \shT_{\infty}(\stX)$ and $\shT_{\infty}(\stX)\arr \Vect(\stX)$ is faithful. We can assume that both $\alpha,\beta$ factors through $\Pi_{\shT_i(\stX)}$, so that, by \ref{universal property at level i}, they correspond to $2$-commutative diagrams
  \[
  \begin{tikzpicture}[xscale=1.9,yscale=-1.2]
    \node (A0_0) at (0, 0) {$\stX$};
    \node (A0_1) at (1, 0) {$\stX_\shT$};
    \node (A1_0) at (0, 1) {$\Gamma$};
    \node (A1_1) at (1, 1) {$\Gamma^{(i)}$};
    \path (A0_0) edge [->]node [auto] {$\scriptstyle{}$} (A0_1);
    \path (A0_0) edge [->,bend left=15]node [auto,swap] {$\scriptstyle{\alpha_{|\stX}}$} (A1_0);
    \path (A0_0) edge [->,bend right=15]node [auto] {$\scriptstyle{\beta_{|\stX}}$} (A1_0);
    \path (A0_1) edge [->,bend right=15]node [auto] {$\scriptstyle{v}$} (A1_1);
    \path (A0_1) edge [->,bend left=15]node [auto,swap] {$\scriptstyle{u}$} (A1_1);
    \path (A1_0) edge [->]node [auto] {$\scriptstyle{}$} (A1_1);
  \end{tikzpicture}
  \]
where $u,v$ are $k$-linear. By \ref{independent of the chart} there exists $j>i$ and a factorization $\Gamma^{(i)}\arr \Gamma_{\et}^{(i)}=\Gamma_{\et} \arr \Gamma^{(j)}$. Replacing $i$ by $j$ we can assume that $u,v$ factor through $\Gamma_\et \arr \Gamma^{(i)}$.
Since $\Hom_k(\stX_\shT,\Gamma_{\et})\simeq \Hom_k(\stX,\Gamma_{\et})$ we can lift the isomorphism $\delta\colon \alpha_{|\stX}\arr \beta_{|\stX}$ to an isomorphism $u\arr v$ as required.

It remains to show that a $k$-linear, monoidal and exact map $F\colon \Vect(\Gamma)\arr \shT_\infty(\stX)$ factors through some $\shT_i(\stX)$. 
We will use a slight modification of \cite[Prop 3.8]{BV} and its proof. Pick $S=\Spec K\arr \Pi_{\shT_{\infty}(\sX)}$, where $K$ is a field, an object corresponding to $\xi\colon \shT_\infty(\stX)\arr \Vect(K)$ and set $R_j=S\times_{\Pi_{\shT_j(\stX)}}S$. Given a $K$-scheme $T$ an object of $R_\infty(T)$ is a triple $(u,v,\gamma)$ where $u,v\colon T\arr S$ and $\gamma\colon u^*\circ\xi \arr v^*\circ \xi$ is a monoidal isomorphism. Similarly, using the functor $\chi=\chi_{\Vect(T)}$ of \ref{hom from the limit}, an object of $(\varprojlim_j R_j)(T)$ is a triple $(u,v,\tilde \gamma)$ where $u,v\colon T\arr S$ and $\tilde\gamma\colon \chi(u^*\circ\xi) \arr \chi(v^*\circ \xi)$ is an isomorphism given by monoidal natural transformations. From this we can deduce that $R_\infty\simeq \varprojlim_j R_j$. Since a map from a scheme to a gerbe is an fpqc covering, the map $\Pi_{\shT_\infty(\stX)}\arr \Gamma$ is given by an object $z\in \Gamma(S)$ with an identification of the two projections in $\Gamma(R_{\infty})\simeq \varinjlim_i \Gamma(R_j)$ satisfying the cocycle condition. Here we use that $\Gamma$ is finitely presented. This identification lies in some $\Gamma(R_j)$. Up to replace this $j$, we can also assume that this identification satisfies the cocycle condition, which yields the desired factorization.\end{proof}

\begin{rmk}
 If $\stX$ is a reduced category fibered in groupoids over $k$ and $\shF\in\QCoh_{\textup{fp}}(\stX)$ then $F^*\shF\in \Vect(\stX)$ implies that $\shF\in\Vect(\stX)$. Indeed let $\phi\colon V\arr\stX$ be a map from a scheme. We must show that $\phi^*\shF$ is a vector bundle. Since $\stX$ is reduced, by fpqc descent we can assume that $\phi$ factors through a reduced scheme. This allow to assume that $\stX$ is a reduced scheme and also that $\stX=\Spec R$, where $R$ is a local ring, so that $F^*\shF$ is free of some rank $r$. Since the Frobenius is an homeomorphism, it follows that for all $p\in \stX$ we have $\dim_{k(p)}\shF\otimes k(p)=r$. Nakayama's lemma gives a surjective morphism $\phi\colon R^r\arr \shF$. If $v\in \Ker \phi$, since $\phi$ is an isomorphism on each minimal prime ideal of $R$, it follows that all entries of $v$ are nilpotent and thus $v=0$.
\end{rmk}

\begin{thm}\label{full nori from etale nori}
 Let $\stZ$ be a reduced and inflexible category fibered in groupoids over $k$ and denote by $\pi\colon \stZ\arr \Pi^{\NN,\et}_{\stZ/k}$ the structure morphism. Denote also by $\shC_i$ the monoidal and additive category of triples $(\E,V,\lambda)$ where $\E\in \Vect(\stZ)$, $V\in \Rep (\Pi^{\NN,\et}_{\stZ/k})$ and $\lambda\colon F^{i*}\E \arr \pi^*V$ is an isomorphism and regard $\shC_i$ as a $k$-linear category via $k\arr \End(\odi\stZ,\odi{\Pi^{\NN,\et}_{\stZ/k}},1)$, $x\mapsto (x,x^{p^i})$. By pulling back along the Frobenius of $\Pi^{\NN,\et}_{\stZ/k}$ we obtain $k$-linear monoidal functors $\shC_i\arr \shC_{i+1}$. Then the $\shC_i$ are $k$-Tannakian categories and there is an equivalence of $k$-Tannakian categories $\varinjlim_i \shC_i\simeq \Rep \Pi^\NN_{\stZ/k}$, where the structure morphism $\Rep\Pi^\NN_{\stZ/k}\arr\Vect(\stZ)$ corresponds to the forgetful functor.
\end{thm}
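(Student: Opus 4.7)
The plan is to specialize the axiomatic formalism of Section~\ref{general Tannaka duality}--\ref{when T(X) is Tannakian} to the pair $\stX = \stZ$ and $\stX_\shT = \Pi^{\NN,\et}_{\stZ/k}$ equipped with the structure morphism $\pi$. With this choice $\shT(\stZ) = \Vect(\Pi^{\NN,\et}_{\stZ/k}) = \Rep \Pi^{\NN,\et}_{\stZ/k}$, and unwinding Definition~\ref{sequence of EH} gives a tautological identification of $\shC_i$ with $\shT_i(\stZ)$ matching the $k$-linear structures (both given by $x \mapsto (x,x^{p^i})$) as well as the transition functors (both induced by pullback along the absolute Frobenius of $\Pi^{\NN,\et}_{\stZ/k}$). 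The theorem will then follow from Theorem~\ref{main theorem formal II} once its hypotheses are verified and a single extra identification, $\EF(\shT_\infty(\stZ)) = \shT_\infty(\stZ)$, is established.

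First I would verify axioms A, B, C of \ref{axioms} in this setup. Since $\Pi^{\NN,\et}_{\stZ/k}$ is a $k$-gerbe we have $L_0 = \Hl^0(\odi{\Pi^{\NN,\et}_{\stZ/k}}) = k$, in particular a field; axiom~A then holds because for any affine gerbe (in particular a pro-\'etale one, obtained as a filtered limit of finite \'etale gerbes for which \cite[Lemma~7.15]{BV} applies, cf.\ \ref{finite reduced stacks are gerbes}) every finitely presented quasi-coherent sheaf is a vector bundle. Axiom~B, the faithfulness of $\shT(\stZ) \to \Vect(\stZ)$, is then automatic by Proposition~\ref{when TX is Tannakian}. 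Axiom~C is the defining universal property of $\Pi^{\NN,\et}_{\stZ/k}$ applied to finite \'etale stacks over $L_0 = k$.

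Next, I would check the Frobenius-descent hypothesis of Theorem~\ref{main theorem formal II}, namely that $F^*\shF\in\Vect(\stZ)$ implies $\shF\in\Vect(\stZ)$; this is precisely the remark immediately preceding that theorem, valid because $\stZ$ is reduced. To compute $L_\infty$, note that by Theorem~\ref{inflexibility characterization} the inflexibility and reducedness of $\stZ$ force $k$ to be integrally closed in $\Hl^0(\odi\stZ)$. Any $x \in L_\infty$ satisfies $x^{p^i} \in k$ for some $i$, hence is integral over $k$, hence lies in $k$; thus $L_i = k$ for every $i \leq \infty$. Theorem~\ref{main theorem formal II} then directly asserts that each $\shC_i$ is a $k$-Tannakian category and produces a canonical equivalence $\EF(\shT_\infty(\stZ)) \simeq \Rep \Pi^\NN_{\stZ/k}$ intertwining the two forgetful functors to $\Vect(\stZ)$.

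The only non-formal step, which I expect to be the main (but mild) obstacle, is the identification $\EF(\shT_\infty(\stZ)) = \shT_\infty(\stZ)$. Using the characterization $\EF(\shT_i(\stZ)) = \{(\shF,\shG,\lambda) \st \shG \in \EF(\shT_0(\stZ))\}$ from Theorem~\ref{main theorem formal II}, this reduces to showing that every $V \in \Rep \Pi^{\NN,\et}_{\stZ/k}$ is essentially finite. Its monodromy gerbe is a quotient of the pro-\'etale gerbe $\Pi^{\NN,\et}_{\stZ/k}$ of finite type over $k$; such a gerbe is \'etale with a finitely generated \'etale band, hence a finite \'etale gerbe, so $V$ generates a finite Tannakian subcategory and is essentially finite. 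Combining this with the previous paragraph yields the claimed equivalence $\varinjlim_i \shC_i \simeq \Rep \Pi^\NN_{\stZ/k}$ of $k$-Tannakian categories, compatible with the forgetful functors to $\Vect(\stZ)$.
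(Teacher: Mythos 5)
Your proposal is correct and follows essentially the same route as the paper: apply the axiomatic machinery to $\stZ\arr\Pi^{\NN,\et}_{\stZ/k}$, verify the axioms and the Frobenius hypothesis, use inflexibility plus reducedness to get $L_0=L_\infty=k$, and conclude via Theorem \ref{main theorem formal II} together with the observation that $\EF(\shT_0(\stZ))=\shT_0(\stZ)$ forces $\EF(\shT_\infty(\stZ))=\shT_\infty(\stZ)$. You merely spell out details the paper leaves as "easy to see" (and you correctly note that axiom D, which the paper mentions in passing, is not actually needed for the conclusion).
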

\begin{proof}
 Consider $\stZ=\stX\arr \stX_\shT=\Pi^{\NN,\et}_{\stZ/k}$. It is easy to see that this map satisfies axioms $A,B,C$ and $D$ and $L=\Hl^0(\odi{\Pi^{\NN,\et}_{\stZ/k}})=k$. By \cite[Prop 5.4, a)]{BV} it follows that $k$ is integrally closed in $\Hl^0(\odi\stZ)$ and the result then follows from \ref{main theorem formal II}: we have $k=L_0=L_\infty$, $\shC_i=\shT_i(\stZ)$ and $\EF(\shT_0(\stX))=\shT_0(\stX)$ implies $\EF(\shT_\infty(\stX))=\shT_\infty(\stX)$.
\end{proof}

\section{Stratification, crystal and Frobenius divided structures}

In this section we apply the result of the previous section to find  explicit morphisms $\pi_{\sT}:\stX\arr \stX_\shT$ for which the general theory works properly. In the next sections when  we  talk about axioms we will always refer to the list of axioms \ref{axioms}.

We start by introducing some geometric notions that will be used in the whole section.
\begin{defn} \label{separably generated}
 A field extension $L/k$ is called separably generated (resp. separable) up to a finite extension if there exists an intermediate extension $k\subseteq F\subseteq L$ such that $L/F$ is finite and $F/k$ is separable (resp. separably generated) (see \cite[\href{http://stacks.math.columbia.edu/tag/030I}{030I}]{SP}). 
\end{defn}

For instance any finitely generated field extension of $k$ is separably generated up to a finite extension.

\begin{rmk}\label{separable extension base change by purely inseparable is a field}
 If $L/k$ is a separable extension and $E/k$ is an algebraic and purely inseparable extension then $L\otimes_k E$ is a field. Indeed $\Spec(L\otimes_k E)\arr \Spec L$ is an homeomorphism and, by  \cite[\href{http://stacks.math.columbia.edu/tag/030W}{030W}]{SP}, is reduced.
\end{rmk}

\begin{defn}\label{defn: adically separated schemes}
 Let $X$ be a scheme. A point $p\in X$ is called adically separated if the local ring $(\odi{X,p},m_p)$ is $m_p$-adically separated, that is $\bigcap_n m_p^n=0$.
\end{defn}

\begin{rmk}
 We introduced this notion instead of considering just Noetherian rings because, when studying $F$-divided sheaves, we have to consider Frobenius twists $X^{(i)}$ of a scheme $X$, which may be not Noetherian even though $X$ is so. For example, if $k$ is a field whose absolute Frobenius is not finite and $L=k$ with $k$-structure given by the Frobenius $k\arr L$, then $L^{(1,k)}=L\otimes_k L$ is not Noetherian. The $p$-power of any element in the kernel of the multiplication map $\delta\colon L\otimes_k L \arr L$ is zero because
 \[
 (\sum_i a_i\otimes b_i)^p=\sum_i a_i^p\otimes b_i^p = 1\otimes (\sum_i a_i^pb_i^p) = 1\otimes \delta(\sum_i a_i\otimes b_i)^p \text{ for all } a_i,b_i\in L
 \]
In particular $L\otimes_k L$ is a local $L$-algebra with residue field $L$. If $L\otimes_k L$ was Noetherian, then the maximal ideal would be nilpotent and, because the residue field is a finite extension of $L$, $L\otimes_k L$ would be a finite $L$-algebra. Thus $L$ would be a finite extension of $k$, contrary to our assumption. 
\end{rmk}

Instead adically separatedness is maintained by Frobenius twists under some mild hypothesis:

\begin{lem}\label{Bon Ring}
Let $(R,m)$ be an $m$-adically separated local ring defined over a field $k$ of positive characteristic and whose residue field is separable up to a finite extension over $k$. Then, for all $i\in N$, $R^{(i)}$ is a local ring separated for the topology of its maximal ideal and its residue field is separable up to a finite extension over $k$.
\end{lem}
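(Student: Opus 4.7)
The plan is to verify the three assertions of the lemma in turn: (i) $R^{(i)}$ is a local ring, (ii) its residue field is separable up to a finite extension over $k$, and (iii) $R^{(i)}$ is separated for the topology of its maximal ideal.

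For (i), the morphism $\Spec R^{(i)}\to \Spec R$ is the base change of the Frobenius $F^i\colon \Spec k\to \Spec k$, which is a universal homeomorphism, Frobenius being such for the spectrum of a field. Hence $\Spec R^{(i)}$ is a single closed-point scheme, so $R^{(i)}$ is local. The maximal ideal is then $m^{(i)}=f_i^{-1}(m)$, where $f_i\colon R^{(i)}\to R$ is the $i$-th relative Frobenius $r\otimes\lambda\mapsto r^{p^i}\lambda$ of Remark \ref{rmk about relative frobenius}.

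For (ii), by that same remark $\ker f_i=\{x\in R^{(i)}\mid x^{p^i}=0\}$ is a nil ideal, and passing modulo $m$ the induced map $\bar f_i\colon L^{(i)}\to L$ has image the $k$-subalgebra $k\cdot L^{p^i}\subseteq L$ generated by $p^i$-th powers. Applying the universal-homeomorphism argument to $\Spec L^{(i)}\to \Spec L$ shows $L^{(i)}$ is local with maximal ideal $\ker \bar f_i$, so the residue field of $R^{(i)}$ identifies with $k\cdot L^{p^i}\subseteq L$. To show $k\cdot L^{p^i}/k$ is separable up to a finite extension, write $L/F$ finite with $F/k$ separably generated and pick a separating transcendence basis $\{t_\beta\}$ of $F/k$. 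Then $F^{p^i}$ is separable algebraic over $k^{p^i}(t_\beta^{p^i})$, so $k\cdot F^{p^i}$ is separable algebraic over $k(t_\beta^{p^i})$; this exhibits $\{t_\beta^{p^i}\}$ as a separating transcendence basis of $k\cdot F^{p^i}/k$. Since Frobenius preserves field degrees, $L^{p^i}/F^{p^i}$ is finite, hence so is $k\cdot L^{p^i}/k\cdot F^{p^i}$, giving the desired structure on the residue field.

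The adic separation, (iii), is the part I expect to be the main obstacle. The starting observation is that $R^{(i)}=R\otimes_k k^{(i)}$ is free as an $R$-module: using that the first-factor $k$-action on $R^{(i)}$ factors through $k^{p^i}$ via Frobenius, a $k^{p^i}$-basis $\{b_\alpha\}$ of $k$ yields a decomposition $R^{(i)}=\bigoplus_\alpha R\cdot(1\otimes b_\alpha)$. Freeness together with $\bigcap_n m^n=0$ in $R$ immediately gives $\bigcap_n m^n R^{(i)}=0$. The remaining step is to bound $(m^{(i)})^n$ by some power of $mR^{(i)}$. This is controlled by $\mathfrak n=m^{(i)}/mR^{(i)}$, the nil maximal ideal of $L^{(i)}=R^{(i)}/mR^{(i)}$. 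When $[k:k^{p^i}]<\infty$, $L^{(i)}$ is Artinian and $\mathfrak n$ is nilpotent of some order $N$, so $(m^{(i)})^{N}\subseteq mR^{(i)}$ and chaining yields $\bigcap_n(m^{(i)})^n\subseteq\bigcap_n m^n R^{(i)}=0$. In the general case (infinite imperfection), $\mathfrak n$ need not be nilpotent; I would then exploit that every element of $R^{(i)}$ lies in a finite free sub-$R$-algebra $R\otimes_k V$ for a finite-dimensional $k^{p^i}$-subalgebra $V\subseteq k$. Each such subalgebra is local with Artinian reduction modulo $m$, hence adically separated at its own maximal ideal, and faithful flatness of the inclusions into $R^{(i)}$ should allow one to descend an element of $\bigcap_n(m^{(i)})^n$ to a finite piece and conclude it is zero. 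The careful bookkeeping of this descent is the technical heart of the proof.
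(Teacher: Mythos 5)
Parts (i) and (ii) of your plan are sound and close in spirit to the paper's: locality follows because $k\arr k$ is purely inseparable, and the residue field of $R^{(i)}$ is the residue field of $L^{(i)}$, which the paper handles by noting that $E^{(1)}$ is a separable field extension of $k$ (Remark \ref{separable extension base change by purely inseparable is a field}) with $F^{(1)}/E^{(1)}$ finite, rather than via a separating transcendence basis. (Be careful there: a separating transcendence basis exists for separably \emph{generated} extensions, whereas the decomposition the paper actually invokes only gives $E/k$ separable; a limit argument over finitely generated subextensions, or the paper's route, avoids this.) Your treatment of (iii) in the case $[k:k^{p^i}]<\infty$ is also correct: $L^{(i)}$ is then Artinian, its maximal ideal $\mathfrak{n}$ is nilpotent, so $(m^{(i)})^{N\ell}\subseteq m^\ell R^{(i)}$ and freeness of $R^{(i)}$ over $R$ finishes the argument.

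The general case of (iii) is where there is a genuine gap, and I do not think the proposed descent can be repaired. If $x\in\bigcap_n (m^{(i)})^n$, then $x$ lies in some finite piece $R\otimes_k V$, but a representation of $x$ as a sum of products of $n$ elements of $m^{(i)}$ only places those factors in some $R\otimes_k V_n$ with $V_n$ growing with $n$; you get $x\in (m_{V_n})^n$ for a varying sequence, not $x\in\bigcap_n(m_V)^n$ for a fixed $V$. Faithful flatness gives $(m_V)^nR^{(i)}\cap (R\otimes_kV)=(m_V)^n$, but $m_VR^{(i)}\subsetneq m^{(i)}$ for any fixed finite $V$, so $(m^{(i)})^n\cap(R\otimes_kV)$ is strictly larger than $(m_V)^n$ in general. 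A filtered union of adically separated local subrings need not be separated — the paper's own example $L\otimes_kL$ with $L=k^{\perf}$, whose maximal ideal $J$ satisfies $J^2=J\neq 0$, illustrates how idempotent maximal ideals arise in such colimits. A further warning sign is that your sketch never uses the residue-field hypothesis for the separation claim, whereas the paper's proof uses it essentially: it first reduces to $m$ nilpotent (via the quotients $R/m^\ell$), then uses formal smoothness of the separable part $E/k$ of the residue field to lift $E$ into $R$, identifies $R^{(1,k)}\simeq R\otimes_E E^{(1,k)}$ and injects it into $R^{(1,E)}$, thereby reducing to the case where the residue field is \emph{finite} over the base; there $F^{(1)}$ is Noetherian, so a power of $m_1$ lands in the nilpotent ideal $m\otimes_k k$ and $m_1$ is itself nilpotent. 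Some such use of the hypothesis is needed to control $\mathfrak{n}$ when $[k:k^{p^i}]=\infty$.
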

\begin{proof}
Notice that $R^{(1)}$ is a local ring because the Frobenius of $k$ is purely inseparable.
Denote by $F$ the residue field of $R$. By hypothesis there exists $k\subseteq E\subseteq F$ such that $E/k$ is separable and $F/E$ is finite. By \ref{separable extension base change by purely inseparable is a field} $E^{(1)}$ is a separable field extension of $k$. Since $F^{(1)}/E^{(1)}$ is finite, we see that also the residue field of $R^{(1)}$ is obtained as a separable extension followed by a finite one. In particular we can assume $i=1$. 

 Denote by $m_1$ the maximal ideal of $R^{(1)}$. We can assume $m$ nilpotent: since the image of $\cap_n m_1^n$ in $(R/m^lR)^{(1)}$ is zero for all $l$ we have
 \[
 \bigcap_n m_1^n \subseteq \bigcap_l (m^l\otimes_k k) = (\bigcap m^l) \otimes_k k = 0 
 \]
 By \cite[\href{http://stacks.math.columbia.edu/tag/0320}{0320}]{SP} the extension $E/k$ is formally smooth. Thus there is a lifting $E\subseteq R$. In particular
 \[
 R^{(1,k)}=R\otimes_k k \simeq R\otimes_E E^{(1,k)}
 \]
 Since $E^{(1,k)}$ is a field the relative Frobenius $E^{(1,k)}\arr E$ is injective and, applying $R\otimes_E -$, we see that also $R^{(1,k)}\arr R^{(1,E)}$ is injective. This allows us to reduce the problem to the case that $F/k$ is a finite extension. In particular $F^{(1)}$ is Notherian. Thus a power of $m_1$ lies in the kernel of $R^{(1)}\arr F^{(1)}$, which is $m\otimes_k k$. Since this last ideal is nilpotent, we get that $m_1$ is nilpotent too.
\end{proof}

\begin{lem}\label{m-adically sepated rings and free modules}
Let $(R,m)$ be an $m$-adically separated ring and $M$ be a finitely generated $R$-module. Then $M$ is free if and only if $M/m^nM$ is a free $R/m^n$-module for all $n\in \N$.
\end{lem}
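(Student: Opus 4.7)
The ``only if'' direction is immediate: an isomorphism $M\simeq R^r$ descends to $M/m^nM\simeq (R/m^n)^r$ for every $n$. So the content is the converse.

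For the ``if'' direction, my plan is to lift a basis of $M/mM$ through Nakayama and then use the hypothesis, together with $m$-adic separation, to show the resulting surjection from a free module is injective. Concretely, write $r$ for the rank of $M/mM$ over $R/m$ (which equals the rank of $M/m^nM$ over $R/m^n$ for every $n$, since $M/m^nM\otimes_{R/m^n}R/m=M/mM$ and rank of a free module is detected by a residue field quotient). Choose elements $x_1,\dots,x_r\in M$ lifting a basis of $M/mM$; since $M$ is finitely generated and $R$ is local, Nakayama's lemma shows that $x_1,\dots,x_r$ generate $M$, giving a surjection $\phi\colon R^r\twoheadrightarrow M$.

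The crucial step is to show that each induced surjection
\[
\phi_n\colon (R/m^n)^r \twoheadrightarrow M/m^nM
\]
is in fact an isomorphism. Let $K_n=\ker\phi_n$. By hypothesis $M/m^nM$ is free over $R/m^n$, so the short exact sequence $0\to K_n\to (R/m^n)^r\to M/m^nM\to 0$ splits. Thus $K_n$ is a direct summand of the finitely generated module $(R/m^n)^r$, so $K_n$ itself is finitely generated over $R/m^n$. Reducing the splitting modulo $m$ gives $(R/m)^r\simeq (K_n\otimes_{R/m^n}R/m)\oplus M/mM$, and since $M/mM$ already has rank $r$ over $R/m$ we conclude $K_n\otimes_{R/m^n}R/m=0$. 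Because $R/m^n$ is local with nilpotent maximal ideal $m/m^n$, Nakayama's lemma forces $K_n=0$, so $\phi_n$ is an isomorphism.

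Finally, from the injectivity of each $\phi_n$ we read off that $\ker\phi\subseteq m^nR^r$ for every $n$: any $v\in\ker\phi$ maps to $0$ in $M/m^nM$, hence to $0$ in $(R/m^n)^r$, so $v\in m^nR^r$. By $m$-adic separation of $R$ (and hence of $R^r$) we get $\ker\phi=0$, so $\phi$ is an isomorphism and $M$ is free. The only delicate point to keep in mind is that $R$ need not be Noetherian here, but this causes no trouble: the splitting argument automatically produces finite generation of $K_n$, and Nakayama over the local ring $R/m^n$ (where $m/m^n$ is nilpotent) requires nothing more.
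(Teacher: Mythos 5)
Your proof is correct and follows essentially the same route as the paper's: lift a basis of $M/mM$ to a surjection $\phi\colon R^r\twoheadrightarrow M$, show each reduction $\phi_n$ is an isomorphism, and conclude $\ker\phi\subseteq\bigcap_n m^nR^r=0$ by separatedness. The only difference is that you spell out the splitting/Nakayama argument for why $\phi_n$ is injective, a step the paper asserts without detail.
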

\begin{proof}
We have to prove $\Longleftarrow$. Lifting a basis of $M/mM$ we can define a surjective morphism $\phi\colon R^l \arr M$, which will be an isomorphism after tensoring by $R/m^nR$ by hypothesis. So if $v\in \Ker \phi$, it becomes $0$ on all the quotients $(R/m^nR)^l$ and therefore $v\in (\cap_n m^n)^l=0$ as desired.
\end{proof}

\subsection{Stratifications and crystals}
\begin{defn}\label{definition of stratifications and crystals}
 Let $\pi\colon \stX\arr \Aff/k$ be a category over $\Aff/k$. We define the big infinitesimal site $\stX_{\iinf/k}$ of $\stX$ as the category of pairs $(\xi,j)$ where $\xi\in \stX$ and $j\colon \pi(\xi)\arr T$, where $T$ is an affine $k$-scheme, is a nilpotent closed immersion.
A morphism $(\xi,\pi(\xi)\arrdi j T)\arr (\xi',\pi(\xi')\arrdi{j'} T')$ is a pair $(\alpha,\beta)$, where $\alpha\colon \xi\arr\xi'$ and $\beta\colon T\arr T'$ are such that the following diagram is commutative
   \[
  \begin{tikzpicture}[xscale=1.5,yscale=-1.2]
    \node (A0_0) at (0, 0) {$\pi(\xi)$};
    \node (A0_1) at (1, 0) {$T$};
    \node (A1_0) at (0, 1) {$\pi(\xi')$};
    \node (A1_1) at (1, 1) {$T'$};
    \path (A0_0) edge [->]node [auto] {$\scriptstyle{j}$} (A0_1);
    \path (A0_0) edge [->]node [auto,swap] {$\scriptstyle{\pi(\alpha)}$} (A1_0);
    \path (A0_1) edge [->]node [auto] {$\scriptstyle{\beta}$} (A1_1);
    \path (A1_0) edge [->]node [auto] {$\scriptstyle{j'}$} (A1_1);
  \end{tikzpicture}
  \]
An object $(\xi,\pi(\xi)\arrdi j T)\in\stX_{\iinf/k}$ is called \emph{extendable} if there exists a map $\xi\arr \eta$ in $\stX$ such that $\pi(\xi)\arr \pi(\eta)$ factors through $\pi(\xi)\arr T$. If $\stX$ is a fibered category this simply means that $\xi\colon \pi(\xi)\arr \stX$ extends along $\pi(\xi)\arr T$.
We define the big stratified site $\stX_{\str/k}$ of $\stX$ as the full sub category of extendable objects of $\stX_{\iinf/k}$. We will consider $\stX_{\str/k}$ and $\stX_{\iinf/k}$ as categories over $k$ via the association $(\xi,j:\pi(\xi)\arr T)\longmapsto T$. Notice that there is a canonical map $\stX\arr\stX_{\str/k}\subseteq \stX_{\iinf/k}$ of categories over $\Aff/k$  given by $\xi\longmapsto (\xi,\id_{\pi(\xi)})$. If $\stX$ is a fibered category over $k$ then also $\stX_\str$ and $\stX_\iinf$ are fibered categories.

Let $\stY$ be a fibered category over $k$. Following notations and definitions from \ref{the T construction} we define the following objects:
\begin{itemize}
 \item if $\stX\arr \stX_\shT=\stX_{\str/k}$ then $\shT_k$ will be replaced by $\Strat_k$ and an object of $\Strat_k(\stX,\stY)=\Hom_k^c(\stX_{\str/k},\stY)$ will be called a stratified map, while an object of $\Strat_k(\stX)=\Vect(\stX_{\str/k})$ a stratified sheaf on $\stX$.
 \item if $\stX\arr \stX_\shT=\stX_{\iinf/k}$ then $\shT_k$ will be replaced by $\Crys_k$ and an object of $\Crys_k(\stX,\stY)=\Hom_k^c(\stX_{\iinf/k},\stY)$ will be called a crystal map, while an object of $\Crys_k(\stX)=\Vect(\stX_{\iinf/k})$ a crystal of sheaves on $\stX$.
\end{itemize}
 When $k$ is clear from the context it will be omitted.
\end{defn}

If $\stZ$ is a scheme and $\stX$ is the category of open subsets of $\stZ$ then $\Crys(\stX)$ is, by construction, the usual category of crystals of sheaves. Although we don't prove it here, it is possible to show that the restriction $\Crys(\stZ)\arr \Crys(\stX)$ is an equivalence. In this paper we prefer to consider the big site $\Aff/\stZ$ instead of the small Zariski site to extends the theory to algebraic stacks and fibered categories.

The main result of this section is the following Theorem:
\begin{thm}\label{main thm for str and inf}
Let $\stZ$ be a category fibered in groupoids over $k$. Then:
\begin{enumerate}
 \item axiom C holds for $\stZ\arr \stZ_{\str}$ and $\stZ\arr \stZ_{\iinf}$;
 \item axiom A implies axiom B for $\stZ\arr \stZ_{\str}$ and $\stZ\arr \stZ_{\iinf}$;
 \item axioms A and B hold for $\stZ\arr \stZ_{\str}$ if $\stZ$ admits an fpqc covering $U\arr \stZ$ where $U$ is a scheme over $k^{\perf}$ such that all its nonempty closed subsets contain an adically separated point (see \ref{defn: adically separated schemes}); if moreover $\stZ$ is connected and there exists a map $\Spec L\arr \stZ$ where $L/k$ is a field extension which is separably generated up to a finite extension (see \ref{separably generated}) then $\Hl^0(\odi{\stZ_{\str/k}})=\Hl^0(\odi\stZ)_{\et,k}$;
 \item axiom A and B holds for $\stZ\arr\stZ_{\iinf}$ if $\stZ$ is an algebraic stack locally of finite type over $k$; moreover in this case $\Hl^0(\odi{\stZ_{\iinf/k}})=\Hl^0(\odi\stZ)_{\et,k}$.
\end{enumerate}
\end{thm}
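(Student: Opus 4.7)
The plan is to prove each of the four parts separately: (1) and (2) are formal, while (3) and (4) exploit the geometric hypotheses. A key preliminary observation is that for any object $(\xi, j \colon \pi(\xi) \hookrightarrow T)$ of $\stZ_\iinf$ or $\stZ_\str$, the morphism $(\id_\xi, j) \colon (\xi, \id_{\pi(\xi)}) \to (\xi, j)$ in the site is cartesian over $j$: since $j$ is a closed immersion, the pullback of $(\xi, j)$ along $j$ is $(\xi, \id_{\pi(\xi)})$. Consequently, for every $\shF \in \QCoh_{\textup{fp}}(\stZ_\shT)$ there is a canonical isomorphism $j^* \shF_{(\xi, j)} \simeq \shF_{(\xi, \id)}$, and for any morphism $\alpha \colon \shF \to \shG$ one has $j^* \alpha_{(\xi, j)} = \alpha_{(\xi, \id)}$ under this identification.

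For part (1), finite \'etale stacks $\Gamma$ are rigid against nilpotent closed immersions: the restriction $\Gamma(T) \to \Gamma(S)$ is an equivalence for every nilpotent $S \hookrightarrow T$. Given $f \colon \stZ \to \Gamma$ over $L$ and $(\xi, j \colon \pi(\xi) \hookrightarrow T)$ in $\stZ_\iinf$, define $\tilde f(\xi, j) \in \Gamma(T)$ as the unique lift of $f(\xi) \in \Gamma(\pi(\xi))$ along $j$. Uniqueness of lifts yields functoriality together with existence and uniqueness of the extension, proving the equivalence $\Hom_L(\stZ_\iinf, \Gamma) \simeq \Hom_L(\stZ, \Gamma)$. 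The same argument works for $\stZ_\str \subseteq \stZ_\iinf$.

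For part (2), by \ref{TX abelian} axiom A makes $\shT(\stZ) = \Vect(\stZ_\shT)$ abelian with images, kernels and cokernels all locally free on $\stZ_\shT$; in particular formation of images commutes with $\pi_\shT^*$. Given $\alpha \colon \shF \to \shG$ with $\pi_\shT^*\alpha = 0$, it suffices to show $\Imm \alpha = 0$. At each $(\xi, j)$, the cartesian morphism above gives $j^* \Imm(\alpha)_{(\xi, j)} \simeq \Imm(\alpha)_{(\xi, \id)} = 0$. Since $\Imm(\alpha)_{(\xi, j)}$ is locally free on $T$ and $j$ is a nilpotent closed immersion, every prime of $T$ contains the defining ideal of $j$, so the local rank of $\Imm(\alpha)_{(\xi, j)}$ at every point of $T$ equals the rank of its restriction along $j$, which is zero. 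Hence $\Imm(\alpha)_{(\xi, j)} = 0$ for all $(\xi, j)$, and $\alpha = 0$.

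For parts (3) and (4), axiom B follows from A via (2), so the core task is axiom A together with the $\Hl^0$ identifications. For axiom A in (3), fpqc descent along $U \to \stZ$ reduces to the case $\stZ = U$ a scheme over $k^{\perf}$. At an adically separated point $u \in U$ with local ring $(R, m)$, Lemma \ref{m-adically sepated rings and free modules} reduces local freeness of $\shF|_U$ near $u$ to freeness of the pullbacks $\shF_n$ to $\Spec R/m^n$ for every $n$. The stratified structure of $\shF$ supplies canonical identifications between $\shF$ evaluated at different extensions into nilpotent thickenings, and combined with Lemma \ref{Bon Ring} (ensuring Frobenius twists $(R/m^n)^{(i)}$ remain local and adically separated with residue field separable up to a finite extension, which is precisely where $k^{\perf}$ enters) this forces $\shF_n$ to be free over $R/m^n$. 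Part (4) is parallel: a smooth atlas $V \to \stZ$ of the algebraic stack $\stZ$ locally of finite type provides fppf-local lifts of nilpotent thickenings through $V$, reducing axiom A to the affine finite-type case. The $\Hl^0$ identifications come from recognizing $\End_{\shT(\stZ)}(1_{\shT(\stZ)})$ as the subring of $\Hl^0(\odi\stZ)$ consisting of elements extending compatibly across all nilpotent thickenings in the site; the hypothesis on the existence of a point $\Spec L \to \stZ$ with $L/k$ separably generated up to finite extension (respectively the finite-type hypothesis in (4)) combined with Lemma \ref{etale part and geometric connectdness} identifies this subring as precisely $\Hl^0(\odi\stZ)_{\et, k}$. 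The main obstacle is this axiom A step, where converting the stratified or crystalline data into local freeness modulo $m^n$ requires carefully combining the infinitesimal descent isomorphisms with the Frobenius-lifting behaviour afforded by $k^{\perf}$ and adical separability.
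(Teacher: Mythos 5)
Parts (1) and (2) of your proposal are correct and essentially coincide with the paper's arguments (unique extension of maps to \'etale stacks along nilpotent closed immersions; a vector bundle whose restriction along a nilpotent closed immersion vanishes is zero). The cartesian-arrow observation at the start is also correct and useful. The problems are in (3) and (4), where the two genuinely hard points of the theorem are asserted rather than proved. For axiom A in (3): after the (correct) reduction to $\stZ=\Spec R$ with $(R,m)$ local and $m$ nilpotent, you write that the stratified structure ``combined with Lemma \ref{Bon Ring} \dots forces $\shF_n$ to be free'', but no mechanism is given, and \ref{Bon Ring} is the wrong tool --- it concerns Frobenius twists of adically separated rings and is needed for $\Fdiv$, not for $\Strat$. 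The mechanism the paper actually uses is: replace $k$ by $k^{\perf}$ via the restriction $\Strat_k\arr\Strat_{k^{\perf}}$, use \ref{key lemma for reducing to a retraction} to pass to an fpqc cover on which the residue field $L$ becomes perfect, note that an extension of perfect fields is formally smooth so that $\Spec L\arr\Spec R$ acquires a retraction, and then apply \ref{Str and Crys along nilpotent closed immersions} to realize $\shF$ as the pullback of a stratified sheaf on $\Spec L$, which is automatically free. Some such chain is unavoidable: Example \ref{example of bad stratification} produces, over $k=\F_p(z)$ and an Artinian local ring \emph{not} defined over $k^{\perf}$, a stratified coherent sheaf that is not locally free, so an argument that does not pinpoint where the $k^{\perf}$ hypothesis enters cannot be complete.

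The second gap is the identification $\Hl^0(\odi{\stZ_{\str/k}})=\Hl^0(\odi\stZ)_{\et,k}$. You reduce it to ``recognizing $\End(1)$ as the subring of elements extending compatibly across all nilpotent thickenings'' and then claim that \ref{etale part and geometric connectdness} identifies this subring with the \'etale part. That lemma does no such thing; it only controls $A_{\et,k}$ under base field extension. The real content is Proposition \ref{endo of trivial object for a field}, namely $\End_{\Strat_k(L)}(1)=L_{\et,k}$ for $L/k$ separably generated up to a finite extension, whose proof is the longest in the section (reduction to $L/K$ finite separable over a purely transcendental $K$, the identity $\End_{\Strat_k(L)}(1)=\bigcap_j L^{(j)}$ via \ref{classical definition of stratifications} and \ref{twist as intersection}, and a final minimal-polynomial and differentials argument). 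Nothing in your proposal supplies or replaces this. Similarly, in (4) the phrase ``reducing axiom A to the affine finite-type case'' omits how that case is settled: the paper evaluates the crystal on the thickening $T$ itself via \ref{from the big to the small site} and \ref{Str and Crys along nilpotent closed immersions}, and then invokes part (3) over the fpqc atlas $T\times_k k^{\perf}\arr T$.
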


\begin{proof}[Proof of Theorem \ref{main thm for str and inf},1).]
This follows by definition, because a map to something \'etale extends uniquely along a nilpotent closed immersion. 
\end{proof}

\begin{rmk}
 If $\stY$ is a fibered category there are functors $\Crys_k(\stX,\stY),\Strat_k(\stX,\stY) \arr \Hom_k^c(\stX,\stY)$ and there is a map $\Crys_k(\stX,\stY)\arr \Strat_k(\stX,\stY)$ over $\Hom_k^c(\stX,\stY)$. Moreover if $\stX$ is defined over a field extension $L$ of $k$ there is a forgetful functor $\stX_{\iinf/L}\arr \stX_{\iinf/k}$ maintaining the stratified sites and inducing maps
 \[
 \Crys_k(\stX,\stY)\arr \Crys_L(\stX,\stY\times_k L)\comma \Strat_k(\stX,\stY)\arr \Strat_L(\stX,\stY\times_k L)
 \]
\end{rmk}

\begin{lem}\label{Str and Crys along nilpotent closed immersions}
Let $i\colon \stX\arr\stX'$ be a nilpotent closed immersion of categories fibered in groupoids over $k$ and $\stY$ be a fibered category over $k$. Then the restriction $\Crys(\stX',\stY)\arr \Crys(\stX,\sY)$ is an equivalence. If $i$ admits a retraction then also $\Strat(\stX',\stY)\arr \Strat(\stX,\sY)$ is an equivalence.
\end{lem}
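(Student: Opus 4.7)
The strategy is to construct an explicit quasi-inverse to the restriction functor. Write $\iota\colon \stX_{\iinf/k}\to\stX'_{\iinf/k}$ for the natural base-preserving functor $(\xi,j\colon V\to T)\mapsto (i(\xi),j)$, which induces the restriction $i^*\colon \Crys(\stX',\stY)\to\Crys(\stX,\stY)$ by precomposition. In the opposite direction, I define $J\colon \stX'_{\iinf/k}\to \stX_{\iinf/k}$ by base change along $i$. Given $(\xi',j'\colon \Spec A\to T)\in\stX'_{\iinf/k}$, set $\Spec A_{0}:=\Spec A\times_{\xi',\stX',i}\stX$, which carries an induced $\xi_{0}\in\stX(A_{0})$ with $i(\xi_{0})\simeq \xi'|_{\Spec A_{0}}$, and for which $\Spec A_{0}\hookrightarrow\Spec A$ is a nilpotent closed immersion (since $i$ is). Set $J(\xi',j'):=(\xi_{0},\Spec A_{0}\hookrightarrow\Spec A\xrightarrow{j'}T)$; this lies in $\stX_{\iinf/k}$ because compositions of nilpotent closed immersions are nilpotent closed immersions. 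Functoriality of $J$ follows from the universal property of fibre products, and a direct computation gives $J\circ\iota=\id_{\stX_{\iinf/k}}$.

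The key observation is the canonical natural transformation $\epsilon\colon \iota\circ J\Rightarrow \id_{\stX'_{\iinf/k}}$ whose component at $(\xi',j'\colon \Spec A\to T)$ is the morphism $\epsilon_{(\xi',j')}\colon (i(\xi_{0}),\Spec A_{0}\to T)\to (\xi',\Spec A\to T)$ induced by $\Spec A_{0}\hookrightarrow\Spec A$ together with the identification $i(\xi_{0})\simeq \xi'|_{\Spec A_{0}}$. This morphism sits over $\id_{T}$ in $\Aff/k$. By the definition of $\Hom^{c}_{k}$, every $F\in\Crys(\stX',\stY)$ sends all arrows in $\stX'_{\iinf/k}$ to Cartesian arrows in $\stY$, and a Cartesian arrow over $\id_{T}$ in a fibered category is an isomorphism in the fibre $\stY(T)$. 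Hence $F(\epsilon_{(\xi',j')})$ is automatically an isomorphism.

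With this in hand the proof concludes as follows. For essential surjectivity, given $G\in\Crys(\stX,\stY)$ set $\tilde G:=G\circ J$; this lies in $\Crys(\stX',\stY)$ since $J$ is a functor and $G$ sends every arrow to a Cartesian one, and $\tilde G\circ\iota = G\circ(J\circ\iota)=G$. For full faithfulness, given $\phi\colon F_{1}\iota\to F_{2}\iota$ one defines the unique extension $\tilde\phi\colon F_{1}\to F_{2}$ by
\[
\tilde\phi_{(\xi',j')}:=F_{2}(\epsilon_{(\xi',j')})\circ \phi_{J(\xi',j')}\circ F_{1}(\epsilon_{(\xi',j')})^{-1},
\]
where the inverse exists by the observation above; naturality of $\tilde\phi$ in $(\xi',j')$ is a routine diagram chase using functoriality of $J$, $\iota$, $\epsilon$ and $\phi$, and agreement with $\phi$ on the image of $\iota$ follows since $J\iota=\id$ forces $\epsilon_{\iota(\xi,j)}$ to be the identity.

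For the stratified statement, the same $\iota$ restricts to $\stX_{\str/k}\to\stX'_{\str/k}$ because if $\xi$ extends along $V\to T$ to $\eta\in\stX(T)$, then $i(\eta)\in\stX'(T)$ extends $i(\xi)$. When a retraction $r\colon \stX'\to\stX$ of $i$ is available, $J$ also restricts to $\stX'_{\str/k}\to\stX_{\str/k}$: an extension $\eta'\in\stX'(T)$ of $\xi'$ yields the candidate $r(\eta')\in\stX(T)$, and its restriction to $V_{0}\subseteq V'$ equals $r(\xi'|_{V_{0}})=r(i(\xi_{0}))=\xi_{0}$, so $r(\eta')$ extends $\xi_{0}$. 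The preceding argument then applies verbatim. I expect the main bookkeeping step to be verifying functoriality of $J$ on arbitrary morphisms (ensuring compatibility of the base change with the $2$-categorical structure of $\stX'$) and the naturality of $\tilde\phi$; both are essentially formal once the correct fibre-product identifications are set up.
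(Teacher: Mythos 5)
Your proof is correct and follows essentially the same route as the paper's: both construct the pullback functor $\stX'_{\iinf}\arr\stX_{\iinf}$ (resp.\ on the stratified sites, using the retraction to preserve extendability) as an almost-inverse at the level of sites, and then use that objects of $\Crys(\stX',\stY)$ send every arrow --- in particular the comparison arrows lying over identities --- to Cartesian, hence invertible, arrows of $\stY$. The paper states these steps more tersely; your explicit $\epsilon$ and the formula for $\tilde\phi$ just fill in the details it leaves to the reader.
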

\begin{proof}
We will consider only the stratified case since the crystal one is completely analogous.
 There is a restriction functor $\psi\colon \stX_{\str}\arr \stX'_{\str}$ obtained by composing with $i\colon\stX\arr\stX'$. Using the pullback along $i$ we also get a morphism $\phi\colon\stX'_{\str}\arr\stX_{\str}$ (the extendability condition is preserved). It is easy to define base-preserving natural transformations $\phi\circ \psi \arr \id$ and $\psi\circ\phi \arr \id$. Since stratified maps sends all arrows to Cartesian arrows, it follows that $\Strat(\stX,\stY)\arr \Strat(\stX',\sY)$ obtained by composing with $\phi$ is a quasi-inverse of the map in the statement.
\end{proof}

Usually stratified sheaves for a scheme are defined using higher diagonals. In the following proposition we show that our definition is equivalent to the classical one. Let us recall the definition of higher diagonals.

\begin{defn}
 Let $S$ be a base scheme and $X$ be an $S$-scheme. The $n$-th diagonal of $X$ over $S$ at level $r\in\N$, denoted $P^n_{X/S}(r)$ is defined as follows: pick an open $U\subseteq X^{\times_S (r+1)}$ containing the diagonal as a closed subscheme with ideal sheaf $\shI$ and set $P^n_{X/S}(r)=\Spec(\odi U/\shI^{n+1})$. 
\end{defn}

\begin{prop}\label{classical definition of stratifications}
 Let $X$ be a $k$-scheme and $\stY$ be a Zariski stack over $\Aff/k$. The category $\Strat(X,\stY)$ is canonically equivalent to the category $\hat\Strat(X,\stY)$ whose objects are tuples $(\eta,\sigma_n)_{n\in \N}$ where $\eta\in\stY(X)$ and $(\sigma_n)_{n\in\N}$ is a compatible system of isomorphisms between the two pullbacks of $\eta$ to $\stY(P^n_{X/k})$ satisfying the cocycle condition on $\stY(P^n_{X/k}(2))$, while the morphisms are maps in $\stY(X)$ compatible with the $\sigma_n$.
 \end{prop}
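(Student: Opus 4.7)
The plan is to construct explicit quasi-inverse functors $\Phi \colon \Strat(X,\stY) \to \hat\Strat(X,\stY)$ and $\Psi \colon \hat\Strat(X,\stY) \to \Strat(X,\stY)$ and verify they are mutually inverse. The underlying observation driving both directions is the following: for $(\xi\colon U\to X,\; j\colon U\to T)$ extendable in $X_{\str/k}$ and $\eta_T\colon T\to X$ any extension of $\xi$ along $j$, the morphisms $(\xi,j)\to (\eta_T,\id_T)$ (over $\id_T$) and $(\eta_T,\id_T)\to (\id_X,\id_X)$ (over $\eta_T$) are Cartesian arrows in $X_{\str/k}$, so any stratified map $F$ satisfies $F(\xi,j)\cong \eta_T^\ast F(\id_X,\id_X)$. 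Thus $F$ is essentially determined by its value on the canonical object $(\id_X,\id_X)$ together with the data of how distinct extensions of $\xi$ along $j$ produce distinct such isomorphisms.

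For $\Phi$, given a stratified map $F$, I first define $\eta\in\stY(X)$ by Zariski-gluing the objects $\eta_U:=F(U\hookrightarrow X,\id_U)\in\stY(U)$ over affine opens $U\subseteq X$; the inclusion $V\subseteq U$ induces a Cartesian arrow in $X_{\str/k}$ whose image under $F$ provides the transition isomorphism, and these glue using that $\stY$ is a Zariski stack. To produce $\sigma_n$, observe that the diagonal $\Delta\colon X\to P^n_{X/k}$ is a nilpotent closed immersion and that both projections $p_1,p_2\colon P^n_{X/k}\to X$ extend $\id_X$ along $\Delta$; applying $F$ to the two Cartesian arrows $(\Delta,\Delta)\to (p_i,\id_{P^n_{X/k}})$ yields isomorphisms $F(\Delta,\Delta)\cong p_i^\ast\eta$ whose composition defines $\sigma_n\colon p_1^\ast\eta\to p_2^\ast\eta$. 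Compatibility across $n$ is automatic from the transition maps $P^{n+1}_{X/k}\to P^n_{X/k}$, and the cocycle on $P^n_{X/k}(2)$ is obtained from the parallel construction with the three projections from the triple product $X^{\times 3}$.

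For $\Psi$, given $(\eta,\sigma_n)$, I send $(\xi,j)$ to $\eta_T^\ast\eta$ for some chosen extension $\eta_T$ of $\xi$ along $j$, gluing over a Zariski cover of $T$ when $T$ is not affine. Two extensions $\eta_T,\eta_T'$ agree on $U$, so the induced map $(\eta_T,\eta_T')\colon T\to X\times_k X$ factors through $P^n_{X/k}$ for any $n$ with $\mc I^{n+1}=0$, where $\mc I=\ker(\mc O_T\to \mc O_U)$; pulling back $\sigma_n$ along this map produces a canonical isomorphism $\eta_T^\ast\eta\cong (\eta_T')^\ast\eta$, and the cocycle condition forces these transition isomorphisms to compose coherently, so $\Psi(\eta,\sigma_n)(\xi,j)$ is well-defined up to canonical isomorphism. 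A morphism $(\alpha,\beta)$ in $X_{\str/k}$ is sent to the pullback of $\eta$ along a compatible choice of extensions on source and target, and again the cocycle condition guarantees this is independent of the choices.

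The main obstacle is the well-definedness of $\Psi$: one must check that the canonical isomorphisms obtained from different choices of extension are compatible with compositions of morphisms in $X_{\str/k}$, which is precisely what is encoded by the compatibility of $\sigma_n$ across $n$ and by the cocycle on $P^n_{X/k}(2)$ — this is the reason both pieces of data in $\hat\Strat(X,\stY)$ are needed. Once well-definedness is in place, the composites $\Phi\circ\Psi$ and $\Psi\circ\Phi$ are canonically isomorphic to the respective identities, since both are ultimately computed by pulling back $\eta=F(\id_X,\id_X)$ along extensions and restricting $\sigma_n$ to the pairs $(p_1,p_2)$ of projections from the higher diagonals.
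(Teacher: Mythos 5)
Your proposal is correct and follows essentially the same route as the paper: the forward functor is obtained by evaluating the stratified map on the canonical object and on the higher diagonals $P^n_{X/k}$ (the paper packages this via the equivalences $\Strat(P^n_{X/k}(r),\stY)\simeq\Strat(X,\stY)$ from the nilpotent-thickening lemma, while you apply $F$ directly to the relevant arrows), and the quasi-inverse is built exactly as in the paper by choosing an extension $g_\chi\colon T\to X$ for each object, setting $\shF(\chi)=g_\chi^*\eta$, and using the factorization of $(g_{\chi'}\alpha,g_\chi)$ through $P^n_{X/k}$ together with $\sigma_n$ to define $\shF$ on morphisms, with the compatibility and cocycle conditions guaranteeing independence of choices. (Two cosmetic points: in the paper's setup the schemes $T$ are affine by definition, so no gluing over $T$ is needed; and the isomorphisms $F(\xi,j)\cong\eta_T^*F(\id_X,\id_X)$ come from the requirement that stratified maps send \emph{all} arrows to Cartesian arrows, not from those arrows being Cartesian in $X_{\str/k}$.)
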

 \begin{proof}
 Let $\shF\in \Strat(X,\sY)$ be an object and consider the maps 
  $$\xymatrix{X\ar[r]^-{j_n}&P^n_{X/k}(2)\ar@/^1pc/[rr]^{p_{12}}\ar[rr]^{p_{23}}\ar@/_1pc/[rr]^-{p_{13}}&& P_{X/k}^n\ar@/^/[rr]^{p_1}\ar@/_/[rr]_{p_2}&&X&}$$  
where $p_{i}$ and $p_{ij}$ are the projections. Since all the maps $X\arr P^n_{X/k}(r)$ are nilpotent closed immersions with a retraction for all $n,r\in \N$, by \ref{Str and Crys along nilpotent closed immersions} we see that applying $\Strat(-,\stY)$ to the above sequence of maps we get a sequence of equivalences. This easily yields compatible maps $\sigma_n\colon p_2^*\shF\arrdi\simeq p_1^*\shF$ in $\Strat(P^n_{X/k},\stY)$ satisfying the cocycle condition in $\Strat(P^n_{X/k}(2),\stY)$. Applying the natural functor $\Strat(-,\stY)\arr \Hom(-,\stY)\simeq \stY(-)$ we obtain an object of $\hat\Strat(X,\stY)$. The association just defined  extends to a functor $\Strat(X,\sY)\to \hat{\Strat}(X,\sY)$.

A quasi-inverse can be defined as follows. For all $\chi=(U\arr T)\in X_{\str}$, where $U$ is an $X$-scheme, choose an extension $g_\chi\colon T\arr X$. Given $(\eta,\sigma_n)_{n\in\N}\in\hat\Strat(X,\stY)$ define $\Phi_X((\eta,\sigma_n)_{n\in\N})=\shF\in \Strat(X,\stY)$ as follows. For $\chi\in X_{\str}$ set $\shF(\chi)=g_\chi^*\eta$. Given a map $\psi\colon \chi\arr\chi'$ over $T\arrdi\alpha T'$ we have to specify a Cartesian arrow $\shF(\psi)\colon g_\chi^*\eta\arr g_{\chi'}^*\eta$ over $\alpha$.
By construction $U\arr T\arrdi{(g_{\chi'}\alpha,g_\chi)}X\times X$ factors through the diagonal. Since $U\arr T$ is nilpotent, we get a factorization of $(g_{\chi'}\alpha,g_\chi)\colon T\arrdi\beta P^n_{X/k}\subseteq X\times X$ for some $n\in \N$. The map $\shF(\psi)$ is $g_\chi^*\eta \simeq \beta^*\pr_2^*\eta \arrdi{\beta^*\sigma_{n}} \beta^*\pr_1^*\eta \simeq \alpha^*g_{\chi'}^*\eta \arr g_{\chi'}^*\eta$. The compatibility among the $\sigma_j$ tell us that $\shF(\psi)$ does not depend on the choice of $n$, while the cocycle condition and a similar argument show that $\shF$ is indeed a functor.

One can show that the two functors are quasi-inverse of each other.
%
 \end{proof}

\begin{lem}\label{key lemma for reducing to a retraction}
 Let $R$ be a $k$-algebra, $I$ be a nilpotent ideal and $L/k$ be a field extension with a $k$-map $L\arr R/I$. Then there exists an fpqc covering $R\arr R'$ and an isomorphism $R'/IR' \simeq R/I \otimes_L L^{\perf}$, where $L^{\perf}$ is the perfect completion of $L$.
 \end{lem}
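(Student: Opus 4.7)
\bigskip

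\noindent\textbf{Proof plan for the lemma.} If $\car k = 0$, then $L^{\perf} = L$ and we may take $R' = R$, so assume $\car k = p > 0$. The whole construction is built from a single simple gadget: for any $\ell \in L$ and any lift $a \in R$ of $\ell$ (which exists because $L \to R/I$ has an obvious set-theoretic lift), the ring $R[y]/(y^p - a)$ is free of rank $p$ over $R$, hence faithfully flat over $R$. Moreover, since $a \bmod I = \ell$, we have
\[
R[y]/(y^p-a) \otimes_R R/I \;=\; (R/I)[y]/(y^p - \ell) \;=\; (R/I) \otimes_L L[y]/(y^p - \ell),
\]
so this construction realizes the base change of $R/I$ by the $L$-algebra $L[y]/(y^p-\ell)$ on an fpqc cover of $R$.

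\smallskip

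\noindent The next step is to identify $L^{\perf}$ in an efficient way to iterate this gadget. Choose a $p$-basis $\{c_i\}_{i\in I}$ of $L$, i.e.\ a set such that the monomials $\prod c_i^{e_i}$ with $0\le e_i<p$ form an $L^p$-basis of $L$. A standard fact (Frobenius descent for $p$-bases) gives, for every $n\ge 1$, an isomorphism of $L$-algebras $L[y_i:i\in I]/(y_i^{p^n}-c_i) \xrightarrow{\sim} L^{1/p^n}$ sending $y_i \mapsto c_i^{1/p^n}$, and in particular $L^{\perf} = \varinjlim_n L^{1/p^n}$.

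\smallskip

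\noindent Now choose, once and for all, lifts $a_i \in R$ of the $c_i$'s and define
\[
R^{(n)} \;:=\; R[y_i : i\in I]/(y_i^{p^n} - a_i)_{i\in I},
\qquad
R' \;:=\; \varinjlim_n R^{(n)},
\]
where the transition $R^{(n)} \to R^{(n+1)}$ sends $y_i \mapsto (y_i')^p$. Each finitely generated sub-algebra $R[y_i:i\in J]/(y_i^{p^n}-a_i)$ with $J\subseteq I$ finite is free of rank $p^{n|J|}$ over $R$, so $R^{(n)}$ is a filtered colimit of free $R$-modules and in particular faithfully flat over $R$; the transition maps are faithfully flat for the same reason. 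Hence $R\to R'$ is fpqc. Reducing modulo $I$ and using the previous paragraph,
\[
R'/IR' \;=\; \varinjlim_n (R/I)[y_i]/(y_i^{p^n} - c_i) \;=\; \varinjlim_n \bigl((R/I)\otimes_L L^{1/p^n}\bigr) \;=\; (R/I)\otimes_L L^{\perf},
\]
which is the desired identification.

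\smallskip

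\noindent The only real subtlety is the bookkeeping for the iteration: one must verify that the compatibility between the levels $n$ and $n+1$ on the side of $R^{(n)}$ matches the natural inclusion $L^{1/p^n} \hookrightarrow L^{1/p^{n+1}}$ (i.e.\ that $y_i \in R^{(n)}$ maps to $(y_i')^p\in R^{(n+1)}$, which on the quotient by $I$ corresponds to $c_i^{1/p^n} = (c_i^{1/p^{n+1}})^p$). This is immediate from the construction, and is the main point to spell out carefully; no deeper obstruction arises, since the nilpotence of $I$ is never needed beyond the initial remark that $L\to R/I$ admits a set-theoretic lift to $R$.
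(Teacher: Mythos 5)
Your proof is correct and follows essentially the same route as the paper's: the paper uses Zorn's lemma to produce a maximal $p$-independent subset of $L$ (in effect a $p$-basis), lifts it to $R$, adjoins $p$-th roots as a filtered colimit of free (hence faithfully flat) extensions, and then passes from $L^{1/p}$ to $L^{\perf}$ by induction. Your only, harmless, variation is to adjoin all $p^n$-th roots of the same lifts at once instead of inducting level by level.
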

\begin{proof}
A proof is required only if $p=\car k>0$. We show how to construct the ring $R'$ when $L^{\perf}$ is replaced by $L^{1/p}$. A simple induction on $\N$ will then give the desired algebra.
 By Zorn's lemma there exists a maximal subset $S\subseteq L-L^p$ such that
 \[
 \text{for all finite } T\subseteq S \text{ the map } L_T=L[X_t]_{t\in T}/(X_t^p-t)\arr L^{1/p}\text{ is injective}
 \]
 Moreover it is easy to show that $L^{1/p}\simeq \lim_T L_T$. For all $t\in S$ let $\hat t \in R$ be a lifting and set
 \[
 R_T=R[X_t]_{t\in T}/(X_t^p-\hat t) \text{ for } T\subseteq S\text{ finite and } R'=\lim_T R_T
 \]
 It is now easy to prove that $R'/IR' \simeq R/I \otimes_L L^{1/p}$ as required.
\end{proof}


\begin{proof}[Proof of Theorem \ref{main thm for str and inf},2).]
 Let $\shC=\Strat(\stZ)$ or $\shC=\Crys(\stZ)$, By \ref{TX abelian} the category $\shC$ is abelian, thus one has to show that if $\shF\in \shC$ and $\shF_{|\stZ}=0$ then $\shF=0$. This follows because if $j\colon U\arr T$ is a nilpotent closed immersion and $\E\in \Vect(T)$ is such that $j^*\E=0$ then $\E=0$.
\end{proof}

\begin{proof}[Proof of Theorem \ref{main thm for str and inf},3), first sentence.]
 Let $\shF\in \Strat(\stZ,\QCoh_{\textup{fp}})$. Since the objects of $\stZ_{\str}$ are extendable by definition, it is enough to show that $\shF_{|\stZ}\in \QCoh_{\textup{fp}}(\stZ)$ is locally free in oder to conclude that $\shF\in \Strat(\stZ)$. Using the existence of an atlas as in the statement, fpqc descent and \ref{m-adically sepated rings and free modules} we can reduce the problem to the case $\stZ=\Spec R$, where $(R,m)$ is a local ring defined over $k^{\perf}$ and with $m$ nilpotent. Using the restriction $\Strat_k(\stZ)\arr\Strat_{k^{\perf}}(\stZ)$ we can also assume $k$-perfect. By \ref{key lemma for reducing to a retraction} applied when $I$ the maximal ideal of $R$ and $L$ is its residue field we can assume that $L$ is perfect. 
 Since an extension of perfect fields is formally smooth (see \cite[\href{http://stacks.math.columbia.edu/tag/031U}{031U}]{SP}), we can assume that the nilpotent closed immersion $\Spec L\arr \Spec R$ has a retraction $\sigma\colon \Spec R\arr \Spec L$. Thanks to \ref{Str and Crys along nilpotent closed immersions}, there exists $\shG\in \Strat(\Spec L,\QCoh_{\textup{fp}})$ restricting to $\shF$ along the retraction $\sigma$. In particular
 \[
 \shF(\id_{R}, \Spec R\arrdi \id \Spec R)\simeq \shG(\sigma,\Spec R\arrdi \id \Spec R)\simeq \sigma^*\shG(\id_{L}, \Spec L\arrdi \id \Spec L)
 \]
is free as required.
\end{proof}

\begin{ex}\label{example of bad stratification}
 If we don't assume that the scheme $U$ in \ref{main thm for str and inf}, 2) is defined over $k^{\perf}$ then the conclusion is false, even if $U$ is the spectrum of an Artinian ring.
 Consider $k=\F_p(z)$, $L=k^{\perf}$ and $A=L[x]/(x^2)$. We regard $A$ as a $k$-algebra via the morphism $\lambda\colon k\arr A$ mapping $z$ to $z-x$. We are going to construct an object $\shF\in \Strat_k(A,\QCoh_{\textup{fp}})$ which is not a vector bundle. Write $x=z-\lambda(z)$ and let $y_n\in L$ such that $y_n^{p^n}=z$. If $J$ is the ideal of the diagonal in $A\otimes_k A$ we have
 \[
 x\otimes 1 - 1\otimes x = z\otimes 1 - 1\otimes z = (y_n\otimes 1 - 1 \otimes y_n)^{p^n}\in J^{p^n}
 \]
 By \ref{classical definition of stratifications} we can conclude that $A\arrdi x A$ is a map in $\Strat_k(A)$, where $A$ is the trivial object. Since in $\Strat_k(A,\QCoh)$ we can take cokernels pointwise, we can conclude that $A/x$ has a stratification, even though is not locally free.
 
 This also show that $\Strat_k(A)\arr \Strat_k(L)$ is not an equivalence even though $\Spec L\arr \Spec A$ is a nilpotent closed immersion. To see this we prove that $\Strat_k(L)\simeq \Vect(L)$. Indeed for $l\in \N$ let $J_l=\Ker(L^{\otimes_k l}\arrdi {\mu_l} L)$. If $x\in J_l$, since $L^{\otimes_k l}$ is perfect, there exists $y\in L^{\otimes_k l}$ such that $y^p=x$. Since $\mu_l(x)=\mu_l(y)^p$ we see that $y\in J_l$, so that $J_l^2=J_l$. Thus all higher diagonals of $\Spec L$ over $k$ are trivial, which implies the result. 
\end{ex}

\begin{lem}\label{from the big to the small site}
If $V$ is an affine scheme over $k$ and $\chi\in V_{\iinf}$ there exists $\chi'=(\id_V,V\arr T')\in V_{\iinf}$ and a map $\chi\arr \chi'$. If $V$ is of finite type over $k$ we can furthermore assume that $T'$ is of finite type too. 
\end{lem}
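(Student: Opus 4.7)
The plan is to reduce the statement to a ring-theoretic fibre-product construction. Write $V=\Spec A$, and let $\chi=(\xi,j)\in V_{\iinf}$ correspond to an affine scheme $U=\Spec B$, a $k$-algebra map $A\arr B$ (giving $\xi\colon U\arr V$), an affine scheme $T=\Spec C$, and a surjective $k$-algebra map $\varphi\colon C\arr B$ with nilpotent kernel $I$ (giving the nilpotent closed immersion $j\colon U\arr T$). The target is an object of the form $(\id_V, V\arr T')$, i.e., a nilpotent closed immersion $V\hookrightarrow T'$, together with a map $\beta\colon T\arr T'$ making the obvious square commute.

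My first step will be to take $T'=\Spec R$ with $R=A\times_B C$, and to verify the two required properties. The first projection $R\arr A$ is surjective because $C\arr B$ is: given $a\in A$, any lift $c\in C$ of $a_{|B}$ gives $(a,c)\in R$. Its kernel is $\{(0,c)\st c\in I\}$, which inherits nilpotency from $I$, so $V\hookrightarrow T'$ is a nilpotent closed immersion and $\chi'=(\id_V, V\arr T')$ lies in $V_{\iinf}$. The second projection $R\arr C$ gives $\beta\colon T\arr T'$, and commutativity of
\[
\begin{tikzpicture}[xscale=1.8,yscale=-1.2]
\node (A0_0) at (0,0) {$U$}; \node (A0_1) at (1,0) {$T$};
\node (A1_0) at (0,1) {$V$}; \node (A1_1) at (1,1) {$T'$};
\path (A0_0) edge [->] node [auto] {$\scriptstyle j$} (A0_1);
\path (A0_0) edge [->] node [auto,swap] {$\scriptstyle \xi$} (A1_0);
\path (A0_1) edge [->] node [auto] {$\scriptstyle \beta$} (A1_1);
\path (A1_0) edge [->] node [auto] {$\scriptstyle j'$} (A1_1);
\end{tikzpicture}
\]
is exactly the defining property of the fibre product $R=A\times_B C$. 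This produces the desired morphism $\chi\arr\chi'$.

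For the finite-type refinement, suppose $A$ is finitely generated over $k$ by elements $a_1,\dots,a_n$. For each $i$ I will pick $c_i\in C$ lifting $(a_i)_{|B}\in B$ (again possible by surjectivity of $\varphi$), and let $R'\subseteq R$ be the $k$-subalgebra generated by $(a_1,c_1),\dots,(a_n,c_n)$. Then $R'$ is finitely generated over $k$, the projection $R'\arr A$ is still surjective (its image contains every $a_i$), and its kernel is contained in the kernel of $R\arr A$, hence is nilpotent. Replacing $T'$ by $\Spec R'$ and composing $R'\inj R\arr C$ gives the required $T\arr T'$ with $T'$ of finite type.

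There is no serious obstacle here: the construction $A\times_B C$ is the obvious affine pushout of $V\leftarrow U\to T$, and the only slightly delicate point is that the fibre product might fail to be finitely generated even when $A$ is, which is precisely what the subalgebra $R'$ sidesteps.
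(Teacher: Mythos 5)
Your proof is correct, but it proceeds by a different construction than the paper's. You take $T'=\Spec(A\times_B C)$, the fibre product of rings (equivalently the pushout of $V\leftarrow U\to T$ in affine schemes); the surjectivity of the first projection, the identification of its kernel with $0\times I$ (hence nilpotent), and the commutativity of the square coming from the fibre-product relation are all verified correctly, and your subalgebra $R'$ generated by the pairs $(a_i,c_i)$ handles the finite-type refinement cleanly. The paper instead chooses a presentation $A=k[\underline x]/J$, lifts the images of the generators $x_s$ into $\Gamma(T)$ to get $\psi\colon k[\underline x]\arr \Gamma(T)$ with $\psi(J)$ nilpotent, and takes $T'=\Spec(k[\underline x]/J^n)$, i.e.\ an infinitesimal neighbourhood of $V$ in affine space. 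Your construction is the terminal (hence canonical) solution to the extension problem and needs no choice of presentation for the existence statement; the paper's construction bundles the existence and finite-type claims into one choice of generators, at the cost of working with an auxiliary polynomial ring. Both arguments are equally elementary and either would serve.
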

\begin{proof}
Set $V=\Spec A$, and consider a ring $B$ with a nilpotent ideal $I$ and a map $\phi\colon A\arr B/I$. Set $\pi\colon B\arr B/I$ the projection and write $A=k[\underline x]/J$, where $\underline x=(x_s)_{s\in S}$ is a set of variables. For all $s\in S$ choose $b_s\in B$ such that $\pi(b_s)=\phi(x_s)$ and denote $\psi\colon k[\underline x]\arr B$ the map such that $\psi(x_s)=b_s$. Since $\psi(J)\subseteq I$ and $I$ is nilpotent, there exists $n\in \N$ such that $J^n\subseteq \Ker(\psi)$. We can therefore choose $T'=\Spec (k[\underline x]/J^n)$. If $V$ is of finite type then $S$ can be chosen finite and therefore also the last claim holds.
\end{proof}

\begin{proof}[Proof of Theorem \ref{main thm for str and inf},4), first sentence.]
Let $\shF\in \Crys(\stZ,\QCoh_{\textup{fp}})$ and $(\xi,V\arrdi j T)\in \stZ_{\iinf}$. We must show that $\shF(\xi,j)\in\QCoh_{\textup{fp}}(T)$ is locally free. Let $U\arr \stZ$ be a smooth atlas. There are Cartesian diagrams
  \[
  \begin{tikzpicture}[xscale=1.5,yscale=-1.2]
    \node (A0_1) at (1, 0) {$T'$};
    \node (A0_3) at (3, 0) {$T$};
    \node (A1_0) at (0, 1) {$V'$};
    \node (A1_1) at (1, 1) {$V_U$};
    \node (A1_2) at (2, 1) {$V$};
    \node (A2_1) at (1, 2) {$U$};
    \node (A2_2) at (2, 2) {$\stZ$};
    \path (A1_2) edge [->]node [auto] {$\scriptstyle{j}$} (A0_3);
    \path (A1_0) edge [->]node [auto] {$\scriptstyle{}$} (A1_1);
    \path (A1_1) edge [->]node [auto] {$\scriptstyle{}$} (A1_2);
    \path (A1_2) edge [->]node [auto] {$\scriptstyle{}$} (A2_2);
    \path (A1_1) edge [->]node [auto] {$\scriptstyle{}$} (A2_1);
    \path (A1_0) edge [->]node [auto] {$\scriptstyle{j'}$} (A0_1);
    \path (A2_1) edge [->]node [auto] {$\scriptstyle{}$} (A2_2);
    \path (A0_1) edge [->]node [auto] {$\scriptstyle{}$} (A0_3);
  \end{tikzpicture}
  \]
where the map $T'\arr T$ is \'etale and surjective. The above diagram is obtained using that the smooth surjective map $V_U\arr V$ has sections in the \'etale topology and that the \'etale map $V'\arr V$ always extends along a nilpotent closed immersion by \cite[Expos\'e VIII, Thm 1.1]{SGA4}. By descent we can assume $\stZ=U$ and, since the problem is Zariski local, that $U$ is affine. By \ref{from the big to the small site} we can further assume that $V=U$, $\xi=\id$ and that $T$ is of finite type over $k$.
Using \ref{Str and Crys along nilpotent closed immersions}, there exists $\shG\in\Crys(T)$ restricting to our $\shF\in\Crys(U)$. In particular
\[
\shF(\id_U,U \arrdi j T)\simeq \shG(j,U \arrdi j T)=\shG(\id_T,T\arrdi \id T)=(\shG_{|T_{\str}})(\id_T,T\arrdi \id T)
\]
This sheaf is locally free because $T\times_k k^{\perf}\arr T$ is an fpqc atlas, $T\times_k k^{\perf}$ is Noetherian and therefore, by \ref{main thm for str and inf},2), $\shG_{|T_{\str}}\in \Strat(T)$.
\end{proof}

\begin{ex}
 Let $k=\F_p(z)$ and $L=k^{\perf}$. We are going to construct a $\shF\in \Crys_k(L,\QCoh_{\textup{fp}})$ such that $\shF\notin \Crys_k(L)$. More precisely we construct a non zero $a\colon \odi{(\Spec L)_{\iinf}}\arr \odi{(\Spec L)_{\iinf}}$ which is $0$ in $\Strat_k(L)=\Vect(L)$ (see \ref{example of bad stratification}) and show that $\shF=\Coker(a)$ (pointwise) satisfies the requirements. In particular if follows that $\Crys_k(L)\arr \Strat_k(L)$ is not faithful.
 Let $\chi \in (\Spec L)_{\iinf}$ given by a map $L\arr B/I$, where $B$ is a $k$-algebra and $I$ a nilpotent ideal. Using the Frobenius it is easy to show that there exists a unique $\F_p$-linear map $\phi_\chi\colon L\arr B$ lifting the given $k$-map $L\arr B/I$. The map $a$ we are looking for is given by $a(\chi)=\phi_\chi(z)-z$. We have $a(\id_L,\id_L)=0$ so that $a=0$ in $\Strat_k(L)$. Consider $B=L[x]/(x^2)$ with the $k$-structure $\lambda\colon k\arr B$, $\lambda(z)=z-x$, $I=(x)$. If $\chi\in (\Spec L)_{\iinf}$ is the corresponding object, by construction $a(\chi)=x\neq 0$ in $B$ and therefore $\shF(\chi)=B/x$ which is not locally free.
\end{ex}

\begin{lem}\label{twist as intersection}
 Let $K$ be a purely transcendental field extension over a field $k$ of positive characteristic and  $L/K$ be a finite separable extension. Then the intersection of all fields $E$ such that $L^{(i)}\subseteq E \subseteq L$ and $L/E$ is finite coincides with the image of the relative Frobenius $L^{(i)}\arr L$
\end{lem}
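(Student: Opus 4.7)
Denote by $F\subseteq L$ the image of the relative Frobenius $L^{(i)}\arr L$, so that $F=L^{p^i}\cdot k$; the inclusion $F\subseteq\bigcap_E E$ is immediate from the definition of the intersection system. For the reverse inclusion, fix a transcendence basis $(x_j)_{j\in J}$ of $K/k$ and a primitive element $\theta$ of $L/K$ with separable minimal polynomial $m(X)=\prod_l(X-\theta_l)\in K[X]$ of degree $d$.

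First, $L=F(x_j)_{j\in J}$: since $\theta$ is separable over $K$, $K(\theta^{p^i})=K(\theta)=L$, hence $L\subseteq K\cdot L^{p^i}\subseteq K\cdot F$, and the reverse inclusion is clear. Writing $K'':=k(x_j^{p^i})_{j\in J}\subseteq F$, one has $L^{p^i}=K^{p^i}(\theta^{p^i})$ and thus $F=K''(\theta^{p^i})$.

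The core technical step is that $\{\prod_{j\in J}x_j^{e_j}:0\leq e_j<p^i,\ \text{finite support}\}$ is an $F$-basis of $L$. The identity $e_k(a_1^{p^i},\ldots,a_d^{p^i})=e_k(a_1,\ldots,a_d)^{p^i}$, valid in characteristic $p$, implies that $m^{(p^i)}(X):=\prod_l(X-\theta_l^{p^i})$ has coefficients in $K^{p^i}\subseteq K''$; it is separable since the $\theta_l^{p^i}$ are distinct, and $\theta^{p^i}$ is a root. Hence $\theta^{p^i}$ is separable over $K''$. Since $K/K''$ is purely inseparable, $K$ and $K''(\theta^{p^i})$ are linearly disjoint over $K''$, and $K\cdot K''(\theta^{p^i})=K(\theta^{p^i})=L$ forces $[K''(\theta^{p^i}):K'']=[L:K]=d$. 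More generally, for any finite $J_0\subseteq J$ the intermediate field $k(x_l:l\in J_0,\,x_j^{p^i}:j\notin J_0)$ is purely inseparable over $K''$ of degree $p^{i|J_0|}$; applying the same linear disjointness argument against $F=K''(\theta^{p^i})$ yields $[F(x_l:l\in J_0):F]=p^{i|J_0|}$. Since the $p^{i|J_0|}$ monomials $\prod_{l\in J_0}x_l^{e_l}$ already span $F(x_l:l\in J_0)$ over $F$, they form an $F$-basis; taking the union over finite $J_0$ gives the claim for $L$.

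The basis statement immediately yields $F(x_j:j\in T)\cap F(x_j:j\in S)=F(x_j:j\in T\cap S)$ for all $T,S\subseteq J$. Now given $\alpha\in L\setminus F$, write $\alpha\in F(x_{j_1},\ldots,x_{j_n})$ and set $E_k:=F(x_j:j\neq j_k)$. Then $F\subseteq E_k\subseteq L$ with $[L:E_k]\leq p^i$, so each $E_k$ appears in the intersection system; iterating the intersection formula gives $\bigcap_{k=1}^n E_k\cap F(x_{j_1},\ldots,x_{j_n})=F$, hence $\alpha\notin E_k$ for some $k$, producing the required witness. The main obstacle is showing that $\theta^{p^i}$ is separable over $K''$ via the elementary symmetric polynomial identity, and combining this with linear disjointness of separable against purely inseparable extensions to extract the exact degree $[F(x_l:l\in J_0):F]=p^{i|J_0|}$; once this is in hand, the rest is bookkeeping with finitely supported monomials.
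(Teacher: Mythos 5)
Your proof is correct and in substance the same as the paper's: the witnessing subfields you use, $E_k=F(x_j:j\neq j_k)$, are exactly the paper's $K_T(\alpha^{p^i})$ for singleton sets $T$, and both arguments turn on the same two facts, namely that $\theta^{p^i}$ is separable over the Frobenius-twisted base and that separable and purely inseparable extensions are linearly disjoint. The only difference is bookkeeping: you record membership via a monomial basis of $L$ over $F$ and an intersection formula for the subfields $F(x_j:j\in T)$, whereas the paper expands elements in the power basis $1,\alpha^{p^i},\dots,\alpha^{(n-1)p^i}$ over $K$ and observes that the coefficients must lie in $\bigcap_T K_T=K_S$.
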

\begin{proof}
Notice that $L^{(i)}$ is a field by \ref{separable extension base change by purely inseparable is a field} because $L/k$ is separable. In particular we will identify $L^{(i)}$ with its image under the relative Frobenius.
 Let $\{z_s\}_{s\in S}$ be a transcendental basis of $K/k$ and let $\alpha\in L$ such that $L=K(\alpha)$. Given $T\subseteq S$ set
 \[
 K_T=k(z_s \st s\notin T\comma z_s^{p^i}\st s\in T)\subseteq K=k(z_s)_{s\in S}
 \]
 We have that $L^{(i)}=K_S(\alpha^{p^i}) \subseteq K_T(\alpha^{p^i})$ and that $L$ is finite over $K_T(\alpha^{p^i})$ if $T$ is finite. 
 Since $K/K_T$ is purely inseparable and $\alpha^{p^i}$ is separable over $K_T$, $K_T[\alpha^{p^i}]\otimes_{K_T}K$ is a field. It follows that, for all $T$, the surjective map $K_T[\alpha^{p^i}]\otimes_{K_T}K\arr K[\alpha^{p^i}]$ is an isomorphism. Thus we have the equality
 \[
 n:=[K_T(\alpha^{p^i}):K_T]=[K(\alpha^{p^i}):K]
 \] 
 Let $\beta\in K_T(\alpha^{p^i})$ for all $T$ finite. Then $\beta$ can be written uniquely as a linear combination of $1,{\alpha^{p^i}},\cdots,{\alpha^{(n-1)p^i}}$ with coefficients in $K$. 
 Since $\beta\in K_T(\alpha^{p^i})$ for all $T$ finite it follows that the coefficients of the linear combination lie in the intersection of all $K_T$ for $T$ finite. This intersection is $K_S$, so that $\beta \in K_S(\alpha^{p^i})=L^{(i)}$.
\end{proof}

\begin{lem}\label{higher diagonal for etale maps}
 Let $S$ be a base scheme and $f\colon Y\arr X$ be an \'etale map of $S$-schemes. Then the following commutative diagrams are Cartesian for all $i=1,2$.
   \[
  \begin{tikzpicture}[xscale=2.0,yscale=-1.2]
    \node (A0_0) at (0, 0) {$P^n_{Y/S}$};
    \node (A0_1) at (1, 0) {$P^n_{X/S}$};
    \node (A1_0) at (0, 1) {$Y$};
    \node (A1_1) at (1, 1) {$X$};
    \path (A0_0) edge [->]node [auto] {$\scriptstyle{f_n}$} (A0_1);
    \path (A1_0) edge [->]node [auto] {$\scriptstyle{f}$} (A1_1);
    \path (A0_1) edge [->]node [auto] {$\scriptstyle{p_i}$} (A1_1);
    \path (A0_0) edge [->]node [auto] {$\scriptstyle{p_i}$} (A1_0);
  \end{tikzpicture}
  \]
Here the $p_i$'s are the projections, while $f_n$ is the map induced by $f\times f \colon Y\times_S Y\arr X\times_S X$.
\end{lem}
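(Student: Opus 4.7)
The plan is to exploit the fact that $f\colon Y \arr X$ étale means $f$ is formally étale, i.e.\ for any nilpotent thickening $Y_0 \hookrightarrow T$ together with compatible maps to $X$, there is a unique lift $T \arr Y$. By the symmetry swapping the two factors in $Y\times_S Y$ and $X\times_S X$, it is enough to treat the case $i=1$.

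Set $T := Y \times_{X, p_1} P^n_{X/S}$ with projections $\pi_1\colon T\arr Y$ and $\pi_2\colon T \arr P^n_{X/S}$. Pulling back the closed immersion $\Delta_{X/S}\colon X \hookrightarrow P^n_{X/S}$ along $f$ yields a closed immersion $\iota\colon Y \hookrightarrow T$ whose ideal is the pullback of the ideal of $\Delta_{X/S}$; in particular its $(n+1)$-th power is zero, so $\iota$ is a nilpotent thickening of order $n$.

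In one direction, combine $p_1\colon P^n_{Y/S}\arr Y$ with $f_n\colon P^n_{Y/S}\arr P^n_{X/S}$ into a map $\Phi=(p_1,f_n)\colon P^n_{Y/S}\arr T$; the required compatibility $f\circ p_1 = p_1\circ f_n$ is exactly the commutativity of the square in the statement. In the other direction, the composition $p_2\circ \pi_2\colon T\arr X$ restricts along $\iota$ to $f$, hence lifts over $\iota$ to the identity $\id_Y\colon Y\arr Y$. By formal étaleness applied to $\iota\colon Y\hookrightarrow T$, there is a unique map $g\colon T\arr Y$ with $f\circ g = p_2\circ \pi_2$ and $g\circ \iota = \id_Y$. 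Then $(\pi_1,g)\colon T\arr Y\times_S Y$ sends $\iota(Y)$ to $\Delta_{Y/S}(Y)$, so the pullback of the ideal of $\Delta_{Y/S}(Y)$ lies in the ideal of $\iota(Y)$ in $T$; since the latter has $(n+1)$-th power zero, the map $(\pi_1,g)$ factors through $P^n_{Y/S}$, giving the desired map $\Psi\colon T\arr P^n_{Y/S}$.

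Finally one checks that $\Psi$ and $\Phi$ are mutually inverse. The composition $\Psi\circ\Phi$ is a self-map of $P^n_{Y/S}$ whose first component is $p_1$ and whose second component, together with $p_2$, are two lifts along $f$ of the same map $f\circ p_2\colon P^n_{Y/S}\arr X$ that agree on the diagonal $Y\hookrightarrow P^n_{Y/S}$; by the uniqueness part of formal étaleness applied to this nilpotent thickening, they coincide, so $\Psi\circ\Phi = \id$. The reverse composition $\Phi\circ \Psi$ is computed directly using $f_n = f\times f$ on $Y\times_S Y$ and the equality $f\circ\pi_1 = p_1\circ\pi_2$ coming from the fiber product defining $T$. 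The main subtlety is bookkeeping: keeping track of which nilpotent thickening one applies formal étaleness to (namely $\iota$ on one side and $\Delta_{Y/S}$ on the other) and verifying that the various ideals have the correct order of nilpotence; once this is done, the argument is purely formal.
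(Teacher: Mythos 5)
Your argument is correct, and it proves the same statement by constructing an explicit inverse to the canonical map $P^n_{Y/S}\to Y\times_{X,p_1}P^n_{X/S}$, which is also the skeleton of the paper's proof; the difference lies in which form of ``unique lifting of \'etale data across nilpotent closed immersions'' is invoked. The paper forms the two pullbacks $Z_i=P^n_{X/S}\times_{p_i,X}Y$ and appeals to \cite[Expos\'e VIII, Th\'eor\`eme 1.1]{SGA4} to obtain an isomorphism $\lambda\colon Z_1\to Z_2$ over $P^n_{X/S}$ restricting to the identity on $Y$; the pair (inclusion, $\lambda$) then gives the map $Z_1\to Y\times_S Y$ that factors through $P^n_{Y/S}$, and the final identity $p_2=p_2\circ(ba)$ is checked using that the diagonal of the \'etale map $Y\to X$ is an open immersion and that $Y\to P^n_{Y/S}$ is a homeomorphism. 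You instead apply the infinitesimal lifting criterion directly to $f$ along the thickening $\iota\colon Y\hookrightarrow T$ to produce the second component $g$ of the inverse, and you close both composites $\Psi\circ\Phi$ and $\Phi\circ\Psi$ by the uniqueness of such lifts together with the fact that $P^n\hookrightarrow(\text{open of the product})$ and $T\to Y\times_S P^n_{X/S}$ are monomorphisms. This buys a more uniform and arguably more economical verification: two applications of uniqueness of lifts replace the appeal to the equivalence of \'etale sites and the open-diagonal argument. Two small points you should make explicit: first, that $(\pi_1,g)$ lands in the chosen open $U\subseteq Y\times_S Y$ defining $P^n_{Y/S}$ (this holds because $|T|=|Y|$ and $(\pi_1,g)\circ\iota=\Delta_{Y/S}$, so the set-theoretic image is $\Delta(|Y|)\subseteq U$, after which your ideal-power argument applies); second, that the lifting criterion for a nilpotent, not merely square-zero, thickening of a non-affine scheme is legitimate for \'etale morphisms (factor into square-zero steps; uniqueness makes the local lifts glue). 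Neither is a gap, just bookkeeping of the kind you already acknowledge.
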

\begin{proof}
 We have Cartesian diagrams
   \[
  \begin{tikzpicture}[xscale=1.6,yscale=-1.2]
    \node (A0_0) at (0, 0) {$Y$};
    \node (A0_1) at (1, 0) {$Z_i$};
    \node (A0_2) at (2, 0) {$Y$};
    \node (A1_0) at (0, 1) {$X$};
    \node (A1_1) at (1, 1) {$P^n_{X/S}$};
    \node (A1_2) at (2, 1) {$X$};
    \path (A0_0) edge [->]node [auto] {$\scriptstyle{\alpha_i}$} (A0_1);
    \path (A0_1) edge [->]node [auto] {$\scriptstyle{}$} (A0_2);
    \path (A1_0) edge [->]node [auto] {$\scriptstyle{}$} (A1_1);
    \path (A1_1) edge [->]node [auto] {$\scriptstyle{p_i}$} (A1_2);
    \path (A0_2) edge [->]node [auto] {$\scriptstyle{f}$} (A1_2);
    \path (A0_0) edge [->]node [auto] {$\scriptstyle{f}$} (A1_0);
    \path (A0_1) edge [->]node [auto] {$\scriptstyle{}$} (A1_1);
  \end{tikzpicture}
  \]
Notice that $Z_1$ is a subscheme of $Y\times_S X$, while $Z_2$ is a subscheme of $X\times_S Y$. Since $X\arr P^n_{X/S}$ is a nilpotent closed immersion and the maps $Z_i\arr P^n_{X/S}$ are \'etale, by \cite[Expos\'e VIII, Th\'eor\` eme 1.1]{SGA4}, there exists an isomorphism $\lambda\colon Z_1\arr Z_2$ over $P^n_{X/S}$ and such that $\lambda \circ \alpha_1 = \alpha_2 \colon Y\arr Z_2$. The map $Y\times_S Y \arr Y\times_S X$ induced a map $P^n_{Y/S}\arrdi a Z_1$ which is compatible with the first projection. We must show this map is an isomorphism. We have a commutative diagram
  \[
  \begin{tikzpicture}[xscale=2.4,yscale=-1.2]
    \node (A0_0) at (0, 0) {$P^n_{Y/S}$};
    \node (A0_1) at (1, 0) {$Z_1$};
    \node (A0_2) at (2, 0) {$Y\times_X Y$};
    \node (A0_3) at (3, 0) {$X\times_SY$};
    \node (A0_4) at (4, 0) {$Y$};
    \node (A1_0) at (0, 1) {$Y$};
    \node (A1_2) at (2, 1) {$Y\times_S X$};
    \node (A1_3) at (3, 1) {$X\times_S X$};
    \node (A1_4) at (4, 1) {$X$};
    \path (A1_0) edge [->,bend left=30]node [auto] {$\scriptstyle{f}$} (A1_4);
    \path (A0_1) edge [->,bend right=45]node [auto] {$\scriptstyle{\lambda}$} (A0_3);
    \path (A0_3) edge [->]node [auto] {$\scriptstyle{p_2}$} (A0_4);
    \path (A0_1) edge [->]node [auto] {$\scriptstyle{b}$} (A0_2);
    \path (A0_3) edge [->]node [auto] {$\scriptstyle{}$} (A1_3);
    \path (A0_0) edge [->]node [auto] {$\scriptstyle{a}$} (A0_1);
    \path (A0_4) edge [->]node [auto] {$\scriptstyle{f}$} (A1_4);
    \path (A0_1) edge [right hook->]node [auto] {$\scriptstyle{}$} (A1_2);
    \path (A0_2) edge [->]node [auto] {$\scriptstyle{}$} (A1_2);
    \path (A0_0) edge [->]node [auto] {$\scriptstyle{p_2}$} (A1_0);
    \path (A0_2) edge [->]node [auto] {$\scriptstyle{}$} (A0_3);
    \path (A1_2) edge [->]node [auto] {$\scriptstyle{}$} (A1_3);
    \path (A1_3) edge [->]node [auto] {$\scriptstyle{p_2}$} (A1_4);
  \end{tikzpicture}
  \]
where the square diagrams are Cartesian and the map $b$ is induced by the universal property of fiber product. Notice that $b$ is a monomorphism because $\lambda$ is a monomorphism. The equality $\lambda \circ \alpha_1 = \alpha_2 \colon Y\arr Z_2$ implies that $Y\arr Z_1 \arrdi b Y\times_S Y$ is the diagonal. Since $Y\arr Z_1$ is, by construction, a nilpotent closed immersion whose sheaf of ideal to the power $n$ vanishes, it follows that $b$ factors through $P^n_{Y/S}$. Thus it is enough to show that $P^n_{Y/S}\arrdi a Z_1 \arrdi b Y\times_S Y$ is the inclusion. By construction $p_1=p_1\circ (ba)$ and we must prove that $p_2=p_2 \circ (ba)$. By the commutativity of diagram above we obtain a map
\[
\gamma\colon P^n_{Y/S}\arrdi{(p_2,p_2\circ(ba))} Y\times_X Y
\]
whose composition along $Y\arr P^n_{Y/S}$ is the diagonal. Since $Y\arr X$ is \'etale, it follows that the diagonal is an open immersion. Since $Y\arr P^n_{Y/S}$ is an homeomorphism, it follows that $\gamma$ factors through the diagonal, that is $p_2=p_2 \circ (ba)$ as required.
\end{proof}

\begin{prop}\label{endo of trivial object for a field}
 Let $L/k$ be a field extension separably generated up to a finite extension. Then $\End_{\Strat_k(L)}(1)=L_{\et,k}$.
\end{prop}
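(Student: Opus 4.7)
My plan is to compute the endomorphism ring explicitly via the classical description of stratifications and then analyze the structure of $L\otimes_k L$ near the diagonal. First I would apply Proposition~\ref{classical definition of stratifications} (with $\stY=\A^1_k$) to identify
\[
\End_{\Strat_k(L)}(\mathbf{1}) \;=\; R := \bigl\{a \in L \st a\otimes 1 - 1\otimes a \in J_L^{n+1}\text{ for all } n\geq 0\bigr\},
\]
where $J_L = \ker(L\otimes_k L \xrightarrow{\mu} L)$. The inclusion $L_{\et,k}\subseteq R$ is straightforward: for $a$ lying in a finite \'etale $k$-subalgebra $K\subseteq L$, the ring $K\otimes_k K$ is \'etale over $K$ and the diagonal component is open and closed, so $J_K$ is generated by an idempotent and $J_K^{n+1}=J_K$ for every $n\geq 0$. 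The resulting congruence $a\otimes 1\equiv 1\otimes a\pmod{J_K}$ then extends to $L\otimes_k L$ via the ring homomorphism $K\otimes_k K \to L\otimes_k L$, which sends $J_K$ into $J_L$.

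For the reverse inclusion I would set $E = L_{\et,k}$ and consider the quotient $L\otimes_k L \surj L\otimes_E L$, which carries $J_L$ to $J_{L/E}:=\ker(L\otimes_E L\to L)$ and hence $J_L^{n+1}$ into $J_{L/E}^{n+1}$. For $a\in R$, the image of $a\otimes 1-1\otimes a$ thus lies in $\bigcap_n J_{L/E}^{n+1}$; if this intersection is zero in $L\otimes_E L$, then $a\otimes_E 1=1\otimes_E a$, which by faithfully flat descent along $E\inj L$ forces $a\in E=L_{\et,k}$. Thus the proof reduces to establishing
\[
\bigcap_{n\geq 0} J_{L/E}^{n+1} = 0 \qquad \text{in } L\otimes_E L.
\]

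To prove this vanishing, I would first reduce to the case where $L/k$ is finitely generated, so that $E$ becomes finite \'etale over $k$: any putative element of the intersection lies in $L_0\otimes_E L_0$ for a suitable finitely generated sub-extension $L_0\subseteq L$, and the condition descends by faithful flatness of the field extensions involved. Now with $L/k$ finitely generated, I would consider the tower $E\subseteq L_s\subseteq L$, where $L_s$ is the maximal separable sub-extension of $L/E$: then $L_s/E$ is finitely generated separable while $L/L_s$ is finite purely inseparable. The identities $L_{\et,E}=L_{\et,k}=E$ together with the fact that an algebraic element of a separable extension is necessarily separable show that $E$ is algebraically closed in $L_s$, so $L_s/E$ is geometrically integral. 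Hence $D:=L_s\otimes_E L_s$ is a domain whose localization $D_{J_D}$ at the diagonal prime is a Noetherian regular local ring of dimension $\operatorname{trdeg}_E(L_s)$, yielding $\bigcap_n J_D^{n+1}D_{J_D}=0$ by Krull's theorem. Since $L\otimes_E L \cong L\otimes_{L_s}D\otimes_{L_s}L$ is a finite free, in particular $D$-torsion-free, module over $D$ of rank $[L:L_s]^2$, it injects into its localization at $J_D$, so $\bigcap_n J_D^{n+1}(L\otimes_E L)=0$. Finally, the quotient $L\otimes_E L/J_D(L\otimes_E L)\cong L\otimes_{L_s}L$ is a local Artin ring whose maximal ideal $J_{L/L_s}$ satisfies $J_{L/L_s}^{p^r}=0$ for some $r\geq 0$ (since $L/L_s$ is finite purely inseparable); this forces $J_{L/E}^{p^r}\subseteq J_D(L\otimes_E L)$, and iteratively $J_{L/E}^{Np^r}\subseteq J_D^N(L\otimes_E L)$ for every $N\geq 1$. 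Combining, $\bigcap_n J_{L/E}^n\subseteq\bigcap_N J_D^N(L\otimes_E L)=0$, as required.

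The main obstacle lies in coordinating the two filtrations on $L\otimes_E L$: Krull intersection handles the Noetherian regular piece $D=L_s\otimes_E L_s$ coming from the separable sub-extension, while nilpotence handles the purely inseparable piece $L/L_s$, and the quantitative link $J_{L/E}^{p^r}\subseteq J_D(L\otimes_E L)$ is the crucial glue between the two. A secondary technical point is the descent of the vanishing of $\bigcap_n J_{L/E}^{n+1}$ from possibly infinitely generated $L$ to a finitely generated sub-extension, which has to be treated with care before the tower argument can be applied.
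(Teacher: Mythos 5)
Your strategy is genuinely different from the paper's. The paper first eliminates the purely inseparable top of $L$ via Frobenius twists, reduces to $L$ finite separable over a purely transcendental field $K$, identifies $\End_{\Strat_k(L)}(1)$ with $\bigcap_j\Imm(L^{(j)}\to L)$, and finishes with a minimal-polynomial/K\"ahler-differential computation. You instead pass to $L\otimes_E L$ with $E=L_{\et,k}$, reduce everything to the single statement $\bigcap_n J_{L/E}^n=0$, and prove it by splicing Krull's intersection theorem on the domain $L_s\otimes_E L_s$ with the nilpotence of $J_{L/L_s}$. The two easy inclusions, the descent step along $E\hookrightarrow L$, and the core Krull argument (when $L_s/E$ is finitely generated, so that the localization of $L_s\otimes_E L_s$ at the diagonal is Noetherian) are all correct, and this part is shorter and more conceptual than the paper's computation.

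There is, however, a genuine gap in the reduction to the finitely generated case, and your hypothesis does not exclude that case ($F/k$ separably generated allows infinite transcendence degree). You claim that an element $x\in\bigcap_n J_{L/E}^n$ lying in $L_0\otimes_E L_0$ for a finitely generated $L_0$ satisfies the same condition there "by faithful flatness". Faithful flatness of $L\otimes_E L$ over $L_0\otimes_E L_0$ gives $I(L\otimes_E L)\cap(L_0\otimes_E L_0)=I$ for ideals $I$ of $L_0\otimes_E L_0$, but $J_{L/E}^n$ is \emph{not} $J_{L_0/E}^n(L\otimes_E L)$, and the intersection $J_{L/E}^n\cap(L_0\otimes_E L_0)$ can be strictly larger than $J_{L_0/E}^n$: for $L_0=E(t)\subseteq L=E(t^{1/p})$ one has $t\otimes1-1\otimes t=(t^{1/p}\otimes1-1\otimes t^{1/p})^p\in J_{L/E}^p$, whereas $t\otimes1-1\otimes t$ has nonzero image $dt$ in $J_{L_0/E}/J_{L_0/E}^2\simeq\Omega_{L_0/E}$. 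What you actually need is injectivity of $(L_0\otimes_E L_0)/J_{L_0/E}^n\to(L\otimes_E L)/J_{L/E}^n$ for a cofinal family of finitely generated $L_0$ chosen to contain the (finitely many) inseparable generators of $L/L_s$, so that $L/L_0$ is separable; this is a compatibility of higher diagonals under formally smooth extensions, which the paper proves only for finite \'etale extensions (\ref{higher diagonal for etale maps}) and arranges never to need in greater generality by performing its reductions in a different order. A secondary, easily repaired imprecision: "the maximal separable sub-extension of $L/E$" is not well defined for transcendental extensions; the tower $E\subseteq L_s\subseteq L$ with $L_s/E$ separably generated, $E$ algebraically closed in $L_s$, and $L/L_s$ finite purely inseparable should instead be extracted from the tower $k\subseteq F\subseteq L$ furnished by Definition \ref{separably generated}.
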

\begin{proof}
Applying \ref{when TX is Tannakian} we can conclude that $\End_{\Strat_k(L)}(1)$ is a subfield of $L$. If $E/k$ is a separable and finite field extension then $(\Spec E)_{\Strat/k}=(\Spec E)_{\Strat/E}$ and therefore $\Strat_k(E)=\Vect(E)$. By functoriality this implies $L_{\et,k}\subseteq \End_{\Strat_k(L)}(1)$. So we concentrate on the other inclusion. We first deal with a particular case.

\emph{The case $L/k$ finite and purely inseparable when $\car k > 0$.} 
We have to prove that $\End_{\Strat_k(L)}(1)=k$.
 Set $A=L\otimes_k \overline k$, which is a local and finite $\overline k$-algebra with residue field $\overline k$. Since the maximal ideal of $A$ is nilpotent, by \ref{Str and Crys along nilpotent closed immersions} we see that $\End_{\Strat_{\overline k}(A)}(1)=\overline k$. Using the functor $\Strat_k(L)\arr \Strat_{\overline k}(A)$ we can conclude that $\End_{\Strat_k(L)}(1)$ is contained in the intersection of $L$ and $\overline k$ inside $A=L\otimes_k \overline k$, which coincides with $k$.

Coming back to the general statement,  we proceed by making some reductions. Consider $K\subseteq F\subseteq L$ where $K$ is purely transcendental, $F/K$ is algebraic and separable and $L/F$ is finite and purely inseparable. In what follows we will use \ref{separable extension base change by purely inseparable is a field} several times.

\emph{Reduction to the perfect case when $\car k > 0$.} Let $k^\perf$ be the perfect closure of $k$ and assume to know that the statement of the theorem holds for perfect fields. Consider the map
\[
\Strat_k(L)\arr \Strat_{k^\perf}(L\otimes_k k^\perf)
\]
The ring $L\otimes_k k^\perf$ is a finite extension of the field $F\otimes_k k^\perf$ and therefore it is a local $k^\perf$-algebra with a nilpotent maximal ideal. In particular, by \ref{Str and Crys along nilpotent closed immersions}, $\Strat_{k^\perf}(L\otimes_k k^\perf)=\Strat_{k^\perf}(E)$, where $E$ is the residue field of $L\otimes_k k^\perf$. Moreover $(L\otimes_k k^\perf)_{\et,k^\perf}=E_{\et,k^\perf}$. We can therefore conclude that $\End_{\Strat_k(L)}(1)$ lies in the intersection of $L$ and $(L\otimes_k k^\perf)_{\et,k^\perf}$ inside $L\otimes_k k^\perf$. This intersection is $L_{\et,k}$ because, using \ref{etale part and geometric connectdness}, we have
\[
L \otimes_{L_{\et,k}} (L\otimes_k k^\perf)_{\et,k^\perf} \simeq L\otimes_{L_{\et,k}} (L_{\et,k}\otimes_k k^\perf) \simeq L \otimes_k k^\perf
\]

\emph{Reduction to the separably generated case when $\car k > 0$.} Assume to know that the statement of the theorem holds for separably generated field extensions.
The ring $L^{(i,K)}$ is a finite and local algebra over the field $F^{(i,K)}$. Moreover its residue field $E_i$ is contained in $K(L^{p^i})$. Since $L/F$ is finite and purely inseparable we can choose $i$ such that $E_i\subseteq F$. There are functors 
$$
\Strat_k(L) \arr \Strat_k(L^{(i,K)}) \arr \Strat_k(F)
$$
which implies that $\End_{\Strat_k(L)}(1)$ lies in $F_{\et,k}$. Notice that here, to be precise, the $k$-structure of $F$ is the one given by $k\arr k \arr F$, where the first map is the $i$-th power of the Frobenius. Since $k$ is perfect, $F$, with this new structure, is still a separably generated extension of $k$.

\emph{Reduction to the case $L/K$ finite.} Let $\beta\in \End_{\Strat_k(L)}(1)$. We claim that $\beta\in \End_{\Strat_k(K(\beta))}(1)$. Given a field extension $Q/k$ denotes by $J_Q\subseteq Q\otimes_k Q$ the ideal of the diagonal. It is enough to prove that, if $Q/E$ is an algebraic and separable extension, then the map 
\[
\gamma\colon (E\otimes_k E)/J_E^n\arr (Q\otimes_k Q)/J_Q^n
\]
is injective for all $n$. Since the ideal $J_Q$ is generated by elements of the form $q\otimes 1 - 1\otimes q$ for $q\in Q$, it is easy to show that the functor $Q\mapsto (Q\otimes_k Q)/J_Q^n$ commutes with filtered direct limits. In particular, for the injectivity of $\gamma$, one can assume that $Q/E$ is finite. In this case the result follows from \ref{higher diagonal for etale maps}.

We can therefore assume that $L/K$ is a finite and separable extension.

 \emph{Computation via relative Frobenius when $\car k > 0$.}
 We show that 
 \begin{equation}\label{end ring and frobenius}
  \End_{\Strat_k(L)}(1) = \bigcap_j L^{(j)}
 \end{equation}
 Here we are identifying $L^{(j)}$ with the image of the relative Frobenius $\phi_{j,L}\colon L^{(j)}\arr L$.
 
 $\supseteq$ By \ref{classical definition of stratifications} we have that $\End_{\Strat_k(L)}(1)$ is the intersection of all 
 $$
 L_n = \Ker (L\arrdi{\id \otimes 1 - 1\otimes \id} (L\otimes_kL)/J^n)
 $$
 where $J$ is the ideal of the diagonal. Since
 \[
 \phi_{j,L}(\sum_q z_q\otimes \lambda_q)\otimes 1 - 1\otimes \phi_{j,L}(\sum_q z_q\otimes \lambda_q)=\sum_q \lambda_q(z_q\otimes 1-1\otimes z_q)^{p^j} \in J^{p^j}
 \]
 we get $\Imm\phi_{j,L}\subseteq L_{p^j}$ for all $j$.
 
 $\subseteq$ If $k\subseteq E \subseteq L$ is an intermediate field extension with $L/E$ finite and purely inseparable then  $\End_{\Strat_E(L)}(1)=E$. Using the functor $\Strat_k(L)\arr \Strat_E(L)$ we see that $\End_{\Strat_k(L)}(1)\subseteq E$. By \ref{twist as intersection} we can conclude that $\End_{\Strat_k(L)}(1)\subseteq L^{(j)}$ for all $j\in \N$.
 
 \emph{Conclusion.}
Let $x\in \End_{\Strat_k(L)}(1)$. We are going to show that $x\in L$ is algebraic over $k$. Since $L/k$ is separably generated, this will imply $x\in L_{\et,k}$. Write $K=k(z_s)_{s\in S}$.

Assume by contradiction that $x$ is transcendental and let $f(X)=X^n+a_{1}X^{n-1}+\cdots+a_n$ be the minimal polynomial of $x$ over $K$. If $\car k > 0$ we make the following simplification. Since $f\notin k[X]$, there exists a maximum $r\in \N$ such that $f\in K^{(r)}[X]$. On the other hand $x\in \End_{\Strat_k(L)}(1)=\End_{\Strat_k(L^{(r)})}(1)$ by (\ref{end ring and frobenius}), $L^{(r)}/K^{(r)}$ is finite and separable, $K^{(r)}$ is purely transcendental and $f$ is also the minimal polynomial of $x$ over $K^{(r)}$. Thus if $\car k > 0$ we can further assume that $f\notin K^{(1)}[X]$, that is $r=0$.

 Since $x\in \End_{\Strat_k(L)}(1)$ and thanks to \ref{classical definition of stratifications}, we have $0=d(x)=x\otimes 1 - 1\otimes x\in I/I^2\simeq \Omega_{L/k}$ where $I$ is the ideal of the diagonal in $L\otimes_k L$. Thus $0=d(f(x))=d(a_1)x^{n-1}+\cdots+d(a_n)$.  Since $L/K$ is finite and separable, $\{d(z_s)\}_{s\in S}$ is a free basis of $\Omega_{L/k}$. Since $f$ is the minimal polynomial we can conclude that $\partial a_i/\partial z_s=0$ for all $i$ and $s$. If $\car k = 0$ this implies $f\in k[X]$ contradicting the assumption. If $\car k > 0$ this tells us that $f\in K^{(1)}[X]$, which is again a contradiction.
 \end{proof}

\begin{proof}[Proof of Theorem \ref{main thm for str and inf},3), second sentence.] Since axioms A and B holds and $\stZ$ is connected, by \ref{when TX is Tannakian} we know that $\Hl^0(\odi{\stZ_{\str}})=F\subseteq \Hl^0(\odi\stZ)$ is a field. By pulling back via $\Spec L\arr \stZ$ we get a map $F\arr \Hl^0(\odi{(\Spec L)_{\str}})=\End_{\Strat(L/k)}(1)=L_{\et,k}$, where we have used \ref{endo of trivial object for a field}. So $F\subseteq \Hl^0(\odi\stZ)_{\et,k}$. The other inclusion follows pulling back along $\stZ\arr \Spec \Hl^0(\odi\stZ)_{\et,k}$ and using again \ref{endo of trivial object for a field}.
\end{proof}

\begin{proof}[Proof of Theorem \ref{main thm for str and inf},4), second sentence.]
We can assume $\stZ$ connected and set $$F:=\Hl^0(\odi{\stZ_{\iinf}})\subseteq \Hl^0(\odi\stZ).$$ Using the map $\Crys(\stZ)\arr \Strat(\stZ)$ we can conclude that $F\subseteq \Hl^0(\odi\stZ)_{\et}$. The other inclusion follows pulling back along $\stZ\arr \Spec \Hl^0(\odi\stZ)_{\et,k}$: If $Q/k$ is a separable and finite field extension then $(\Spec Q)_{\iinf/k}=(\Spec Q)_{\iinf/Q}$ so that $\Crys(Q/k)=\Vect(Q)$.
\end{proof}

\subsection{F-divided structures}

In this section we fix a base field $k$ with positive characteristic $p$.
\begin{defn}\label{definition of Fdivided sheaves}
 Let $\stZ$ be a category fibered in groupoids over $k$. The chain of relative Frobenius of $\stZ$
 \[
 \stZ\arr \stZ^{(1,k)}\arr \stZ^{(2,k)}\arr \cdots
 \]
 defines a direct system of fibered categories over $k$ indexed by $\N$ and we will denote by $\stZ^{(\infty,k)}$ its limit, which is a category fibered in groupoids over $k$ (see \ref{limit of fibered categories}).
 Let $\stY$ be a fibered category over $k$. Following notations and definitions from \ref{the T construction} we define the following objects: if $\stX=\stZ$, $\stX_\shT=\stZ^{(\infty,k)}$ and the map $\stX\arr \stX_\shT$ is the one induced by the limit, then $\shT_k$ will be replaced by $\Fdiv_k$ and an object of $\Fdiv_k(\stZ,\stY)=\Hom_k^c(\stZ^{(\infty,k)},\stY)$ will be called an F-divided map, while an object of $\Fdiv_k(\stZ)=\Vect(\stZ^{(\infty,k)})$ an F-divided sheaf.
 When $k$ is clear from the context it will be omitted.
\end{defn}

Using \ref{limit over natural numbers} and  \ref{limit of fibered categories} we have a more concrete description, which will be the one used in this paper.
\begin{prop}
 Let $\stY$ be a fibered category and $\stZ$ be a category fibered in groupoids over $k$. Then $\Fdiv(\stZ,\stY)$ is equivalent to the category of objects $(Q_n,\sigma_n)_{n\geq 0}$ where $Q_n\colon \stZ^{(n)}\arr \stY$ is a $k$-map and $\sigma_n\colon Q_{n+1}\circ R_n \arr Q_n$ are isomorphisms, where $R_n\colon \stZ^{(n)}\arr \stZ^{(n+1)}$ is the relative Frobenius. Under this equivalence the functor $\Fdiv(\stZ,\stY)\arr \Hom(\stZ,\stY)$ is given by $(Q_n,\sigma_n)_{n\geq 0}\longmapsto Q_0$.
\end{prop}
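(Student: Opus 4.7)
The strategy is simply to unwind the definition of $\stZ^{(\infty,k)}$ as the limit of the directed system
\[
\stZ \arr \stZ^{(1,k)} \arr \stZ^{(2,k)} \arr \cdots
\]
and then invoke the universal property of such limits of fibered categories, which is the content of the appendix results \ref{limit over natural numbers} and \ref{limit of fibered categories}.

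First I would recall that \ref{limit of fibered categories} gives, for any directed system $\{\stZ_i\}$ of fibered categories with transition maps $f_{ij}\colon \stZ_i \arr \stZ_j$, a description of $\Hom^c(\plim \stZ_i, \stY)$ as the category of compatible systems of base-preserving maps $\stZ_i \arr \stY$ together with isomorphisms encoding the compatibility with the $f_{ij}$. Since $\stZ$ is a category fibered in groupoids, so is each $\stZ^{(n,k)}$, and $\Hom^c$ reduces to ordinary $\Hom$, giving $\Fdiv(\stZ,\stY) = \Hom_k^c(\stZ^{(\infty,k)},\stY)$ the desired interpretation once we plug in the indexing category.

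Next, I would apply \ref{limit over natural numbers}, which should say that for a system indexed by the poset $\N$, the general compatibility data reduces to giving, for each $n$, a map $Q_n\colon \stZ^{(n)}\arr \stY$ and an isomorphism $\sigma_n\colon Q_{n+1}\circ R_n \arr Q_n$ for each consecutive transition $R_n\colon \stZ^{(n)}\arr \stZ^{(n+1)}$, without further cocycle conditions beyond the consecutive ones (since the indexing category $\N$ is free on its consecutive arrows). This gives exactly the stated equivalence between $\Fdiv(\stZ,\stY)$ and the category of tuples $(Q_n,\sigma_n)_{n\geq 0}$.

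Finally, the description of the forgetful functor $\Fdiv(\stZ,\stY)\arr \Hom(\stZ,\stY)$ as $(Q_n,\sigma_n)_{n\geq 0}\mapsto Q_0$ is immediate from the fact that the canonical map $\stZ = \stZ^{(0,k)}\arr \stZ^{(\infty,k)}$ is the structural map into the limit corresponding to the index $0$, so restriction along it picks out the component $Q_0$. There is no serious obstacle here: the whole statement is a direct translation of the general limit formalism, and the only care required is bookkeeping to confirm that the data of a map out of a limit indexed by $\N$ collapses to consecutive compatibility isomorphisms, which is precisely what the cited appendix lemma \ref{limit over natural numbers} is designed to provide.
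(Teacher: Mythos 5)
Your proposal is correct and follows exactly the paper's own route: the paper's proof consists precisely of invoking \ref{limit over natural numbers} and \ref{limit of fibered categories} (together with \ref{hom from the limit}) to unwind $\Hom_k^c(\stZ^{(\infty,k)},\stY)$ into consecutive compatibility data over $\N$. Your additional remarks about $\Hom^c$ reducing to $\Hom$ for categories fibered in groupoids and about restriction along the index-$0$ structural map are accurate bookkeeping of the same argument.
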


\begin{rmk}\label{Fdiv when k is perfect}
 If $k$ is perfect there is a even more concrete description of $F$-divided sheaves:  $\Fdiv(\stZ)$ is the category of tuples $(Q_n,\sigma_n)_{n\geq 0}$ where $Q_n$ is a vector bundle over $\stZ$ and $\sigma_n\colon F_\stZ^*Q_{n+1}\arr Q_n$ is an isomorphism. This is because the projections $\stZ^{(n)}\arr \stZ$ are equivalences
\end{rmk}

The main result of this section is the following Theorem:
\begin{thm}\label{main thm for FDiv}
Let $\stZ$ be a category fibered in groupoids over $k$. Then:
\begin{enumerate}
 \item if $\stZ$ is connected then axiom A implies axioms B,C and D for $\stZ\arr \stZ^{(\infty)}$ and that $\Pi_{\Fdiv(\stZ)}$ is a pro-smooth banded gerbe (see \ref{pro stuff for gerbes 2});
 \item axioms A and B holds for $\stZ\arr \stZ^{(\infty)}$ if $\stZ$ admits an fpqc covering $U\arr \stZ$ from a scheme $U$ such that all its nonempty closed subsets contains an adically separated point $q$ (see \ref{defn: adically separated schemes}) with $k(q)/k$ separable up to a finite extension (see \ref{separably generated}); if moreover $\stZ$ is connected and there exists a map $\Spec L\arr \stZ$ where $L/k$ is a field extension which is separably generated up to a finite extension (see \ref{separably generated}) then $\Hl^0(\odi{\stZ^{(\infty)}})=\Hl^0(\odi\stZ)_{\et,k}$.
\end{enumerate}
\end{thm}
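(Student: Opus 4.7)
My organizing principle is that $\stZ^{(\infty)}=\colim_n\stZ^{(n)}$ is Frobenius-perfect: the absolute Frobenius $F_{\stZ^{(\infty)}}$ is an autoequivalence, whose pullback on $\Fdiv(\stZ)=\Vect(\stZ^{(\infty)})$ is the inverse of the index shift $(Q_n)_n\mapsto(Q_{n+1})_n$. All assertions of Part (1) will flow from this invertibility, while Part (2) is essentially a freeness criterion at adically separated points.

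For Part (1), axiom A makes $\Fdiv(\stZ)$ abelian with pointwise exact sequences by \ref{TX abelian}. For axiom B I will argue: given $\phi\colon(Q_n)\to(Q'_n)$ with $\phi_0=0$, form $\Imm(\phi)\in\Fdiv(\stZ)$, a sub-bundle of rank $0$ at level zero; since each $\stZ\to\stZ^{(n)}$ is radicial, connectedness of $\stZ$ propagates to $\stZ^{(\infty)}$, forcing the rank to be identically zero and hence $\phi=0$. For axiom C, I will exploit that for a finite \'etale $\Gamma/L$ the relative Frobenius $\Gamma\to\Gamma^{(1)}$ is an equivalence, so a map $\stZ\to\Gamma$ lifts via successive Frobenius twists to a coherent tower $(\stZ^{(n)}\to\Gamma)$ and hence to a unique map from the colimit. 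For axiom D and the pro-smooth banded conclusion, the invertibility of Frobenius on $\stZ^{(\infty)}$ transfers through the universal property to $\Pi_{\Fdiv(\stZ)}$; combined with the factorization $\Gamma\to\Gamma_{\et}\to\Gamma^{(j)}$ of \ref{independent of the chart}, any finite quotient gerbe $\Gamma$ must satisfy $\Gamma=\Gamma_{\et}$, so $\Pi_{\Fdiv(\stZ)}$ is pro-\'etale (in particular pro-smooth banded) and axiom D is immediate.

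For Part (2) the main content is axiom A. By fpqc descent and passage to stalks I will reduce to $\stZ=\Spec A$ with $(A,m)$ a local, $m$-adically separated $k$-algebra whose residue field $k(A)/k$ is separable up to a finite extension; \ref{Bon Ring} then ensures every Frobenius twist $A^{(n)}$ shares these properties. Given $\shF_0\in\QCoh_{\textup{fp}}(A)$ together with an F-divided structure $\shF_0\cong\shF_n\otimes_{A^{(n)},R_n}A$, the crucial observation is that for $n$ with $p^n\geq l$ the relative Frobenius $R_n$ sends $m_{A^{(n)}}$ into $m^{p^n}\subseteq m^l$, so the composite $A^{(n)}\to A/m^l$ factors through the residue field $k(A^{(n)})$; consequently
\[
\shF_0/m^l\shF_0\;\cong\;(\shF_n/m_{A^{(n)}}\shF_n)\otimes_{k(A^{(n)})}A/m^l
\]
is a free $A/m^l$-module of rank $\dim_{k(A)}\shF_0\otimes k(A)$. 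Invoking \ref{m-adically sepated rings and free modules} for every $l$ yields freeness of $\shF_0$, and local freeness of constant rank across $\stZ$ is propagated by an upper-semicontinuity argument using the hypothesis that every nonempty closed subset of the atlas contains such an adically separated point. Axiom B then follows from axiom A as in Part (1), component by component. For the second sentence, I will pull back along $\Spec L'\to\stZ$ to obtain an inclusion $\Hl^0(\odi{\stZ^{(\infty)}})\hookrightarrow\End_{\Fdiv_k(\Spec L')}(1)$, and compute the latter to equal $\bigcap_n k\cdot L'^{p^n}=L'_{\et,k}$ via an F-divided analogue of \ref{endo of trivial object for a field} using \ref{twist as intersection}; the reverse inclusion comes by functoriality applied to $\stZ\to\Spec\Hl^0(\odi\stZ)_{\et,k}$, since for a separable finite extension $k'/k$ one has $\Spec k'\simeq\Spec k'^{(n)}$ and hence $\Fdiv_k(\Spec k')=\Vect(k')$.

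The principal obstacle I foresee is twofold. First, in Part (2) I must propagate pointwise freeness at adically separated points to a genuine local-freeness statement on $\stZ$; this requires marrying \ref{m-adically sepated rings and free modules} to fpqc descent and to an upper-semicontinuity argument that uses the ``every nonempty closed subset contains an adically separated point'' hypothesis in a delicate way, especially in the absence of Noetherianness. Second, explicitly computing $\End_{\Fdiv_k(\Spec L')}(1)=L'_{\et,k}$ when $L'/k$ is only separably generated up to a finite extension calls for adapting the Frobenius-intersection arguments of \ref{endo of trivial object for a field} and \ref{twist as intersection} (both stated for stratifications) to the F-divided setting, with careful bookkeeping between successive Frobenius twists and the subtle interplay between the separable and finite parts of the extension.
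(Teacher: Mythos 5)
Your Part (1) rests on the claim that the absolute Frobenius of $\stZ^{(\infty)}$ is an autoequivalence whose pullback inverts the index shift. This is precisely what the paper can only establish when $k$ is perfect (Remark \ref{prosmoothness when k is perfect}, which relies on the description \ref{Fdiv when k is perfect}, valid only because the projections $\stZ^{(n)}\arr\stZ$ are then equivalences); for imperfect $k$ the paper explicitly states that it is unclear whether the relative Frobenius of $\Pi_{\Fdiv(\stZ)}$ is an equivalence. The paper's proof of A$\then$D therefore takes a different route: it base-changes to $\overline k$, proves that $\Vect(\overline\Gamma)\arr\Fdiv_{\overline k}(\overline\stZ)$ is fully faithful (a non-trivial step using $\pi_*$ along $\overline\Gamma\arr\Gamma$ and infinite direct sums), deduces that $\odi{\overline\Gamma}\arr F_*\odi{\overline\Gamma}$ is surjective, and descends this to conclude that $\Gamma\arr\Gamma^{(1,L)}$ is a quotient, whence $\Gamma$ is smooth banded. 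Separately, your conclusion that $\Pi_{\Fdiv(\stZ)}$ is pro-\'etale is false: a pro-\'etale gerbe is in particular profinite, while $\Pi_{\Fdiv(\stZ)}$ typically has non-finite smooth quotients (e.g.\ the diagonalizable quotients arising from $F$-divided line bundles of infinite order on an abelian variety). Axiom D only asserts that maps to \emph{finite} gerbes factor through finite \'etale ones; it does not make every finite-type quotient finite \'etale, and the correct conclusion is the strictly weaker ``pro-smooth banded''. (Your arguments for B and C are essentially the paper's, except that B needs no connectedness: one uses that the relative Frobenius of an affine scheme is a homeomorphism, so a bundle whose Frobenius pullback vanishes is zero.)

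In Part (2) the key step ``for $p^n\geq l$ the composite $A^{(n)}\arr A/m^l$ factors through the residue field of $A^{(n)}$'' fails when $k(A)$ has a non-trivial purely inseparable finite part over $k$: the maximal ideal of $A^{(n)}$ is then strictly larger than $m\otimes_k k$, since it contains lifts of the non-zero nilpotents of $k(A)^{(n)}$, and the relative Frobenius sends such a lift only into $m$, not into $m^{p^n}$. The paper first twists until the residue field becomes separable over $k$ (using \ref{Bon Ring} and the decomposition $k\subseteq E\subseteq k(A)$ with $E/k$ separable and $k(A)/E$ finite, which also forces the maximal ideal of $R^{(i)}$ to be nilpotent for $i\gg 0$) and then passes to the residue field via \ref{Fdiv is insensible to thickening}; your computation is valid only after such a preliminary reduction. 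Finally, for $\Hl^0(\odi{\stZ^{(\infty)}})$ you propose to redo the Frobenius-intersection computation of \ref{endo of trivial object for a field} directly in the $F$-divided setting; the paper avoids this entirely by composing with the comparison functor $\Fdiv_k(L)\arr\Crys_k(L)\arr\Strat_k(L)$ of \ref{from Fdiv to crystals} and quoting the already-established equality $\End_{\Strat_k(L)}(1)=L_{\et,k}$, which is both shorter and sidesteps exactly the bookkeeping you flag as an obstacle.
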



\begin{lem}\label{Fdiv is insensible to thickening} 
Let $f:\stZ\arr \stZ'$ be a nilpotent closed immersion. Then the induced functor $\stZ^{(\infty)}\arr \stZ'^{(\infty)}$ is an equivalence. In particular for any fiber category $\stY$ the restriction $\Fdiv(\stZ,\stY) \to \Fdiv(\stZ',\stY)$ is an equivalence.
\end{lem}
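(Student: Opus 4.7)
The plan is to prove the equivalence $\iota^{(\infty)}\colon \stZ^{(\infty)}\to \stZ'^{(\infty)}$ first; the claim about $\Fdiv$ then follows at once from the universal property $\Fdiv(-,\stY)=\Hom^c_k((-)^{(\infty)},\stY)$ recorded in the Proposition preceding the lemma, simply by pre- and post-composition with $\iota^{(\infty)}$.

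The technical core is a Frobenius factorization through the nilpotent closed immersion. Write $\shI\subseteq\odi{\stZ'}$ for the quasi-coherent ideal cutting out $\stZ$ and fix $N$ with $\shI^N=0$. The key claim is that for any $i$ with $p^i\geq N$ and any $n\geq 0$ the relative Frobenius $R^i\colon\stZ'^{(n)}\to\stZ'^{(n+i)}$ factors canonically as
\begin{equation*}
\stZ'^{(n)}\arrdi{g_n}\stZ^{(n+i)}\arrdi{\iota_{n+i}}\stZ'^{(n+i)},
\end{equation*}
where $\iota_{n+i}$ is the base change of $\iota$ along the $(n+i)$-th Frobenius twist. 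Since closed immersions are monomorphisms, the existence and uniqueness of $g_n$ can be tested affine-locally on $\stZ'$: if $\stZ'=\Spec A$ and $\stZ=\Spec(A/I)$ with $I^N=0$, the pullback along $R^i$ of the ideal of $\stZ^{(n+i)}\subseteq\stZ'^{(n+i)}$ is generated by $\{a^{p^i}\st a\in I\}$, and each such generator lies in $I^{p^i}\subseteq I^N=0$.

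By construction and naturality of the Frobenius, the morphisms $g_n$ satisfy $\iota_{n+i}\circ g_n=R^i_{\stZ'^{(n)}}$ and $g_n\circ\iota_n=R^i_{\stZ^{(n)}}$, where $\iota_n\colon\stZ^{(n)}\to\stZ'^{(n)}$ is the base change of $\iota$. These identities weave the two directed systems together into a single interleaved zigzag
\begin{equation*}
\stZ^{(0)}\arrdi{\iota_0}\stZ'^{(0)}\arrdi{g_0}\stZ^{(i)}\arrdi{\iota_i}\stZ'^{(i)}\arrdi{g_i}\stZ^{(2i)}\to\cdots
\end{equation*}
in which the composition of any two consecutive arrows equals $R^i$. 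Consequently the subsystems $(\stZ^{(ni)})_n$ and $(\stZ'^{(ni)})_n$, both with transition maps $R^i$, are each cofinal in this zigzag, while each is in turn cofinal in the corresponding full Frobenius system indexed by $\N$. Passing to the $2$-colimit in fibered categories over $k$ therefore shows that $\iota^{(\infty)}$ is an equivalence, and the equivalence of $\Fdiv$ categories follows. The only delicate point is ensuring that the construction of $g_n$ globalizes from affine charts of $\stZ'$; this is automatic, because nilpotency of $\shI$ is a global hypothesis and factorizations through a closed immersion are unique whenever they exist, so the local data glues unambiguously.
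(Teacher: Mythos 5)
Your proposal is correct and follows essentially the same route as the paper: both arguments rest on the observation that for $p^{i}\geq N$ (where $\shI^{N}=0$) the $i$-th relative Frobenius of $\stZ'^{(n)}$ kills the twisted ideal and hence factors through $\stZ^{(n+i)}$, producing maps $g_n\colon\stZ'^{(n)}\arr\stZ^{(n+i)}$ that interleave the two Frobenius towers and give mutually quasi-inverse functors on the limits. The paper phrases the conclusion as directly exhibiting a quasi-inverse $\stZ'^{(\infty)}\arr\stZ^{(\infty)}$ rather than via cofinality of the zigzag, but this is only a cosmetic difference.
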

\begin{proof}
 Let $N\in \N$ such that the $N$-th power of the ideals defining $\stZ\arr \stZ'$ are all $0$. For a given $i\in\N$ set $\stX=\stZ^{(i)}$ and $\stX'=\stZ'^{(i)}$. In particular this $N$ works also for the nilpotent closed immersion $\stX\arr \stX'$. Let $i_0\in \N$ with $p^{i_0}\geq N$. Notice that $F_{\stX'}^{i_0}\colon \stX'\arr \stX'$ factors through $\stX\subseteq \stX'$. This yield a $k$-map $\stX'\arr \stX^{(i_0)}$ making the following diagram commutative
   \[
  \begin{tikzpicture}[xscale=3.1,yscale=-1.2]
    \node (A0_0) at (0, 0) {$\stX$};
    \node (A0_1) at (1, 0) {$\stX^{(i_0)}$};
    \node (A1_0) at (0, 1) {$\stX'$};
    \node (A1_1) at (1, 1) {$\stX'^{(i_0)}$};
    \path (A0_0) edge [->]node [auto] {$\scriptstyle{}$} (A0_1);
    \path (A1_0) edge [->]node [auto] {$\scriptstyle{}$} (A0_1);
    \path (A0_0) edge [->]node [auto] {$\scriptstyle{}$} (A1_0);
    \path (A0_1) edge [->]node [auto] {$\scriptstyle{}$} (A1_1);
    \path (A1_0) edge [->]node [auto] {$\scriptstyle{}$} (A1_1);
  \end{tikzpicture}
  \]
Thus we get $k$-maps $\stZ'^{(i)}\arr \stZ^{(i+i_0)}$ with the above property. This yields a $k$-map $\stZ'^{(\infty)}\arr \stZ^{(\infty)}$ which is easily seen to be a quasi-inverse of $\stZ^{(\infty)}\arr \stZ'^{(\infty)}$.
\end{proof}

\begin{cor}\label{from Fdiv to crystals}
Let $\stZ$ be a category fibered in groupoids. Then there exists a natural $k$-functor $\psi\colon \stZ_{\iinf}\arr \stZ^{(\infty)}$ making the following diagram commutative
  \[
  \begin{tikzpicture}[xscale=1.0,yscale=-1.2]
    \node (A0_1) at (1, 0) {$\stZ$};
    \node (A1_0) at (0, 1) {$\stZ_{\iinf}$};
    \node (A1_2) at (2, 1) {$\stZ^{(\infty)}$};
    \path (A1_0) edge [->]node [auto] {$\scriptstyle{\psi}$} (A1_2);
    \path (A0_1) edge [->]node [auto] {$\scriptstyle{}$} (A1_2);
    \path (A0_1) edge [->]node [auto] {$\scriptstyle{}$} (A1_0);
  \end{tikzpicture}
  \]
In particular if $\stY$ is a fibered category we obtain a restriction functor $\Fdiv(\stZ,\stY)\arr \Crys(\stZ,\stY)$.
\end{cor}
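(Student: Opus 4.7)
The plan is to construct $\psi$ explicitly by applying the Frobenius-factorization trick from the proof of Lemma \ref{Fdiv is insensible to thickening} to each individual object of $\stZ_{\iinf}$. Given $(\xi, j\colon U \arr T) \in \stZ_{\iinf}$, where $U = \pi(\xi)$ and the ideal $\shI$ of $j$ satisfies $\shI^N = 0$, I choose any $i_0 \in \N$ with $p^{i_0} \geq N$. Exactly as in the cited lemma, the absolute Frobenius $F_T^{i_0}$ kills $\shI$ and therefore factors through $U \inj T$; this gives, after passing to the relative Frobenius, a canonical $k$-map $\tau_{(\xi,j),i_0}\colon T \arr U^{(i_0,k)}$. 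Composing with the twist $\xi^{(i_0)}\colon U^{(i_0)} \arr \stZ^{(i_0)}$ of $\xi$ and with the canonical functor $\stZ^{(i_0)} \arr \stZ^{(\infty)}$, I declare $\psi(\xi,j)$ to be the resulting object of $\stZ^{(\infty)}(T)$.

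Next I verify that this is independent of $i_0$: if $i_0' \geq i_0$ both work, the factorizations are compatible with the relative Frobenius $\stZ^{(i_0)} \arr \stZ^{(i_0')}$, so the two outputs become canonically isomorphic after transition in the direct system defining $\stZ^{(\infty)}$. On morphisms, a map $(\alpha,\beta)\colon (\xi,j) \arr (\xi',j')$ in $\stZ_{\iinf}$ gives a commutative square $U \arr T$, $U' \arr T'$, and the whole construction above is plainly natural in this data (one can use a single $i_0$ that works for both immersions), producing the required Cartesian arrow in $\stZ^{(\infty)}$ over $\beta$. This makes $\psi$ a base-preserving functor from $\stZ_{\iinf}$ to the fibered category $\stZ^{(\infty)}$.

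For commutativity of the triangle, take $\xi \in \stZ(U)$ and consider its image $(\xi, \id_U) \in \stZ_{\iinf}$. Here $\shI = 0$, so $i_0 = 0$ is admissible; the map $\tau_{(\xi,\id_U),0}$ is the identity $U \arr U$ and $\psi(\xi,\id_U)$ is exactly the image of $\xi$ under $\stZ = \stZ^{(0)} \arr \stZ^{(\infty)}$. So the asserted triangle commutes (canonically). The last sentence follows at once: precomposition with $\psi$ carries $\Fdiv(\stZ,\stY) = \Hom^c_k(\stZ^{(\infty)},\stY)$ into $\Crys(\stZ,\stY) = \Hom^c_k(\stZ_{\iinf},\stY)$.

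The only real subtlety, and the step I expect to be a bit delicate, is checking that $\psi$ respects composition of morphisms in $\stZ_{\iinf}$ compatibly with the passage to the colimit $\stZ^{(\infty)}$; the point is that given two composable morphisms one can always choose a common $i_0$ large enough for both of them, and then the naturality of the Frobenius factorizations, which is what drives the proof of Lemma \ref{Fdiv is insensible to thickening}, ensures that the composed factorization agrees with the factorization of the composite. Beyond that the construction is essentially formal.
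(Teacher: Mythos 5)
Your proof is correct and is essentially the paper's argument: the paper simply writes the object as the composite $T\arr T^{(\infty)}\xrightarrow{(j^{(\infty)})^{-1}}U^{(\infty)}\arr\stZ^{(\infty)}$, invoking the equivalence from Lemma \ref{Fdiv is insensible to thickening}, whose quasi-inverse is built from exactly the Frobenius factorization $T\arr U^{(i_0)}$ that you spell out explicitly. The independence of $i_0$ and the naturality checks you describe are the same ones implicit in the paper's one-line proof.
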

\begin{proof}
 Given $(\xi,U\arrdi j T)\in \stZ_{\iinf}$ we obtain an arrow 
 $$
 T\arr T^{(\infty)}\xrightarrow{(j^{(\infty)})^{-1}}U^{(\infty)} \arrdi {\xi^{(\infty)}}\stZ^{(\infty)}
 $$
 In a similar way an arrow in $\stZ_{\iinf}$ can be mapped to an arrow in $\stZ^{(\infty)}$. 
\end{proof}

%
%


\begin{lem}\label{A implies B for Fdiv}
 If $\stZ$ is a category fibered in groupoids then, for $\stZ\arr\stZ^{(\infty)}$, axiom A implies axiom B.
\end{lem}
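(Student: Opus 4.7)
The plan is to invoke \ref{TX abelian}, which under axiom A makes $\Fdiv(\stZ)=\Vect(\stZ^{(\infty)})$ an abelian category whose exact sequences are precisely the pointwise exact ones. Since the restriction $\Fdiv(\stZ)\arr\Vect(\stZ)$ is additive, axiom B reduces to showing that if $\alpha=(\alpha_n)\colon (Q_n,\sigma_n)\arr(Q'_n,\sigma'_n)$ is a morphism in $\Fdiv(\stZ)$ with $\alpha_0=0$ in $\Vect(\stZ)$, then $\alpha_n=0$ for every $n$; here $R_n\colon \stZ^{(n)}\arr\stZ^{(n+1)}$ denotes the relative Frobenius and $\sigma_n\colon R_n^*Q_{n+1}\xrightarrow{\sim}Q_n$ are the structural isomorphisms.

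I would form the image $I=\im\alpha$ inside $\Fdiv(\stZ)$. Pointwise exactness gives $I=(I_n,\tau_n)$ with $I_n=\im\alpha_n$, and axiom A forces each $I_n$ to lie in $\Vect(\stZ^{(n)})$. Right-exactness of pullback applied to the compatibility relation $\alpha_n\sigma_n=\sigma'_n R_n^*\alpha_{n+1}$ produces isomorphisms $\tau_n\colon R_n^*I_{n+1}\xrightarrow{\sim} I_n$, so that $(I_n,\tau_n)$ really is an object of $\Fdiv(\stZ)$. Hence $\alpha=0$ is equivalent to $I_n=0$ for all $n$, and since $I_0=\im\alpha_0=0$ one can argue by induction, the inductive step reducing to the following key claim: if $V\in\Vect(\stZ^{(n+1)})$ satisfies $R_n^*V=0$, then $V=0$.

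For the claim, I would fix a test object $\xi\colon\Spec B\arr\stZ^{(n+1)}$ and show that $V_\xi\in\Vect(B)$ is zero. The pullback of the relative Frobenius $R_n$ along $\xi$ is, locally, modelled on the affine relative Frobenius $B^{(1,k)}\arr B$ described in \ref{rmk about relative frobenius}, whose kernel is nilpotent, so that this map is a universal homeomorphism and in particular topologically surjective. The hypothesis $R_n^*V=0$ means that after pulling $V_\xi$ back along this topologically surjective map one obtains the zero vector bundle; since pullback of a vector bundle preserves its rank function, the rank of $V_\xi$ must vanish at every point of $\Spec B$, whence $V_\xi=0$.

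The main obstacle I anticipate is that $\stZ$ is only required to be a fibered category, not necessarily an algebraic stack, so the fiber product defining the pullback of $R_n$ along $\xi$ need not be representable and one cannot appeal to an atlas. The proof must therefore proceed intrinsically via the rank function, extracting from \ref{rmk about relative frobenius} the universal-homeomorphism property of the relative Frobenius of affine schemes and propagating it through base change to conclude the pointwise vanishing.
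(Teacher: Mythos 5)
Your reduction to the object-level statement coincides with the paper's: under axiom A, \ref{TX abelian} makes $\Fdiv(\stZ)$ abelian with pointwise exact sequences, so faithfulness of $\Fdiv(\stZ)\arr\Vect(\stZ)$ follows once one shows that an object $(\shF_n,\sigma_n)$ with $\shF_0=0$ is zero; passing through the image of a morphism and your induction on $n$ are harmless reformulations. The genuine gap is in your key claim that $R_n^*V=0$ forces $V=0$ for $V\in\Vect(\stZ^{(n+1)})$. The hypothesis $R_n^*V=0$ only tells you that $V_{R_n(\zeta)}=0$ for objects $\zeta$ of $\stZ^{(n)}$; to get $V_\xi=0$ for an arbitrary $\xi\in\stZ^{(n+1)}(\Spec B)$ you must manufacture from $\xi$ a suitable object of $\stZ^{(n)}$, and your argument never does this. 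The assertion that ``the pullback of $R_n$ along $\xi$ is locally modelled on the affine relative Frobenius $B^{(1,k)}\arr B$'' is unjustified --- as you yourself observe, the fibered category $\stZ^{(n)}\times_{\stZ^{(n+1)},\xi}\Spec B$ need not be representable and there is no atlas --- and ``propagating the universal-homeomorphism property through base change'' is not an argument, because there is nothing representable to base change along.

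The missing construction, which is exactly the content of the paper's proof, is to go around the square the other way, using the \emph{projection} rather than the relative Frobenius. Set $\eta=\pr\circ\xi\colon\Spec B\arr\stZ^{(n)}$, where $\pr\colon\stZ^{(n+1)}=(\stZ^{(n)})^{(1)}\arr\stZ^{(n)}$ is the first projection; this map is a base change of $\Spec k\arr\Spec k$, hence affine and representable, and $\stZ^{(n+1)}\times_{\stZ^{(n)},\eta}\Spec B\simeq(\Spec B)^{(1)}$ once $B$ is given the $k$-structure coming from $\eta$. The pair $(\xi,\id)$ then factors $\xi$ as $\Spec B\arrdi{j}(\Spec B)^{(1)}\arrdi{\eta^{(1)}}\stZ^{(n+1)}$. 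Naturality of the relative Frobenius gives $\eta^{(1)}\circ\mathrm{Fr}_{\Spec B}=R_n\circ\eta$, so the pullback of $\eta^{(1)*}V$ along the relative Frobenius $\mathrm{Fr}_{\Spec B}\colon\Spec B\arr(\Spec B)^{(1)}$ equals $\eta^*(R_n^*V)=0$; since this relative Frobenius of affine schemes is a homeomorphism by \ref{rmk about relative frobenius}, your rank argument applies to the honest vector bundle $\eta^{(1)*V}$ on the affine scheme $(\Spec B)^{(1)}$ and yields $\eta^{(1)*}V=0$, whence $V_\xi=j^*(\eta^{(1)*}V)=0$. (The paper runs the same computation in a single step from level $n$ down to level $0$, via the projection $\stZ^{(n)}\arr\stZ$ and the identification $\stZ^{(n)}\times_\stZ\Spec C\simeq(\Spec C)^{(n)}$.) With this insertion your proof closes; without it the key claim is unproved.
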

\begin{proof}
 By \ref{TX abelian} the category $\Fdiv(\stZ)$ is abelian, thus one has to show that if $\shF=(\shF_n,\sigma_n)\in \Fdiv(\stZ)$ and $\shF_0=0$ then $\shF=0$. If $C$ is an $\F_p$-algebra and $\xi\colon \Spec(C)\arr \stZ^{(n)}$ a map, there exists $\eta\colon \Spec(C)\arr \stZ$, namely the composition $\Spec(C)\arr \stZ^{(n)}\arr \stZ$ and a factorization of $\xi$ as $\Spec(C)\arr V=\stZ^{(n)}\times_\stZ \Spec C\arr \stZ^{(n)}$. If $C$ has the $k$-structure induced by $\eta\colon \Spec C\arr \stZ$, then $V=(\Spec C)^{(n)}$, so that the pullback of $(\shF_n)_{|V}$ along the relative Frobenius of $C$ coincides with $\eta^*\shF_0=0$ on $\Spec C$. Since the relative Frobenius for affine schemes is a homeomorphism, we can conclude that $(\shF_n)_{|V}=0$, so that $\xi^*\shF_n=0$.
\end{proof}

\begin{proof}[Proof of Theorem \ref{main thm for FDiv}, 2), first sentence.]
Let $(\shE_n,\sigma_n)_{n\in \N}\in \Fdiv(\stZ,\QCoh_{\textup{fp}})$ and $U\arr \stZ$ be the atlas of the statement. We have to show that all $\shE_i$ are locally free. Since all $U^{(i)}\to \stZ^{(i)}$ are fpqc coverings we can assume $\stZ=U$. Moreover, since the relative Frobenius is a homeomorphism, we can moreover assume $\stZ=\Spec R$, where $(R,m)$ is a local ring which is $m$-adically separated and whose residue field $L$ is separable up to a finite extension over $k$. As for each $i\in \N$, $(\shE_n,\sigma_n)_{n\geq i}$ is in $\Fdiv(\Spec(R)^{(i)},\QCoh_{\textup{fp}})$ and $R^{(i)}$ is again adically separated with respect to its maximal ideal and has a residue field separable up to a finite extension over $k$ by \ref{Bon Ring}, we see that we can always replace $R$ by $R^{(i)}$ and, using \ref{m-adically sepated rings and free modules}, that we can assume $m$ nilpotent. Since $L/k$ is separable up to a finite extension, we have a decomposition $k\subseteq E\subseteq L$, where $E/k$ is separable and $L/E$ is finite. It follows that, for $i\gg 0$, $R^{(i)}$ has separable residue field. On the other hand  $(R/m)^{(i)}=L^{(i)}$ is finite over the field $E^{(i)}$, so it Noetherian and therefore the maximal ideal of $R^{(i)}$ is nilpotent. Thus we can assume $L/k$ separable. By  \ref{Fdiv is insensible to thickening} applied on the nilpotent closed immersion $\Spec L\arr \Spec R$ we obtain $(\Spec L)^{(\infty)}\simeq (\Spec R)^{(\infty)}$. Thus we may assume $R=L$ a field. Since $L/k$ is separable all $L^{(i)}$ are fields by \ref{separable extension base change by purely inseparable is a field}. Thus all $\E_n$ are vector spaces and thus locally free.
\end{proof}


\begin{ex}
 Without the hypothesis on the residue fields in \ref{main thm for FDiv} the conclusion is false. Indeed if $k=\F_p(z)$ and $L=k^{\perf}$ then $\Fdiv_k(L)\neq \Fdiv_k(L,\QCoh_{\textup{fp}})$. Let $\phi_i \colon L^{(i+1)}\arr L^{(i)}$ the relative Frobenius, that is $\phi_i(a\otimes \lambda)=a^p\otimes \lambda$, and consider $x_i=z^{1/p^i}\otimes 1 - 1\otimes z\in L^{(i)}$. A direct computation shows that $\phi_i(x_{i+1})=x_i$ and $x_0=0$. The collection $x=(x_i)_{i\in \N}$ defines a morphism $\odi{(\Spec L)^{(\infty)}}\arr \odi{(\Spec L)^{(\infty)}}$. Its cokernel is not in $\Fdiv_k(L)$ because $x_0=0$ but $x_1\neq 0$.
\end{ex}

\begin{proof}[Proof of Theorem \ref{main thm for FDiv}, 2), second sentence.] Since axioms A and B holds and $\stZ$ is connected, by \ref{when TX is Tannakian} we know that $\Hl^0(\odi{\stZ^{(\infty)}})=F\subseteq \Hl^0(\odi\stZ)$ is a field. The inclusion $\Hl^0(\odi\stZ)_{\et,k}\subseteq F$ follows pulling back along $\stZ\arr \Spec \Hl^0(\odi\stZ)_{\et,k}$: if $Q/k$ is a separable and finite extension of $k$ then $\Spec Q=(\Spec Q)^{(\infty,k)}$ so that $\Fdiv_k(Q)=\Vect(Q)$. For the other inclusion, pulling back via $\Spec L\arr \stZ$ we get a map $F\arr \Hl^0(\odi{(\Spec L)^{(\infty)}})=\End_{\Fdiv_k(L)}(1)=L'$. Using the map $\Fdiv_k(L)\arr \Strat_k(L)$ and \ref{main thm for str and inf}, 2) we see that $L'\subseteq L_{\et,k}$ as desired.
\end{proof}

\begin{proof}[Proof of Theorem \ref{main thm for FDiv}, 1), A$\then$C]  By \ref{when TX is Tannakian} and \ref{A implies B for Fdiv} $L=\Hl^0(\odi{\stZ^{(\infty)}})$ is a field. 
In what follows we will use the following notation. If $\stW$ is a category fibered in groupoids over $L$ we will use $\stW^{(i)}$ for $\stW^{(i,k)}$ for $i\in \N\cup\{\infty\}$ and denote by $\stW_{(i,L)}$, for $i\in \N$, the fibered category $\stW$ with $L$-structure $\stW\arrdi{F^i_\stW}\stW \xrightarrow{\pi} \Spec L$, where $F_\stW$ is the absolute Frobenius, $\pi$ is the structure map.

We need to show that the pullback functor $\Hom_L(\sZ^{(\infty)}, \Gamma)\to\Hom_L(\sZ,\Gamma)$ is an equivalence for a finite and \'etale stack $\Gamma$ over $L$. By \ref{limit over natural numbers} it is enough to prove that $\phi_{(i,\sZ)}^*:\Hom_L(\sZ^{(i)}, \Gamma)\to\Hom_L(\sZ,\Gamma)$ is an equivalence for all $i$, where $\sZ^{(i)}$ is equipped with the $L$-structure via $\sZ^{(\infty)}$ and $\phi_{(i,\stZ)}$ is the relative Frobenius. Denote by $\varphi\colon \stZ^{(i)}\arr \stZ$ the projection and consider the following $2$-commutative diagram.
  \[
  \begin{tikzpicture}[xscale=3.5,yscale=-1.2]
    \node (A0_1) at (1, 0) {$(\stZ)_{(i,L)}$};
    \node (A0_2) at (2, 0) {$(\stZ^{(i)})_{(i,L)}$};
    \node (A0_3) at (3, 0) {$\stZ$};
    \node (A1_0) at (0, 1) {$(\stZ^{(i)})_{(i,L)}$};
    \node (A1_1) at (1, 1) {$\stZ$};
    \node (A1_2) at (2, 1) {$\stZ^{(i)}$};
    \path (A0_1) edge [->]node [auto] {$\scriptstyle{F^i_\stZ}$} (A1_1);
    \path (A1_0) edge [->]node [below] {$\scriptstyle{\varphi}$} (A1_1);
    \path (A0_1) edge [->]node [auto] {$\scriptstyle{\phi_{(i,\stZ)}}$} (A0_2);
    \path (A0_2) edge [->]node [auto] {$\scriptstyle{F^i_{\stZ^{(i)}}}$} (A1_2);
    \path (A1_1) edge [->]node [below] {$\scriptstyle{\phi_{(i,\stZ)}}$} (A1_2);
    \path (A0_2) edge [->]node [auto] {$\scriptstyle{\varphi}$} (A0_3);
  \end{tikzpicture}
  \]
We have $\varphi\circ \phi_{(i,\stZ)}=F^i_\stZ$,  $\phi_{(i,\stZ)}\circ \varphi = F^i_{\stZ^{(i)}}$, and that the morphisms in the above diagram are $L$-linear. The result then follows upon applying $\Hom_L(-,\Gamma)$ to the diagram,  provided that the following is true: If $\stW$ is a category fibered in groupoids over $L$ then $\Hom_L(\stW,\Gamma)\arrdi{F^i_\stW\circ -} \Hom_L(\stW_{(i,L)},\Gamma)$ is an equivalence. By construction $\Hom_L(\stW_{(i,L)},\Gamma) \simeq \Hom_L(\stW,\Gamma^{(i,L)})$ and a direct check shows that $F^i_\stW\circ -$ corresponds to the composition along the relative Frobenius $\Gamma\arr\Gamma^{(i,L)}$, which is an equivalence because $\Gamma$ is \'etale over $L$. 
%
%
\end{proof}

\begin{proof}[Proof of Theorem \ref{main thm for FDiv},1), A$\then$D and last sentence]
For quotient gerbes, please refer to definition \ref{quotient groups and gerbes}.
 By \ref{when TX is Tannakian} and \ref{A implies B for Fdiv} we have that $L=\Hl^0(\odi{\stZ^{(\infty,k)}})$ is a field and $\Pi_{\Fdiv(\stZ)}$ an $L$-gerbe. We must prove that, if $\Gamma$ is a quotient $L$-gerbe of $\Pi_{\Fdiv(\stZ)}$ of finite type, then $\Gamma$ is smooth banded. We have an $L$-map $\phi\colon \stZ^{(\infty,k)}\arr\Gamma$ such that $\phi^*\colon \Rep\Gamma\arr \Fdiv(\stZ)$ is fully faithful. Set $\overline\stZ=\stZ\times_k\overline{k}$ and $\overline\Gamma=\Gamma\times_k\overline k\arrdi\pi\Gamma$. Since $\overline\stZ^{(i,\overline k)}\simeq \stZ^{(i,k)}\times_k \overline k$, using the definition of limit it is easy to see that $\overline\stZ^{(\infty,\overline k)}\simeq \stZ^{(\infty,k)}\times_k \overline k$. Denote by $\overline\phi\colon\overline\stZ^{(\infty,\overline k)}\arr\overline\Gamma$ the base change of $\phi$. We claim that
 \[
 \overline\phi^*\colon \Vect(\overline\Gamma)\arr\Vect(\overline\stZ^{(\infty,\overline k)})=\Fdiv_{\overline k}(\overline\stZ)
 \]
 is fully faithful. Let $\overline V,\overline W\in\Vect(\overline{\Gamma})$. Since $\phi^*$ is faithful, it is enough to prove that
 \[
 \Hom_{\Gamma}(\pi_*\overline V,\pi_*\overline W)\arr \Hom_{\stZ^{(\infty,k)}}(\phi^*\pi_*\overline V,\phi^*\pi_*\overline W)
 \]
 is bijective. The pushforward $\pi_*\overline V$ can be written as a direct sum of vector bundles on $\Gamma$. Indeed let $k'/k$ be a finite extension for which there exists $V'\in\Vect(\Gamma\times_k k')$ inducing $\overline V$ and consider $\overline\Gamma\arrdi\alpha\Gamma\times_kk'\arrdi\beta\Gamma$. We have that $\pi_*\overline{V}=\beta_*(V'\otimes_{k'}\overline k)$, which is a direct sum of copies of $\beta_*V'$, and $\beta_*V'$ is a vector bundle because it is a coherent sheaf on $\Gamma$, which is an $L$-gerbe. Writing $\pi_*V=\bigoplus_i V_i$ and $\pi_*W=\bigoplus_j W_j$ and using that $\phi^*$ is fully faithful on vector bundles, the proof of the bijectivity of the above map translates into the following statement: given a collection of maps $\lambda_{i,j} \colon V_i\arr W_j$ for all $i,j$ such that $\phi^*\lambda_{i,j}$ induces a map $\mu \colon \bigoplus_i \phi^*V_i\arr \bigoplus_j \phi^*W_j$, then it also induces a map $\bigoplus_i V_i\arr \bigoplus_j W_j$. 
 If $\xi\colon \Spec B\arr \stZ^{(\infty,k)}$ is any object, since $\xi^*\mu$ is defined and $\Spec B$ is quasi-compact, we can conclude that for all $i$ the set $\{j\st \xi^*\phi^*\lambda_{i,j}\neq 0\}$ is finite. Since $\Spec B\arrdi{\phi\xi}\Gamma$ is faithfully flat, the same holds over $\Gamma$ and therefore the map $\bigoplus_i V_i\arr \bigoplus_j W_j$ is well defined.
 
As $\overline{k}$ is perfect, the absolute Frobenius of $\overline\stZ^{(\infty,\overline k)}$ is also an equivalence. By the discussion above, we conclude that $F^*\colon \Vect(\overline \Gamma)\arr \Vect(\overline \Gamma)$ is fully faithful, where $F$ is the absolute Frobenius of $\overline\Gamma$. We show that $u\colon \odi{\overline\Gamma}\arr F_*\odi{\overline\Gamma}$ is surjective. For all $V\in\Vect(\overline\Gamma)$ we have a bijection
 \[
 \Hom_{\overline\Gamma}(V,\odi{\overline\Gamma})\arr \Hom_{\overline\Gamma}(F^*V,F^*\odi{\overline\Gamma})\simeq \Hom_{\overline\Gamma}(V,F_*\odi{\overline\Gamma}) 
 \]
 which is induced by $u$. By \cite[Cor 3.9, pp. 132]{De3} and \ref{generation for affine and finite maps} the sheaf $F_*\odi{\overline\Gamma}$ is a quotient of a direct sum of vector bundles. This easily implies that $u$ is surjective.
 
 Recall that if $\stX$ is a category fibered in groupoids over a scheme $S$ and we set $\stX^{(1,S)}$ for the base change of $\stX\arr S$ along the absolute Frobenius of $S$, then the absolute Frobenius factors as $\stX\arr\stX^{(1,S)}\arr \stX$. Moreover if $T\arr S$ is a map and we apply $-\times_S T$ to the map $\stX\arr \stX^{(1,S)}$ we get $\stX\times_S T\arr (\stX\times_S T)^{(1,T)}$. The stack $\overline \Gamma=\Gamma\times_k \overline k$ is a stack over $S=\Spec(L\otimes_k \overline k)$. Thus the absolute Frobenius of $\overline\Gamma$ factors as $\overline\Gamma\arrdi\alpha \overline\Gamma^{(1,S)}\arrdi\beta \overline\Gamma$. Since $\beta$ is affine and $\odi{\overline \Gamma}\arr \beta_*\odi{\overline\Gamma^{(1,S)}}\arr \beta_*\alpha_*\odi{\overline\Gamma}=F_*\odi{\overline\Gamma}$ is surjective, we can conclude that $\odi{\overline\Gamma^{(1,S)}}\arr \alpha_*\odi{\overline\Gamma}$ is surjective. Since $\alpha$ is the base change of $\Gamma\arrdi \delta \Gamma^{(1,L)}$ along the flat map $S\arr \Spec L$, it also follow that $\odi{\Gamma^{(1,L)}}\arr \delta_*\odi\Gamma$ is surjective. In particular $\delta_*\odi\Gamma$ is of finite type and thus locally free, which implies that $\odi{\Gamma^{(1,L)}}\arr \delta_*\odi\Gamma$ is an isomorphism.
 Using \ref{quotient and faithful maps for gerbes}, (1) and \ref{when the factorization is affine} it follows that $\Gamma\arr \Gamma^{(1,L)}$ is a quotient.  
 We claim that this implies that $\Gamma$ is smooth banded. For this we can assume $L=k$ algebraically closed and $\Gamma=\Bi G$, for an affine group scheme $G$ of finite type over $k$. The relative Frobenius is a quotient means that $G\arr G^{(1)}$  is faithfully flat, which implies that $G$ is reduced and thus smooth.
\end{proof}

\begin{rmk}\label{prosmoothness when k is perfect}
If $k$ is perfect and $\stZ$ is any category fibered in groupoids over $k$ such that $\Fdiv(\stZ)$ is a $k$-Tannakian category then the relative Frobenius of $\Pi_{\Fdiv(\stZ)}$ is an equivalence and this implies that $\Pi_{\Fdiv(\stZ)}$ is pro-smooth banded.

Indeed we have commutative diagrams
  \[
  \begin{tikzpicture}[xscale=2.9,yscale=-1.2]
    \node (A0_0) at (0, 0) {$\stZ^{(\infty)}$};
    \node (A0_1) at (1, 0) {$\Pi_{\Fdiv(\stZ)}$};
    \node (A0_2) at (2, 0) {$\Vect(\Pi_{\Fdiv(\stZ)})$};
    \node (A0_3) at (3, 0) {$\Fdiv(\stZ)$};
    \node (A1_0) at (0, 1) {$\stZ^{(\infty)}$};
    \node (A1_1) at (1, 1) {$\Pi_{\Fdiv(\stZ)}$};
    \node (A1_2) at (2, 1) {$\Vect(\Pi_{\Fdiv(\stZ)})$};
    \node (A1_3) at (3, 1) {$\Fdiv(\stZ)$};
    \path (A0_0) edge [->]node [auto] {$\scriptstyle{}$} (A0_1);
    \path (A0_2) edge [->]node [auto] {$\scriptstyle{\simeq}$} (A0_3);
    \path (A1_0) edge [->]node [auto] {$\scriptstyle{}$} (A1_1);
    \path (A0_3) edge [->]node [auto] {$\scriptstyle{F_{\stZ^{(\infty)}}^*}$} (A1_3);
    \path (A0_2) edge [->]node [auto] {$\scriptstyle{F_{\Pi_{\Fdiv(\stZ)}}^*}$} (A1_2);
    \path (A0_0) edge [->]node [auto] {$\scriptstyle{F_{\stZ^{(\infty)}}}$} (A1_0);
    \path (A0_1) edge [->]node [auto] {$\scriptstyle{F_{\Pi_{\Fdiv(\stZ)}}}$} (A1_1);
    \path (A1_2) edge [->]node [auto] {$\scriptstyle{\simeq}$} (A1_3);
  \end{tikzpicture}
  \]
The absolute Frobenius of $\stZ^{(\infty)}$ is the limit of the absolute Frobenius of the $\stZ^{(i)}$. Using the description in \ref{Fdiv when k is perfect} we can interpret  $F_{\stZ^{(\infty)}}^*\colon \Fdiv(\stZ)\arr \Fdiv(\stZ)$ as a shift and thus conclude that it is an equivalence. Since $\Pi_{\Fdiv(\stZ)}$ is a $k$-gerbe, it follows that its absolute and relative Frobenius are equivalences. This implies that $\Pi_{\Fdiv(\stZ)}$ is pro-smooth banded. Indeed if $\Gamma$ is a quotient of finite type of $\Pi_{\Fdiv(\stZ)}$, its relative Frobenius $\Gamma\arr\Gamma^{(1)}$ is a quotient. It follows that $\Gamma$ is smooth banded arguing as in the end of the above proof.

If $k$ is not perfect we do not have the description of \ref{Fdiv when k is perfect} and it is unclear whether the relative Frobenius of $\Pi_{\Fdiv(\stZ)}$ is an equivalence or not. When $k$ is algebraically closed and $\stZ$ is a connected, locally Noetherian and regular scheme the above argument has already been used by dos Santos in \cite[Theorem 11]{dS}.
\end{rmk}

\section{The Local Quotient of the Nori Fundamental Gerbe} \label{local gerbe}

Let $k$ be a field of characteristic $p>0$, $\sX$ be a category fibered in groupoids over $k$ and denote by $F\colon\stX\arr\stX$ the absolute Frobenius. For $i\in \N$ denote by $\shD_i$ the category of triples $(\shF,V,\lambda)$ where $\shF\in\Vect(\stX)$, $V\in\Vect(k)$ and $\lambda\colon V\otimes_k\odi\stX \arr F^{i*}\shF$ is an isomorphism. The category $\shD_i$ is monoidal, rigid and $k$-linear via $k\arr \End_{\shD_i}(\odi\stX,k,\id)$, $x\longmapsto (x,x^{p^i})$. Moreover the association
\[
\shD_i\arr\shD_{i+1}\comma (\shF,V,\lambda)\longmapsto (\shF,F_k^*V,F^*\lambda)
\]
where $F_k$ is the absolute Frobenius of $k$, is $k$-linear and monoidal. We can therefore define
\[
\shD_\infty = \varinjlim_{i\in\N}\shD_i
\]

\begin{thm}\label{main theorem local gerbes}
Let $\sX$ be a reduced category fibered in groupoids over $k$. Then $\stX$ admits a Nori local fundamental gerbe over $k$ if and only if $\Hl^0(\odi\stX)$ does not contain non trivial purely inseparable field extensions of $k$. In this case $\shD_\infty$ is a $k$-Tannakian category and the map $\stX\arr\Pi_{\shD_\infty}$, induced by the forgetful functor $\shD_\infty\arr\Vect(\stX)$, is the pro-local Nori fundamental gerbe of $\stX$.

If $\Hl^0(\odi\stX)=k$ then $\Rep(\Pi^{\NN,\LL}_{\stX/k})\arr \Vect(\stX)$ is an equivalence onto the full subcategory of $\Vect(\stX)$ of sheaves $\shF$ such that $F_{\sX}^{i^*}\shF$ is free for some $i\in \N$.
\end{thm}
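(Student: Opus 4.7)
The plan is to recognize that the $\shD_i$ defined here are exactly the categories $\shT_i(\stX)$ produced by the axiomatic framework of Section~5 applied to the structure morphism $\pi_\shT\colon \stX\to \stX_\shT:=\Spec k$. Under this identification $\shT(\stX)=\Vect(\Spec k)=\QCoh_{\textup{fp}}(\Spec k)$, so axiom A is immediate and $L_0=k$, and the Frobenius-reflection condition required in Theorem~\ref{main theorem formal II} (``$F^*\shF\in\Vect(\stX)\Rightarrow\shF\in\Vect(\stX)$'' for $\shF\in\QCoh_{\textup{fp}}(\stX)$) holds because $\stX$ is reduced (remark preceding~\ref{full nori from etale nori}). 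Theorem~\ref{main theorem formal II} then yields at once that $\shD_\infty$ is an $L_\infty$-Tannakian category with $\Pi_{\shD_\infty}$ an affine $L_\infty$-gerbe, that $L_\infty$ is the purely inseparable closure of $k$ inside $\Hl^0(\odi\stX)$, and that $\stX\to (\Pi_{\shD_\infty})_\LL$ is the pro-local Nori fundamental gerbe of $\stX$ over $L_\infty$.

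The next step is to show that $\Pi_{\shD_\infty}$ is already pro-local, so that taking $(-)_\LL$ is unnecessary. For any finite $\chi\in\shD_i$ with monodromy gerbe $\Gamma_\chi$, Lemma~\ref{Frobenius for TiX} writes $F_\shT^i\chi$ as $\beta(\alpha(\chi))$, placing it in the essential image of the fully faithful inclusion $\Vect(k)\hookrightarrow\shD_\infty$, which as an $L_\infty$-linear Tannakian subcategory generates $\Vect(L_\infty)$ and therefore corresponds to the trivial quotient $\Pi_{\shD_\infty}\to\Spec L_\infty$. Thus the monodromy of $F_\shT^i\chi$ is trivial, and by the Frobenius-monodromy square used in the proof of~\ref{main theorem formal II} the $i$-th Frobenius $F^i_{\Gamma_\chi}\colon\Gamma_\chi\to\Gamma_\chi$ factors through $\Spec L_\infty$; since Frobenius is an auto-equivalence on the étale part of any gerbe, this forces $(\Gamma_\chi)_\et=\Spec L_\infty$, i.e.\ $\Gamma_\chi$ is local. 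Therefore every finite quotient of $\Pi_{\shD_\infty}$ is local, proving $\Pi_{\shD_\infty}=(\Pi_{\shD_\infty})_\LL$.

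For the existence criterion, if $L_\infty=k$ then $\Pi_{\shD_\infty}$ is a pro-local $k$-gerbe with the universal property of the pro-local Nori fundamental gerbe. For the converse, suppose some $\alpha\in\Hl^0(\odi\stX)\setminus k$ satisfies $\alpha^{p^n}\in k$ for some $n\geq 1$; taking $n$ minimal and replacing $\alpha$ by $\alpha^{p^{n-1}}$, we may assume $\alpha^p=c\in k$ with $\alpha\notin k$, and, since $\Hl^0(\odi\stX)$ is reduced, $c$ is not a $p$-th power in $k$, so $k[\alpha]=k[x]/(x^p-c)$ is a purely inseparable field extension of $k$ and $\Spec k[\alpha]$ is a finite local $k$-stack. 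The map $\stX\to\Spec k[\alpha]$ is non-trivial, so by universal property a putative pro-local Nori $k$-gerbe $\Pi$ would give a $k$-morphism $\Pi\to\Spec k[\alpha]$, i.e.\ a non-zero $k$-algebra map $k[\alpha]\to\Hl^0(\odi\Pi)=k$. But a non-zero $k$-algebra map out of a field is injective, forcing $k[\alpha]=k$, a contradiction.

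Finally, assume $\Hl^0(\odi\stX)=k$, so $L_\infty=k$ and $\Pi^{\NN,\LL}_{\stX/k}=\Pi_{\shD_\infty}$; since every object of $\Vect(k)=\shT_0(\stX)$ is essentially finite, the description of $\EF(\shT_\infty(\stX))$ in~\ref{main theorem formal II} gives $\EF(\shD_\infty)=\shD_\infty$, hence $\Rep(\Pi^{\NN,\LL}_{\stX/k})\simeq\shD_\infty$. The stated functor is then the forgetful $(\shF,V,\lambda)\mapsto\shF$, which is faithful by~\ref{main theorem formal II}; its essential image consists of the bundles $\shF$ with $F^{i*}\shF$ free for some $i$ (take $V=\Hl^0(\stX,F^{i*}\shF)$, a $k$-vector space by $\Hl^0(\odi\stX)=k$, and $\lambda$ the evaluation isomorphism); and fullness follows from the fact that under $\Hl^0(\odi\stX)=k$ every $\odi\stX$-linear map $V\otimes\odi\stX\to V'\otimes\odi\stX$ has the form $\psi\otimes\id$ for a unique $\psi\colon V\to V'$, so that any $\phi\colon\shF\to\shF'$ lifts uniquely via $F^{i*}\phi$ to a morphism $(\phi,\psi)$ in $\shD_i$. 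I expect the main obstacle to be the identification in the second paragraph of the fully faithful $\Vect(k)\hookrightarrow\shD_\infty$ with the trivial quotient $\Pi_{\shD_\infty}\to\Spec L_\infty$, since this is what converts the abstract Frobenius-trivialisation on $\shD_\infty$ into pro-locality of the gerbe.
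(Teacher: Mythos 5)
Your proposal is correct and follows essentially the same route as the paper's proof: both apply Theorem \ref{main theorem formal II} to $\stX\arr\Spec k$ (identifying $\shD_i$ with $\shT_i(\stX)$, with axiom A immediate and the Frobenius-reflection hypothesis supplied by reducedness), use Lemma \ref{Frobenius for TiX} to see that the $i$-th Frobenius of each monodromy gerbe factors through a point and hence that $\Pi_{\shD_\infty}$ is pro-local, and obtain the existence criterion by observing that a nontrivial purely inseparable subfield of $\Hl^0(\odi\stX)$ would have to map to $\Hl^0(\odi{\Pi})=k$, exactly as in the proof of \ref{existence of Nori etale gerbe}. The extra details you spell out (the full-faithfulness of $\shD_i\arr\Vect(\stX)$ and the identification of the essential image) are precisely the steps the paper declares easy to check.
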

\begin{proof} 
The only if part in the first claim is very similar to the proof in \ref{existence of Nori etale gerbe}, taking into account that a finite and purely inseparable field extension is a finite and local stack. For the if part it is enough to show the remaining claims in the statement. 
We apply \ref{main theorem formal II} on the map $\stX\arr\stX_\shT=\Spec k$, which satisfies axiom $A$ and $L=\Hl^0(\odi{\stX_\shT})=k$ is a field. We have $\shT_i(\stX)=\shD_i$ for all $i\in \N\cup\{\infty\}$ and that $\stX\arr (\Pi_{\shD_\infty})_{\LL}$ is the local Nori fundamental gerbe of $\stX/L_\infty$. Since $L_0=k$ and $L_0\subseteq L_i$ are purely inseparable inside $\Hl^0(\odi\stX)$ we also have $L_i=k$ for all $i\in \N\cup\{\infty\}$. Thus it remains to show $\Pi_{\shD_\infty}$ is pro-local. Thanks to \ref{Frobenius for TiX}, for any $V\in\sD_\infty=\Vect(\Pi_{\shD_\infty})$ there exists an index $i\in \N$ such that $F_{\Pi_{\shD_\infty}}^{i^*}V$ is free,  where $F_{\Pi_{\shD_\infty}}$ is the absolute Frobenius of $\Pi_{\shD_\infty}$, and by \ref{main theorem formal II} plus the fact that $\shT(\stX)=\Vect(k)$ is made of finite objects, $V$ is essentially finite. Let $\Gamma$ be the monodromy gerbe of $V\in\shD_\infty$ (see \ref{pro stuff for gerbes}). Then the absolute Frobenius $F_\Gamma^i$ factors as $\Gamma\xrightarrow{\pi}\Spec(k)\to\Gamma$, where $\pi$ is the structure map of $\Gamma/k$. This implies immediately that $\Gamma$ is local.

In the last claim we have to show that $\shD_\infty\arr \Vect(\stX)$ is full. Actually one can easily check that $\shD_i\arr\Vect(\stX)$ is fully faithful for all $i\in \N$.
\end{proof}

\begin{rmk} In \cite{EH} Esnault and Hogadi did not go into the study of the local quotient of Nori's fundamental group. However, using their main theorem it is easily seen (under their assumptions) that the finite representations of the local quotient of Nori's fundamental group is the full Tannakian subcategory of $\sD_\infty$ consisting of the essentially finite objects. Now our Theorem \ref{main theorem local gerbes} grantees that any object in $\sD_\infty$ is essentially finite. 
\end{rmk}

\appendix
 \section{Limit of categories and fibered categories}
 \begin{defn}\label{direct limit of categories}
Let $I$ be  a filtered category. A directed system of categories indexed by $I$ is a pseudo-functor $\shD_* \colon I\arr (\text{Cat})$ \cite[Def. 3.10]{Vis}. Concretely this is the assignment of data $(\shD_i,\shD_\alpha,\lambda_{\alpha,\beta},\lambda_i)$: categories $\shD_i$ for all $i\in I$, functors $\shD_\alpha\colon \shD_i\arr\shD_j$ for all $i\arrdi\alpha j$ in $I$ and natural isomorphisms $\lambda_{\alpha,\beta}\colon \shD_\beta\circ \shD_\alpha \arr \shD_{\beta\alpha}$ for all composable arrows $i\arrdi \alpha j \arrdi \beta k$ and $\lambda_i \colon \shD_{\id_i}\arr \id_{\shD_i}$ for all $i\in I$. This data is subject to compatibility conditions (see  \cite[Def. 3.10]{Vis}).

We define the limit of $\shD_*$, written $\lim_{i\in I} \shD_i$ or $\shD_\infty$, in the following way. The category $\shD_\infty$ has pairs $(i,x)$, where $i\in I$ and $x\in \shD_i$, as objects. Given $(i,x),(j,y)\in\shD_\infty$ the set $\Hom_{\shD_\infty}((i,x)),(j,y))$ is the limit on the category of pairs $(i\arrdi \alpha k, j\arrdi \beta k)$ (which is a filtered category) of the sets $\Hom_{\shD_k}(\shD_\alpha(x),\shD_\beta(y))$. Composition is defined in the obvious way. For all $i\in I$ there are functors $F_i\colon\shD_i\arr \shD_\infty$, $F_i(x)=(i,x)$ and, for all $i\arrdi \alpha j$ in $I$, there are canonical isomorphisms $ \mu_\alpha \colon F_j\circ \shD_\alpha \arr F_i$.

Given a category $\shC$ we define the category $\shC^\shD$ in the following way. The objects are collections $(H_i,\delta_\alpha)_{i,i\arrdi \alpha j}$ where: $H_i \colon \shD_i\arr \shC$ are functors for all $i\in I$, $\delta_\alpha \colon H_j\circ \shD_\alpha \arr H_i$ are natural isomorphisms for all arrows $i\arrdi \alpha j$ in $I$. This data is subject to the following compatibilities. For all $i\in I$ we have $\delta_{\id_i}=H_i\circ \lambda_i\colon H_i\circ \shD_{\id_i}\arr H_i$. For all composable arrows $i\arrdi \alpha j\arrdi \beta k$ the following diagram commutes
  \[
  \begin{tikzpicture}[xscale=3.1,yscale=-1.2]
    \node (A0_0) at (0, 0) {$H_k\circ \shD_{\beta\alpha}$};
    \node (A0_1) at (1, 0) {$H_i$};
    \node (A1_0) at (0, 1) {$H_k\circ\shD_\beta\circ\shD_\alpha$};
    \node (A1_1) at (1, 1) {$H_j\circ\shD_\alpha$};
    \path (A0_0) edge [->]node [auto] {$\scriptstyle{\delta_{\beta\alpha}}$} (A0_1);
    \path (A0_0) edge [<-]node [left] {$\scriptstyle{H_k\circ \lambda_{\alpha,\beta}}$} (A1_0);
    \path (A1_0) edge [->]node [auto] {$\scriptstyle{\delta_\beta\circ \shD_\alpha}$} (A1_1);
    \path (A1_1) edge [->]node [auto] {$\scriptstyle{\delta_\alpha}$} (A0_1);
  \end{tikzpicture}
  \]
The arrows in $\shC^\shD$ are the obvious ones.
  
Given a functor $G\colon \shC\arr \shC'$ one can easily define a functor $G^\shD\colon \shC^\shD \arr \shC'^\shD$. Moreover the data $(F_i,\mu_\alpha)$ defined above is an object of $\shD_\infty^\shD$. In particular we obtain a functor
\[
\chi_\shC\colon \Hom(\shD_\infty,\shC)\arr \shC^\shD\comma (\shD_\infty \arrdi G \shC) \longmapsto G^\shD(F_i,\mu_\alpha)
\]
\end{defn}

\begin{prop}\label{hom from the limit}
 The functor $\chi_\shC$ in \ref{direct limit of categories} is an isomorphism of categories.
\end{prop}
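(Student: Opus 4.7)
The plan is to construct a functor $\psi_\shC \colon \shC^\shD \arr \Hom(\shD_\infty,\shC)$ explicitly and verify it is a two-sided inverse of $\chi_\shC$. Since $\chi_\shC$ is defined by transport of structure through $F_i$ and $\mu_\alpha$, the inverse must be forced by the requirement that any $G \colon \shD_\infty \arr \shC$ mapping to $(H_i,\delta_\alpha)$ satisfies $G \circ F_i = H_i$ and $G\mu_\alpha = \delta_\alpha$.

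Given $(H_i,\delta_\alpha) \in \shC^\shD$ I would define $H = \psi_\shC((H_i,\delta_\alpha))$ on objects by $H((i,x)) = H_i(x)$. On morphisms, an element of $\Hom_{\shD_\infty}((i,x),(j,y))$ is an equivalence class represented by some $f \colon \shD_\alpha(x) \arr \shD_\beta(y)$ in $\shD_k$ for a pair $(i \arrdi \alpha k, j \arrdi \beta k)$, and I set
\[
H(f) = (\delta_\beta)_y \circ H_k(f) \circ (\delta_\alpha)_x^{-1} \colon H_i(x) \arr H_j(y).
\]
The first key step is to show this is independent of the representative. By the filteredness of the indexing category of pairs over $(i,j)$ and the construction of morphisms as a filtered colimit, it suffices to verify compatibility along a single refinement $k \arrdi \gamma l$, which reduces to checking that $(\delta_\beta)_y = (\delta_{\gamma\beta})_y \circ H_l(\lambda_{\beta,\gamma}^{-1})_y \circ \delta_\gamma^{-1}$ and the analogous formula for $\alpha$; this is exactly the pentagon-type compatibility condition built into the definition of $\shC^\shD$.

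Next I would check that $H$ is a functor: compatibility with identities follows from the axiom $\delta_{\id_i} = H_i \circ \lambda_i$, while functoriality for composition reduces, after picking a common index $k$ for the two representatives, to the functoriality of $H_k$. This gives $\psi_\shC$ on objects; on morphisms of $\shC^\shD$ it is defined in the obvious componentwise way, and functoriality is automatic.

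Finally, the two identities $\chi_\shC \circ \psi_\shC = \id$ and $\psi_\shC \circ \chi_\shC = \id$ are both straightforward verifications from the definitions: the first follows because by construction $H \circ F_i = H_i$ and the natural transformations $H\mu_\alpha$ coincide with $\delta_\alpha$; the second, because if we start from a functor $G \colon \shD_\infty \arr \shC$ and pass through $\chi_\shC$ then $\psi_\shC$, we recover the same functor on objects since $G((i,x)) = G(F_i(x))$, and on morphisms by unraveling that any $f \in \Hom_{\shD_\infty}((i,x),(j,y))$ factors through the canonical isomorphisms $\mu_\alpha$, $\mu_\beta$. The hard part is the well-definedness of $H$ on morphisms: everything else is bookkeeping once the 2-categorical coherence conditions of the definition of $\shC^\shD$ are matched with the construction of $\Hom_{\shD_\infty}$ as a filtered colimit.
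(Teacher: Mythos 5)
Your proposal is correct and follows essentially the same route as the paper: it constructs the inverse $\iota$ explicitly by $H((i,x))=H_i(x)$ and, on a morphism represented by $v\colon \shD_\alpha(x)\arr\shD_\beta(y)$, by the conjugation formula $\delta_\beta\circ H_k(v)\circ\delta_\alpha^{-1}$, then checks independence of the representative via the cocycle compatibilities of $\shC^\shD$ and verifies the two composite identities directly. The only cosmetic difference is that the paper phrases the morphism assignment as the unique arrow filling a commutative square rather than writing the formula, and spells out the verification of $\iota\circ\chi_\shC=\id$ in slightly more detail; the mathematical content is identical.
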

\begin{proof}
Let's define a functor $\iota\colon \shC^\shD\arr \Hom(\shD_\infty,\shC)$. Given $a=(H_i,\delta_\alpha)\in \shC^\shD$ defines $\iota(a)\colon \shD_\infty\arr \shC$ as follows. For $(i,x)\in \shD_\infty$ set $\iota(a)(i,x)=H_i(x)$. For $\phi\colon (i,x)\arr (j,y)$ in $\shD_\infty$ choose $i\arrdi f k,j\arrdi g k$ such that $\phi$ is induced by the arrow $v\colon \shD_f(x)\arr \shD_g(y)$ in $\shD_k$. Set $\iota(a)(\phi)$ as the only dashed arrow making the following diagram commutative
  \[
  \begin{tikzpicture}[xscale=3.1,yscale=-1.2]
    \node (A0_0) at (0, 0) {$H_k\circ D_f(x)$};
    \node (A0_1) at (1, 0) {$H_k \circ D_g(y)$};
    \node (A1_0) at (0, 1) {$H_i(x)$};
    \node (A1_1) at (1, 1) {$H_j(y)$};
    \path (A0_0) edge [->]node [auto] {$\scriptstyle{H_k(v)}$} (A0_1);
    \path (A0_0) edge [->]node [auto] {$\scriptstyle{\delta_f}$} (A1_0);
    \path (A0_1) edge [->]node [auto] {$\scriptstyle{\delta_g}$} (A1_1);
    \path (A1_0) edge [->,dashed]node [auto] {$\scriptstyle{}$} (A1_1);
  \end{tikzpicture}
  \]
A direct check shows that this arrows does not depend on the choices of $f,g,v$. In particular $\iota(a)$ is easily seen to be a functor $\shD_\infty\arr \shC$. The action of $\iota$ on arrows is the obvious one: the required compatibilities follows from the compatibilities of arrows in $\shC^\shD$. In conclusion one get a functor $\iota\colon \shC^\shD\arr \Hom(\shD_\infty,\shC)$. The equality $\chi_\shC \circ \iota = \id$ can be checked directly.

For the converse let $G\colon \shD_\infty\arr \shC$ be a functor. We have $\chi_C(G)=(G\circ F_i,G\circ \mu_\alpha)$ and set $\widetilde G=\iota(\chi_C(G))$. We must show that $G=\widetilde G$. For $i\in I$ and $x\in \shD_i$ we have $G(i,x)=G(F_i(x))=\widetilde G(x)$. Let now $\phi\colon (i,x)\arr (j,y)$ be an arrow in $\shD_\infty$ and $i\arrdi \alpha k,j\arrdi \beta k$ arrows, $v\colon \shD_\alpha(x)\arr \shD_\beta(y)$ inducing $\phi$. This can be expressed in the following commutative diagram
  \[
  \begin{tikzpicture}[xscale=3.1,yscale=-1.2]
    \node (A0_0) at (0, 0) {$(k,\shD_\alpha(x))$};
    \node (A0_1) at (1, 0) {$(i,x)$};
    \node (A1_0) at (0, 1) {$(k,\shD_\beta(y))$};
    \node (A1_1) at (1, 1) {$(j,y)$};
    \path (A0_0) edge [->]node [auto] {$\scriptstyle{\mu_\alpha(x)}$} (A0_1);
    \path (A1_0) edge [->]node [auto] {$\scriptstyle{\mu_\beta(y)}$} (A1_1);
    \path (A0_1) edge [->]node [auto] {$\scriptstyle{\phi}$} (A1_1);
    \path (A0_0) edge [->]node [auto] {$\scriptstyle{F_k(v)}$} (A1_0);
  \end{tikzpicture}
  \]
We have $G(F_k(v))=\widetilde G(F_k(v))$, $G\circ \mu_\alpha =\widetilde G \circ \mu_\alpha$ and $G\circ \mu_\beta=\widetilde G \circ \mu_\beta$ by construction. It follows that $G(\phi)=\widetilde G(\phi)$.
\end{proof}

\begin{rmk}\label{limit over natural numbers}
 When $I=\N$ with the usual order a directed system $\shD_*$ of categories indexed by $\N$ is just an infinite sequence of categories and functors:
 \[
 \shD_0 \arrdi{G_0} \shD_1 \arrdi{G_1} \shD_2 \arrdi{G_2} \cdots 
 \]
 Moreover if $\shC$ is a category then $\shC^\shD$ is equivalent to the category whose objects are tuples $(H_n,\sigma_n)$ where: $H_n\colon \shD_n\arr \shC$ is a functor, $\sigma_n \colon H_{n+1}\circ G_n \arr H_n$ a natural isomorphism.
\end{rmk}

Let $\shD_*$ be a direct system of categories indexed by $I$. We have the following fact which are easy to check:
\begin{itemize}
 \item If for all arrows $\alpha$ in $I$ the functor $\shD_\alpha$ is faithful (resp. fully faithful, equivalence) then for all $i\in I$ the functor $F_i$ is faithful (resp. fully faithful, equivalence);
 \item If for all $i\in I$ the category $\shD_i$ is a groupoid then $\shD_\infty$ is a groupoid;
 \item If $R$ is a ring, for all $i\in I$ the category $\shD_i$ is $R$-linear and for all arrows $\alpha$ in $I$ the functor $\shD_\alpha$ is $R$-linear then $\shD_\infty$ is naturally an $R$-linear category and for all $i\in I$ the functor $F_i$ is $R$-linear;
 \item If for all $i\in I$ the category $\shD_i$ is abelian and for all arrows $\alpha$ the functor $\shD_\alpha$ is additive and exact, then $\shD_\infty$ is an abelian category and for all $j\in I$ the functor $F_j$ is also additive and exact.
 \item If for all $i\in I$ the category $\shD_i$ is monoidal and for all arrows $\alpha,\beta$ in $I$ the functor $D_\alpha$ has a monoidal structure and the $\lambda_{\alpha,\beta}$ are monoidal then we can endow $\shD_\infty$ and, for all $i\in I$, $F_i$ with a monoidal structure in the following way. Given $i,j\in I$ choose $k_{i,j}\in I$, maps $i\arrdi{ \alpha_{i,j}} k_{i,j}$, $j\arrdi{ \beta_{i,j}} k_{i,j}$ and define
 \[
 (i,x)\otimes (j,y) = (k_{i,j}, \shD_{\alpha_{i,j}}(x)\otimes_{\shD_{k_{i,j}}} \shD_{\beta_{i,j}}(y))
 \]
 and $(i_0,1_{\shD_{i_0}})$ as unit for a chosen $i_0\in I$. All the maps required in order to have a monoidal structure are easy to define.
\end{itemize}

\begin{prop}\label{limit of fibered categories}
 Let $\shC$ be a category with fiber products, $I$ be a filtered category and $\stX_*$ be a directed system of fibered categories over $\shC$, that is a direct system of categories $\stX_*$ given by data $(\stX_i,\stX_\alpha,\lambda_{\alpha,\beta},\lambda_i)$ such all $\pi_i\colon \stX_i\arr \shC$ are fibered categories, all $\stX_\alpha\colon \stX_i\arr \stX_j$ are maps of fibered categories and all $\lambda_{\alpha,\beta}$, $\lambda_i$ are base preserving natural transformations. Then the induced functor $\stX_\infty\arr \shC$ makes $\stX_\infty$ into a fibered category, the functor $F_i\colon \stX_i\arr \stX_\infty$ are maps of fibered categories and $\mu_\alpha$ are base preserving natural transformations.
 Moreover if all $\stX_i$ are fibered in groupoids (resp. sets) then so is $\stX_\infty$.
 
 If $c\in \shC$ then the direct system $\stX_*$ induces a direct system of categories $\stX(c)_*\colon I\arr (\text{cat})$ and the $F_i\colon \stX_i \arr \stX_\infty$ and the natural transformations $\mu_\alpha$ induces an equivalence
 \[
 \stX(c)_\infty \simeq \stX_\infty (c)
 \]
 
 If $\stY$ is another fiber category over $\shC$ then $\chi_\stX$ restricts to an isomorphism between $\Hom_\shC(\stX_\infty,\stY)$ and the full subcategory of $\stY^\stX$ of objects $(H_i,\delta_\alpha)$ such that $H_i$ are maps of fibered categories and the $\delta_\alpha$ are base preserving natural transformations.
\end{prop}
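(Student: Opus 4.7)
The plan is to verify the fibration property for $\stX_\infty\to\shC$ directly from its construction as a filtered colimit, and then deduce the remaining statements by comparing fibers and invoking Proposition \ref{hom from the limit}.

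First I would construct Cartesian arrows in $\stX_\infty$. Given $(i,x)\in\stX_\infty$ with $c=\pi_i(x)$ and an arrow $f\colon c'\to c$ in $\shC$, pick a Cartesian lift $\phi\colon x'\to x$ in $\stX_i$, which exists since $\pi_i$ is a fibered category. Then $F_i(\phi)\colon(i,x')\to(i,x)$ is an arrow of $\stX_\infty$ lying over $f$. To verify it is Cartesian, consider an arbitrary $\psi\colon(j,y)\to(i,x)$ whose image in $\shC$ factors as $g=f\circ h$. By the filtered colimit description of Hom-sets in $\stX_\infty$, $\psi$ is represented, for some $i\xrightarrow{\beta}k$ and $j\xrightarrow{\alpha}k$, by an arrow $\tilde\psi\colon\stX_\alpha(y)\to\stX_\beta(x)$ of $\stX_k$ over $g$. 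Since $\stX_\beta$ is a morphism of fibered categories, $\stX_\beta(\phi)$ is Cartesian in $\stX_k$, so there is a unique arrow $\stX_\alpha(y)\to\stX_\beta(x')$ of $\stX_k$ over $h$ whose composition with $\stX_\beta(\phi)$ recovers $\tilde\psi$. This yields the required factorization in $\stX_\infty$; uniqueness is obtained by representing two candidate factorizations at a common higher index and invoking uniqueness of the Cartesian lift in $\stX_k$. The same argument shows $F_i$ preserves Cartesian arrows, and the base-preserving property of each $\mu_\alpha$ is built into the construction of $\stX_\infty$.

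Next, for the fiber description, note that an object of $\stX_\infty(c)$ is a pair $(i,x)$ with $\pi_i(x)=c$, i.e.\ $x\in\stX_i(c)$, and a morphism $(i,x)\to(j,y)$ over $\id_c$ is represented, via the colimit of Hom-sets, by an arrow $\stX_\alpha(x)\to\stX_\beta(y)$ in some $\stX_k$ lying over $\id_c$, i.e.\ a morphism in $\stX_k(c)$. This is precisely the description of $\stX(c)_\infty$, giving the equivalence $\stX_\infty(c)\simeq\stX(c)_\infty$. The groupoid/sets cases are then immediate: a fibered category is fibered in groupoids (resp.\ in sets) iff each fiber is a groupoid (resp.\ discrete), and a filtered colimit of groupoids (resp.\ discrete categories) is again a groupoid (resp.\ discrete).

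Finally, the Hom description follows from Proposition \ref{hom from the limit}: the bijection $\chi_\stY\colon\Hom(\stX_\infty,\stY)\to\stY^\stX$ restricts to the claimed subcategory because a functor $G\colon\stX_\infty\to\stY$ is base-preserving exactly when each $G\circ F_i$ is base-preserving and each $G\circ\mu_\alpha$ is a base-preserving natural transformation, while by the explicit description of Cartesian arrows of $\stX_\infty$ obtained in the first step, $G$ sends Cartesian arrows to Cartesian arrows iff each $G\circ F_i$ does. The main obstacle throughout is the Cartesian verification in the first step: one must carefully translate the abstract lifting problem in $\stX_\infty$ into a concrete lifting problem inside a single $\stX_k$, exploiting that the structural functors $\stX_\beta$ preserve Cartesian arrows. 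Everything after that reduces to bookkeeping with filtered colimits and to a direct application of Proposition \ref{hom from the limit}.
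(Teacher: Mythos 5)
Your proposal is correct and follows essentially the same route as the paper: show that the $F_i$ carry Cartesian arrows to Cartesian arrows (whence every Cartesian lifting problem in $\stX_\infty$ is solved at some finite stage, and every Cartesian arrow of $\stX_\infty$ is, up to isomorphism, the image of one from some $\stX_i$), identify the fibers $\stX_\infty(c)\simeq\stX(c)_\infty$ by unwinding the colimit, and deduce the Hom statement from Proposition \ref{hom from the limit}. The only difference is that you carry out in detail the Cartesian verification that the paper dismisses as "easy to see", which is a welcome expansion rather than a deviation.
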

\begin{proof}
 We have that $(\pi_i,\omega_\alpha)\in \shC^\stX$, where we set $\omega_\alpha=\id$ for all $\alpha$, because the $\pi_i$ strictly commutes with the $\stX_\alpha$. We therefore get a functor $\pi_\infty\colon \stX_\infty\arr \shC$ such that $\pi_i=\pi_\infty\circ F_i$ and $\pi_\infty( \mu_\alpha)=\id$. The first equation assures that the $F_i$ strictly commutes over $\shC$, the second assures that the $\mu_\alpha$ are base preserving natural transformations. Moreover it is easy to see that the $F_i$ map Cartesian arrows to Cartesian arrows, which in particular implies that $\stX_\infty$ is a fibered category.
 
 The system $\stX_*$ together with the structure morphisms $\pi_i$ can be seen as a pseudo-functor from $I$ to the $2$-category $\Fib(\shC)$ of fibered categories over $\shC$. Given $c\in \shC$ the evaluation in $c$ yields a functor $\Fib(\shC)\arr (\text{cat})$ and, composing, we obtain the direct system $\stX(c)_*$. It is easy to see that $\stX_\infty(c)$ and $\stX(c)_\infty$ are the same categories. In particular if all $\stX_i$ are fibered in groupoids (resp. sets) then so is $\stX_\infty$.
 
 Let $G\colon \stX_\infty\arr \stY$ any functor and $\chi_\stX(G)=(G\circ F_i,G\circ \mu_\alpha)\in \stY^\stX$. It is easy to see that $G$ is base preserving if and only if the $G\circ F_i$ and $G( \mu_\alpha)$ are base preserving. In this case, assuming that the $G\circ F_i$ preserve Cartesian arrows, we have to show that $G$ does the same. This follows from the fact that a Cartesian arrow $\gamma$ in $\stX_\infty$ is, up to isomorphism, determined by the target of $\gamma$ and $\pi_\infty(\gamma)$, which implies that $\gamma$ is image of a Cartesian arrow in some $\stX_i$. 
\end{proof}

\section{Affine gerbes and Tannakian categories}

Let $k$ be a field. In this appendix we collect useful results about affine gerbes and Tannakian categories.
Recall that an affine gerbe $\Gamma$ over $k$ is a gerbe for the fpqc topology $\Gamma\arr \Aff/k$ with affine diagonal. If $L/k$ is a field extension and $\xi\in \Gamma(L)$ then $\Gamma$ is affine if and only if $\Autsh_\Gamma(\xi)$ is an affine scheme. Moreover any map from a scheme $X\arr \Gamma$ is an fpqc covering which is affine if $X$ is affine. (See \cite[Prop 3.1]{BV} for details.)

A $k$-Tannakian category is a $k$-linear, monoidal, rigid and abelian category $\shC$ such that $\End_\shC(1_\shC)=k$ (where $1_\shC$ is the unit) and there exists a field extension $L/k$ and a $k$-linear, exact and monoidal functor $\shC\arr \Vect L$. 

Classical Tannaka's duality states that the functors $\Vect(-)$ and $\Pi_*$ between the $2$-categories of affine gerbes over $k$ and $k$-Tannakian categories are ``quasi-inverses'' of each other. See Section \ref{general Tannaka duality} for the definition of $\Pi_*$ and of the natural functors $\shC \arr \Vect(\Pi_\shC)$ and $\Gamma\arr \Pi_{\Vect(\Gamma)}$.

Given an affine gerbe $\Gamma$ we will often use the notation $\Rep\Gamma$ instead of $\Vect(\Gamma)$.

\begin{defn}\label{quotient groups and gerbes}
A map of affine group schemes $G\arr G'$ over $k$ is a quotient if it is faithfully flat or equivalently if $\Hl^0(\odi{G'})\arr \Hl^0(\odi G)$ is injective (see \cite[Chapter 14]{Wat}).

A map of affine gerbes $\Gamma\arrdi\phi \Gamma'$ over $k$ is a quotient (resp. faithful) if there exists a field $L$ and $\xi\in \Gamma(L)$ such that the map of affine group schemes $\Autsh_\Gamma(\xi)\arr \Autsh_{\Gamma'}(\phi(\xi))$ is a quotient (a monomorphism or equivalently a closed immersion by \cite[Section 15.3]{Wat}). This notion does not depend on the choice of $\xi$ and $L$. Moreover $\phi$ is faithful if and only if it is faithful as a functor.
\end{defn}

\begin{prop}\label{quotient and faithful maps for gerbes}
 Let $\phi\colon \Gamma\arr\Gamma'$ be a map of affine gerbes. Then
 \begin{enumerate}
  \item the map $\odi{\Gamma'}\arr\phi_*\odi\Gamma$ is an isomorphism if and only if $\phi^*\colon \Rep \Gamma'\arr \Rep \Gamma$ is fully faithful;
  \item the following are equivalent: a) $\phi$ is a quotient; b) $\phi$ is a relative gerbe; c) the functor $\phi^*\colon \Rep\Gamma'\arr\Rep\Gamma$ is fully faithful and its image is stable under quotients;
  \item  the functor $\phi$ is faithful if and only if all $V\in\Rep\Gamma$ is a subquotient of $\phi^*W$ for some $W\in \Rep\Gamma'$.
 \end{enumerate}
\end{prop}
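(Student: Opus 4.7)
The plan is to reduce each of the three statements to the analogous statement about affine group schemes via Tannakian duality, after passing to a field extension where a section of $\Gamma$ exists.

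For (1), I would use the adjunction $\phi^* \dashv \phi_*$ (which exists since $\phi$ is affine). Full faithfulness of $\phi^*$ is equivalent to the unit $\eta_V \colon V \to \phi_*\phi^*V$ being an isomorphism for every $V \in \Rep\Gamma'$. Because $V$ is a vector bundle (hence dualizable), the projection formula yields $\phi_*\phi^*V \simeq V \otimes \phi_*\odi\Gamma$, and under this identification $\eta_V = \id_V \otimes (\odi{\Gamma'}\to\phi_*\odi\Gamma)$. Thus the collection of conditions collapses to the single condition $\odi{\Gamma'}\to \phi_*\odi\Gamma$ being an isomorphism, giving the equivalence.

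For (2), the equivalence (a)$\Leftrightarrow$(b) is essentially definitional: fix $\xi \in \Gamma(L)$ and set $G = \Autsh_\Gamma(\xi)$, $G' = \Autsh_{\Gamma'}(\phi(\xi))$; then $\phi$ is a relative gerbe iff fpqc-locally it looks like $BG \to BG'$ with $G \to G'$ faithfully flat, which by definition is (a). For (a)$\Rightarrow$(c), after reducing along a section to $\phi = Bu$ for a faithfully flat $u \colon G \to G'$ of affine group schemes, $u^*\colon \Rep G' \to \Rep G$ is well-known to be fully faithful (its image consists of $G$-modules where the action descends to $G'$, and faithful flatness is fpqc descent) and closed under quotients (a $G$-quotient of a $G'$-module is again $G'$-equivariant by faithfully flat descent). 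For (c)$\Rightarrow$(a), using rigidity (duality exchanges sub and quotient objects) the image of $\phi^*$ is automatically closed under subobjects as well, and Deligne's recognition theorem for faithfully flat maps of affine group schemes (\cite[Cor. 2.9, 2.21]{De3} or equivalently Waterhouse) gives that $u \colon G \to G'$ is faithfully flat, i.e.\ $\phi$ is a quotient.

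For (3), the faithfulness of the functor $\phi$ translates, after picking a section, to $G \to G'$ being a monomorphism, equivalently a closed immersion for affine group schemes (\cite[Section 15.3]{Wat}). Now I would use the classical fact: for a closed subgroup $G \hookrightarrow G'$, every $V \in \Rep G$ is a subrepresentation of $\res^{G'}_G \ind^{G'}_G V$, and one can extract from $\ind^{G'}_G V$ a finite-dimensional $G'$-subrepresentation $W$ whose restriction still contains $V$; this gives $V \subseteq \phi^*W$. Conversely, if every $V \in \Rep G$ is a subquotient of some $\phi^*W$, then the kernel $K = \ker(G \to G')$ acts trivially on all $\phi^*W$, hence on all their subquotients, hence on every representation of $G$; by Tannakian reconstruction $K$ is trivial, so $G \to G'$ is a closed immersion.

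The main obstacle is the implication (c)$\Rightarrow$(a) in (2), which is Deligne's recognition theorem and not a triviality; however, once one fixes a geometric point of $\Gamma$ the statement reduces to the classical one for affine group schemes, so no new work is needed beyond citing the existing reference.
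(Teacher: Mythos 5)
Your overall route is the same as the paper's: part (1) via the projection formula, parts (2) and (3) by neutralizing over a field $L$ with $\Gamma(L)\neq\emptyset$ and invoking the classical faithful-flatness/closed-immersion criteria for affine group schemes (the paper simply cites \cite[3.3.3 c)]{Saa}, which states these for gerbes directly and thereby avoids having to justify that full faithfulness and stability under quotients can be tested after the base change to $L$). Your argument for (3) is essentially a proof of the cited classical fact (induction for the forward direction, triviality of the kernel for the converse), which is fine. In (1), note that the categorical slogan ``$\phi^*$ fully faithful iff the unit is an isomorphism'' is only automatic when the adjunction is taken on all of $\QCoh$; since you restrict $\phi^*$ to $\Rep\Gamma'$, the implication from full faithfulness to $\id_V\otimes\rho$ being an isomorphism (equivalently, to surjectivity of $\rho\colon\odi{\Gamma'}\to\phi_*\odi\Gamma$) still needs the fact that $\Rep\Gamma'$ generates $\QCoh(\Gamma')$ \cite[Cor 3.9, pp. 132]{De3}; the paper makes this step explicit.

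The one place where you assert something that genuinely requires an argument is (2), (a)$\Leftrightarrow$(b): this is not definitional. After neutralizing, ``quotient'' means $u\colon G\to G'$ is faithfully flat, while ``relative gerbe'' is a statement about the fibers of $\Bi G\to\Bi G'$, and the two halves of the equivalence each need a computation. For (a)$\Rightarrow$(b) one checks that $\Bi G\times_{\Bi G'}\Spec k\simeq\Bi K$ with $K=\Ker(u)$, using that a faithfully flat $u$ makes $G$ a $K$-torsor over $G'$. For (b)$\Rightarrow$(a) one factors $u$ through its image $H\subseteq G'$ and computes $\Bi H\times_{\Bi G'}\Spec k\simeq G'/H$; this is a gerbe over $\Spec k$ only if it is $\Spec k$ itself, i.e.\ $H=G'$. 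Your phrase ``fpqc-locally it looks like $\Bi G\to\Bi G'$ with $G\to G'$ faithfully flat'' presupposes exactly the conclusion in the direction (b)$\Rightarrow$(a). This is a short and standard argument, but it must be supplied.
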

\begin{proof}
 For $(1)$, the map $\rho\colon \odi{\Gamma'}\arr\phi_*\odi\Gamma$ induces maps
 \[
 \Hom_{\Gamma'}(V,W)\arr \Hom_{\Gamma'}(V,W\otimes \phi_*\odi\Gamma)\simeq \Hom_\Gamma(\phi^*V,\phi^*W)\text{ for }V,W\in \Rep \Gamma'
 \]
 So if $\rho$ is an isomorphism then $\phi^*$ is fully faithful. Conversely assume the above map bijective for all $V,W$ and choose $W=\odi{\Gamma'}$. The map $\rho$ is injective since $\phi$ is faithfully flat. The surjectivity follows using that $\Rep\Gamma$ generates $\QCoh(\Gamma)$ by \cite[Cor 3.9, pp. 132]{De3}.
 
 For $(2), a)\iff c)$ and $(3)$ see \cite[3.3.3 c), pp. 205]{Saa}. For $(2), a)\iff b)$ we can assume $\Gamma=\Bi G$, $\Gamma'=\Bi G'$ and $\phi$ induced by $G\arr G'$. If $\phi$ is a quotient then $\Bi G \times_{\Bi G'} \Spec k \simeq \Bi K$, where $K$ is the kernel of $G\arr G'$, and thus $\phi$ is a relative gerbe. For the converse, one can replace $\Gamma$ by the image of $G\arr G'$ and assume $G\subseteq G'$ a closed subgroup. In this case $\Bi G \times_{\Bi G'}\Spec k \simeq G'/G$ and $G'/G\arr \Spec k$ is a gerbe if and only if it is an isomorphism, that is $G'=G$.
\end{proof}
\begin{defn}
 Given a Tannakian category $\shC$ a full Tannakian subcategory of $\shC$ is a sub-abelian, sub-monoidal and rigid full subcategory $\shD \subseteq \shC$ which is stable under quotients (in other words is the image of a functor $\Rep\Gamma'\arr \shC$ induced by a quotient map $\Pi_\shC\arr \Gamma'$.
 
 Given a subset $T$ of objects of $\shC$ we denote by $\langle T \rangle$ the full subcategory of $\shC$ whose objects are subquotients of objects of the form $P(X)$ or $P(X^\vee)$ for $X\in T$ and $P\in \N[t]$. It is easy to see that $\langle T \rangle$ is the smallest full Tannakian subcategory of $\shC$ containing $T$. For this reason we call $\langle T \rangle$ the sub Tannakian category spanned by $T$. 
\end{defn}

\begin{defn}\label{universal factorization for map of gerbes}
If $\phi\colon \Gamma\arr \Gamma'$ is a map of affine gerbe there exists a unique (up to a unique isomorphism) factorization of $\phi$ as $\Gamma\arrdi\alpha \Delta\arrdi\beta \Gamma'$, where $\alpha$ is a quotient and $\beta$ is faithful. We call $\Delta$ the image of $\phi$.
\end{defn}


\begin{defn}
A finite gerbe over $k$ is an affine gerbe over $k$ which is a finite stack. An affine gerbe $\Gamma$ over $k$ is finite and \'etale (resp. local) if it is finite and \'etale (resp. local) in the sense of \ref{finite and etale stacks} (resp. \ref{finite and local stacks}). 
\end{defn}

\begin{prop} \label{gerbe finite and of finite type}
 Let $\Gamma$ be an affine gerbe over $k$, $L/k$ be a field extension and $\xi\in \Gamma(L)$.
 \begin{enumerate}
 \item The following conditions are equivalent: a) $\Gamma$ is an algebraic stack; b) $\Autsh_\Gamma(\xi)/L$ is of finite type; c) there exists $V\in \Rep \Gamma$ such that $\langle V \rangle = \Rep \Gamma$.
 \item The gerbe $\Gamma$ is finite if and only if there exists $V\in \Rep\Gamma$ generating $\QCoh(\Gamma)$ (see \ref{sheaves generating categories});
 \item The gerbe $\Gamma$ is finite (resp. finite and \'etale, finite and local) if and only if $\Autsh_\Gamma(\xi)/L$ is finite (resp. finite and \'etale, finite and local).
 \end{enumerate}
\end{prop}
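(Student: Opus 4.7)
For part (1), my plan is to first establish (a)$\Leftrightarrow$(b) using standard affine gerbe theory: after passing to a field $L'/L$ containing a point $\xi$ of $\Gamma$, we have $\Gamma_{L'} \simeq B\Autsh_\Gamma(\xi)$, and this classifying stack is algebraic iff $\Autsh_\Gamma(\xi)/L'$ is of finite type; I will then descend this property to $\Gamma/L$. The equivalence (b)$\Leftrightarrow$(c) I will obtain from classical Tannakian duality: an affine group scheme $G$ over $L'$ is of finite type iff it admits a faithful finite-dimensional representation $V$, which corresponds precisely to $\langle V \rangle = \Rep G$.

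For part (2), the direction ``$\Gamma$ finite $\Rightarrow$ existence of $V$'' I would prove using Lemma \ref{generation for affine and finite maps}: choose a finite atlas $f\colon U \to \Gamma$ with $U$ a finite $L$-scheme, then $f_*\odi U \in \Vect(\Gamma)$ will generate $\QCoh(\Gamma)$. For the converse, suppose $V \in \Vect(\Gamma)$ generates $\QCoh(\Gamma)$. The key observation is that any finite-dimensional $W$ that is a quotient of $V^{(I)}$ is already a quotient of a finite subsum $V^n$ (any lift of a finite basis of $W$ uses only finitely many summands), so dualizing gives $W^\vee \hookrightarrow (V^\vee)^n$. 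After base change to a field $L'/L$ where $\Gamma_{L'} \simeq BG$, I would then argue that the matrix coefficients (coefficient coalgebra) $C_W$ of every finite-dimensional $G$-representation $W$ must lie in those of $V_{L'}$, which span a finite-dimensional subspace of $L'[G]$ of dimension at most $(\dim V)^2$. Hence $L'[G] = \bigcup_W C_W = C_V$ is finite-dimensional, so $G$ is finite.

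Part (3) will follow by combining (1) and (2) with a base change argument: $\Gamma$ is finite (resp.\ finite \'etale, finite local) iff $\Gamma_{L'} = BG$ has the same property iff $G$ has the same property as an $L'$-group scheme. The \'etale and local refinements come from inspecting the \'etale part of $BG$: the atlas $\Spec L' \to BG$ is \'etale (resp.\ has trivial \'etale part) iff $G$ is \'etale (resp.\ local), in combination with Remark \ref{base change of etale for finite gerbes}.

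The hardest part will be the converse in (2), specifically ensuring that the coefficient coalgebra inclusion $C_W \subseteq C_V$ extends to \emph{all} finite-dimensional representations of $G$ over $L'$, not only those descended from $\Vect(\Gamma)$. My plan to handle this is to descend any finite-dimensional sheaf on $\Gamma_{L'}$ to a finite subextension of $L'/L$ where the generation property of $V$ can be applied directly, and then pass to the colimit over such finite subextensions; alternatively, one can argue that $V_{L'}$ itself is a generator of $\QCoh(\Gamma_{L'})$ using faithful flatness of $\Gamma_{L'} \to \Gamma$ and the projection formula.
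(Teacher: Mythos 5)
Your proposal is correct in substance, but it takes a genuinely different route from the paper: the paper's proof of items (1)(b)$\iff$(c)$\then$(a) and of item (2) is essentially a citation of Saavedra (Chapter III, 3.3.1.1 and 3.3.3 a)), supplemented by a short direct argument that the diagonal of an algebraic affine gerbe is fppf (which gives (a)$\then$(b)) and by the observation that (3) follows from (1) together with \ref{base change of etale for finite gerbes}. You instead reprove the cited results. Your coefficient-coalgebra argument for the converse of (2) is sound and is in effect Saavedra's own proof: a coherent quotient of $V^{(I)}$ is a quotient of a finite subsum by quasi-compactness, dualizing embeds every finite-dimensional representation into $(V^\vee)^n$, and the coefficient space of $V^\vee$ bounds $L'[G]$. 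The subtlety you flag — transporting the generation property from $\Gamma$ to $\Gamma_{L'}\simeq \Bi G$ — is real, and your second fix is the right one: $\Gamma_{L'}\arr\Gamma$ is affine, so the second half of \ref{generation for affine and finite maps} shows that the pullback of a generator is a generator; the ``descend to finite subextensions'' alternative is more awkward and unnecessary. What your approach buys is a self-contained proof; what the paper's buys is brevity and, for (a)$\then$(b), an argument that works directly on $\Gamma$ without first trivializing it.

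Two steps deserve more care than your sketch gives them. First, ``descending'' algebraicity from $\Gamma_{L'}$ to $\Gamma$ is not formal fppf descent, because $\Spec L'\arr\Spec k$ is faithfully flat but in general not of finite presentation. The clean route (and the one the paper's diagonal argument is the converse of) is: flatness and finite presentation of the affine diagonal are fpqc-local on the target, hence descend from $\Gamma_{L'}$ to $\Gamma$; then $\Spec L\arr\Gamma$ is an fppf atlas and Artin's theorem yields algebraicity. Second, in (3) the implication ``$\Autsh_\Gamma(\xi)$ finite $\then$ $\Gamma$ finite'' requires, by Definition \ref{finite and etale stacks}, a finite flat atlas from a finite $k$-scheme; $\Spec L\arr\Gamma$ is finite and flat but $\Spec L$ need not be finite over $k$. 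One must first use (1) to see that $\Gamma$ is algebraic of finite type over $k$, hence has a point over a finite extension $k_0/k$, and take $\Spec k_0\arr\Gamma$ as the atlas; the \'etale and local refinements then go through with \ref{base change of etale for finite gerbes} as you indicate.
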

\begin{proof}
 Implications $1)$, b)$\iff$c)$\then$a) follows from \cite[Chapter III, 3.3.1.1]{Saa} and fpqc descent. For a)$\then$b), we choose an fppf atlas $X\to\Gamma$ with $X$ a  $k$-scheme.  Since $X\times_{\Gamma}X$ is an fppf $X$-algebraic space, the map $X\times_{\Gamma}X\to X\times_kX$ is also  fppf. This implies that the diagonal of $\Gamma$ is fppf, whence the result.

 
 Item 2) is proved in \cite[Chapter III, 3.3.3 a)]{Saa}, while 3) follows from 1) and \ref{base change of etale for finite gerbes}.
\end{proof}

\begin{rmk}\label{when the factorization is affine}
 Let $\phi$ be a map of gerbes factorizing as $\Gamma\arrdi\alpha \Delta\arrdi\beta \Gamma'$, where $\alpha$ is a quotient and $\beta$ is faithful. If $\beta$ is affine then $\phi$ is a quotient if and only if $\phi^*\colon\Rep\Gamma'\arr \Rep\Gamma$ is fully faithful. Indeed in this last case also $\beta^*\colon \Rep \Gamma'\arr \Rep\Delta$ would be fully faithful, that is $\odi{\Gamma'}\simeq \beta_*\odi\Delta$ thanks to \ref{quotient and faithful maps for gerbes}: if $\beta$ is affine than it is an isomorphism. 
 
 The map $\beta$ is affine in the following cases: $\Delta$ is finite, for instance if $\Gamma$ or $\Gamma'$ is finite; $\Gamma$ is of finite type and $\phi$ is a relative Frobenius. Moreover, if $L/k$ is a field extension, $\xi\in\Gamma(L)$,  $v\colon G=\Autsh_\Gamma(\xi)\arr \Autsh_{\Gamma'}(\phi(\xi))=G'$ and $H$ its image, then $\beta$ is affine if and only if $G'/H$ is affine, which is true in the following cases: $H$ is normal in $G'$, for instance if $\Gamma'$ is abelian; $G'$ is of finite type and the closed immersion $H\arr G'$ is nilpotent.
 
 This can be proved when $L=k$ is algebraically closed, so that $\Gamma=\Bi G$, $\Gamma'=\Bi G'$ and $\phi$ is induced by $v\colon G\arr G'$. The map $\beta$ is $\Bi H\arr \Bi G'$ and we have a $2$-Cartesian diagram
   \[
  \begin{tikzpicture}[xscale=1.8,yscale=-1.2]
    \node (A0_0) at (0, 0) {$G'/H$};
    \node (A0_1) at (1, 0) {$\Spec k$};
    \node (A1_0) at (0, 1) {$\Bi H$};
    \node (A1_1) at (1, 1) {$\Bi G'$};
    \path (A0_0) edge [->]node [auto] {$\scriptstyle{}$} (A0_1);
    \path (A1_0) edge [->]node [auto] {$\scriptstyle{\beta}$} (A1_1);
    \path (A0_1) edge [->]node [auto] {$\scriptstyle{}$} (A1_1);
    \path (A0_0) edge [->]node [auto] {$\scriptstyle{}$} (A1_0);
  \end{tikzpicture}
  \]
  So $\beta$ is affine if and only if $G'/H$ is affine. This is the case if $H$ is finite (see \cite[\href{http://stacks.math.columbia.edu/tag/03BM}{03BM}]{SP}) or if $H$ is normal (see \cite[Section 16.3]{Wat}). If $H(k)=G'(k)$, as for the relative Frobenius, we have that $G'/H$ is an algebraic space of finite type and with only one rational section $p\in G'/H$. The complement of $p$ is an algebraic space of finite type without rational points and thus empty. Since quasi-separated algebraic spaces are generically schemes, we can conclude that $G'/H$ is a scheme of finite type over $k$ with just one point, thus a finite $k$-scheme.
\end{rmk}

\begin{defn}\label{pro stuff for gerbes}
 Given an affine gerbe $\Gamma$ over $k$ and $E\in\Vect(\Gamma)$, the \textit{monodromy gerbe} of $E$, denoted by $\Gamma_E$, is the gerbe corresponding to $\langle E \rangle$, or, equivalently, the image of the map $\Gamma\arr \Bi \GL_n$ induced by $E$ (where $n=\rk E$). By \ref{gerbe finite and of finite type} $\Gamma_E$ is of finite type over $k$.
 
 Let $\sC$ be a Tannakian category, we denote $\Ess(\sC)$ (resp. $\Et(\sC)$, ${\rm Loc}(\sC)$) the full sub category of $\sC$ consisting of objects with finite (resp. finite and \'etale, finite and local) monodromy gerbe.
 \end{defn}

 \begin{rmk}
 If $\shC$ is a Tannakian category then $\shD=\Ess(\sC)$ (resp. $\shD=\Et(\sC)$, $\shD={\rm Loc}(\sC)$) is a full Tannakian subcategory of $\shC$. Indeed $\shD$ is additive because, given $E,F\in \shC$, the monodromy gerbe of $E\oplus F$ is the image of $\Pi_\shC\arr (\Pi_\shC)_E \times_k (\Pi_\shC)_F$. Moreover notice that if $E,F\in\shC$ and $F\in \langle E \rangle$ then $(\Pi_\shC)_F$ is a quotient of $(\Pi_\shC)_E$. We conclude that $\shD$ is a full Tannakian subcategory of $\shC$ observing that: $\shD$ is monoidal because $E\otimes F\in \langle E\oplus F\rangle$; $\shD$ is abelian and stable under quotients because if $F$ is a quotient or a subobject of $E$ then $F\in \langle E\rangle$; $\shD$ is stable under duals because $E^\vee\in \langle E\rangle$.
 \end{rmk}
 
 \begin{defn}\label{essentially finite}\cite[Definition 7.7, pp. 21]{BV}
Let $\shC$ be an additive and monoidal category. An object $E\in\shC$ is called finite if there exist $f\neq g \in \N[X]$ polynomials with natural coefficients and an isomorphism $f(E)\simeq g(E)$, it is called essentially finite if it is a kernel of a map of finite objects of $\shC$. We denote by $\EF(\shC)$ the full subcategory of $\shC$ consisting of essentially finite objects. When $\shC$ is $k$-Tannakian the two definitions of $\EF(\shC)$ introduced agree thanks to \cite[Thm 7.9]{BV}, that is an object of $\shC$ is essentially finite if and only if it has finite monodromy gerbe.
\end{defn}

 \begin{defn}\label{pro stuff for gerbes 2}
 Let $\Gamma$ be an affine gerbe. We say that $\Gamma$ is profinite (resp. pro-\'etale, pro-local) if it is a filtered projective limit (in the sense of \cite[Section 3]{BV}) of finite (resp. finite and \'etale, finite and local) gerbes. We denote by $\widehat \Gamma$ (resp. $\Gamma_{\et}$, $\Gamma_{\LL}$) the quotient gerbe $\Pi_{\Ess(\Rep\Gamma)}$ (resp. $\Pi_{\Et(\Rep\Gamma)}$, $\Pi_{{\rm Loc}(\Rep\Gamma)}$) and call it the profinite (resp. pro-\'etale, pro-local) quotient of $\Gamma$. Notice that $\Gamma$ is profinite (resp. pro-\'etale, pro-local) if and only if $\Gamma=\widehat\Gamma$ (resp. $\Gamma=\Gamma_{\et}$, $\Gamma=\Gamma_{\LL}$) and, if $\Gamma$ is an affine gerbe over $k$, then $\widehat\Gamma=\Pi_{\Gamma/k}^\NN$,  $\Gamma_{\et}=\Pi_{\Gamma/k}^{\NN,\et}$ and $\Gamma_\LL = \Pi_{\Gamma/k}^{\NN,\LL}$.
 
 We say that $\Gamma$ is smooth (pro-smooth) banded if there exists $L/k$ field extension and $\xi\in\Gamma(L)$ such that $\Autsh_\Gamma(\xi)$ is a smooth group scheme over $L$ (a projective limit of smooth group schemes over $L$).  
\end{defn}

\begin{rmk}
 An affine gerbe $\Gamma$ is pro-smooth banded if and only if any finite type quotient of $\Gamma$ is smooth banded. The implication "$\Leftarrow$" follows from the fact that affine gerbes are projective limit of gerbes of finite type. For the other, we can reduce to the neutral case, so that one has to prove that if $v\colon G_\infty=\varprojlim_j G_j\arr G$ is a quotient, $G$ is of finite type and the $G_j$ are smooth then $G$ is smooth. But $v$ factors through a quotient map $G_j\arr G$. Since $G_j\arr G$ is faithfully flat and $G_j$ is smooth it follows that $G$ is smooth.
\end{rmk}

\end{document}